\newcommand{\commentJDG}[1]{}
\newcommand\blfootnote[1]{%
\begingroup
\renewcommand\thefootnote{}\footnote{#1}%
\addtocounter{footnote}{-1}%
\endgroup
}
\newtheorem{theorem}{Theorem}
\newtheorem{corollary}[theorem]{Corollary}
\newtheorem{lemma}[theorem]{Lemma}
\newtheorem{proposition}[theorem]{Proposition}
\newtheorem{assumption}{Assumption}
\theoremstyle{definition}
\newtheorem{definition}[theorem]{Definition}
\newtheorem{remark}[theorem]{Remark}
\newcommand{\EE}{\mathbb{E}}
\newcommand{\PP}{\mathbb{P}}
\newcommand{\RR}{\mathbb{R}}
\newcommand{\NN}{\mathbb{N}}
\newcommand{\iiter}{\mathbf{n}}
\newcommand{\Iiter}{\mathcal{N}^{max}}
\newcommand{\yut}[1]{ u_{#1,t}^{(y)} }
\newcommand{\zut}[1]{ u_{#1,t}^{(z)} }
\def\bX{\boldsymbol{X}}
\def\bZ{\boldsymbol{Z}}
\newcommand\bp{\mathbf p}
\newcommand\bu{\mathbf u}
\newcommand\bx{\mathbf x}
\newcommand\by{\mathbf y}
\newcommand\bz{\mathbf z}
\newcommand\bV{\mathbf V}
\def\bbeta{\boldsymbol{\beta}}
\newcommand\cD{\mathcal D}
\newcommand\cF{\mathcal F}
\newcommand\cL{\mathcal L}
\newcommand\cM{\mathcal M}
\newcommand\cN{\mathcal N}
\newcommand\cP{\mathcal P}
\newcommand\cR{\mathcal R}
\newcommand\cS{\mathcal S}
\newcommand\cU{\mathcal U}
\newcommand\cX{\mathcal X}
\newcommand{\diag}{\mathrm{diag}}
\newlength\myindent
\title[Linear-Quadratic Zero-Sum MFTG] 
      {Linear-Quadratic Zero-Sum Mean-Field Type Games: \\ Optimality Conditions and Policy Optimization}
\author{R. Carmona}
\author{K. Hamidouche}
\author{M. Lauri\`ere}
\author{Z. Tan}
\address[R. Carmona, K. Hamidouche, M. Lauri\`ere, Z. Tan]{Department of Operations Research and Financial Engineering, Princeton University, Princeton, NJ 08540, USA}
\email[R. Carmona, K. Hamidouche, M. Lauri\`ere, Z. Tan]{\{rcarmona, kenzah, lauriere, zongjun.tan\}@princeton.edu }
\thanks{The research of M. Lauri\`ere is supported by NSF grant DMS--1716673 and ARO grant W911NF--17--1--0578.}
\subjclass{Primary: xxx; Secondary: xxx.}
 \keywords{Mean field games, Mean field control, Mean field type games, Zero sum games.}
\begin{document}
\maketitle

\medskip

\bigskip

\blfootnote{{\it A preliminary version of this work has been accepted to the 59th Conference on Decision and Control~\cite{carmona2020lqzsmftg}.}}

\begin{abstract}
In this paper, zero-sum mean-field type games (ZSMFTG) with linear dynamics and quadratic cost are studied under infinite-horizon discounted utility function. ZSMFTG are a class of games in which two decision makers whose utilities sum to zero, compete to influence a large population of indistinguishable agents. In particular, the case in which the transition and utility functions depend on the state, the action of the controllers, and the mean of the state and the actions, is investigated. The optimality conditions of the game are analysed for both open-loop and closed-loop controls, and explicit expressions for the Nash equilibrium strategies are derived. Moreover, two policy optimization methods that rely on policy gradient are proposed for both model-based and sample-based frameworks. In the model-based case, the gradients are computed exactly using the model, whereas they are estimated using Monte-Carlo simulations in the sample-based case. Numerical experiments are conducted to show the convergence of the utility function as well as the two players' controls.
\end{abstract}

\tableofcontents

\allowdisplaybreaks

\section{Introduction}

\sloppy
Decision making in multi-agent systems has recently received an increasing interest from both theoretical and empirical viewpoints. 
For instance, multi-agent reinforcement learning (MARL) has been applied successfully to problems ranging from self-driving cars and robotics to games, while game-theoretic models have been exploited to study several prominent decision-making problems in engineering, economics and finance. 

In multi-agent systems, a large number of interacting agents either cooperate or compete to optimize a certain individual or common goal. MARL and stochastic games were shown to model well systems with a small number of agents. However, as the number of agents becomes large, analysing such systems 
becomes intractable due to the exponential growth of agent interactions and the prohibitive computational cost. To tackle this issue, mean-field approximations, borrowed from statistical physics, were considered to study the limit behaviour of systems in which the agents are indistinguishable and their decisions are influenced by the empirical distribution
of the other agents. 

Mean-field games (MFGs) \cite{lasry2007mean,HuangMalhameCaines2006_MR2346927} and their variants mean-field type control (MFC) \cite{bensoussan2007representation} and mean-field type games (MFTG) \cite{barreiro2020discrete} consist of studying the global behaviour of systems composed of infinitely many agents which interact in a symmetric manner. In particular, the mean-field approximation captures all agent-to-agent interactions that, individually, have a negligible influence on the overall system's evolution.

An MFG corresponds to the asymptotic limit of the situation in which all the agents compete to minimize their individual utility. In this case, the solution concept is a Nash equilibrium, in which a typical agent is worse-off if she deviates unilaterally. From the point of view of the global system, a better solution can be found by a central planner who tries to minimize the social utility by prescribing the control that each agent should use. This leads to the notion of MFC, which can be viewed as the optimal control of a McKean-Vlasov (MKV) dynamics, in which the evolution of the state process is influenced by its own distribution. Last, mean-field type games are a framework that models control problems involving several decision makers and mean-field interactions. Typical motivations are problems in which large coalitions compete or in which several agents try to influence a large population \cite{djehiche2016mean,bensoussan2018mean}. These three types of models have found numerous applications \cite{bauso2016game}, e.g. in finance \cite{cardaliaguet2018mean}, energy production \cite{bauso2012robust,alasseur2017extended}, crowd motion~\cite{achdou2019mean,MR3392611}, wireless communications \cite{shi2020mean,kim2019mean,meriaux2012mean}, distributed robotics \cite{liu2018mean} and systemic risk~\cite{MR3325083,elie2020large}.

In the past decade, many contributions have contributed to develop the theory of such mean-field problems. In order to study their solutions, a key point is the derivation of optimality conditions, which are typically phrased either in terms of partial differential equations (PDEs) or in terms of forward-backward stochastic differential equations (FBSDEs). For a detailed account, see e.g.~\cite{Cardaliaguet-2013-notes,MR3134900,carmona2018probabilisticI-II} and the references therein. As a cornerstone for applications, the development of numerical methods for these mean-field problems has also attracted a growing interest. Assuming full knowledge of the model, methods for which convergence guarantees have been established include finite difference schemes for partial differential equations~\cite{MR2679575,achdou2012mean}, semi-Lagrangian schemes~\cite{MR3148086}, augmented Lagrangian or primal-dual methods~\cite{MR3575615,MR3772008,BricenoAriasetalCEMRACS2017}, value iteration algorithm~\cite{anahtarci2019value}, or neural network based stochastic methods~\cite{CarmonaLauriere_DL_periodic,CarmonaLauriere_DL}; see e.g.~\cite{achdoulauriere2020mfgnumerical} for a recent overview. However, in many practical situations, the model is not fully known and these methods can not be employed. Hence model-free or sample-based methods, in which the optimization is performed while having only access to a simulator instead of knowing the model, have recently been investigated. For mean-field games, fixed-point~\cite{guo2019learning} or fictitious play scheme ~\cite{elie2020convergence} have been combined with model-free methods to compute the best response, whereas for mean-field control problems, the solution has been approximated using policy gradient~\cite{carmona2019linear} or Q-learning~\cite{carmona2019model,gu2020q}. Despite recent progress, these methods remain restricted to mean-field problems with simple structures which have a common point: the decision makers are either infinitesimal and identical players or a single central planner. More complex models are often needed to tackle applications, such as settings in which a mean-field dynamics is influenced by several distinguishable decision makers. Such situations can typically be modeled by a MFTG.

An archetypal MFTG is the case of mean-field zero-sum games. Two-player zero-sum games in their standard stochastic form, with no mean-field interactions, have been extensively studied in the literature \cite{von2007theory}. In this class of games, two decision makers compete to respectively maximize and minimize the same utility function. The large literature on this topic is motivated by many applications and by connections with robust control \cite{bacsar2008h}. Recently, generalizations to the case where the state dynamics is of MKV type have been introduced in continuous time over a finite time horizon. Optimality conditions have been derived using the theory of backward stochastic differential equations (BSDEs)  in~\cite{xu2012zerosum}, using the dynamic programming principle and partial differential equations (PDEs) in~\cite{cosso2019zero} or using a weak formulation in~\cite{MR4104525}. All these works assume that the controls take values in a compact space, and hence are not applicable to a general linear-quadratic setting. Along a different line, zero-sum games with mean-field interactions have also attracted interest for their connections with generative adversarial nets (GANs)~\cite{domingo2020mean,cao2020connecting}.

Although general stochastic problems with mean-field interactions can be studied from a theoretical perspective, explicit computation of the solution and numerical illustration of the Nash equilibrium are challenging. In standard optimal control, linear-quadratic (LQ) models, where the dynamics are linear and the cost is quadratic, usually have analytical or easily tractable solutions, which makes them very popular. These problems have also been considered in the optimization and machine learning communities, since algorithms with proof of convergence can be developed, see e.g. \cite{fazel2018global} where the authors prove convergence of model-based and sample-based policy gradient methods for a LQ optimal control problem. Sample-based methods have also been used to solve (standard) LQ zero-sum games. In \cite{al2007model}, a discrete-time linear quadratic zero-sum game with infinite time horizon is studied and a Q-learning algorithm is proposed, which is proved to converge to the Nash equilibrium.   
In \cite{zhang2019policy}, the authors study LQ zero-sum games and propose three projected nested-gradient methods that are shown to converge to the Nash equilibrium of the game. However, none of these contributions tackle mean-field interactions in a zero-sum setting.

In the present work, under a discrete time, infinite-horizon
and discounted utility function, we investigate zero-sum mean-field type games (ZSMFTG) of linear-quadratic type, which, to the best of our knowledge, had not been the focus of any work before. In particular, we address the case in which the transition and utility functions do not only depend on the state and the action of the controllers, but also the mean of the state and the actions. Moreover, the state is subject to a common noise. The structure of the problem and the infinite horizon regime allow us to identify the form of the equilibrium controls as linear combinations of the state and its mean conditioned on the common noise, both in the open-loop and the closed-loop settings. To learn the equilibrium, we extend the policy-gradient techniques developed in~\cite{carmona2019linear} for MFC, to the ZSMFTG framework. We design policy optimization methods in which the gradients are either computed exactly using the LQ model or estimated using Monte-Carlo samples when the model is not fully known.

The rest of the paper is organized as follows. In Section~\ref{sec1:prob}, the zero-sum mean-field type game is formulated, preceded by a $N$-agent control problem which motivates this setting. In Section~\ref{app:proba-framework}, we present the rigorous probabilistic setup for the zero-sum mean-field type game under consideration. Open-loop controls are investigated in Section~\ref{sec:open-loop-structure}. After defining the set of admissible controls, we prove a Pontryagin maximum principle giving necessary and sufficient conditions of optimality, see Propositions~\ref{proposition:Pontryagin_maximum_principle_necessary_condition} and~\ref{prop:sufficient_Pontryagin}. Section~\ref{sec:closed-loop-structure} considers closed-loop controls which are linear in the state and the mean. Focusing on the coefficients of the linear combination, we define a notion of admissible controls and prove sufficient conditions of optimality, see Proposition~\ref{prop:sufficient_CLSP_y} and Corollary~\ref{cor:sufficient_CLSP_z}. The connection between equilibria in the open-loop and the closed-loop information structures are studied in Section~\ref{sec:connect-open-closed}, see Lemma~\ref{lemma:connect_ARE_open_closed} and Remark~\ref{rem:from-Pc-to-Po}.
Focusing on closed-loop controls, expressions for the gradient of the utility function and a necessary condition of optimality are derived  in Section~\ref{sec4:algo}, and both model-based and model-free policy optimization methods are proposed. In Subsection~\ref{sec5:num}, we report numerical experiments to show the convergence of the controls and the utility function. Section~\ref{sec6:conc} concludes the paper.

\section{Model and Problem Formulation}
In this section, we first present a zero-sum game in which two controllers compete to influence a population of agents. The agents interact in a symmetric way, through the empirical distribution of their states and actions. We then present a mean-field version of the game (corresponding to the situation where $N \to +\infty$), in which the two controllers influence a state whose dynamics is of MKV type. 
\label{sec1:prob}

\subsection{$N$-agent problem}
Consider a system composed of a population $\{1,\dots,N\}$ with $N$ indistinguishable \emph{agents}. We investigate the case in which these agents have symmetric interactions and are influenced by two \emph{decision makers}, also called \emph{controllers} or \emph{players}, competing to optimize a criterion. In particular, we are interested in the linear-quadratic zero-sum case. Here, the state evolution of an agent $i\in\{1,\dots,N\}$ is given by
\begin{multline}
\label{eq:dyn-N-agent-general}
    x^i_{t+1} = A x^i_t + \bar{A} \bar{x}_t + B_{1} u^i_{1,t} + \bar{B}_{1} \bar{u}_{1,t} + B_{2}u^i_{2,t} + \bar{B}_{2}\bar{u}_{2,t} 
    + \epsilon^i_{t+1} + \epsilon^0_{t+1},
\end{multline}
with initial condition $x^i_0 = \epsilon^i_0 + \epsilon^0_0$, where  $x^i_0$ is the initial state of agent $i$ to which we introduce randomness with $\epsilon^i_0$ and $ \epsilon^0_0$. At each time $t$, $x_t^i \in \RR^d$ corresponds to the state of the $i$-th agent in the population, and $u^i_{1,t} \in \RR^\ell$ and $u^i_{2,t} \in \RR^\ell$ are the controls prescribed to this agent respectively by the first and the second decision maker. The noise terms $\epsilon^0_{t+1}$ and $\epsilon^i_{t+1}$ are independent of each other and of $\epsilon^0_0$ and $ \epsilon^i_0$. Moreover, the noise terms $\epsilon^0_{t+1}$ for $t \ge 0$ are assumed to be identically distributed with mean $0$, and similarly for $\epsilon^i_{t+1}$ for $t \ge 0$. The interpretation of the noise terms is that $\epsilon^0_{t}$ is a common noise affecting the position of all the agents, whereas  $\epsilon^i_{t}$ is an indiosyncratic noise affecting only the position of the $i$-th agent. $A, \bar{A}, B_i, \bar{B}_i$ are fixed matrices with suitable dimensions.  Here, $\bar{x}_t=\frac{1}{N}\sum_{i=1}^{N} x_t^i$, is the sample average of the individual states, and similarly for $\mathrm{u}_1$ and $\mathrm{u}_2$: $\bar{u}_{j,t}=\frac{1}{N}\sum_{i=1}^{N} u_{j,t}^i$. The instantaneous utility is defined by 
\begin{equation}
\label{eq:MKV_running_cost_c}
\begin{aligned}
    c(x, \bar{x}, \mathrm{u}_{1}, \bar{\mathrm{u}}_{1}, \mathrm{u}_{2}, \bar{\mathrm{u}}_{2}) 
    &=
     (x-\bar{x})^{\top} Q (x-\bar{x}) + \bar{x}^{\top} (Q+\bar{Q}) \bar{x}
    \\ 
    &\qquad + (\mathrm{u}_{1}-\bar{\mathrm{u}}_{1})^{\top} R_{1} (\mathrm{u}_{1}-\bar{\mathrm{\bf \mathrm{u}}}_{1}) + \bar{\mathrm{u}}^\top_{1}
    + (R_{1}+\bar{R}_{1}) \bar{\mathrm{u}}_{1}
    \\
    &\qquad - (\mathrm{u}_{2} - \bar{\mathrm{u}}_{2})^\top R_{2} (\mathrm{u}_{2} - \bar{\mathrm{u}}_{2}) - \bar{\mathrm{u}}^\top_{2} (R_{2}+\bar{R}_{2}) \bar{\mathrm{u}}_{2},
\end{aligned}
\end{equation}
where $Q,\bar Q, R_i, \bar R_i$ are deterministic symmetric matrices of suitable sizes such that $R_i, R_i + \bar R_i$ for $i=1,2$ are positive definite.

The objective of each controller in this zero-sum problem is to minimize (resp. maximize) the $N$-agent utility functional 
\begin{equation*}
    J^N(\mathrm{\bf {\underline u}}_1, \mathrm{\bf {\underline u}}_2) = \EE\left[\sum_{t=0}^{+\infty} \gamma^{t} \bar{c}^N({\underline x}_t, {\underline u}_{1,t}, {\underline u}_{2,t}) \right],
\end{equation*}
where ${\underline x}_t = (x_t^1,\dots,x_t^N)$, and ${\bf {\underline u}}_i = ({\underline u}_{i,t})_t$ with ${\underline u}_{i,t} = (u^1_{i,t},\dots,u^N_{i,t})$ (we use a boldface to denote a function of time and an underline to denote a vector of size $N$), and $\bar c^N$ is the average utility, defined by
\begin{equation*}
    \bar{c}^N({\underline x}_t, {\underline u}_{1,t}, {\underline u}_{2,t}) 
    = \frac{1}{N} \sum_{i=1}^{N} c(x^i_t,\bar{x}_t, u^i_{1,t}, \bar{u}_{1,t}, u^i_{2,t}, \bar{u}_{2,t}).
\end{equation*}

 The minimax problem is defined as follows,
 \begin{equation}
        \inf_{\mathrm{\bf \underline{u}}_1} \sup_{\mathrm{\bf \underline{u}}_2} J^N(\mathrm{\bf \underline{u}}_1, \mathrm{\bf \underline{u}}_2).
\end{equation}

This problem is a generalization of the mean-field control setup, in which there is a single decision maker. It can also be viewed as a variant of Nash mean-field control setup studied in~\cite{bensoussan2018mean} or mean-field type games~\cite{djehiche2016mean} in which several mean-field decision makers compete in a general-sum game.

\begin{remark}
\label{rem:2pop-case}
An interesting special case is the situation in which each decision maker controls a different population. This corresponds to a zero-sum game between two large coalitions. This setting can be covered in the following way. Assume that $d = 2 d'$ for some integer $d'$. Consider, for the dynamics, block matrices of the form:
$$
    A = \begin{pmatrix}
    A_1 & 0 \\
    0 & A_2
    \end{pmatrix}, \,\,\, 
    B_1 = \begin{pmatrix}
    B_1^1  \\
    0 
    \end{pmatrix}, \,\,\, 
    B_2 = \begin{pmatrix}
    0 \\
    B_2^2
    \end{pmatrix},
$$
and
$$
    \bar{A} = \begin{pmatrix}
    \bar{A}_{11} & \bar{A}_{12} \\
    \bar{A}_{21} & \bar{A}_{22}
    \end{pmatrix}, \,\,\, 
    \bar{B}_1 = \begin{pmatrix}
    \bar{B}_1^1 \\
    \bar{B}_1^2
    \end{pmatrix}, \,\,\, 
    \bar{B}_2 = \begin{pmatrix}
    \bar{B}_2^1 \\
    \bar{B}_2^2
    \end{pmatrix}.
$$
Then the dynamics~\eqref{eq:dyn-N-agent-general} rewrites, with the notation $x = (x_1, x_2)$ where $x_i \in \RR^{d'}$ and similarly for $\epsilon^0, \epsilon^i$, 
\begin{align*}
    d x^i_{1,t} &= \left[A_1 x^i_{1,t} + \bar{A}_{11} \bar{x}_{1,t} + \bar{A}_{12} \bar{x}_{2,t}
    + B_1^1 u^i_{1,t}  + \bar{B}_1^1 \bar{u}_{1,t}  + \bar{B}_1^2 \bar{u}_{2,t} 
     \right] + \epsilon^i_{1,t} + \epsilon^0_{1,t},
    \\
    d x^i_{2,t} &= \left[A_2 x^i_{2,t} + \bar{A}_{21} \bar{x}_{1,t} + \bar{A}_{22} \bar{x}_{2,t}
    + B_2^2 u^i_{2,t}  + \bar{B}_1^2 \bar{u}_{1,t}  + \bar{B}_2^2 \bar{u}_{2,t} 
     \right] + \epsilon^i_{2,t} + \epsilon^0_{2,t}.
\end{align*}
Note that the evolution of the two halves of vector $x$ are coupled only through their expectations and the expectation of the control used for the other half. We can thus interpret each half as the state of a player in a different population where each population has $N$ indistinguishable agents.
\end{remark}

\subsection{Mean-field problem}

Here, we consider the limit of the $N$-agent case. The dynamics is given by
\begin{multline}
\label{eq:MKV-state_ZS}
    x_{t+1} = A x_t + \bar{A} \bar{x}_t + B_1 u_{1,t} + \bar{B}_1 \bar{u}_{1,t}
    + B_2 u_{2,t}+\bar{B}_2\bar{u}_{2,t}  
    + \epsilon^0_{t+1} + \epsilon^1_{t+1},
\end{multline}
with initial condition 
$$
    x_0 = \epsilon^0_0 + \epsilon^1_0.
$$

Here and thereafter, when considering the mean-field problem, we use the notation $\bar{x}_t = \EE[x_t | (\epsilon^0_{s})_{0 \le s \le t}]$ for the expectation of the state conditional on the realization of the common noise, and likewise for $\mathrm{\bf u}_1$ and $\mathrm{\bf u}_2$. Note that~\eqref{eq:MKV-state_ZS} is a dynamics of MKV type since it is influenced by its own distribution and by the distribution of the actions.
The utility function takes the form 
\begin{equation}
\label{fo:MKV-discounted_utility_ZS}
    J(\mathrm{\bf u}_1, \mathrm{\bf u}_2) = \EE\left[\sum_{t=0}^{+\infty} \gamma^{t} c_t \right],
\end{equation}
where $\gamma \in [0,1]$ is a discount factor, and the instantaneous utility at time $t$ is defined as
\begin{align}
\label{eq:instantaneous-utility}
    c_t 
    & = c(x_t, \bar{x}_t, u_{1,t}, \bar{u}_{1,t}, u_{2,t}, \bar{u}_{2,t}),
\end{align}
where the function $c$ is as in the $N$-agent problem. 

The goal is to find a Nash equilibrium, namely a pair of $( \mathrm{\bf u}^*_1, \mathrm{\bf u}^*_2)$ such that
\begin{align}
\label{eq:prob_for}
    J( \mathrm{\bf u}^*_1, \mathrm{\bf u}^*_2)
    =
    \inf_{\mathrm{\bf u}_1} \sup_{\mathrm{\bf u}_2} J(\mathrm{\bf u}_1, \mathrm{\bf u}_2).
\end{align}

Next, we study the existence of the Nash equilibrium and derive its closed-form expression for the formulated ZSMFTG.

\section{Probabilistic setup}
\label{app:proba-framework}

In this section we rigorously define the model of MKV dynamics with common noise. It is analogous to the one considered in~\cite{carmona2019linear}, except for the fact that there are two decision makers instead of one.
A convenient way to think about this model is to view the state $x_t$ of the system at time $t$ as a random variable defined on the probability space $(\Omega,\cF,\PP)$ where $\Omega=\Omega^0\times\Omega^1$, $\cF=\cF^0\times\cF^1$ and $\PP=\PP^0\times\PP^1$. In this set-up, if $\omega=(\omega^0,\omega^1)$,  $\epsilon^0_t(\omega)=\tilde\epsilon^0_t(\omega^0)$ and  $\epsilon^1_t(\omega)=\tilde\epsilon^1_t(\omega^1)$ where $(\tilde\epsilon^0_{t})_{t=1,2,\ldots}$ and $(\tilde\epsilon^1_{t})_{t=1,2,\ldots}$ are i.i.d. sequences of mean-zero random variables on $(\Omega^0,\cF^0,\PP^0)$ and $(\Omega^1,\cF^1,\PP^1)$ respectively, while the initial sources of randomness  $\tilde\epsilon^0_{0}$ and $\tilde\epsilon^1_{0}$ are random variables on $(\Omega^0,\cF^0,\PP^0)$ and $(\Omega^1,\cF^1,\PP^1)$ with distributions $\mu^0_0 $ and $\mu^1_0$ respectively, which are independent of each other and independent of $(\tilde\epsilon^0_{t})_{t=1,2,\ldots}$ and $(\tilde\epsilon^1_{t})_{t=1,2,\ldots}$.
We denote by $\cF_t$ the filtration generated by the noise up until time $t$, that is $\cF_t = \sigma(\epsilon^0_0, \epsilon^1_0, \epsilon^0_1, \epsilon^1_1, \dots , \epsilon^0_t, \epsilon^1_t)$.
We assume that the variance of random variables $\epsilon_t^0$ and $\epsilon_t^1$ are constant along time, and these variances are denoted by $\Sigma^0 = \EE[ (\epsilon_t^0)^\top \epsilon_t^0 ]$ and $\Sigma^1 = \EE[ (\epsilon_t^1)^\top \epsilon_t^1 ]$ for every $t \geq 1$.

At each time $t\ge 0$, $x_t$ and $u_{i,t}$ with $i=1,2$ are random elements defined on $(\Omega,\cF,\PP)$ representing the state of the system and the controls exerted by a pair of generic agents. Using the fact that the idiosyncratic noise and the common noise are independent, the quantities $\bar{x}_t$ and $\bar{u}_{i,t}$ with $i= 1,2$ appearing in \eqref{eq:MKV-state_ZS} are random variables on $(\Omega,\cF,\PP)$ defined by: for $\omega = (\omega^0, \omega^1)$,
\begin{equation*}
\label{fo:bars}
\bar{x}_t(\omega^0,\omega^1)=\int_{\Omega^1}x_t(\omega^0,\tilde\omega^1)\PP^1(d\tilde\omega^1),
\quad
\bar{u}_{i,t}(\omega^0,\omega^1)=\int_{\Omega^1}u_{i,t}(\omega^0,\tilde\omega^1)\PP^1(d\tilde\omega^1),  \   i = 1,2.
\end{equation*}
Notice that $\bar{x}_t$, $\bar{u}_{1,t}$ and $\bar{u}_{2,t}$ depend only upon $\omega^0$. In fact, the best way to think of $\bar{x}_t$ and $\bar{u}_{i,t}$ with $i=1,2$ is to keep in mind the following fact: 
$$
\bar{x}_t=\EE[x_t|\cF^0], \qquad \text{and} \qquad  \bar{u}_{i,t}=\EE[u_{i,t}|\cF^0].
$$ 
These are the mean field terms appearing in the (stochastic) dynamics of the state~\eqref{eq:MKV-state_ZS}:
\begin{equation*}
	x_{t+1} = A x_{t} + \bar A \bar x_t +  B_1 u_{1,t} + \bar B_1 \bar{u}_{1,t} + B_2 u_{2,t} + \bar B_2 \bar u_{2,t} + \epsilon_{t+1}^0 + \epsilon_{t+1}^1.
\end{equation*}

\section{Open-loop information structure}
\label{sec:open-loop-structure}

In this section, we consider open-loop controls, that is, controls available if the controllers can directly see the noise terms. We start with this class of controls because it is somehow ``larger'' than the class of closed-loop controls that will be considered in the next section (any closed-loop control gives rise to an open-loop control, but the converse is not always true). The main point of this section is to show that, under suitable conditions, the Nash equilibrium controls in the open-loop setting can in fact be written as linear combinations of the state and the conditional mean. 

\subsection{Admissible controls}

We introduce the following sets: for $T \geq 0$,
\begin{equation*}
    \begin{aligned}
        \cU_T & := \Big\{  \bu: \{0,\ldots,T\} \times \Omega \mapsto \RR^{\ell} \  | \  u_t  
        \text{ is } \cF_t-\text{measurable}, 
        \  \EE\left[ \sup_{t=0,\ldots,T} \gamma^t \| u_t \|^2 \right] < \infty \Big\}, 
        \\
        \cU_{loc} &:= \bigcup_{T \geq 0} \cU_T,
        \qquad
        \cU := \left\{ \bu: \NN \times \Omega \mapsto \RR^{\ell} \  \left| \  \bu \in \cU_{loc}, \   \EE\left[ \sum_{t=0}^\infty \gamma^t \| u_t \|^2 \right] < \infty \right. \right\},
\end{aligned}
\end{equation*}
where we use the notation $u_t(\cdot) = \bu(t, \cdot)$ for every $t \in \NN$ and we identify $\bu$ to an $\cF$-adapted process $(u_t)_{t\geq 0}$. A process $\bu$ is called $L^2-$discounted globally integrable, or $L^2-$integrable for short, if $\bu \in \cU$. 
Also, for $T \geq 0$,
\begin{equation*}
\begin{aligned}
\cX_T & := \Big\{  \bx: \{0,\ldots,T\} \times \Omega \mapsto \RR^d \  | \  x_t   
\text{ is } \cF_t-\text{measurable}, 
    \  \EE\left[ \sup_{t=0,\ldots,T} \gamma^t \| x_t \|^2 \right] < \infty \Big\},  
\\
\cX_{loc} &:= \bigcup_{T \geq 0} \cX_T,
\qquad
\cX  := \left\{  \bx \in \NN \times \Omega \mapsto \RR^d \  \left| \  \bu \in \cX_{loc},  \  \EE\left[ \sum_{t=0}^\infty \gamma^t \| x_t \|^2 \right] < \infty \right. \right\}.
\end{aligned}
\end{equation*}
Similarly, we identify $\bx \in \cX_{loc}$ or $\bx \in \cX$ to an $\cF-$adapted process $(x_t)_{t\geq 0}$ in $\RR^d$. We also call a state process is $L^2-$discounted globally integrable, or simply $L^2-$integrable, if $\bx \in \cX$. Let $\cS^d$ stand for the set of symmetric matrices in $\RR^{d\times d}$.\\

In the open-loop information structure, we consider the following subset of $\cU \times \cU$:
\begin{equation*}
\cU_{ad}^{open} := \left\{ (\bu_1, \bu_2) \in \cU \times \cU \  | \  (x_t^{\bu, \bu_2})_{t \geq 0} \in \cX \right\}
\end{equation*}
where the state process $(x_t^{\bu_1, \bu_2})_{x \geq 0}$ follows the dynamics \eqref{eq:MKV-state_ZS}. We call every element $(\bu_1, \bu_2) \in \cU_{ad}^{open}$ an admissible (open-loop) control pair for the two players.

We collect here a few useful results, stated without proof for the sake of brevity.

	The following proposition is about the $L^2$-integrability of the state processes.

	\begin{proposition}
		\label{prop:stability_with_extra_L2_term}
		We consider a state processes $\bX= (X_t)_{t \geq 0}$ following the dynamics 
		\begin{equation}
		\label{eq:sys_A}
		X_{t+1} = A X_t +  q_t + W_{t+1}, \qquad X_0 \sim \mu_0
		\end{equation}
		where $A \in \RR^{d \times d}$, $ q = (q_t)_{t \geq 0} \in \cU$, and $\mu_0 \in \cP^2(\RR^d)$ such that $\EE[X_0] = 0$ and $\EE[\| X_0 \|^2] < \infty$. Assume the noise process $(W_{t})_{t \geq 1}$ satisfies $ \EE[W_{i} W_j] = 0$ for $1 \leq i < j < \infty$, $\EE[W_t] = 0$ for every $t \geq 1$, and also
		\begin{equation*}
			\EE\left[ \sum_{t=1}^\infty \gamma^t \| W_{t} \|^2 \right]	< \infty.	
		\end{equation*}
		Assume that the matrix $A$ satisfies 
		$
			\gamma \| A \|^2 < 1.
		$
		Then the state process $\bX = (X_t)_{t \geq 0}$ is in $\mathcal{X}$, i.e.
		$
			\EE \left[ \sum_{t=0}^\infty \gamma^t \| X_t \|^2 \right] < \infty.
		$
	\end{proposition}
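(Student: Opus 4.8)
The plan is to solve the linear recursion explicitly and then control each resulting term separately against the discounted $\ell^2$ weight. Unrolling \eqref{eq:sys_A} gives, for every $t\ge 1$,
$$X_t = A^t X_0 + \sum_{s=0}^{t-1} A^{t-1-s} q_s + \sum_{k=1}^{t} A^{t-k} W_k,$$
which I would establish by a one-line induction. Applying the elementary bound $\|a+b+c\|^2\le 3(\|a\|^2+\|b\|^2+\|c\|^2)$ together with submultiplicativity $\|A^j\|\le\|A\|^j$ reduces the claim to showing that the discounted sum $\sum_{t\ge 0}\gamma^t\EE[\,\cdot\,]$ of each of the three pieces is finite. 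Throughout I would set $\beta:=\sqrt{\gamma}\,\|A\|$, so that the hypothesis $\gamma\|A\|^2<1$ reads $\beta<1$; this is the single quantitative input driving every geometric series below.

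The homogeneous term is immediate, since $\sum_{t\ge 0}\gamma^t\EE[\|A^tX_0\|^2]\le \EE[\|X_0\|^2]\sum_{t\ge 0}\beta^{2t}<\infty$. For the noise term I would invoke the uncorrelatedness hypothesis $\EE[W_iW_j]=0$ for $i\neq j$: the cross terms in the second moment of $\sum_{k=1}^t A^{t-k}W_k$ then vanish, so that $\EE[\|\sum_{k=1}^t A^{t-k}W_k\|^2]\le\sum_{k=1}^t\|A\|^{2(t-k)}\EE[\|W_k\|^2]$. Summing against $\gamma^t$ and exchanging the order of summation (Tonelli, all terms nonnegative) produces the bound $\tfrac{1}{1-\beta^2}\,\EE[\sum_{k\ge 1}\gamma^k\|W_k\|^2]$, finite by assumption.

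The control term is the crux, since the $q_s$ are neither independent nor uncorrelated, so the orthogonality trick is unavailable and a naive Cauchy--Schwarz in $t$ would lose a factor growing in $t$. Here I would instead treat the inner sum as a discrete convolution with geometric kernel: writing $\gamma^t\big(\sum_{s=0}^{t-1}\|A\|^{t-1-s}\|q_s\|\big)^2 = \gamma\big(\sum_{s=0}^{t-1}\beta^{t-1-s}(\gamma^{s/2}\|q_s\|)\big)^2$ and applying Cauchy--Schwarz to split off one factor of $\beta^{t-1-s}$ (equivalently, Young's inequality $\|f*g\|_{\ell^2}\le\|f\|_{\ell^1}\|g\|_{\ell^2}$ with kernel $f_j=\beta^j$), I bound this by $\tfrac{\gamma}{1-\beta}\sum_{s=0}^{t-1}\beta^{t-1-s}\gamma^{s}\|q_s\|^2$. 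Taking expectations, summing over $t$, and again swapping the order of summation collapses the kernel into a second factor $\tfrac{1}{1-\beta}$, leaving $\tfrac{\gamma}{(1-\beta)^2}\,\EE[\sum_{s\ge 0}\gamma^s\|q_s\|^2]$, which is finite precisely because $q\in\cU$.

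Combining the three estimates yields $\EE[\sum_{t\ge 0}\gamma^t\|X_t\|^2]<\infty$. To finish, $\cF_t$-adaptedness of $(X_t)_{t\ge 0}$ follows by induction from that of $q$ and the noise, and membership in $\cX_{loc}$ is automatic, since for each finite $T$ the supremum is dominated by the finite sum $\sum_{t=0}^{T}\gamma^t\|X_t\|^2$ whose expectation is finite; hence $\bX\in\cX$. I expect the control-term convolution estimate to be the only genuinely delicate point, the remaining steps being geometric bookkeeping controlled by $\beta<1$.
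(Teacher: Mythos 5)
Your proposal is correct and follows essentially the same route as the paper's proof: unroll the recursion, split off the homogeneous, control, and noise contributions, use the uncorrelatedness of the $W_k$ to kill the cross terms in the noise sum, and handle the control term via the Cauchy--Schwarz/Young convolution estimate with the geometric kernel $\beta^{j}$ followed by Tonelli. The only differences are cosmetic (a three-way rather than two-way splitting, and working directly with $\beta=\sqrt{\gamma}\,\|A\|$ instead of the paper's auxiliary pair $\xi,\eta$), and your bookkeeping is if anything slightly cleaner.
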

    
    \commentJDG{
	\begin{proof}
		From the dynamics of state process $\bX$, we have for every $t \geq 1$,
		\begin{equation*}
			X_t = A^t X_0 + \sum_{j=0}^{t-1} A^{t-1-j} q_j + \sum_{j=1}^{t} A^{t-j} W_j.
		\end{equation*}
		Then, by Cauchy-Schwarz inequality, we have
		\begin{align*}
			& \EE\left[ \gamma^t \| X_t \|^2 \right] 
			\\
			= & \EE \left[ \gamma^t \left\| A^t X_0 + \sum_{j=0}^{t-1} A^{t-1-j} q_j + \sum_{j=1}^{t} A^{t-j} W_j \right\|^2 \right] 
			\\
			\leq  &  2 \EE \left[ \gamma^t \left\| A^t X_0 + \sum_{j=0}^{t-1} A^{t-1-j} q_j \right\|^2 + \gamma^t \left\| \sum_{j=1}^{t} A^{t-j} W_j \right\|^2  \right]
			\\
			= & 2 \EE \left[ \left\| (\gamma^{1/2} A)^t X_0 + \sum_{j=0}^{t-1} (\gamma^{1/2} A)^{t-1-j} \Big(\gamma^{(j+1)/2} q_j \Big) \right\|^2 \right] + 2 \EE\left[ \left\| \sum_{j=1}^t \gamma^{t/2} A^{t-j} W_j\right\|^2 \right]
			\\
			= & (i)_t + (ii)_t \,.
		\end{align*}
		
		Thus, when we take the sum of $\EE[ \gamma^t \| X_t \|^2]$ from $t=0$ up to infinity, by the Monotone Convergence Theorem, we have
		\begin{equation*}
		\EE \left[ \sum_{t=0}^\infty  \gamma^t \| X_t \|^2 \right] \leq \sum_{t=0}^\infty (i)_t + \sum_{t=0}^\infty (ii)_t .
		\end{equation*}
		
		We first bound from above the terms $(i)_t$ and $(ii)_t$. Under Assumption~\ref{assumption:A_openloop}, there exists a real number $1 > \gamma_1 > \gamma$ such that
		$$
			\xi := \gamma_1 \| A \|^2 < 1.
		$$
		Let us also define
		$$
			\eta := \left\| \left( \frac{\gamma}{\xi} \right)^{1/2} A \right\|^2 \leq \frac{\gamma \| A \|^2}{\xi} = \frac{\gamma}{\gamma_1} < 1.
		$$
		Then, we have, for every $j = 1,\ldots, t$,
		$$
			\left\| \left( \frac{\gamma^{1/2} A}{\xi^{1/2}}  \right)^{t - j} \right\|^2 \leq  \left\| \frac{\gamma^{1/2} A}{\xi^{1/2}}  \right\|^{2 (t - j)} = \eta^{t - j} < 1.
		$$
		Consequently, by applying the Cauchy-Schwarz inequality to $(i)_t$, we have
		\begin{align*}
			\frac{1}{2} (i)_t & = \EE \left[ \left \| ( \gamma^{1/2} A )^t X_0 + \sum_{j=0}^{t-1} (\gamma^{1/2} A)^{t-1-j} (\gamma^{(j+1)/2} q_j) \right\|^2 \right]
			\\
			& = \EE \left[ \left\|  \left( \frac{\gamma^{1/2} A}{\xi^{1/2}} \right)^t \xi^{t/2} X_0 + \sum_{j=0}^{t-1} \left( \frac{\gamma^{1/2} A }{\xi^{1/2}} \right)^{t-1-j} \xi^{(t-1-j)/2} \gamma^{(j+1)/2} q_j \right\|^2 \right]
			\\
			& \leq \EE \left[ \Bigg( \left\| \left( \frac{\gamma^{1/2} A}{\xi^{1/2}}  \right)^t  \xi^{t/2} X_0 \right\| + \sum_{j=0}^{t-1} \left\| \left( \frac{\gamma^{1/2} A}{\xi^{1/2}}  \right)^{t-1-j} \xi^{(t-1-j)/2} \gamma^{(j+1)/2} q_j  \right\| \Bigg)^2 \right]
			\\
			& \leq \EE \left[ \Bigg( \left\| \left( \frac{\gamma^{1/2} A}{\xi^{1/2}}  \right)^t  \right\|  \xi^{t/2} \| X_0 \| + \sum_{j=0}^{t-1} \left\| \left( \frac{\gamma^{1/2} A}{\xi^{1/2}}  \right)^{t-1-j} \right\| \xi^{(t-1-j)/2} \gamma^{(j+1)/2}  \| q_j  \| \Bigg)^2 \right]
			\\
			& \leq \EE\left[ \left( \eta^t + \sum_{j=0}^{t-1} \eta^{t-1-j} \right) . \left(\xi^t \| X_0 \|^2 + \sum_{j=0}^{t-1} \xi^{t-j-1} \gamma^{j+1} \| q_j \|^2 \right)\right]
			\\
			& \leq \frac{1}{1-\eta} \left( \xi^t \EE[ \| X_0 \|^2] + \gamma \sum_{j=0}^{t-1} \xi^{t-j-1} \EE\left[ \gamma^j  \| q_j \|^2 \right] \right)
		\end{align*}
		where the last inequality is justified by the fact that $\eta < 1$.
	
		Applying Fubini's theorem, we observe that
		\begin{align*}
			\sum_{t=1}^\infty  \sum_{j=0}^{t-1} \xi^{t-j-1} \EE\left[ \gamma^j  \| q_j \|^2 \right] =  \sum_{j=0}^{\infty} \sum_{t=0}^\infty  \xi^t \EE\left[ \gamma^{j} \| q_j \|^2\right].
		\end{align*}
		So
		\begin{align*}
			\sum_{t=0}^\infty (i)_t & \leq \frac{2}{1-\eta} \left( \sum_{t=0}^\infty \xi^t \EE[ \| X_0 \|^2] + \gamma \sum_{t=1}^\infty  \sum_{j=0}^{t-1} \xi^{t-j-1} \EE\left[ \gamma^j  \| q_j \|^2 \right] \right)
			\\
			& = \frac{2}{1- \eta} \frac{1}{1- \xi} \EE[ \| X_0 \|^2] + \frac{2 \gamma}{1-\eta} \sum_{j=0}^{\infty} \sum_{t=0}^\infty  \xi^t \EE\left[ \gamma^{j} \| q_j \|^2\right]
			\\ 
			& =  \frac{2}{1- \eta} \frac{1}{1- \xi} \left(  \EE[ \| X_0 \|^2] + \gamma \sum_{j=0}^\infty \EE\left[ \gamma^{j} \| q_j \|^2\right] \right)
			\\
			& < \infty.
		\end{align*}
		Similarly, we have the following upper bound for $(ii)_t$:
		\begin{align*}
		(ii)_t = 2 \EE \left[ \left\| \gamma^{t/2}  \sum_{j=1}^t A^{t-j} W_j \right\|^2 \right]
		& = 2 \EE \left[ \sum_{j=1}^t \left\|  \left(\frac{\gamma^{1/2} A}{\xi^{1/2}} \right)^{t-j} \xi^{t-j} \gamma^{j/2} W_j \right\|^2 \right]
		\\
		& \leq 2 \EE \left[ \left(\sum_{j=1}^t \eta^{t-j} \right). \left(\sum_{j=1}^t  \xi^{t-j} \gamma^j \| W_j \|^2\right) \right]
		\\
		& \leq \frac{2}{1- \eta} \sum_{j=1}^t \xi^{t-j} \EE\left[ \gamma^j \| W_j \|^2 \right],
		\end{align*}
		where the first equality is due to the fact that $\EE[W_i W_j] = 0$ for every $1 \leq i < j \leq t$.
		Using Fubini's theorem again, we get
		\begin{align*}
		\sum_{t=0}^\infty (ii)_t & \leq \frac{2}{1- \eta} \sum_{t=0}^\infty \sum_{j=1}^t \xi^{t-j} \EE\left[ \gamma^j \| W_j \|^2 \right]  = \frac{2}{1 - \eta} \frac{1}{1 - \xi} \sum_{j=1}^{\infty} \EE\left[ \gamma^j \| W_j \|^2 \right] < \infty.
		\end{align*}	
		Therefore, we conclude that
		$
			\EE \left[ \sum_{t=0}^\infty  \gamma^t \| X_t \|^2 \right]  < \infty,
		$
		i.e. $\bX \in \cX$.
	\end{proof}
	}

    The following result gives a link with $L^2-$asymptotical stability.
    \begin{lemma}
    \label{lemma:x_L2_integ_L2_asym_stable}
        If $\bx \in \cX$, then we have $\lim_{t \to \infty} \EE [ \gamma^t \| x_t \|^2 ] = 0$.
    \end{lemma}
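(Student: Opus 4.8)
The plan is to reduce the claim to the elementary fact that the general term of a convergent series of nonnegative reals must tend to zero. First I would set $a_t := \EE[\gamma^t \|x_t\|^2]$, which is nonnegative for every $t \geq 0$. Since $\sum_{t=0}^\infty \gamma^t \|x_t\|^2$ is a sum of nonnegative measurable functions, Tonelli's theorem allows me to interchange the expectation and the summation, giving $\sum_{t=0}^\infty a_t = \EE\big[\sum_{t=0}^\infty \gamma^t \|x_t\|^2\big]$. The hypothesis $\bx \in \cX$ states precisely that the right-hand side is finite, so $\sum_{t=0}^\infty a_t < \infty$.

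Second, I would invoke the necessary condition for convergence of a numerical series. Writing $S_n := \sum_{t=0}^n a_t$ for the partial sums, the finiteness just established means $S_n$ converges to a finite limit $S$ as $n \to \infty$. Then $a_n = S_n - S_{n-1} \to S - S = 0$, which is exactly the assertion $\lim_{t \to \infty} \EE[\gamma^t \|x_t\|^2] = 0$.

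There is essentially no substantive obstacle here: the statement is a direct consequence of the definition of $\cX$. The only point worth stating explicitly is the interchange of expectation and infinite sum, which is legitimate purely by nonnegativity (Tonelli), so no integrability of the individual terms beyond what the hypothesis already provides is needed, and no dominated-convergence argument is required.
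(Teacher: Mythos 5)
Your proof is correct and follows essentially the same route as the paper: both reduce the claim, via Tonelli (interchange of expectation and sum), to the fact that a convergent series of nonnegative terms has general term tending to zero. The only cosmetic difference is that the paper establishes this last elementary fact by contradiction (extracting a subsequence bounded below by $\epsilon$) while you use the partial-sum identity $a_n = S_n - S_{n-1}$.
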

    
    \commentJDG{
    \begin{proof}
        If $\EE[ \gamma^t \| X_t \|^2 ] $ does not tend to zero, then there exists an $\epsilon > 0$ such that for any $N \in \NN$, these is a time point $s > N$ satisfying $\EE[ \gamma^{s} \| X_{s} \|^2 ] > \epsilon$.
        Thus, we can construct a sequence of time points $t_0 < t_1 < ...$ such that for every $i \in \NN$, $ \EE[ \gamma^{t_i} \| X_{t_i} \|^2 ] > \epsilon$. Their sum tends to infinity, contradicting the fact that $\EE[ \sum_{t=0}^\infty \gamma^t \| X_t \|^2 ] < \infty$.
    \end{proof}

    \begin{remark}
        The above result is usually referred to as the $L^2-$asymptotically stability for the state process $\bx$ in a continuous-time situation. See Proposition~3.3 in \cite{huang2012linear} for a detailed discussion on the $L^2-$globally integrability and the $L^2-$asymptotically stability in a continuous-time model without the discount factor.
        \end{remark}
        }
    
    We have the following two lemmas related to the $L^2-$discounted globally integrability for processes $(x_t - \bar{x}_t)_{t \geq 0}$ and $(\bar{x}_t)_{t \geq 0}$. 
	\begin{lemma} 
	\label{lemma:L2_x_and_x_bar}
	A process $\bx \in \cX$ if and only if both processes $(x_t - \bar x_t)_{t \geq 0} \in \cX$ and $(\bar x_t)_{t \geq 0} \in \cX$.
	Similarly, a control process $\bu \in \cU$ if and only if both processes $(u_t - \bar u_t)_{t \geq 0} \in \cU$ and $(\bar u_t)_{t \geq 0} \in \cU$.
	\end{lemma}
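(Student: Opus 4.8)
The plan is to reduce everything to a single orthogonal (Pythagorean) decomposition in $L^2$ and then read off both implications from the nonnegativity of each piece. Since the statements for $\bx \in \cX$ and $\bu \in \cU$ have identical structure --- the only relevant facts being that $\bar x_t = \EE[x_t \mid \cF^0]$ and $\bar u_t = \EE[u_t \mid \cF^0]$ are conditional expectations given the common noise --- I would prove the claim for the state process and observe that the control case is word-for-word the same. Write $x_t = (x_t - \bar x_t) + \bar x_t$. The central observation is that the fluctuation $x_t - \bar x_t$ and the conditional mean $\bar x_t$ are orthogonal: since $\bar x_t$ is $\cF^0$-measurable and $\EE[x_t - \bar x_t \mid \cF^0] = 0$, conditioning gives $\EE[(x_t - \bar x_t)^\top \bar x_t] = \EE\big[\EE[x_t - \bar x_t \mid \cF^0]^\top \bar x_t\big] = 0$. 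Hence the cross term in $\|x_t\|^2 = \|x_t - \bar x_t\|^2 + 2(x_t - \bar x_t)^\top \bar x_t + \|\bar x_t\|^2$ vanishes in expectation, yielding the identity $\EE[\|x_t\|^2] = \EE[\|x_t - \bar x_t\|^2] + \EE[\|\bar x_t\|^2]$ for each $t$.

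Multiplying this identity by $\gamma^t$, summing over $t$, and exchanging sum and expectation by Tonelli's theorem (all summands are nonnegative), I obtain
$$\EE\Big[\sum_{t=0}^\infty \gamma^t \|x_t\|^2\Big] = \EE\Big[\sum_{t=0}^\infty \gamma^t \|x_t - \bar x_t\|^2\Big] + \EE\Big[\sum_{t=0}^\infty \gamma^t \|\bar x_t\|^2\Big].$$
Because every term is nonnegative, finiteness of the left-hand side is equivalent to the simultaneous finiteness of the two terms on the right, which settles the global $L^2$-discounted integrability part of both directions.

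It remains to reconcile this with the membership in $\cX_{loc}$ and the adaptedness built into the definition of $\cX$. For measurability, I note that $\bar x_t = \EE[x_t \mid \cF^0] = \EE[x_t \mid \sigma(\epsilon^0_0,\dots,\epsilon^0_t)]$, since $x_t$ depends only on the noise up to time $t$ and the later common noise is independent; hence $\bar x_t$, and therefore $x_t - \bar x_t$, is $\cF_t$-measurable. For the local (finite-horizon, sup-type) condition, conditional Jensen gives $\gamma^t \|\bar x_t\|^2 \le \EE[\gamma^t \|x_t\|^2 \mid \cF^0]$, so $\sup_{t \le T} \gamma^t \|\bar x_t\|^2 \le \EE[\sup_{t \le T} \gamma^t \|x_t\|^2 \mid \cF^0]$, and taking expectations shows $\bx \in \cX_{loc} \Rightarrow (\bar x_t)_t \in \cX_{loc}$; the bound $\|x_t - \bar x_t\|^2 \le 2\|x_t\|^2 + 2\|\bar x_t\|^2$ then handles the fluctuation, and the reverse implication follows from $\|x_t\|^2 \le 2\|x_t - \bar x_t\|^2 + 2\|\bar x_t\|^2$. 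The only genuinely delicate point is the orthogonality step: everything hinges on $\bar x_t$ being \emph{exactly} the conditional expectation $\EE[x_t\mid\cF^0]$, so that the cross term vanishes and the Pythagorean identity holds; once that is in place, the remaining estimates are routine applications of Jensen's inequality and the triangle inequality.
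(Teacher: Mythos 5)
Your proof is correct and follows essentially the same route as the paper's: the paper's (abbreviated) argument rests on exactly the Pythagorean identity $\EE[\|x_t-\bar x_t\|^2]=\EE[\|x_t\|^2]-\EE[\|\bar x_t\|^2]$ together with Jensen's inequality for the conditional expectation $\bar x_t=\EE[x_t\mid\cF^0]$. Your version merely spells out the orthogonality of the cross term and the routine measurability and $\cX_{loc}$ checks in more detail.
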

	
	\commentJDG{
	    This comes from the fact that
	    $
	        \EE[ \| x_t - \bar{x}_t \|^2 ] = \EE[ \| x_t \|^2 - \| \bar{x}_t \|^2 ]
	    $
	    and by Jensen's inequality  $ \EE[ \| \bar x_t \|^2 ] = \EE[ \| \EE[ x_t | \cF_t^0] \|^2 ] \leq \EE[ \| x_t \|^2 ]$.
    }
    
    Using Proposition~\ref{prop:stability_with_extra_L2_term} and Lemma~\ref{lemma:L2_x_and_x_bar}, we deduce the following result. 
	\begin{lemma} 
	\label{lemma:L2_y_and_z}
	For any given $(\bu_1, \bu_2) \in \cU \times \cU$, let $(q_t^{(y)})_{t \geq 0}$ and $(q_t^{(z)})_{t \geq 0} $ be two processes in $\cU$ given by: for every $t \geq 0 $,
		 \begin{equation}
		 \label{eq:qy_qz}
		 	\left\{ 
		 	\begin{aligned}
		 	q^{(y)}_t & = B_1 (u_{1,t}- \bar{u}_{1,t} ) + B_2 (u_{2,t}- \bar{u}_{2,t} ),
		  	\\
		  	q^{(z)}_t &= (B_1 + \bar{B}_1) \bar u_{1,t} + (B_2 + \bar{B}_2) \bar{u}_{2,t}.
		  	\end{aligned}
		   \right.
		  \end{equation}
		We have:
		\begin{itemize}
		\item if $\gamma \| A \|^2 < 1$, then the state process $\by = (y_t)_{t \geq 0}$ following the dynamics
		\begin{equation}
		    \label{eq:dyn_y_qy}
			y_{t+1} = A y_t + q_t^{(y)} +  \epsilon_{t+1}^1, \qquad y_0 \sim \mu_0^1
		\end{equation} 
		is $L^2-$discounted globally integrable, i.e. $\EE\left[ \sum_{t=0}^\infty \gamma^t \| y_t \|^2 \right] < \infty$;
		
		\item 
		if $\gamma \| A + \bar A \|^2 < 1$, the state process $\bz = (z_{t})_{t \geq 0}$ following the dynamics
		\begin{equation}
		\label{eq:dyn_z_qz}
		z_{t+1} = (A + \bar A) z_t + q_t^{(z)} +  \epsilon_{t+1}^0, \qquad z_0 \sim \mu_0^0
		\end{equation} 
		is $L^2-$discounted globally integrable, i.e. $\EE\left[ \sum_{t=0}^\infty \gamma^t \| z_t \|^2 \right] < \infty$.
		\end{itemize}
	\end{lemma}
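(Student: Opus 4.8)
The plan is to recognize that both auxiliary processes $\by$ and $\bz$ are particular instances of the abstract linear system~\eqref{eq:sys_A} analysed in Proposition~\ref{prop:stability_with_extra_L2_term}, so that the two claimed integrability statements reduce to verifying the hypotheses of that proposition in each case. Concretely, the dynamics~\eqref{eq:dyn_y_qy} is of the form~\eqref{eq:sys_A} with transition matrix $A$, forcing term $q_t^{(y)}$, noise $W_{t+1} = \epsilon_{t+1}^1$ and initial law $\mu_0^1$; likewise~\eqref{eq:dyn_z_qz} is of that form with transition matrix $A + \bar A$, forcing term $q_t^{(z)}$, noise $W_{t+1} = \epsilon_{t+1}^0$ and initial law $\mu_0^0$. (These are exactly the idiosyncratic and common-noise components $x_t - \bar x_t$ and $\bar x_t$ of the state, which is why the $L^2$-decomposition provided by Lemma~\ref{lemma:L2_x_and_x_bar} is the natural tool here.) Since the contraction assumption $\gamma\|A\|^2 < 1$ (resp. $\gamma\|A+\bar A\|^2 < 1$) is precisely the hypothesis imposed in the corresponding bullet, it remains only to check the three structural requirements of Proposition~\ref{prop:stability_with_extra_L2_term}: that the forcing term lies in $\cU$, that the driving noise is centered, pairwise uncorrelated and discounted square-summable, and that the initial condition is centered and square-integrable.

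The first and main point is to establish $q^{(y)}, q^{(z)} \in \cU$, understanding $\cU$ as the class of $\cF$-adapted, discounted $L^2$-integrable processes irrespective of the target dimension (as the statement of Proposition~\ref{prop:stability_with_extra_L2_term} already does). By hypothesis $\bu_1, \bu_2 \in \cU$, and Lemma~\ref{lemma:L2_x_and_x_bar} guarantees that each centered process $(u_{i,t} - \bar u_{i,t})_{t\ge 0}$ and each conditional-mean process $(\bar u_{i,t})_{t \ge 0}$ again belongs to $\cU$. Since $\cU$ is manifestly stable under left multiplication by a fixed matrix and under finite sums — using $\|Mv\| \le \|M\|\,\|v\|$ together with $\|a+b\|^2 \le 2\|a\|^2 + 2\|b\|^2$ to preserve the discounted $L^2$ bound, and observing that $\cF_t$-measurability is inherited — the definitions in~\eqref{eq:qy_qz} immediately yield $q^{(y)}_t = B_1(u_{1,t}-\bar u_{1,t}) + B_2(u_{2,t} - \bar u_{2,t}) \in \cU$ and $q^{(z)}_t = (B_1+\bar B_1)\bar u_{1,t} + (B_2 + \bar B_2)\bar u_{2,t} \in \cU$.

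The remaining requirements are read directly off the probabilistic setup. The noises $\epsilon^1_t$ (resp. $\epsilon^0_t$) for $t \ge 1$ are i.i.d. and mean-zero, so $\EE[\epsilon^1_t] = 0$ and, by independence, $\EE[\epsilon^1_i \epsilon^1_j] = 0$ for $i < j$ (and identically for $\epsilon^0$). Because the variances are constant in time, equal to $\Sigma^1$ (resp. $\Sigma^0$), the discounted noise energy is $\EE\big[\sum_{t \ge 1}\gamma^t\|\epsilon^1_t\|^2\big] = \frac{\gamma}{1-\gamma}\,\mathrm{tr}(\Sigma^1) < \infty$ for $\gamma < 1$, so the summability condition holds. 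Finally $y_0 \sim \mu_0^1$ and $z_0 \sim \mu_0^0$ are square-integrable since $\mu_0^1, \mu_0^0 \in \cP^2$, and centered by the assumption that the initial randomness is mean-zero. With every hypothesis of Proposition~\ref{prop:stability_with_extra_L2_term} verified, applying it once with matrix $A$ and once with matrix $A + \bar A$ gives $\by \in \cX$ and $\bz \in \cX$ respectively, which is the assertion. The only genuinely delicate bookkeeping is the implication $\bu_i \in \cU \Rightarrow q^{(y)}, q^{(z)} \in \cU$, and this is exactly where Lemma~\ref{lemma:L2_x_and_x_bar} is essential; all other conditions are a transcription of the standing assumptions.
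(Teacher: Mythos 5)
Your proposal is correct and takes essentially the same approach as the paper's own proof: both reduce the two bullets to Proposition~\ref{prop:stability_with_extra_L2_term}, with the key membership $q^{(y)}, q^{(z)} \in \cU$ supplied by Lemma~\ref{lemma:L2_x_and_x_bar} and the noise/initial-condition hypotheses read off the probabilistic setup. The paper compresses this into a single sentence; your write-up is simply a more explicit verification of the same checklist.
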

	
	\commentJDG{
	\begin{proof}
	    Since the noise processes $\boldsymbol{\epsilon}^0, \boldsymbol{\epsilon}^1$ and the initial distributions $\mu_0^0, \mu_0^1$ satisfy conditions assumed in Proposition~\ref{prop:stability_with_extra_L2_term}, and the processes $(q_t^{(y)})_{t \geq 0}, (q_t^{(z)})_{t \geq 0}$ are in $\cU$ by Lemma~\ref{lemma:L2_x_and_x_bar}, the results hold. \\
	\end{proof}
    }
    
    Now, we are ready to provide the $L^2-$discounted global integrability for the state process $\bx^{\bu_1, \bu_2} = (x_t^{\bu_1, \bu_2})_{t \geq 0}$ following dynamics \eqref{eq:MKV-state_ZS} controlled by two processes $\bu_1, \bu_2 \in \cU$.
	\begin{assumption} 
	\label{assumption:A_openloop}
	$ \gamma \| A \|^2 < 1	$ and $\gamma \| A + \bar A \|^2 < 1$.
	\end{assumption}
    
    \begin{proposition}
	Under Assumption~\ref{assumption:A_openloop}, we have $\cU_{ad}^{open} = \cU \times \cU$. In particular, the set of admissible controls $\cU_{ad}^{open}$ is convex.
	\end{proposition}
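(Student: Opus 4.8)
The plan is to establish the nontrivial inclusion $\cU \times \cU \subseteq \cU_{ad}^{open}$, since the reverse inclusion $\cU_{ad}^{open} \subseteq \cU \times \cU$ holds by the very definition of $\cU_{ad}^{open}$. So I would fix an arbitrary pair $(\bu_1, \bu_2) \in \cU \times \cU$, let $\bx = \bx^{\bu_1,\bu_2}$ be the associated state process solving \eqref{eq:MKV-state_ZS}, and aim to show $\bx \in \cX$. Following the standard decomposition of a McKean--Vlasov dynamics, I would split $x_t = (x_t - \bar x_t) + \bar x_t =: y_t + z_t$ and recognize $\by$ and $\bz$ as instances of the auxiliary dynamics \eqref{eq:dyn_y_qy} and \eqref{eq:dyn_z_qz} treated in Lemma~\ref{lemma:L2_y_and_z}.

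The key computation is obtaining the dynamics of the conditional mean $z_t = \bar x_t = \EE[x_t \mid \cF^0]$. Taking the conditional expectation with respect to $\cF^0$ in \eqref{eq:MKV-state_ZS} and using that $\epsilon^0_{t+1}$ is $\cF^0$-measurable while $\epsilon^1_{t+1}$ is independent of $\cF^0$ with mean zero, one obtains $z_{t+1} = (A+\bar A) z_t + q_t^{(z)} + \epsilon^0_{t+1}$ with $q_t^{(z)} = (B_1+\bar B_1)\bar u_{1,t} + (B_2+\bar B_2)\bar u_{2,t}$, that is, exactly \eqref{eq:dyn_z_qz} with $q^{(z)}$ as in \eqref{eq:qy_qz} and initial condition $z_0 = \bar x_0 = \epsilon^0_0 \sim \mu_0^0$. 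Subtracting this relation from \eqref{eq:MKV-state_ZS} yields $y_{t+1} = A y_t + q_t^{(y)} + \epsilon^1_{t+1}$ with $q_t^{(y)} = B_1(u_{1,t} - \bar u_{1,t}) + B_2(u_{2,t} - \bar u_{2,t})$, matching \eqref{eq:dyn_y_qy}, with $y_0 = \epsilon^1_0 \sim \mu_0^1$. This identification, together with checking the mean-zero and finite-variance conditions on $y_0$ and $z_0$, is the one spot requiring genuine care, but it is routine given the probabilistic setup of Section~\ref{app:proba-framework}.

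Next I would verify the integrability of the forcing terms. Since $\bu_1, \bu_2 \in \cU$, the control version of Lemma~\ref{lemma:L2_x_and_x_bar} ensures that each of $(u_{i,t} - \bar u_{i,t})_t$ and $(\bar u_{i,t})_t$ belongs to $\cU$; as $\cU$ is stable under left-multiplication by fixed matrices and under finite sums (a consequence of $\| a+b \|^2 \le 2\| a \|^2 + 2\| b \|^2$), both $q^{(y)}$ and $q^{(z)}$ lie in $\cU$. Under Assumption~\ref{assumption:A_openloop} we have $\gamma\| A \|^2 < 1$ and $\gamma\| A+\bar A \|^2 < 1$, so Lemma~\ref{lemma:L2_y_and_z} (itself resting on Proposition~\ref{prop:stability_with_extra_L2_term}) applies and gives $\by \in \cX$ and $\bz \in \cX$. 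Since $x_t - \bar x_t = y_t$ and $\bar x_t = z_t$, the state version of Lemma~\ref{lemma:L2_x_and_x_bar} then yields $\bx \in \cX$, hence $(\bu_1, \bu_2) \in \cU_{ad}^{open}$. This establishes $\cU \times \cU = \cU_{ad}^{open}$.

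Finally, convexity is immediate: $\cU$ is a linear subspace of the space of $\cF$-adapted processes (closed under addition and scalar multiplication, by the same inequality used above), hence convex, and a Cartesian product of convex sets is convex; therefore $\cU_{ad}^{open} = \cU \times \cU$ is convex. The upshot is that, under Assumption~\ref{assumption:A_openloop}, the stability hypothesis $\gamma\|A\|^2<1$ and $\gamma\|A+\bar A\|^2<1$ is precisely what guarantees that \emph{every} square-integrable control pair produces a square-integrable state, so the admissibility constraint in the definition of $\cU_{ad}^{open}$ imposes no restriction beyond $(\bu_1,\bu_2)\in\cU\times\cU$.
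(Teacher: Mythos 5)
Your proposal is correct and follows essentially the same route as the paper's proof: the reverse inclusion by definition, the decomposition $x_t = (x_t - \bar x_t) + \bar x_t$ into the auxiliary dynamics of Lemma~\ref{lemma:L2_y_and_z}, the application of that lemma under Assumption~\ref{assumption:A_openloop}, and the recombination via Lemma~\ref{lemma:L2_x_and_x_bar}, with convexity inherited from $\cU \times \cU$. The extra details you supply (verifying $q^{(y)}, q^{(z)} \in \cU$ and identifying the initial conditions) are consistent with what the paper leaves implicit.
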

	    
	\begin{proof}
	    By definition, $\cU_{ad}^{open} \subseteq \cU \times \cU$. For the other inclusion, let us consider a pair of control processes $(\bu_1, \bu_2) \in \cU \times \cU$. We know that the corresponding state process $\bx^{\bu_1, \bu_2} \in \cX_{loc}$. Taking the conditional expectation with respect to $\cF^0$ and denoting $\bar{x}_t^{\bu_1, \bu_2} = \EE[x_t^{\bu_1, \bu_2} | \mathcal{F}^0]$, we notice that, for every $t\geq 0$,
	    \begin{equation*}
	    \left\{
	    \begin{aligned}
	        x^{\bu_1, \bu_2}_t - \bar{x}_t^{\bu_1, \bu_2} &= A (x^{\bu_1, \bu_2}_t - \bar{x}_t^{\bu_1, \bu_2}) + q_t^{(y)} + \epsilon_{t+1}^1
	        \\
	        \bar{x}_t^{\bu_1, \bu_2} &= (A + \bar A) \bar{x}_t^{\bu_1, \bu_2} + q_t^{(z)} + \epsilon_{t+1}^0
	    \end{aligned}
	    \right.
	    \end{equation*}
	    where $q_t^{(y)}$ and $q_t^{(z)}$ are given by \eqref{eq:qy_qz}. Let us denote $y_t = x^{\bu_1, \bu_2}_t - \bar{x}_t^{\bu_1, \bu_2} $ and $z_t = \bar{x}_t^{\bu_1, \bu_2}$ for every $t \geq 0$. Under Assumption~\ref{assumption:A_openloop}, by Lemma~\ref{lemma:L2_y_and_z}, we obtain that the processes $(y_t)_{t \geq 0} \in \cX$ and $(z_t)_{t \geq 0} \in \cX$. which implies $\bx^{\bu_1, \bu_2} \in \cX$. Thus, $\cU \times \cU \subseteq \cU_{ad}^{open}$. The convexity of $\cU_{ad}^{open}$ is a direct consequence of the convexity of $\cU$.
	\end{proof}
	
	\begin{remark}
	    In the closed-loop information structure (c.f. Section \ref{sec:closed-loop-structure}), we will see that the set of closed-loop admissible policy is not convex.
	\end{remark}

\begin{definition}
    A pair of admissible control processes $(\bu_1^*, \bu_2^*) \in \cU_{ad}^{open}$ is an open-loop Nash equilibrium (or open-loop saddle point, OLSP for short) for the zero-sum game if for any process $\bu_1' \in \cU$ and $\bu_2' \in \cU$, we have
    \begin{equation}
        \label{eq:open_loop_Nash}
        J(\bu_1^*, \bu_2') \leq J(\bu_1^*, \bu_2^*) \leq J(\bu_1', \bu_2^*),
    \end{equation}
    where $(\bu_1, \bu_2) \mapsto J(\bu_1, \bu_2)$ is the utility function defined in equation \eqref{fo:MKV-discounted_utility_ZS}.
\end{definition}

\subsection{Equilibrium condition}

For the sake of convenience, we use the notation $\check{A} = A - I_d$ where $I_d$ denotes the $d\times d$ identity matrix, and $\zeta = (x,\bar{x},u_1,\bar{u}_1,u_2,\bar{u}_2)$, so that, if we define the function $b$ by:
\begin{equation}
\label{fo:MKV-b_tilde}
b(\zeta) = b(x,\bar{x},u_1,\bar{u}_1,u_2,\bar{u}_2) = \check{A} x + \bar{A}\bar{x}  + B_1 u_1 + \bar{B}_1 \bar{u}_1 + B_2 u_2 + \bar{B}_2 \bar{u}_2.
\end{equation}
The state equation~\eqref{eq:MKV-state_ZS} can be rewritten as:
\begin{equation}
\label{eq:MKV-state_ZS_with_theta}
x_{t+1}-x_t 
= b(x_t ,\bar{x}_t,u_{1,t},\bar{u}_{1,t},u_{2,t},\bar{u}_{2,t}) 
+\epsilon^0_{t+1}+\epsilon^1_{t+1} 
= b(\zeta_t) +\epsilon^0_{t+1}+\epsilon^1_{t+1}.
\end{equation}

We define the Hamiltonian function $h$ by:
\begin{equation}
\label{fo:MKV-hamiltonian_ZS}
\begin{split}
&h(\zeta, p)
\\
&= h(x,\bar{x},u_1,\bar{u}_1,u_2,\bar{u}_2, \eta)
\\
&=[\check{A} x + \bar{A}\bar{x}  + B_1 u_1 + \bar{B}_1 \bar{u}_1 + B_2 u_2 + \bar{B}_2 \bar{u}_2]\cdot p +c(x,\bar{x},u_1,\bar{u}_1,u_2,\bar{u}_2) - \delta x\cdot p
\\
&= b(\zeta)\cdot p +c(\zeta)-\delta x\cdot p
\end{split}
\end{equation}
for $p \in \RR^d$, where $\delta = (1 - \gamma)/\gamma$ is a positive constant representing the discount rate, $\gamma \in [0,1]$ being the discount factor. Throughout, we use the notation $\cdot$ for the scalar product in Euclidean space.
We will use the following property of the Hamiltonian, under the following assumption, where $\succeq 0$ (resp. $\succ$) means that the matrix is non-negative semi definite (resp. positive definite).

	\begin{lemma}
		\label{le:MKV-convexity_ZS} 
		If $R_1 \succeq 0,$ $ R_1+\bar{R}_1 \succeq 0$ (resp. $ R_2 \succeq 0,$ and $ R_2+\bar{R}_2 \succeq 0$),
		the function $h$ is convex w.r.t. $(u_1,\bar{u}_1)$ (resp. concave w.r.t. $(u_2,\bar{u}_2)$). It is strictly convex (resp. strictly concave) if $R_1 \succ 0$ and $(R_1+\bar{R}_1) \succ 0$ (resp. $R_2 \succ 0$ and $(R_2+\bar{R}_2) \succ 0$).
	\end{lemma}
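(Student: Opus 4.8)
The plan is to isolate the dependence of $h$ on the control pair of interest and reduce everything to the convexity of a single quadratic form. Inspecting the definition~\eqref{fo:MKV-hamiltonian_ZS}, I observe that the term $-\delta x\cdot p$ does not involve any control, and that $b(\zeta)\cdot p$ is affine in $(u_1,\bar u_1,u_2,\bar u_2)$. Hence, viewed as a function of $(u_1,\bar u_1)\in\RR^\ell\times\RR^\ell$ (with the two arguments regarded as independent variables), $h$ equals an affine function plus the quadratic contribution coming from $c$, namely $(u_1-\bar u_1)^\top R_1(u_1-\bar u_1)+\bar u_1^\top(R_1+\bar R_1)\bar u_1$. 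Since affine functions are simultaneously convex and concave, the convexity of $h$ in $(u_1,\bar u_1)$ is equivalent to the convexity of this quadratic form, and likewise for the strict version.

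First I would prove convexity by writing the quadratic form as a sum of two manifestly convex pieces. The map $(u_1,\bar u_1)\mapsto u_1-\bar u_1$ is linear, so $(u_1-\bar u_1)^\top R_1(u_1-\bar u_1)$ is the composition of that linear map with the quadratic form associated to $R_1\succeq 0$, hence convex in $(u_1,\bar u_1)$; the second piece $\bar u_1^\top(R_1+\bar R_1)\bar u_1$ depends only on $\bar u_1$ and is convex because $R_1+\bar R_1\succeq 0$. A sum of convex functions is convex, which gives the claim.

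Next I would upgrade to strict convexity under $R_1\succ 0$ and $R_1+\bar R_1\succ 0$. Both summands above are nonnegative, so the quadratic form can vanish at a point $(u_1,\bar u_1)$ only if each summand vanishes separately; positive definiteness of $R_1$ then forces $u_1=\bar u_1$, while positive definiteness of $R_1+\bar R_1$ forces $\bar u_1=0$, whence $u_1=\bar u_1=0$. Thus the quadratic form is positive definite, which is exactly strict convexity.

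Finally, the statement for $(u_2,\bar u_2)$ follows by the same argument applied with the opposite sign: the relevant part of $h$ is an affine term plus $-\big[(u_2-\bar u_2)^\top R_2(u_2-\bar u_2)+\bar u_2^\top(R_2+\bar R_2)\bar u_2\big]$, the negative of a convex (resp. strictly convex) quadratic under $R_2\succeq 0$, $R_2+\bar R_2\succeq 0$ (resp.\ $\succ 0$), hence concave (resp.\ strictly concave). The only point requiring care is to read the convexity statement with $\bar u_1$ (resp.\ $\bar u_2$) treated as a free variable independent of $u_1$ (resp.\ $u_2$), rather than as a conditional mean; beyond that bookkeeping there is no genuine analytical obstacle, since the whole argument collapses onto the semidefiniteness/definiteness hypotheses on $R_i$ and $R_i+\bar R_i$.
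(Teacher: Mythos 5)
Your proof is correct and follows essentially the same route as the paper: the paper computes the Hessian block $\left(\begin{smallmatrix} 2R_1 & -2R_1 \\ -2R_1 & 2(2R_1+\bar R_1)\end{smallmatrix}\right)$ and asserts its (semi)definiteness under the stated hypotheses, and your decomposition of the associated quadratic form as $(u_1-\bar u_1)^\top R_1(u_1-\bar u_1)+\bar u_1^\top(R_1+\bar R_1)\bar u_1$ is precisely the computation that justifies that assertion. Your explicit argument for the strict case (both nonnegative summands must vanish, forcing $u_1=\bar u_1=0$) fills in a step the paper leaves to the reader.
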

\begin{proof}
	For the purpose of computing  gradients, Hessians and partial derivatives, we treat $\zeta$ as a $(2d+4 \ell)\times 1$ column vector by specifying its definition as $\zeta=[x^{\top}, \bar x^{\top}, u_1^{\top}, \bar u_1^{\top}, u_2^{\top},\bar u_2^{\top}]^{\top}$. Now, for every fixed $p \in\RR^d$, we have:
	\begin{equation}
	\label{fo:gradient}
	\nabla_\zeta h(\zeta,p)=
	\begin{pmatrix}
	\partial_x h(\zeta,p)\\
	\partial_{\bar{x}} h(\zeta,p)\\
	\partial_{u_1} h(\zeta,p)\\
	\partial_{\bar{u}_1} h(\zeta,p)\\
	\partial_{u_2} h(\zeta,p)\\
	\partial_{\bar{u}_2} h(\zeta,p)
	\end{pmatrix}
	=
	\begin{pmatrix}
	p^{\top}(\check{A}-\delta I_d)+2(x-\bar{x})^{\top}Q\\
	p^{\top}\bar{A} +2(\bar{x}-x)^{\top}Q+2\bar x^{\top}(Q+\bar{Q})
	\\
	p^{\top}B_1+2(u_1-\bar{u}_1)^{\top}R_1\\
	p^{\top}\bar{B}_1 +2(\bar{u}_1-u_1)^{\top}R_1+2\bar u_1^{\top}(R_1+\bar{R}_1)
	\\
	p^{\top}B_2 - 2(u_2-\bar{u}_2)^{\top}R_2\\
	p^{\top}\bar{B}_2 - 2(\bar{u}_2-u_2)^{\top}R_2 - 2\bar u_2^{\top}(R_2+\bar{R}_2).
	\end{pmatrix}
	\end{equation}
	\commentJDG{
	so that:
	\begin{equation}
	\label{fo:hessian}
	\nabla^2_{\zeta\zeta} h(\zeta,p)=
	\begin{pmatrix}
	2Q&-2Q&0&0 &0&0\\
	-2Q&2(2Q+\bar{Q})&0&0 &0&0\\
	0&0&2R_1&-2R_1 &0&0\\
	0&0&-2R_1&2(2R_1+\bar{R}_1) &0&0\\
	0&0 &0&0 & -2R_2& 2R_2\\
	0&0 &0&0 & 2R_2& -2(2R_2+\bar{R}_2)
	\end{pmatrix}.
	\end{equation}
	}
	It can be seen that
	\begin{equation*}
	\nabla^2_{(u_1,\bar{u}_1),(u_1,\bar{u}_1)} h(\zeta,p)=
	\begin{pmatrix}
	2R_1&-2R_1\\
	-2R_1&2(2R_1+\bar{R}_1)
	\end{pmatrix},
  	\end{equation*}
	is non-negative definite if the inequalities $R_1 \succeq 0$ and $R_1 + \bar R_1 \succeq 0$ are satisfied, and positive definite if $R_1 \succ 0$ and $(R_1+\bar{R}_1) \succ 0$. Likewise for the second order derivatives w.r.t. $(u_2, \bar{u}_2)$.
\end{proof}

In the spirit of the analysis of the stochastic version of the Pontryagin maximum principle, we introduce the notion of adjoint process associated to a given admissible pair of control processes.

\begin{definition}
	\label{de:MKVadjoint}
	If $\bu_i=(u_{i,t})_{t=0,1,\ldots}, i=1,2$ is a pair of  admissible control processes and $\bx=(x_t)_{t=0,1,\ldots}$ is the corresponding state process, we say that an $\RR^d$-valued $(\cF_t)_{t\geq 0}$-adapted process $\boldsymbol{p}=(p_t)_{t=0,1,\ldots}$ is an adjoint process if it satisfies:
	\begin{equation}
	\label{fo:MKV-adjoint_ZS}
	p_t = \EE\Bigl[p_{t+1}+ \gamma\bigl[(\check{A}^{\top}-\delta I_d) p_{t+1} + 2Q x_{t+1} + \bar{A}^{\top}\bar{p}_{t+1}+2\bar{Q}\bar{x}_{t+1}\bigr]|\cF_t\Bigr],\qquad t=0,1,\ldots.
	\end{equation}
\end{definition}

It will be useful to note that the above expression can equivalently be written as:
\begin{equation}
\label{eq:useful-expression-pt}
p_t = \gamma \EE [ A^\top p_{t+1} + 2 Q x_{t+1} + \bar A^\top \bar p_{t+1} + 2 \bar Q \bar x_{t+1} \ | \  \cF_t ].
\end{equation}

Using this notion, we can express the derivative of the function $J$ as follows.

	\begin{lemma}
		\label{lem:differential-J}
		The Gateaux derivative of $J$ at $(\bu_1,\bu_2)$ in the direction $(\bbeta_1,\bbeta_2) \in \cU \times \cU$ exists and is given by 
		\begin{equation}
		\label{fo:second}
		\begin{split}
		D J(\bu_1, \bu_2)(\bbeta_1, \bbeta_2)
		& =
		\EE \left[ \sum_{t=0}^\infty\gamma^t \bigl( p_t^{\top} B_1 +2 u_{1,t}^{\top} R_1 + \bar p_t^{\top}\bar{B}_1 +2\bar{u}_{1,t}^{\top}\bar{R}_1\bigr)  \beta_{1,t}
		\right]
		\\
		& \qquad +
		\EE \left[ \sum_{t=0}^\infty\gamma^t \bigl( p_t^{\top} B_2 -2 u_{2,t}^{\top} R_2 + \bar p_t^{\top}\bar{B}_2 -2\bar{u}_{2,t}^{\top}\bar{R}_2\bigr)  \beta_{2,t}
		\right] \, .
		\end{split}
		\end{equation}
	\end{lemma}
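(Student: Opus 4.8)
The plan is to compute the Gâteaux derivative directly from the definition, $DJ(\bu_1,\bu_2)(\bbeta_1,\bbeta_2)=\frac{d}{d\theta}\big|_{\theta=0}J(\bu_1+\theta\bbeta_1,\bu_2+\theta\bbeta_2)$, and then to trade the \emph{state-dependent} part of the resulting expression for \emph{control} terms by pairing against the adjoint process through a discrete summation by parts.

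First I would record the state variation. Since the dynamics \eqref{eq:MKV-state_ZS} is affine in the controls with control-independent noise, the perturbed state is exactly affine in $\theta$, i.e. $x_t^\theta=x_t+\theta v_t$, where $\bv=(v_t)_{t\ge0}$ solves the linear recursion
\[
v_{t+1}=A v_t+\bar A\bar v_t+G_t,\qquad v_0=0,\qquad G_t:=B_1\beta_{1,t}+\bar B_1\bar\beta_{1,t}+B_2\beta_{2,t}+\bar B_2\bar\beta_{2,t}.
\]
Splitting this into its fluctuation $v_t-\bar v_t$ and conditional-mean $\bar v_t$ parts and invoking Lemma~\ref{lemma:L2_y_and_z} (with $G_t$ playing the role of the control inputs) together with Lemma~\ref{lemma:L2_x_and_x_bar} shows $\bv\in\cX$ under Assumption~\ref{assumption:A_openloop}. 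Because $c$ is quadratic, $\theta\mapsto\gamma^t c_t^\theta$ is a quadratic polynomial whose difference quotient equals $D_t+\theta E_t$, with $D_t=\partial_x c_t\cdot v_t+\partial_{\bar x}c_t\cdot\bar v_t+\sum_{i}(\partial_{u_i}c_t\cdot\beta_{i,t}+\partial_{\bar u_i}c_t\cdot\bar\beta_{i,t})$ and partial derivatives read off from the cost part of \eqref{fo:gradient}. The $L^2$-integrability of $\bx,\bu_i,\bv,\bbeta_i$ and Cauchy--Schwarz make $\EE[\sum_t\gamma^t|D_t|]$ and $\EE[\sum_t\gamma^t|E_t|]$ finite, which justifies letting $\theta\to0$ under the sum and the expectation and yields $DJ=\EE[\sum_t\gamma^t D_t]$.

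Next I would simplify $\EE[\gamma^t D_t]$ with the tower property $\bar\cdot=\EE[\cdot\mid\cF^0]$: since cross terms between a fluctuation and a conditional mean vanish in expectation, the control part collapses to $\EE[2u_{1,t}^\top R_1\beta_{1,t}+2\bar u_{1,t}^\top\bar R_1\bar\beta_{1,t}]$ and its $R_2$-analogue with a minus sign, while the state part collapses to $\EE[2x_t^\top Q v_t+2\bar x_t^\top\bar Q\bar v_t]$. The core step is to rewrite this last, state-dependent sum in terms of $\bp$ and $G_t$. Using the adjoint relation \eqref{eq:useful-expression-pt} for $p_{t-1}$ and the fact that $v_t$ is $\cF_{t-1}$-measurable, I pull $v_t$ inside the conditional expectation and reduce the barred terms with the tower property to obtain, for $t\ge1$,
\[
\EE[\gamma^{t-1}p_{t-1}^\top v_t]=\EE\big[\gamma^t\big(p_t^\top A v_t+2x_t^\top Q v_t+\bar p_t^\top\bar A\bar v_t+2\bar x_t^\top\bar Q\bar v_t\big)\big].
\]
Isolating the $Q$-terms and using $A v_t+\bar A\bar v_t=v_{t+1}-G_t$ together with $\EE[\gamma^t\bar p_t^\top\bar A\bar v_t]=\EE[\gamma^t p_t^\top\bar A\bar v_t]$ gives $\EE[\gamma^t(2x_t^\top Q v_t+2\bar x_t^\top\bar Q\bar v_t)]=a_t-a_{t+1}+\EE[\gamma^t p_t^\top G_t]$, with $a_t:=\EE[\gamma^{t-1}p_{t-1}^\top v_t]$. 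Summing over $t\ge1$ telescopes to $a_1=\EE[p_0^\top G_0]=\EE[\gamma^0 p_0^\top G_0]$ (as $v_0=0$, $v_1=G_0$), so after adding the (vanishing) $t=0$ term I get $\EE[\sum_t\gamma^t(2x_t^\top Q v_t+2\bar x_t^\top\bar Q\bar v_t)]=\EE[\sum_t\gamma^t p_t^\top G_t]$. Expanding $G_t$, converting the barred pieces via the tower property, and combining with the control contributions reproduces \eqref{fo:second}.

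The hard part will be the summation by parts: one must pair $p_{t-1}$ (not $p_t$) with $v_t$, so that the adjoint relation reintroduces $x_t$ at the \emph{same} time index as $v_t$ — the naive pairing $p_t\leftrightarrow v_t$ produces an index mismatch and does not close — and one must justify that the boundary term $a_{T+1}=\EE[\gamma^T p_T^\top v_{T+1}]$ vanishes as $T\to\infty$. The latter follows from Cauchy--Schwarz once $\bv\in\cX$ and $\bp\in\cX$, i.e. from Lemma~\ref{lemma:x_L2_integ_L2_asym_stable} applied to both processes, which is precisely where Assumption~\ref{assumption:A_openloop} and the $L^2$-integrability of the adjoint process enter.
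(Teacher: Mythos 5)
Your argument is correct and reaches \eqref{fo:second} by the same underlying mechanism as the paper --- linearize the state, pair the state-dependent part of the derivative against the adjoint process, and sum by parts --- but you organize the computation differently. The paper routes everything through the Hamiltonian $h$: identity \eqref{fo:first} rewrites $\sum_t\gamma^t[c(\zeta'_t)-c(\zeta_t)]$ as Hamiltonian increments plus the Abel-summed residual $\sum_t\gamma^t(x'_{t+1}-x_{t+1})\cdot(p_{t+1}-p_t)$, and the $x$- and $\bar x$-gradient terms of $h$ then cancel against this residual by the definition \eqref{fo:MKV-adjoint_ZS} of the adjoint process. You skip the Hamiltonian entirely, pair $p_{t-1}$ with $v_t$ through the compact one-step recursion \eqref{eq:useful-expression-pt} (correctly exploiting that $v_t$ is $\cF_{t-1}$-measurable), and telescope $a_t-a_{t+1}$; the two computations are algebraically equivalent rearrangements of the same discrete integration by parts, and your use of the tower property to absorb the barred terms plays exactly the role of the Fubini step in the paper. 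What your version buys is explicitness on two points the paper passes over quickly: the justification for exchanging $\theta\to0$ with the infinite sum (exact quadraticity of $\theta\mapsto c^\theta_t$ plus $L^2$-integrability of $\bx,\bu_i,\bv,\bbeta_i$), and the vanishing of the boundary term $\EE[\gamma^T p_T^\top v_{T+1}]$. Note, however, that this last step requires $\bp\in\cX$, which neither your argument nor the paper's actually establishes for a process merely satisfying \eqref{fo:MKV-adjoint_ZS}; it should be flagged as a standing integrability hypothesis on the adjoint process (it does hold for the specific adjoint $p_t=P(x_t-\bar x_t)+\bar P\bar x_t$ of Proposition~\ref{proposition:open_loop_Nash_expression}).
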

\begin{proof}
	We start by computing the difference between the values of $J$ evaluated on two pairs of controls. 
	Let $\bu_i=(u_{i,t})_{t=0,1,\ldots}$ and $\bu'_i=(u_{i,t}')_{t=0,1,\ldots}$, $i=1,2$ be two pairs of admissible control processes and let us denote by 
	$x_t$ and $x'_t$  the corresponding states of the system at time $t$, as given by the state equation \eqref{eq:MKV-state_ZS} with the same initial point and the same realizations of the noise sequences $(\epsilon^0_{t})_{t=0,1,\ldots}$ and $(\epsilon^1_{t})_{t=0,1,\ldots}$. Note that as a consequence:
	\begin{align*}
	x_{t+1}- x'_{t+1}
	&= x_t-  x'_t + \check{A} (x_t- x'_t) + \bar{A}(\bar{x}_t - \bar{x}'_t) 
	+ \sum_{i=1,2} \left[ B_i (u_{i,t}- u'_{i,t}) + \bar{B}_i( \bar{u}_{i,t}- \bar{u}'_{i,t})\right],
	\end{align*}
	which shows that $x_{t+1}- x'_{t+1}$ is in fact $\cF_t$-measurable. 
	As before, we use the convenient notations $\zeta_t=(x_t,\bar{x}_t,u_{1,t},\bar{u}_{1,t},u_{2,t},\bar{u}_{2,t})$ and $\zeta'_t=( x'_t,\bar{x}'_t, u'_{1,t},\bar{u}'_{1,t}, u'_{2,t},\bar{u}'_{2,t})$. In order to estimate $J(\bu'_1,\bu'_2)-J(\bu_1, \bu_2)$ we first notice that, if $\bp=(p_t)_{t=0,1,\ldots}$ is any sequence of real valued random variables, then:
	\begin{equation}
	\label{fo:first}
	\begin{split}
	\sum_{t=0}^\infty\gamma^t [c(\zeta'_t)-c(\zeta_t)]
	&=
	\sum_{t=0}^\infty\gamma^t \Bigl[c(\zeta'_t)-c(\zeta_t)+[b(\zeta'_t)-b(\zeta_t)]\cdot p_t-\delta(x'_t- x_t)\cdot p_t
	\\
	&\hskip 125pt
	-[b(\zeta'_t)-b(\zeta_t)]\cdot p_t+\delta(x'_t- x_t)\cdot p_t\Bigr]
	\\
	&=\sum_{t=0}^\infty\gamma^t \bigl[ h(\zeta'_t,p_t)- h( \zeta_t,p_t)\bigr]
	-\sum_{t=0}^\infty\gamma^t[(x'_{t+1}-  x_{t+1})-(x'_t- x_t)]\cdot p_t
	\\
	&\hskip 125pt +\delta\sum_{t=0}^\infty\gamma^t(x'_t- x_t)\cdot p_t\\
	&=\sum_{t=0}^\infty\gamma^t \bigl[ h(\zeta'_t,p_t)- h( \zeta_t,p_t)\bigr]
	+\sum_{t=0}^\infty\gamma^t(x'_{t+1}-  x_{t+1})\cdot (p_{t+1}-p_t)\\
	\end{split}
	\end{equation}
	where we used the fact that $\delta=(1-\gamma)/\gamma$.

	We now turn to computing the Gateaux derivative of $J$. Let $\bu_i,\bbeta_i$, $i=1,2$, as in the statement. To alleviate the notation, we denote
	$$
	V_t = \lim_{\epsilon \to 0} \frac{1}{\epsilon} \left( x^{\bu_1+\epsilon \bbeta_1, \bu_2+\epsilon \bbeta_2}_t - x^{\bu_1, \bu_2}_t \right),
	$$
	where $x^{\bu_1+\epsilon \bbeta_1, \bu_2+\epsilon \bbeta_2}$ is the state process controlled by $(\bu_1+\epsilon \bbeta_1, \bu_2+\epsilon \bbeta_2) \in \cU_{ad}^{open}$, and $x^{\bu_1, \bu_2}$ is the state process controlled by $(\bu_1, \bu_2) \in \cU_{ad}^{open}$. Let $(\bu'_1, \bu'_2) = (\bu_1 + \epsilon \bbeta_1, \bu_2 + \epsilon \bbeta_2)$.
	
	We then compute, using the expressions of the partial derivatives of $ h$ already computed in the proof of Lemma~\ref{le:MKV-convexity_ZS}
	\begin{equation}
	\label{fo:second_ZS}
	\begin{split}
	&D J(\bu_1, \bu_2)(\bbeta_1, \bbeta_2) = \lim_{\epsilon \to 0} \frac{1}{\epsilon} \left[ J(\bu_1 + \epsilon \bbeta_1, \bu_2 + \epsilon \bbeta_2) - J(\bu_1, \bu_2) \right]
	\\
	&= \sum_{t=0}^\infty \EE\left[ \gamma^t V_{t+1} \cdot (p_{t+1}-p_t) \right] + \sum_{t=0}^\infty\gamma^t \EE\Bigl[\partial_x  h(\zeta_t,p_t) V_t + \partial_{\bar{x}}  h(\zeta_t,p_t) \bar V_t
	\\
	&\qquad\qquad\qquad
	+\partial_{u_1}  h(\zeta_t,p_t) \beta_{1,t} + \partial_{\bar{u}_1}  h(\zeta_t,p_t) \bar \beta_{1,t}+\partial_{u_2}  h(\zeta_t,p_t) \beta_{2,t} + \partial_{\bar{u}_2}  h(\zeta_t,p_t) \bar \beta_{2,t}
	\Bigr]
	\\
	&= \sum_{t=0}^\infty \gamma^t \EE\Big[ V_{t+1} \cdot (p_{t+1}-p_t) 
	\\
	&\quad
	+ \left( p_t^{\top}(\check{A}-\delta I_d)+2(x_t-\bar{x}_t)^{\top}Q\right) V_t 
	+ \left(p_t^{\top}\bar{A} +2(\bar{x}_t-x_t)^{\top}Q+2\bar x_t^{\top}(Q+\bar{Q})\right) \bar V_t 
	\\
	&\quad
	+ \left(p_t^{\top}B_1+2(u_{1,t}-\bar{u}_{1,t})^{\top}R_1\right) \beta_{1,t} 
	+ \left(p_t^{\top}\bar{B}_1 +2(\bar{u}_{1,t}-u_{1,t})^{\top}R_1+2\bar u_{1,t}^{\top}(R_1+\bar{R}_1)\right) \bar{\beta}_{1,t}
	\\
	&\quad
	+\left(p_t^{\top}B_2 -2(u_{2,t}-\bar{u}_{2,t})^{\top}R_2\right) \beta_{2,t} 
	+ \left(p_t^{\top}\bar{B}_2 -2(\bar{u}_{2,t}-u_{2,t})^{\top}R_2 -2\bar u_{2,t}^{\top}(R_2+\bar{R}_2)\right) \bar{\beta}_{2,t} \Big]
	\\
	&=\sum_{t=0}^\infty\gamma^t \EE\bigl[(0) + (i)+(ii)+(iii)_1+(iv)_1 +(iii)_2+(iv)_2\bigr].
	\end{split}
	\end{equation}
	We now use Fubini's theorem to compute two of the six terms above. Recall that 
	$
	\bar V_t=\EE[V_t|\cF^0],
	$
	which we choose to express in the form
	$$
	\bar{V}_t
	= \tilde\EE[ \tilde V_t|\cF^0]
	= \int_{\tilde{\Omega}^1} \tilde{V}_t(\omega^0,\tilde\omega^1)\tilde{\PP}^1(d{\tilde \omega^1}),
	$$
	where $(\tilde{\Omega}^1,\tilde{\cF}^1,\tilde{\PP}^1)$ is an identical copy of $(\Omega^1,\cF^1,\PP^1)$ and the probability space 
	$(\tilde{\Omega},\tilde{\cF},\tilde{\PP})$ is defined as $\tilde\Omega=\Omega^0\times\tilde{\Omega}^1$, $\tilde{\cF}=\cF^0\times\tilde{\cF}^1$,and $\tilde{\PP}=\PP^0\times \tilde{\PP}^1$. For the sake of ease of notation, we introduce still another notation for the conditional expectations: we shall denote by $\EE_{\cF^0}$ and $\tilde\EE_{\cF^0}$ the conditional expectations usually denoted by $\EE[\,\cdot\,|\cF^0]$ and $\tilde\EE[\,\cdot\,|\cF^0]$ respectively. With this new notation $\bar{x}_t=\EE_{\cF^0}[x_t]=\tilde \EE_{\cF^0}[\tilde{x}_t]=\overline{\tilde{x}}_t$ and similarly for the other random variables. Consequently:
	
	\begin{equation*}
	\begin{split}
	\EE\sum_{t=0}^\infty\gamma^t (ii) 
	&= \EE \EE_{\cF^0}  \sum_{t=0}^\infty\gamma^t \bigl(p_t^{\top}\bar{A} +2(\bar{x}_t-x_t)^{\top}Q+2\bar x_t^{\top}(Q+\bar{Q})\bigr) \bar V_t 
	\\ 
	&= \EE \EE_{\cF^0} \tilde\EE_{\cF^0}  \sum_{t=0}^\infty\gamma^t \bigl(p_t^{\top}\bar{A} +2(\bar{x}_t-x_t)^{\top}Q+2\bar x_t^{\top}(Q+\bar{Q})\bigr) \tilde V_t 
	\\
	&= \EE\sum_{t=0}^\infty\gamma^t\bigl(\bar p_t^{\top}\bar A +2\bar{x}_t^{\top}(Q+\bar{Q})\bigr) V_t,
	\end{split}
	\end{equation*}
	where we used Fubini's theorem for the last equality. So:
	\begin{equation}
	\label{fo:i+ii_ZS}
	\begin{split}
	\EE\sum_{t=0}^\infty\gamma^t [(i)+(ii)] &=\EE\sum_{t=0}^\infty\gamma^t \bigl( p_t^{\top}(\check{A}-\delta I_d) + 2(x_t-\bar{x}_t)^{\top}Q
	+\bar p_t^{\top}\bar{A} +2\bar{x}_t^{\top}(Q+\bar{Q})\bigr) V_t
	\\
	&=\EE\sum_{t=0}^\infty\gamma^t \bigl(p_t^{\top}(\check{A}-\delta I_d) + 2x_t^{\top}Q+\bar p_t^{\top}\bar{A} +2\bar{x}_t^{\top}\bar{Q}\bigr) V_t.
	\end{split}
	\end{equation}
	As a consequence, 
	$$
	\sum_{t=0}^\infty\gamma^t \EE\bigl[(0) + (i)+(ii)\bigr]
	=
	0
	$$
	because of \eqref{fo:i+ii_ZS} and the definition \eqref{fo:MKV-adjoint_ZS} of the adjoint process.

	Furthermore, using Fubini's theorem on an identical copy of $\bu_1$ and $\bbeta_1$ we get:
	\begin{equation}
	\label{fo:iii+iv_1}
	\begin{split}
	\EE\sum_{t=0}^\infty\gamma^t [(iii)_1+(iv)_1] 
	&=
	\EE\sum_{t=0}^\infty\gamma^t \bigl( p_t^{\top} B_1 +2 u_{1,t}^{\top} R_1 + \bar p_t^{\top}\bar{B}_1 +2\bar{u}_{1,t}^{\top}\bar{R}_1\bigr) \beta_{1,t},
	\end{split}
	\end{equation}
	and likewise for $\bu_2, \bbeta_2$. 
	
\end{proof}

We are now in a position to prove the following condition for optimality:

\begin{proposition}[Pontryagin's maximum principle, necessary condition]
\label{proposition:Pontryagin_maximum_principle_necessary_condition}
	Assuming that Assumption~\ref{assumption:A_openloop} holds, if $\bu_i=(u_{i,t})_{t=0,1,\ldots},$ $i=1,2$ is a pair of admissible control processes such that it is an open-loop Nash equilibrium for the zero-sum game and $\bp=(p_t)_{t=0,1,\ldots}$ is the corresponding adjoint process, then it holds
	\begin{equation}
	\label{fo:MKV-adjoint_ass_ZS}
	\begin{cases}
	& B_1^{\top} p_t + 2 R_1 u_{1,t} + \bar{B}_1^{\top} \bar{p}_t + 2\bar{R}_1 \bar{u}_{1,t} = 0 
	\\
	& B_2^{\top} p_t - 2 R_2 u_{2,t} + \bar{B}_2^{\top} \bar{p}_t - 2\bar{R}_2 \bar{u}_{2,t} = 0 
	\end{cases}
	\end{equation}
	for all $t\ge 0$, $\PP$-almost surely. 
\end{proposition}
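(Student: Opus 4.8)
The plan is to combine the saddle-point inequalities defining the open-loop Nash equilibrium with the Gateaux-derivative formula of Lemma~\ref{lem:differential-J}, and then to invoke a fundamental lemma of the calculus of variations to pass from the vanishing of the directional derivatives to the pointwise relations~\eqref{fo:MKV-adjoint_ass_ZS}.

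First I would exploit the saddle-point structure~\eqref{eq:open_loop_Nash}. Fixing the second player's control at $\bu_2^*$, the map $\bu_1 \mapsto J(\bu_1, \bu_2^*)$ attains a minimum at $\bu_1 = \bu_1^*$; fixing the first player's control at $\bu_1^*$, the map $\bu_2 \mapsto J(\bu_1^*, \bu_2)$ attains a maximum at $\bu_2 = \bu_2^*$. Under Assumption~\ref{assumption:A_openloop} the admissible set is the whole vector space $\cU \times \cU$, so for any $\bbeta_1 \in \cU$ the scalar function $s \mapsto J(\bu_1^* + s\bbeta_1, \bu_2^*)$ is differentiable at $s=0$ and has a minimum there, whence its derivative vanishes; considering also the direction $-\bbeta_1$ confirms $DJ(\bu_1^*,\bu_2^*)(\bbeta_1,0) = 0$. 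The same reasoning applied to the maximizing second player yields $DJ(\bu_1^*,\bu_2^*)(0,\bbeta_2) = 0$ for every $\bbeta_2 \in \cU$.

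Next I would substitute these into the explicit formula of Lemma~\ref{lem:differential-J}. Taking $\bbeta_2 = 0$ gives
\[
\EE\left[\sum_{t=0}^\infty \gamma^t \bigl(p_t^{\top} B_1 + 2u_{1,t}^{\top} R_1 + \bar p_t^{\top} \bar B_1 + 2\bar u_{1,t}^{\top} \bar R_1\bigr)\beta_{1,t}\right] = 0, \qquad \forall \bbeta_1 \in \cU,
\]
and symmetrically for the second player with $\bbeta_1 = 0$. Denote by $G_{1,t} = B_1^{\top} p_t + 2R_1 u_{1,t} + \bar B_1^{\top} \bar p_t + 2\bar R_1 \bar u_{1,t}$ the transpose of the coefficient row vector, and similarly $G_{2,t}$ for the second relation. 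Each $G_{i,t}$ is $\cF_t$-measurable, since $p_t$ and $u_{i,t}$ are $\cF_t$-adapted and $\bar p_t, \bar u_{i,t}$ are $\cF^0$-measurable, and each has a finite second moment. The final step is the variational argument: for fixed $s \ge 0$, a coordinate $k \in \{1,\dots,\ell\}$, and an event $C \in \cF_s$, the process $\beta_{1,t} = \mathbf 1_{\{t=s\}}\mathbf 1_C e_k$ lies in $\cU$; plugging it in forces $\EE[\gamma^s G_{1,s}^{(k)} \mathbf 1_C] = 0$ for all such $C$, and since $G_{1,s}$ is $\cF_s$-measurable this yields $G_{1,s}^{(k)} = 0$ $\PP$-almost surely. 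Letting $k$ and $s$ range gives $G_{1,t} = 0$ a.s. for all $t$, which is the first line of~\eqref{fo:MKV-adjoint_ass_ZS}; the second line follows identically from $G_{2,t}=0$.

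I expect the main obstacle to be this last passage, namely justifying that the vanishing of the integrated directional derivative against every test direction implies the pointwise a.s. identities. This rests on (i) the admissibility of the localized test directions $\mathbf 1_{\{t=s\}}\mathbf 1_C e_k$, which is immediate since they are bounded and $\cF_s$-measurable, and (ii) the $\cF_t$-measurability and integrability of the coefficients $G_{i,t}$, for which I would rely on the $L^2$-integrability of the adjoint process, guaranteed under Assumption~\ref{assumption:A_openloop} by a backward stability estimate analogous to Proposition~\ref{prop:stability_with_extra_L2_term}, together with the admissibility of $\bu_1^*$ and $\bu_2^*$. Care is also needed to ensure the interchange of the limit defining the Gateaux derivative with the infinite sum, but this is already embedded in Lemma~\ref{lem:differential-J}.
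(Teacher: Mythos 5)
Your proposal follows essentially the same route as the paper's proof: use the saddle-point inequalities to show the Gateaux derivative of Lemma~\ref{lem:differential-J} vanishes in each player's direction separately, then read off the pointwise conditions~\eqref{fo:MKV-adjoint_ass_ZS}. In fact you are more careful than the paper in the final step --- the paper passes directly from the nonnegativity of the integrated derivative to the pointwise a.s.\ identity, whereas you justify it properly via the two-sided perturbation $\pm\bbeta_1$ and the localized test directions $\mathbf 1_{\{t=s\}}\mathbf 1_C e_k$.
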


\begin{proof}
	By Lemma~\ref{lem:differential-J}, for any pair of processes $(\bbeta_1, \bbeta_2) \in \cU \times \cU$ we have the 
	\begin{equation*}
		\begin{split}
		D J(\bu_1, \bu_2)(\bbeta_1, \bbeta_2)
		&=
		\EE \left[ \sum_{t=0}^\infty\gamma^t \bigl( p_t^{\top} B_1 +2 u_{1,t}^{\top} R_1 + \bar p_t^{\top}\bar{B}_1 +2\bar{u}_{1,t}^{\top}\bar{R}_1\bigr)  \beta_{1,t}
		\right]
		\\
		& \qquad +
		\EE \left[ \sum_{t=0}^\infty\gamma^t \bigl( p_t^{\top} B_2 -2 u_{2,t}^{\top} R_2 + \bar p_t^{\top}\bar{B}_2 -2\bar{u}_{2,t}^{\top}\bar{R}_2\bigr)  \beta_{2,t}
	\right]\, .
		\end{split}
		\end{equation*}
	Since $(\bu_1, \bu_2) \in \cU_{ad}^{open}$ is an open-loop Nash equilibrium for the zero-sum game, then for every $\bu_1' \in \cU$ and $\bu_2' \in \cU$, we have 
	\begin{equation*}
		J( \bu_1, \bu'_2) \leq J(\bu_1, \bu_2) \leq J(\bu'_1, \bu_2).
	\end{equation*}
	Let us denote the state processes in the above inequalities by $\bX^{\bu_1, \bu'_2}, \bX^{\bu_1, \bu_2}, \bX^{\bu_1', \bu_2}$, which are all $L^2-$discounted globally integrable according to Proposition~\ref{prop:stability_with_extra_L2_term}.
	
	If we choose $\bbeta_2 = 0$, then for every $\bbeta_1 \in \cU$, 
	$$
		DJ(\bu_1, \bu_2)(\bbeta_1, 0) = \lim_{\epsilon \to 0} \frac{1}{\epsilon} \left[ J(\bu_1 + \epsilon \bbeta_1, \bu_2) - J(\bu_1, \bu_2) \right] \geq 0
	$$
	which implies, 
	\begin{equation*}
		\EE \left[ \sum_{t=0}^\infty\gamma^t \bigl( p_t^{\top} B_1 +2 u_{1,t}^{\top} R_1 + \bar p_t^{\top}\bar{B}_1 +2\bar{u}_{1,t}^{\top}\bar{R}_1\bigr)  \beta_{1,t}
		\right] \geq 0.
	\end{equation*}
	Thus, the corresponding adjoint process $\bp$ satisfies: $\PP-$almost surely, for every $t \geq 0$,
	\begin{equation*}
		B_1^{\top} p_t + 2 R_1 u_{1,t} + \bar{B}_1^{\top} \bar{p}_t + 2\bar{R}_1 \bar{u}_{1,t} = 0.
	\end{equation*}
	Similarly, we have 
	$
		B_2^{\top} p_t - 2 R_2 u_{2,t} + \bar{B}_2^{\top} \bar{p}_t - 2\bar{R}_2 \bar{u}_{2,t} = 0 
	$
	for every $t \geq 0$, $\PP$-almost surely.
\end{proof}


\subsection{Identification of the equilibrium}

Let us introduce the notations
\begin{equation}
\label{eq:def-GammaXiLambda_i}
\begin{cases}
&\Gamma_i=(-1)^{i}\frac12 R_i^{-1}B_i^{\top},
\qquad 
\Xi_i =  (-1)^{i}\frac12 R_i^{-1}\bigl[\bar{B}_i^{\top} - \bar{R}_i(R_i+\bar R_i)^{-1}(B_i+\bar{B}_i)^{\top}\bigr],
\\
&\Lambda_i =\Gamma_i+\Xi_i= (-1)^{i} \frac12 (R_i+\bar R_i)^{-1}(B_i+\bar{B}_i)^{\top}, i=1,2.
\end{cases}
\end{equation} 
We then  consider the following Riccati equations: \commentJDG{the Riccati equations introduced in~\eqref{eq:main_ARE_P_ZS}--\eqref{eq:main_ARE_Pbar_ZS}, namely:}
\begin{equation} 
\label{eq:main_ARE_P_ZS}
\gamma [A^{\top} P + 2Q]\left( A + \big(B_1 \Gamma_1 + B_2 \Gamma_2\big)  P \right) = P,
\end{equation}
and
\begin{equation}
\label{eq:main_ARE_Pbar_ZS} 
\gamma\bigl[ (A^{\top}+\bar{A}^{\top})\bar{P}+2(Q+\bar{Q})\bigr]\left[(A+\bar{A}) +\Big((B_1+\bar{B}_1)\Lambda_1 + (B_2+\bar{B}_2)\Lambda_2\Big)\bar{P} \right] =\bar{P}.
\end{equation}
We shall assume that there exists solutions $P, \bar P\in \cS^d$ to these equations, which can be proved under suitable conditions. (for example, by contraction arguments with ``small coefficients" of the problem.) We also discuss in section~\ref{sec:connect-open-closed} a way to construct $P$ and $\bar P$ with the help of other Algebraic Riccati equations.

\begin{proposition}
\label{proposition:open_loop_Nash_expression}
Assume there exists $P,\bar P$ solving~\eqref{eq:main_ARE_P_ZS}--\eqref{eq:main_ARE_Pbar_ZS}. 
The following pair of controls satisfies the equilibrium condition~\eqref{fo:MKV-adjoint_ass_ZS}:
\begin{equation}
    \label{eq:MKV-opt-ctrl-formula}
		u_{i,t} = \Gamma_i P(x_t - \bar x_t) + \Lambda_i \bar P \bar x_t,
\end{equation}
where $x_t$ is controlled by $u_{i,t}, i=1,2$ and $\bar x_t = \EE_{\cF^0}[x_t]$.

Moreover, the process $\bp = (p_t)_{t \geq 0}$ defined by: 
\begin{equation}
	 \label{eq:def_pt_Riccati}
	    p_t = P (x_t - \bar{x}_t) + \bar{P} \bar{x}_t, \qquad t \ge 0,
\end{equation}
is an adjoint process satisfying~\eqref{fo:MKV-adjoint_ZS}.
\end{proposition}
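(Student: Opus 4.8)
The plan is to verify both claims by direct substitution, exploiting throughout the splitting of each process into its $\cF^0$-conditional mean and its centered fluctuation. The starting point is that $\EE[x_t - \bar{x}_t \mid \cF^0] = 0$, so taking $\EE[\cdot\mid\cF^0]$ in the control ansatz gives $\bar{u}_{i,t} = \Lambda_i \bar{P}\bar{x}_t$ and hence $u_{i,t} - \bar{u}_{i,t} = \Gamma_i P(x_t - \bar{x}_t)$; likewise the candidate adjoint process satisfies $\bar{p}_t = \bar{P}\bar{x}_t$ and $p_t - \bar{p}_t = P(x_t - \bar{x}_t)$, and it is $(\cF_t)$-adapted since $x_t$ is $\cF_t$-measurable and $\bar{x}_t$ is $\cF^0$-measurable.

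For the equilibrium condition \eqref{fo:MKV-adjoint_ass_ZS} I would split each equation into the coefficient of $x_t - \bar{x}_t$ (centered part) and the coefficient of $\bar{x}_t$ (mean part) and show each vanishes. Using the relations above, the centered part of the equation for player $i$ is $\bigl(B_i^\top + (-1)^{i+1} 2R_i\Gamma_i\bigr)P(x_t-\bar{x}_t)$ and the mean part is $\bigl((B_i+\bar{B}_i)^\top + (-1)^{i+1} 2(R_i+\bar R_i)\Lambda_i\bigr)\bar P\bar{x}_t$. The definitions \eqref{eq:def-GammaXiLambda_i} give exactly $2R_i\Gamma_i = (-1)^i B_i^\top$ and $2(R_i+\bar R_i)\Lambda_i = (-1)^i(B_i+\bar{B}_i)^\top$, so both brackets collapse to zero; the $(-1)^i$ factor carried by $\Gamma_i,\Lambda_i$ is precisely what matches the opposite sign conventions of the minimizing player $1$ and the maximizing player $2$.

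The structural step is the adjoint recursion. Substituting the closed-loop controls into the expressions \eqref{eq:qy_qz} yields $q_t^{(y)} = (B_1\Gamma_1 + B_2\Gamma_2)P(x_t - \bar{x}_t)$ and $q_t^{(z)} = \bigl((B_1+\bar B_1)\Lambda_1 + (B_2+\bar B_2)\Lambda_2\bigr)\bar P\,\bar{x}_t$, so by \eqref{eq:dyn_y_qy}--\eqref{eq:dyn_z_qz} the centered process $y_t := x_t - \bar{x}_t$ and the mean process $z_t := \bar{x}_t$ obey
\begin{equation*}
y_{t+1} = A_y\, y_t + \epsilon^1_{t+1}, \qquad z_{t+1} = A_z\, z_t + \epsilon^0_{t+1},
\end{equation*}
with $A_y = A + (B_1\Gamma_1 + B_2\Gamma_2)P$ and $A_z = (A+\bar A) + \bigl((B_1+\bar B_1)\Lambda_1 + (B_2+\bar B_2)\Lambda_2\bigr)\bar P$, i.e.\ exactly the matrices inside the brackets of \eqref{eq:main_ARE_P_ZS}--\eqref{eq:main_ARE_Pbar_ZS}. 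I would then work from the equivalent form \eqref{eq:useful-expression-pt}: inserting $p_{t+1} = P y_{t+1} + \bar P z_{t+1}$, $x_{t+1} = y_{t+1} + z_{t+1}$, $\bar p_{t+1} = \bar P z_{t+1}$ and $\bar{x}_{t+1} = z_{t+1}$, the integrand regroups as $(A^\top P + 2Q)y_{t+1}$ plus $\bigl((A^\top+\bar A^\top)\bar P + 2(Q+\bar Q)\bigr)z_{t+1}$. Since $\epsilon^0_{t+1},\epsilon^1_{t+1}$ are centered and independent of $\cF_t$, conditioning replaces $y_{t+1},z_{t+1}$ by $A_y y_t, A_z z_t$; multiplying by $\gamma$ and applying the two Riccati equations gives $P y_t + \bar P z_t = p_t$, which is the required identity.

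The argument is purely a verification of a correctly guessed ansatz, so there is no genuine obstacle; the only care needed lies in the bookkeeping of the centered/mean split and in identifying $A_y, A_z$ with the brackets of the Riccati equations. The $L^2$-integrability making all conditional expectations well defined follows from $x_t,\bar{x}_t\in L^2$, already established.
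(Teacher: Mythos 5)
Your proof is correct and follows essentially the same route as the paper's: both rest on the decomposition into the centered part $x_t-\bar x_t$ and the conditional mean $\bar x_t$, the identities $2R_i\Gamma_i=(-1)^iB_i^\top$ and $2(R_i+\bar R_i)\Lambda_i=(-1)^i(B_i+\bar B_i)^\top$ for the equilibrium condition, and the two Riccati equations applied to the closed-loop matrices $A+(B_1\Gamma_1+B_2\Gamma_2)P$ and $(A+\bar A)+((B_1+\bar B_1)\Lambda_1+(B_2+\bar B_2)\Lambda_2)\bar P$ for the adjoint recursion. The only difference is organizational: you verify the ansatz directly in the conditional-expectation form~\eqref{eq:useful-expression-pt}, whereas the paper first inverts the equilibrium condition to get $u_{i,t}=\Gamma_i p_t+\Xi_i\bar p_t$ and then checks the resulting forward-backward system (introducing the auxiliary terms $Z^0,Z^1$), but the computations are the same.
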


\begin{proof}
	Let $\bZ^0$ and $\bZ^1$ be the processes defined by
	$$
	Z^0_t = \gamma [ (A^{\top}+\bar{A}^{\top}) \bar{P} +2(Q+\bar{Q}) ],
	\qquad
	Z^1_{t} = \gamma [A^{\top} P  + 2Q].
	$$
	Notice that such processes are deterministic (hence predictable) and independent of time. 
	
	Let us consider a process $\bp$ satisfying
	\begin{equation}
	\label{fo:adjoint_bsde_special_ZS}
	p_{t+1}-p_t
	=-\gamma \bigl[ (\check{A}^{\top}- \delta I_d)p_{t+1}+2Qx_{t+1} + \bar A^{\top}\bar p_{t+1}+2\bar Q\bar x_{t+1}\bigr]  + Z^0_{t+1}\epsilon^0_{t+1}+Z^1_{t+1}\epsilon^1_{t+1}.
	\end{equation}
	Then, $\bp$ also satisfies~\eqref{fo:MKV-adjoint_ZS} so it is an adjoint process.
	
	We now rewrite the equilibrium condition \eqref{fo:MKV-adjoint_ass_ZS}. Taking conditional expectations $\EE_{\cF^0}$ in the first equation, we get:
	$$
	(B_1+\bar{B}_1)^{\top}\bar{p}_t+2(R_1+\bar{R}_1)\bar{u}_{1,t}=0
	$$
	from which we derive:
	\begin{equation}
	\label{fo:u_t_bar_ZS_1}
	\bar{u}_{1,t}= - \frac12 (R_1+\bar R_1)^{-1}(B_1+\bar{B}_1)^{\top}\bar{p}_t.
	\end{equation}
	Plugging this expression back into the first equation of \eqref{fo:MKV-adjoint_ass_ZS} we get:
	$$
	B_1^{\top} p_t+2 R_1 u_{1,t}+\bar{B}_1^{\top}\bar{p}_t - \bar{R}_1(R_1+\bar R_1)^{-1}(B_1+\bar{B}_1)^{\top}\bar{p}_t=0
	$$
	from which we deduce:
	\begin{equation}
	\label{fo:u_t_fct_y_t_ZS_1}
	u_{1,t}
	= \Gamma_1 p_t + \Xi_1 \bar{p}_t,
	\qquad\text{and}\qquad 
	\bar{u}_{1,t} 
	= \Lambda_1 \bar{p}_t
	\end{equation}
	for $\Gamma_1, \Xi_1, \Lambda_1$ introduced in~\eqref{eq:def-GammaXiLambda_i}. Similarly, we find 
	\begin{equation}
	\label{fo:u_t_fct_y_t_ZS_2}
	u_{2,t}
	= \Gamma_2 p_t + \Xi_2 \bar{p}_t,
	\qquad\text{and}\qquad 
	\bar{u}_{2,t} 
	= \Lambda_2 \bar{p}_t.
	\end{equation}
	
	\vskip 2pt\noindent
	
	Plugging expressions \eqref{fo:u_t_fct_y_t_ZS_1}, \eqref{fo:u_t_fct_y_t_ZS_2} for $u_{1,t}, u_{2,t}$ and $\bar{u}_{1,t}, \bar{u}_{2,t}$ into the state dynamics equation \eqref{eq:MKV-state_ZS} and the definition \eqref{fo:adjoint_bsde_special_ZS} of the adjoint process we find that at the equilibrium, the system should satisfy, for $t=0,1,\ldots$,
	\begin{equation}
	\label{fo:fbsde1}
	\begin{cases}
	&x_{t+1}-x_t = \check{A} x_t +\bar{A}\bar{x}_t+(B_1\Gamma_1 + B_2\Gamma_2) p_t
	\\
	& \qquad\qquad\qquad\qquad\qquad+(\bar{B}_1\Lambda_1 +B_1\Xi_1 + \bar{B}_2\Lambda_2 +B_2\Xi_2)\bar{p}_t +\epsilon^0_{t+1}+\epsilon^1_{t+1},
	\\
	&p_{t+1}-p_t=-\gamma \bigl[(\check{A}^{\top}- \delta I_d)p_{t+1} + 2Qx_{t+1} + \bar A^{\top}\bar p_{t+1}+2\bar Q\bar x_{t+1} \bigr]+Z^0_{t+1}\epsilon^0_{t+1}
	\\
	&\qquad\qquad\qquad\qquad\qquad+Z^1_{t+1}\epsilon^1_{t+1}. 
	\end{cases}
	\end{equation}
	Taking conditional expectations $\EE_{\cF^0}$ on all sides we get: for $t=0,1,\ldots$
	\begin{equation}
	\label{fo:fbsde-mean1}
	\begin{cases}
	&\bar{x}_{t+1}-\bar{x}_t = (\check{A}+\bar{A}) \bar{x}_t + \Big[ (B_1+\bar B_1)\Lambda_1 + (B_2+\bar B_2)\Lambda_2 \Big] \bar{p}_t +\epsilon^0_{t+1}, 
	\\
	&\bar{p}_{t+1}-\bar{p}_t = -\gamma[(\check{A}^{\top}+\bar{A}^{\top}-\delta I_d) \bar{p}_{t+1} +2(Q+\bar{Q})\bar{x}_{t+1}] +Z^0_{t+1}\epsilon^0_{t+1},  
	\end{cases}
	\end{equation}
	which is a linear forward-backward stochastic difference equation in infinite horizon on the probability space $(\Omega^0,\cF^0,\PP^0)$ of the common noise.
	
	 Using the solutions $P,\bar P$ solving~\eqref{eq:main_ARE_P_ZS}--\eqref{eq:main_ARE_Pbar_ZS}, let us define 
	 \begin{equation*}
	    p_t = P (x_t - \bar{x}_t) + \bar{P} \bar{x}_t.
	 \end{equation*}
	
	We now check the pair of process $(x_t, p_t)_{t \geq 0}$ provides a solution to~\eqref{fo:fbsde1}, so that $(p_t)_{t \geq 0}$ given by equation~\eqref{eq:def_pt_Riccati} is an adjoint process. 
	
	First, from our choice of $\bZ^0$ and using the fact that $\bar{P}$ solves~\eqref{eq:main_ARE_Pbar_ZS}, there holds: for $t=0,1,\ldots$
	\begin{equation*} 
	\begin{cases}
	&\bar{x}_{t+1} =  \left[(A+\bar{A}) +\Big[(B_1+\bar{B}_1)\Lambda_1 + (B_2+\bar{B}_2)\Lambda_2 \Big]\bar{P} \right] \bar{x}_t +\epsilon^0_{t+1},  
	\\
	&\gamma\bigl[ (A^{\top}+\bar{A}^{\top})\bar{P}+2(Q+\bar{Q})\bigr] \bar{x}_{t+1} 
	= \bar{P}\bar{x}_t + Z^0_{t+1}\epsilon^0_{t+1}, 
	\end{cases}
	\end{equation*}
	or equivalently
	\begin{equation*}
	\begin{cases}
	&\bar{x}_{t+1}-\bar{x}_t = \left[(\check{A}+\bar{A}) +\Big[(B_1+\bar{B}_1)\Lambda_1 + (B_2+\bar{B}_2)\Lambda_2 \Big] \bar{P} \right] \bar{x}_t +\epsilon^0_{t+1},  
	\\
	&\bar{P}(\bar{x}_{t+1}-\bar{x}_t) = -\gamma\left[(\check{A}^{\top}+\bar{A}^{\top}-\delta I_d) \bar{P} +2(Q+\bar{Q})\right]\bar{x}_{t+1} +Z^0_{t+1}\epsilon^0_{t+1}.  
	\end{cases}
	\end{equation*}
	Combining with $\bar p_t = \bar P \bar x_t$, we deduce that~\eqref{fo:fbsde-mean1} is satisfied.
	
	We proceed similarly for the other equation. 
	\commentJDG{Using our choice of $\bZ^1$
 and the fact that $P$ solves~\eqref{eq:main_ARE_P_ZS}, we have: for $t=0,1,\ldots$
	\begin{equation*}
	\begin{cases}
	&(x_{t+1}-\bar{x}_{t+1}) = ( A + ( B_1 \Gamma_1 + B_2 \Gamma_2 )   P )  (x_t - \bar{x}_t) + \epsilon^1_{t+1},  
	\\
	&\gamma [A^\top P+2Q](x_{t+1}-\bar{x}_{t+1}) =  P (x_t - \bar{x}_t) +Z^1_{t+1} \epsilon^1_{t+1}, 
	\end{cases}
	\end{equation*} 
	or equivalently
	\begin{equation*}
	\begin{cases}
	&(x_{t+1}-\bar{x}_{t+1}) - (x_t - \bar{x}_t) = ( \check{A} + ( B_1 \Gamma_1 + B_2 \Gamma_2 ) P )  (x_t - \bar{x}_t) + \epsilon^1_{t+1},  
	\\
	&  P [(x_{t+1}-\bar{x}_{t+1}) - (x_t - \bar{x}_t)] = - \gamma [(\check{A}^\top-\delta I_d)   P  + 2Q ](x_{t+1} - \bar{x}_{t+1}) + Z^1_{t+1}\epsilon^1_{t+1}, 
	\end{cases}
	\end{equation*}
	from which we deduce, using~\eqref{eq:def_pt_Riccati} and~\eqref{fo:fbsde-mean1}, that~\eqref{fo:fbsde1} is satisfied.
	
	For every $t \geq 0$, by replacing the definition of $p_t$ in equations~\eqref{fo:u_t_fct_y_t_ZS_1}--\eqref{fo:u_t_fct_y_t_ZS_2}, we obtain the expression~\eqref{eq:MKV-opt-ctrl-formula_ZS} for $(u_{1,t}, u_{2,t})$.
	}
	
\end{proof}

\subsection{A Convexity-concavity sufficient condition}

Consider two deterministic processes $\bV_1 = (V_{1,t})_{t \geq 0}$ and $\bV_2 = (V_{2,t})_{t \geq 0}$ following the dynamics
\begin{subequations}
    \begin{empheq}[left=\empheqlbrace]{align}
        V_{1, t+1} &= A V_{1,t} + \bar{A} \bar V_{1,t} + B_1 \beta_{1,t} + \bar B_1 \bar{\beta}_{1,t}, \qquad V_{1, t=0} = 0,
        \label{eq:dyn_V1}\\
        V_{2, t+1} &= A V_{2,t} + \bar{A} \bar V_{2,t} + B_2 \beta_{2,t} + \bar B_2 \bar{\beta}_{2,t}, \qquad V_{2, t=0} = 0,
        \label{eq:dyn_V2}
    \end{empheq}
\end{subequations}
where $(\bbeta_1, \bbeta_2) \in \cU \times \cU$ are two $L^2-$integrable control processes. Under Assumption~\ref{assumption:A_openloop}, by Lemma~\ref{lemma:L2_x_and_x_bar}, we have $\bV_1 \in \cX$ and  $\bV_2 \in \cX$.

\begin{proposition}[Pontryagin's maximum principle, sufficient condition]
\label{prop:sufficient_Pontryagin}
We assume the following conditions:
\begin{enumerate}
    \item There exists a state process $\bx = (x_t)_{t \geq 0}$ and its adjoint processes $(\bp, \bZ^0, \bZ^1) = ( p_t, Z^0_t, Z^1_t)_{t \geq 0}$ such that $\bx, \bp$ are $(\cF_t)_{t \geq 0}$-adapted and $(Z^0_t, Z^1_t)_{t \geq 1}$ are $(\cF_t)_{t \geq 0}$-predictable processes, and they satisfy the forward-backward system of equations: for every $t \geq 0$,
    \begin{equation}
    \label{eq:FBSDE_pontryagin_sufficient}
    \left\{ 
    \begin{array}{rl}
      x_{t+1} & = A x_t + \bar{A} \bar{x}_t + ( B_1 \Gamma_1 + B_2 \Gamma_2) p_t  
      \\
      &\qquad + \Big( (B_1 + \bar{B}_1) \Lambda_1 + (B_2 + \bar B_2) \Lambda_2 - B_1 \Gamma_1 - B_2 \Gamma_2 \Big) \bar p_t  
    + \epsilon^0_{t+1} + \epsilon^1_{t+1},      
         \\
	    p_t &= \gamma \left( A^\top p_{t+1} + 2Q x_{t+1} + \bar{A}^{\top}\bar{p}_{t+1}+2\bar{Q}\bar{x}_{t+1} \right) + Z_{t+1}^0 \epsilon_{t+1}^0 + Z_{t+1}^1 \epsilon^1_{t+1}
	  \end{array}
	\right.
	\end{equation}
	with initial values $x_0 = \epsilon_0^0 + \epsilon_0^1$ and $Z_0^0 = Z_0^1 = 0$.

    \item For any control processes $(\bbeta_1, \bbeta_2) \in \cU \times \cU$, we have the following convexity-concavity condition for the zero-sum game:
        \begin{align}
            & \EE\left[ \sum_{t=0}^\infty \gamma^t \left( V_{1,t}^\top Q V_{1,t}  + \bar V_{1,t}^\top  \bar{Q} \bar V_{1,t} + \beta_{1,t}^\top R_1 \beta_{1,t} + \bar \beta_{1,t}^\top  \bar{R}_1 \bar{\beta}_{1,t} \right) \right] \geq 0
            \label{eq:cond_convex_1}\\
            & \EE\left[ \sum_{t=0}^\infty \gamma^t \left( V_{2,t}^\top Q V_{2,t} + \bar V_{2,t}^\top \bar Q \bar V_{2,t} - \beta_{2,t}^\top R_2 \beta_{2,t}  - \bar \beta_{2,t}^\top \bar{R}_2 \bar{\beta}_{2,t} \right) \right] \leq 0
            \label{eq:cond_concave_2}
    \end{align}
      where the processes $(\bV_1, \bV_2) \in \cX \times \cX$ follows the dynamics defined in \eqref{eq:dyn_V1} and \eqref{eq:dyn_V2}.
\end{enumerate}
    Then, the pair of control processes $(\bu_1, \bu_2)\in \cU_{ad}^{open}$ given by:
    \begin{equation}
        \label{eq:open_loop_Nash_expression_BSDE}
                 u_{i,t} = \Gamma_i p_t + (\Lambda_i - \Gamma_i) \bar p_t, \qquad i=1,2
    \end{equation}
    is an open-loop Nash equilibrium for the zero-sum game. Moreover, $(\bu_1, \bu_2)$ satisfies the equilibrium condition \eqref{fo:MKV-adjoint_ass_ZS} of the Pontryagin maximum principle.
\end{proposition}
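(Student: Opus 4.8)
The plan is to verify the first-order equilibrium condition~\eqref{fo:MKV-adjoint_ass_ZS} directly, and then to deduce the two saddle-point inequalities in~\eqref{eq:open_loop_Nash} from an \emph{exact} second-order expansion of the quadratic utility $J$ around the candidate pair: the first-order term will vanish precisely because of~\eqref{fo:MKV-adjoint_ass_ZS}, while the sign of the second-order term is exactly what the convexity--concavity hypotheses~\eqref{eq:cond_convex_1}--\eqref{eq:cond_concave_2} supply. Before that I would record two identifications. Setting $\bar u_{i,t} = \EE_{\cF^0}[u_{i,t}] = \Lambda_i \bar p_t$ (using $\EE_{\cF^0}[p_t] = \bar p_t$ and $\Lambda_i = \Gamma_i + \Xi_i$), the control contribution $B_1 u_{1,t} + \bar B_1 \bar u_{1,t} + B_2 u_{2,t} + \bar B_2 \bar u_{2,t}$ in~\eqref{eq:MKV-state_ZS} collapses to exactly the control term of the forward equation of~\eqref{eq:FBSDE_pontryagin_sufficient}; hence $\bx$ from condition~(1) is the state controlled by $(\bu_1, \bu_2)$. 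Taking $\EE[\,\cdot\,|\cF_t]$ in the backward equation of~\eqref{eq:FBSDE_pontryagin_sufficient}, and using that $Z^0_{t+1}, Z^1_{t+1}$ are $\cF_t$-measurable while $\epsilon^0_{t+1}, \epsilon^1_{t+1}$ are centered and independent of $\cF_t$, the noise increments drop out and we recover~\eqref{eq:useful-expression-pt}; hence $\bp$ is the adjoint process of $(\bu_1, \bu_2)$ and Lemma~\ref{lem:differential-J} applies to this pair.

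To obtain~\eqref{fo:MKV-adjoint_ass_ZS}, I would substitute the definitions~\eqref{eq:def-GammaXiLambda_i} into $u_{i,t} = \Gamma_i p_t + \Xi_i \bar p_t$ and $\bar u_{i,t} = \Lambda_i \bar p_t$: the coefficients of $p_t$ and $\bar p_t$ then cancel term by term — for instance $2R_1\Gamma_1 = -B_1^\top$ kills $B_1^\top p_t$, while $2R_1\Xi_1$ and $2\bar R_1\Lambda_1$ together cancel $\bar B_1^\top \bar p_t$ — so both lines of~\eqref{fo:MKV-adjoint_ass_ZS} hold, the case $i=2$ being identical up to the sign $(-1)^i$. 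This is simply the derivation inside Proposition~\ref{proposition:open_loop_Nash_expression} run in reverse.

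For the right-hand inequality, fix $\bu_2$, take any $\bu_1' \in \cU$, and set $\bbeta_1 = \bu_1' - \bu_1$. Since~\eqref{eq:MKV-state_ZS} is affine in the controls and $\bu_2$ is unchanged, the state under $(\bu_1', \bu_2)$ is $x_t + V_{1,t}$ with $\bV_1$ solving~\eqref{eq:dyn_V1}, and $\bV_1 \in \cX$ under Assumption~\ref{assumption:A_openloop}. As $c$ is quadratic and the state is affine in the control, $J$ is a quadratic functional, so the expansion
\[
J(\bu_1', \bu_2) - J(\bu_1, \bu_2) = DJ(\bu_1, \bu_2)(\bbeta_1, 0) + I_1(\bbeta_1)
\]
is exact, where $I_1(\bbeta_1)$ is the pure second-order form obtained by substituting $(V_{1,t}, \beta_{1,t})$ for $(x_t, u_{1,t})$ in the $Q$- and $R_1$-blocks of $c$. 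The first term vanishes: by Lemma~\ref{lem:differential-J} its integrand is $\beta_{1,t}$ contracted with the transpose of the left-hand side of the first line of~\eqref{fo:MKV-adjoint_ass_ZS}, which is zero by the previous step. For $I_1$, the identities $\EE[(V - \bar V)^\top Q (V - \bar V)] = \EE[V^\top Q V] - \EE[\bar V^\top Q \bar V]$ and $\EE[(\beta - \bar\beta)^\top R_1 (\beta - \bar\beta)] = \EE[\beta^\top R_1 \beta] - \EE[\bar\beta^\top R_1 \bar\beta]$, valid since $\bar V = \EE_{\cF^0}[V]$ and $Q, R_1$ are deterministic, reduce $I_1(\bbeta_1)$ to the left-hand side of~\eqref{eq:cond_convex_1}, which is $\ge 0$. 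Hence $J(\bu_1', \bu_2) \ge J(\bu_1, \bu_2)$. The left-hand inequality is symmetric: with $\bbeta_2 = \bu_2' - \bu_2$ driving $\bV_2$ via~\eqref{eq:dyn_V2}, the first-order term $DJ(\bu_1, \bu_2)(0, \bbeta_2)$ vanishes by the second line of~\eqref{fo:MKV-adjoint_ass_ZS}, and the second-order form reduces to the left-hand side of~\eqref{eq:cond_concave_2}, which carries the opposite sign on the $R_2$-blocks and is $\le 0$; thus $J(\bu_1, \bu_2') \le J(\bu_1, \bu_2)$. Chaining the two yields~\eqref{eq:open_loop_Nash}.

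I expect the main obstacle to be rigor rather than algebra in the third step. One must justify that the quadratic functional $J$ splits \emph{exactly} into its value, its Gateaux differential, and a pure second-order form with no cross-remainder — which reduces to the fact that the finite state variation coincides with the Gateaux variation because the state map is affine in the control — and one must control the interchange of the infinite $t$-sum, the expectation, and the $\epsilon \to 0$ limit. The latter is legitimate because Assumption~\ref{assumption:A_openloop} places $\bx, \bV_1, \bV_2$ in $\cX$ and $\bbeta_1, \bbeta_2$ in $\cU$, so every series is absolutely summable and dominated convergence applies. The Fubini manipulations on the $\cF^0$-conditional means — needed both for the first-order term, exactly as in Lemma~\ref{lem:differential-J}, and for the reduction of the second-order form — are then routine.
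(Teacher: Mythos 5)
Your proposal is correct and follows essentially the same route as the paper's proof: verify that $\bp$ is the adjoint of the candidate pair and that \eqref{fo:MKV-adjoint_ass_ZS} holds by the algebra of \eqref{eq:def-GammaXiLambda_i}, then exploit the exact second-order expansion of the quadratic functional $J$, with the first-order term killed by the equilibrium condition via Lemma~\ref{lem:differential-J} and the second-order form reducing (through the identity $\EE[(V-\bar V)^\top Q(V-\bar V)]=\EE[V^\top QV]-\EE[\bar V^\top Q\bar V]$, which the paper carries out via the constant Hessian of the Hamiltonian in \eqref{eq:hessian_h_with_V}) exactly to the left-hand sides of \eqref{eq:cond_convex_1}--\eqref{eq:cond_concave_2}. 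The only cosmetic difference is that you take $\bbeta_1=\bu_1'-\bu_1$ directly rather than $(\bu_1'-\bu_1)/\epsilon$ as in the paper, which is immaterial since the expansion is exact.
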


\begin{proof}
    The backward equation for process $\bp$ implies that it satisfies the conditional expectation condition \eqref{fo:MKV-adjoint_ZS}. In the proof of Proposition~\ref{proposition:open_loop_Nash_expression}, we show with equations~\eqref{fo:u_t_fct_y_t_ZS_1}--\eqref{fo:u_t_fct_y_t_ZS_2} that the pair of control processes $(\bu_1, \bu_2)$ defined by equation \eqref{eq:open_loop_Nash_expression_BSDE} satisfies the equilibrium condition \eqref{fo:MKV-adjoint_ass_ZS}. By substituting the right hand side of~\eqref{eq:open_loop_Nash_expression_BSDE} with $(u_{1,t}, u_{2,t})$ in the forward equation for $(x_t)_{t \geq 0}$ in~\eqref{eq:FBSDE_pontryagin_sufficient}, we get that the process $\bx$ follows dynamics~\eqref{eq:MKV-state_ZS} which is controlled exactly by $(\bu_1, \bu_2) \in \cU_{ad}^{open}$.

Based on the proof of Lemma~\ref{lem:differential-J} for the Gateaux derivative of $J$, we write a second-order expansion for the value function $J$ at a point $(\bu_1, \bu_2) \in \cU_{ad}^{open}$ in the direction $(\bbeta_1, \bbeta_2) \in \cU \times \cU$. To alleviate the notation, we introduce a deterministic process $\bV = (V_t)_{t \geq 0}$ following a dynamics
    \begin{equation*}
        V_{t+1} = A V_t + \bar{A} \bar V_t + B_1 \beta_1 + \bar B_1 \bar \beta_1 + B_2 \beta_2 + \bar B_2\beta_2
    \end{equation*}
    with initial value $V_0 = 0$. The linearity of the dynamics \eqref{eq:MKV-state_ZS} shows that $V_t = (x^{\bu_1 + \epsilon \bbeta_1, \bu_2 + \epsilon \bbeta_2}_t - x^{\bu_1, \bu_2}_t) / \epsilon$ for every $\epsilon > 0$ and $t \geq 0$.
    According to equation \eqref{fo:first}, the difference between the values of $J$ at point $(\bu_1 + \epsilon \bbeta_1, \bu_2 + \epsilon \bbeta_2)$ and at point $(\bu_1, \bu_2)$ can be expressed by 
    \begin{equation}
    \label{eq:difference_J_epsilon_beta}
        \begin{aligned}
            & J (\bu_1 + \epsilon \bbeta_1, \bu_2 + \epsilon \bbeta_2) - J(\bu_1, \bu_2) 
            \\
            = & \epsilon \left( \sum_{t=0}^\infty \EE \left[ \gamma^t V_{t+1} \cdot (p_{t+1} - p_t)\right] + \sum_{t=0}^\infty  \gamma^t  \EE \left[\nabla_{\zeta} h(\zeta_t, p_t) \cdot \check{\zeta}_t \right] \right)
            \\
            &\qquad\qquad\qquad\qquad+ \frac{1}{2} \epsilon^2 \sum_{t=0}^{\infty} \gamma^t \EE \Big[ \nabla^2_{\zeta \zeta} h( \eta_t, p_t) \check{\zeta}_t \cdot \check{\zeta}_t \Big] 
            \\
            = & \epsilon (i) + \epsilon^2  (ii)
        \end{aligned}
    \end{equation}
    where $\check{\zeta}_t = (\zeta'_t - \zeta_t) / \epsilon = \left[ V_t^\top, \bar V_t^\top, \beta_{1,t}^\top, \bar \beta_{1,t}^\top, \beta_{2,t}^\top, \bar \beta_{2,t}^\top \right]^\top \in \RR^{2d \times 4\ell}$ and $\eta_t = (1-\lambda_t) \zeta_t' + \lambda_t \zeta_t \in \RR^{2d + 4\ell}$ for some $\lambda_t \in [0,1]$. 
    Since we assume that the pair of admissible control processes $(\bu_1, \bu_2)$ satisfies the system of equations \eqref{fo:MKV-adjoint_ass_ZS} at every time $t\geq 0$, then by applying Lemma~\ref{lem:differential-J}, we have $(i)=0$. We also notice that the Hessian  \commentJDG{\eqref{fo:hessian}} of the Hamiltonian function $h(\cdot)$ with respect to $\zeta$ is a constant matrix depending only on model parameters. Thus, we obtain
    \begin{equation}
    \label{eq:hessian_h_with_V}
    \begin{aligned}
         (ii) & =  \sum_{t=0}^\infty \gamma^t \EE \Big[ V_{t}^\top Q V_{t} + \bar V_{t}^\top \bar (2 Q+ \bar Q) \bar V_{t} - V_t^\top Q \bar V_t - \bar{V}_t^\top Q V_t 
          + \beta_{1, t}^\top R_1 \beta_{1, t}  
          \\
          &\qquad +\bar \beta_{1,t}^\top (2 R_1 + \bar{R}_1) \bar{\beta}_{1,t} 
            -  \beta_{1,t}^\top R_1 \bar \beta_{1,t} - \bar \beta_{1,t}^\top R_1  \beta_{1,t}  -  \beta_{2, t}^\top R_2 \beta_{2, t}  
            \\
            &\qquad - \bar \beta_{2,t}^\top (2 R_2 + \bar{R}_2) \bar{\beta}_{2,t} +  \beta_{2,t}^\top R_2 \bar \beta_{2,t} + \bar \beta_{2,t}^\top R_2  \beta_{2,t} \Big] 
          \\
          & = \sum_{t=0}^\infty \gamma^t \EE \Big[ V_t^\top Q V_t+ \bar V_t^\top \bar Q \bar V_t + \beta_{1, t}^\top R_1 \beta_{1, t}  + \bar \beta_{1,t}^\top \bar{R}_1 \bar{\beta}_{1,t} -  \beta_{2, t}^\top R_2 \beta_{2, t}  
           - \bar \beta_{2,t}^\top \bar{R}_2 \bar{\beta}_{2,t} \Big].
    \end{aligned}
    \end{equation}
    
    Consider a fixed control process $\bu_2$ for player 2. For every control process $\bu'_1 \in \cU$, we choose $\bbeta_1 = (\bu_1' - \bu_1) / \epsilon \in \cU$ and $\bbeta_2 = 0$. The convexity condition \eqref{eq:cond_convex_1}, together with equations \eqref{eq:difference_J_epsilon_beta} and \eqref{eq:hessian_h_with_V}, yield that 
    \begin{align*}
        &J(\bu_1 + \epsilon \bbeta_1, \bu_2) - J( \bu_1, \bu_2) 
        \\
        &= \epsilon^2 \EE\left[ \sum_{t=0}^\infty \gamma^t \Big(  V_{1,t}^\top Q V_{1,t}  + \bar V_{1,t}^\top  \bar{Q} \bar V_{1,t} + \beta_{1,t}^\top R_1 \beta_{1,t} + \bar \beta_{1,t}^\top  \bar{R}_1 \bar{\beta}_{1,t} \Big) \right] \geq 0,
    \end{align*}
    where the process $\bV_1 = (V_{1,t})_{t \geq 0}$ follows the dynamics \eqref{eq:dyn_V1}. Consequently, we have for every $\bu'_1 \in \cU$, 
    $$  
        J(\bu'_1, \bu_2) \geq J(\bu_1, \bu_2).
    $$
    Similarly, the concavity condition \eqref{eq:cond_concave_2} implies that, for every $\bbeta_2 \in \cU$,
    \begin{align*}
        &J(\bu_1, \bu_2 + \epsilon \bbeta_2) - J( \bu_1, \bu_2) 
        \\
        &=  \epsilon^2 \EE\left[ \sum_{t=0}^\infty \gamma^t \Big(  V_{2,t}^\top Q V_{2,t} + \bar V_{2,t}^\top \bar Q \bar V_{2,t} - \beta_{2,t}^\top R_2 \beta_{2,t}  - \bar \beta_{2,t}^\top \bar{R}_2 \bar{\beta}_{2,t} \Big) \right] \leq 0,
    \end{align*}
    where $\bV_2 = (V_{2,t})_{t \geq 0}$ follows the dynamics \eqref{eq:dyn_V2}. Thus, with the same argument, we have for every $\bu_2' \in \cU$, $J(\bu_1, \bu_2') \leq J(\bu_1, \bu_2)$. 
    
    Therefore, we conclude that under the convexity-concavity condition for the two processes $(\bV_1, \bV_2)$, a pair of control processes $(\bu_1, \bu_2)$ satisfying the system of equations \eqref{fo:MKV-adjoint_ass_ZS} with adjoint process $\bp$ is an open-loop Nash equilibrium for the zero-sum game.
\end{proof}

\begin{remark}
 \label{remark:uniqueness_adjoint} Assume that the convexity-concavity condition for the zero-sum game holds true under a suitable choice of model parameters.
If there exists a unique solution $(\bx, \bp)$ to the forward-backward system of equations~\eqref{eq:FBSDE_pontryagin_sufficient}, then by the necessary condition of the Pontryagin maximum principle proven in Proposition~\ref{proposition:Pontryagin_maximum_principle_necessary_condition}, the open-loop Nash equilibrium given by~\eqref{eq:open_loop_Nash_expression_BSDE} is the unique one.
In this case, if we suppose in addition that the Riccati equations~\eqref{eq:main_ARE_P_ZS}--\eqref{eq:main_ARE_Pbar_ZS} admit unique solutions $P$ and $\bar P$, then by Proposition~\ref{proposition:open_loop_Nash_expression} the adjoint process $\bp$ must be a linear function of $x_t$ and $\bar x_t$ defined by equation~\eqref{eq:def_pt_Riccati}, namely $p_t = P (x_t - \bar x_t) + \bar P \bar x_t$ for every $t \geq 0$.
\end{remark}

\begin{remark}
    We can see from equations \eqref{eq:difference_J_epsilon_beta} and \eqref{eq:hessian_h_with_V} that the convexity-concavity condition is also a necessary condition if $(\bu_1, \bu_2) \in \cU_{ad}^{open}$ is an open-loop Nash equilibrium for the zero-sum game.
\end{remark}

Taking a closer look at the convexity condition \eqref{eq:cond_convex_1} (resp. the concavity condition \eqref{eq:cond_concave_2}), it is indeed a quadratic function of the process $\bV_1$ (resp. $\bV_2$) and the control $\bbeta_1 \in \cU$ (resp. $\bbeta_2 \in \cU$). So, we can apply results from the deterministic Linear-Quadratic control problems to derive a sufficient condition for the convexity-concavity condition. Let us define some new value functions $C_{V_i}(\bbeta_i - \bar \bbeta_i)$ and $\bar C_{V_i}(\bar \bbeta_i)$ for $i=1,2$:
\begin{align*}
    C_{V_i}(\bbeta_i - \bar{\bbeta}_i ) 
    & =  \EE\Big[ \sum_{t=0}^\infty  \gamma^t \Big( ( V_{i,t} - \bar V_{i,t})^\top \Big( (-1)^{i-1} Q \Big) (V_{i,t} - \bar V_{i,t})  
    \\
    &\qquad\qquad+  (\beta_{i,t} - \bar \beta_{i,t}) ^\top R_i (\beta_{i,t} - \bar \beta_{i,t} ) \Big) \Big],
    \\
    \bar C_{V_i}(\bar \bbeta_i) & =  \EE \left[ \sum_{t=0}^\infty \gamma^t \left( \bar V_{i,t}^\top \Big( (-1)^{i-1} (Q + \bar{Q}) \Big) \bar V_{i,t} + \bar \beta_{i,t}^\top  ( R_i + \bar{R}_i) \bar{\beta}_{i,t} \right) \right] \ .
\end{align*}
Then the convexity-concavity condition~\eqref{eq:cond_convex_1}--\eqref{eq:cond_concave_2} is equivalent to 
\begin{equation*}
    \min_{\bbeta_1 \in \cU} C_{V_1}(\bbeta_1 - \bar \bbeta_1) + \bar C_{V_1}(\bar \bbeta_1)  \geq 0, \qquad \text{and} \qquad  \min_{\bbeta_2 \in \cU} C_{V_2}(\bbeta_2 - \bar \bbeta_2) + \bar C_{V_2}(\bar \bbeta_2) \geq 0. 
\end{equation*}
Here, we multiply $Q$ and $Q + \bar Q$ by $-1$ for player $i=2$ so that the concavity condition is connected to a minimization problem. 
Let us assume that the following discrete Algebraic Riccati equation (DARE-i):
\begin{equation}
\label{eq:DARE}
	0 = (-1)^{i-1} Q - P_i + \gamma A^\top P_i A - \gamma^2 A^\top P_i B_i \Big( \gamma B_i^\top P_i B_i + R_i \Big)^{-1} B_i^\top P_i A,
\end{equation}
 admits a symmetric matrix $P_i \in \mathcal{S}^{d}$ as solution satisfying $\gamma B_i^\top P_i B_i + R_i \succ 0 $
and $\gamma \| A - B_i K_i \|^2 < 1$ where $K_i = \gamma (\gamma B_i^\top P_i B_i + R_i)^{-1} B_i^\top P_i A$. Then, by applying the Dynamically Programming principle and the expression of optimal value function \cite{kuvcera1972discrete} starting at time $t=1$ with an initial $V_{i,1} = B_i (\beta_{i,0} - \bar \beta_{i,0} )$, the value $C_{V_i}( \bbeta_i - \bar \bbeta_i)$ can be expressed as:
\begin{align*}
    \min_{\bbeta_i} C_{V_i}( \bbeta_i - \bar \bbeta_i) 
    &= \min_{\beta_{i,0} = \check \beta_i} \EE\left[ \min_{\bbeta_i: \beta_{i,0} = \check\beta_{i} } C_{V_i}(\bbeta_i - \bar \bbeta_i) \right] 
    \\
    &=
    \min_{\check\beta_{i}}  \EE\left[  (\check\beta_{i} - \bar{\check\beta}_{i})^\top \Big( R_i + \gamma B_i^\top P_i B_i \Big) (\check\beta_{i} - \bar{\check\beta}_{i} ) \right] \geq  0.
\end{align*}
Existence of such a solution $P_i$ can be guaranteed under suitable conditions, see  \cite{ran1988existence} or Lemma~\ref{lemma:existence_DARE_Thm_3.1} below. 
Moreover, for every given random variable $\check\beta_{i}$, the value $\min_{\bbeta_i: \beta_{i,0} = \check\beta_{i} } C_{V_i}(\bbeta_i - \bar \bbeta_i)$ is attained at $\beta_{i, t} - \bar \beta_{i,t} = - K_i (V_{i,t} - \bar{V}_{i,t})$ for $t \geq 1$.

Similarly, if the discrete Algebraic Riccati equation for $i=1, 2$ (DARE-MF-i):
\begin{align}
    \label{eq:DARE_MF}
	0 & = (-1)^{i-1} (Q + \bar Q) - \bar P_i + \gamma (A + \bar A)^\top \bar P_i (A + \bar A) 
	\\
	\nonumber
	& \qquad - \gamma^2 (A + \bar A)^\top \bar P_i (B_i + \bar B_i) \Big( \gamma (B_i+ \bar B_i)^\top \bar P_i (B_i + \bar B_i) 
	\\
	&\qquad +(R_i + \bar R_i) \Big)^{-1} (B_i+ \bar B_i)^\top \bar P_i (A + \bar A).
	\nonumber
\end{align}
has a solution $\bar P_i$ such that $\gamma (B_i + \bar B_i)^\top \bar P_i (B_i + \bar B_i) + (R_i + \bar R_i) \succ 0 $ and $\gamma \| A + \bar A - (B_i + \bar B_i) L_i \|^2 < 1$ where $L_i = \gamma \Big(\gamma (B_i + \bar B_i)^\top \bar P_i (B_i + \bar B_i) + (R_i + \bar R_i) \Big)^{-1} (B_i + \bar B_i)^\top \bar P_i (A + \bar A)$, then the value function $\bar C_{V_i}( \bar \bbeta_i)$ can be expressed as 
\begin{equation*}
    \min_{\bar \bbeta_i} \bar C_{V_i}( \bar \bbeta_i) 
    = \min_{\check{\bar \beta}_{i}}  \EE\left[  (\check{\bar \beta}_{i})^\top \Big( (R_i + \bar R_i) + \gamma (B_i + \bar B_i)^\top \bar P_i (B_i + \bar B_i) \Big) \check{\bar \beta}_{i}  \right] \geq 0.
\end{equation*}
Furthermore, for every given random variable $\check{\bar \beta}_{i}$, the value $\min_{\bar \bbeta_i : \bar \beta_{i,0} = \check{\bar \beta}_i} \bar C_{V_i}(\bar \bbeta_i)$ is attained at $\bar \beta_{i,t} = - L_i \bar{V}_{i,t}$ for $t \geq 1$.\\

We have directly the following sufficient condition for the convexity-concavity condition.
\begin{lemma}
\label{lemma:sufficient_condition_convexity_concavity}
    If the four discrete Algebraic Riccati equations (DARE-i) and (DARE-MF-i) for $i=1,2$, i.e., \eqref{eq:DARE} and \eqref{eq:DARE_MF}, have solutions $(P_i, \bar P_i)$ such that 
    \begin{equation}
        \label{eq:condition_DARE_P_i}
        \gamma B_i^\top P_i B_i + R_i \succ 0, \qquad \gamma (B_i + \bar B_i)^\top \bar P_i (B_i + \bar B_i) + (R_i + \bar R_i) \succ 0,
    \end{equation}
     then the convexity-concavity condition~\eqref{eq:cond_convex_1}--\eqref{eq:cond_concave_2} for the process $\bV_1$ and $\bV_2$ holds.
\end{lemma}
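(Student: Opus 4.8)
The plan is to assemble the pieces already laid out in the paragraphs preceding the statement; once the structure is made explicit, the lemma is essentially a bookkeeping combination. The crucial structural fact is that the quadratic forms in the convexity-concavity condition \eqref{eq:cond_convex_1}--\eqref{eq:cond_concave_2} split additively into a ``fluctuation'' part and a ``mean'' part. Indeed, taking conditional expectations with respect to $\cF^0$ in the dynamics \eqref{eq:dyn_V1}--\eqref{eq:dyn_V2} shows that the conditional mean obeys $\bar V_{i,t+1} = (A+\bar A)\bar V_{i,t} + (B_i + \bar B_i)\bar\beta_{i,t}$ and is driven only by $\bar\bbeta_i$, while the centered process obeys $V_{i,t+1} - \bar V_{i,t+1} = A(V_{i,t} - \bar V_{i,t}) + B_i(\beta_{i,t} - \bar\beta_{i,t})$ and is driven only by the centered control $\bbeta_i - \bar\bbeta_i$. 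Hence the left-hand side of each condition equals $C_{V_i}(\bbeta_i - \bar\bbeta_i) + \bar C_{V_i}(\bar\bbeta_i)$, a sum of two functionals depending on disjoint, freely choosable components of the control.

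First I would record that, because of this decoupling, the infimum over $\bbeta_i \in \cU$ factorizes:
\[
\min_{\bbeta_i \in \cU}\big[\, C_{V_i}(\bbeta_i - \bar\bbeta_i) + \bar C_{V_i}(\bar\bbeta_i) \,\big] = \min_{\bbeta_i} C_{V_i}(\bbeta_i - \bar\bbeta_i) + \min_{\bar\bbeta_i} \bar C_{V_i}(\bar\bbeta_i),
\]
so it suffices to show each summand is non-negative. This is precisely what the dynamic-programming computations recalled just above deliver: under the hypothesis that \eqref{eq:DARE} admits $P_i$ with $\gamma B_i^\top P_i B_i + R_i \succ 0$, the optimal-cost formula gives $\min_{\bbeta_i} C_{V_i} = \min_{\check\beta_i}\EE\big[(\check\beta_i - \bar{\check\beta}_i)^\top(R_i + \gamma B_i^\top P_i B_i)(\check\beta_i - \bar{\check\beta}_i)\big] \geq 0$, and symmetrically \eqref{eq:DARE_MF} with its positivity condition yields $\min_{\bar\bbeta_i}\bar C_{V_i} \geq 0$. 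Adding the two non-negative minima, and invoking the equivalence (established in the text, with the $(-1)^{i-1}$ bookkeeping that recasts player $2$'s concavity statement as a minimization) between \eqref{eq:cond_convex_1}--\eqref{eq:cond_concave_2} and the sign of these infima, closes the argument.

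The step demanding the most care is the justification of the value formula in each sub-problem, rather than the final combination. Concretely, one must check that the standard deterministic LQ optimal-cost identity of \cite{kuvcera1972discrete} applies to the centered and mean sub-problems despite their measurability constraints (the centered control $\beta_{i,t} - \bar\beta_{i,t}$ has vanishing $\cF^0$-conditional mean, while $\bar\beta_{i,t}$ is $\cF^0$-measurable), and that the optimal feedbacks $K_i$ and $L_i$ generate $L^2$-discounted integrable closed-loop trajectories. This is where the stability conditions $\gamma\|A - B_i K_i\|^2 < 1$ and $\gamma\|A + \bar A - (B_i + \bar B_i)L_i\|^2 < 1$ accompanying the DARE solutions enter, ensuring the value functions are finite, that the infima are attained, and that the telescoping in the Bellman recursion is legitimate (via Lemma~\ref{lemma:L2_y_and_z} and Proposition~\ref{prop:stability_with_extra_L2_term}). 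Once these are in place, the lemma follows immediately.
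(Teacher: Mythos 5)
Your proposal is correct and follows essentially the same route as the paper: the paper derives the lemma directly from the preceding decomposition of the convexity--concavity condition into $C_{V_i}(\bbeta_i-\bar\bbeta_i)+\bar C_{V_i}(\bar\bbeta_i)$ and the dynamic-programming value formulas for each sub-problem, which is exactly the combination you carry out. Your added care about the decoupling of the centered and mean dynamics and the stability of the optimal feedbacks only makes explicit what the paper leaves implicit.
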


Together with the sufficient condition of the Pontryagin maximum principle (Proposition~\ref{prop:sufficient_Pontryagin}), we have 
\begin{corollary}
\label{corollary:sufficient_open_loop_Nash_eq}
 Let $(P_1, P_2, \bar P_1, \bar P_2)$ be solutions to the four discrete Algebraic Riccati equations in Lemma~\ref{lemma:sufficient_condition_convexity_concavity}, and they satisfy conditions \eqref{eq:condition_DARE_P_i}, then the pair of control processes $(\bu_1, \bu_2) \in \cU_{ad}^{open}$ defined in Proposition~\ref{prop:sufficient_Pontryagin} by~\eqref{eq:open_loop_Nash_expression_BSDE} is an open-loop Nash equilibrium for the zero-sum game.
\end{corollary}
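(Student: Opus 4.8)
The plan is to treat the corollary exactly as what it advertises itself to be: the composition of Lemma~\ref{lemma:sufficient_condition_convexity_concavity} with Proposition~\ref{prop:sufficient_Pontryagin}. Proposition~\ref{prop:sufficient_Pontryagin} already promises that its two hypotheses — existence of a solution to the forward--backward system \eqref{eq:FBSDE_pontryagin_sufficient}, and the convexity--concavity inequalities \eqref{eq:cond_convex_1}--\eqref{eq:cond_concave_2} — together force the controls \eqref{eq:open_loop_Nash_expression_BSDE} to be an open-loop Nash equilibrium. Hence essentially no new computation is required; the entire task is to certify that the data of the corollary (the four Riccati solutions $P_1,P_2,\bar P_1,\bar P_2$ and the positivity conditions \eqref{eq:condition_DARE_P_i}) guarantee those two hypotheses, and then simply to quote Proposition~\ref{prop:sufficient_Pontryagin}.

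First I would dispose of condition~(2). Since by assumption the discrete Algebraic Riccati equations \eqref{eq:DARE} and \eqref{eq:DARE_MF} admit solutions $(P_i,\bar P_i)$ satisfying the positivity requirements \eqref{eq:condition_DARE_P_i}, Lemma~\ref{lemma:sufficient_condition_convexity_concavity} applies directly and yields the convexity--concavity condition \eqref{eq:cond_convex_1}--\eqref{eq:cond_concave_2}. I would emphasize here the mechanism already set up before the lemma: the inequalities are recast as the deterministic LQ minimization problems $\min_{\bbeta_i} C_{V_i}(\bbeta_i-\bar\bbeta_i)$ and $\min_{\bar\bbeta_i}\bar C_{V_i}(\bar\bbeta_i)$, whose optimal values, attained at $\beta_{i,t}-\bar\beta_{i,t}=-K_i(V_{i,t}-\bar V_{i,t})$ and $\bar\beta_{i,t}=-L_i\bar V_{i,t}$, are manifestly nonnegative under \eqref{eq:condition_DARE_P_i}. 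This is precisely condition~(2) of Proposition~\ref{prop:sufficient_Pontryagin}.

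Next I would turn to condition~(1), the existence of a state process $\bx$ and adjoint triple $(\bp,\bZ^0,\bZ^1)$ solving the coupled system \eqref{eq:FBSDE_pontryagin_sufficient}, which is the genuinely delicate point and the place where the proof must be honest. The corollary's hypotheses furnish only the decoupled Riccati matrices $(P_i,\bar P_i)$, whereas \eqref{eq:FBSDE_pontryagin_sufficient} is a coupled forward--backward system; the controls \eqref{eq:open_loop_Nash_expression_BSDE} cannot even be written down until an adjoint $\bp$ is produced. The clean route is to lean on Proposition~\ref{proposition:open_loop_Nash_expression}: given solutions $P,\bar P$ to the coupled Riccati equations \eqref{eq:main_ARE_P_ZS}--\eqref{eq:main_ARE_Pbar_ZS}, the ansatz $p_t=P(x_t-\bar x_t)+\bar P\bar x_t$ defines an adjoint process and, with the controls \eqref{eq:MKV-opt-ctrl-formula}, exhibits an explicit solution of \eqref{eq:FBSDE_pontryagin_sufficient}. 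I would therefore carry the existence of $P,\bar P$ as a standing assumption (as flagged in the text following \eqref{eq:main_ARE_Pbar_ZS}), or defer it to the auxiliary-Riccati construction promised in Section~\ref{sec:connect-open-closed}; the $L^2$-integrability of the resulting $\bx$ and the membership $(\bu_1,\bu_2)\in\cU_{ad}^{open}$ then follow from Assumption~\ref{assumption:A_openloop} via Proposition~\ref{prop:stability_with_extra_L2_term} and the identity $\cU_{ad}^{open}=\cU\times\cU$.

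With both hypotheses verified, the conclusion is immediate from Proposition~\ref{prop:sufficient_Pontryagin}: the pair \eqref{eq:open_loop_Nash_expression_BSDE} is an open-loop Nash equilibrium and satisfies \eqref{fo:MKV-adjoint_ass_ZS}. I expect the main obstacle to be exactly the reconciliation of the two Riccati families, rather than any inequality manipulation: the positivity conditions \eqref{eq:condition_DARE_P_i} are designed to certify convexity--concavity through $(P_i,\bar P_i)$, but the adjoint process defining the equilibrium controls lives on the coupled solutions $P,\bar P$. Making the logical dependence on the solvability of \eqref{eq:main_ARE_P_ZS}--\eqref{eq:main_ARE_Pbar_ZS} (equivalently, on the well-posedness of \eqref{eq:FBSDE_pontryagin_sufficient}) fully explicit, instead of tacitly assuming condition~(1), is the step that I would treat with the most care.
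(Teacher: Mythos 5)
Your proposal is correct and follows essentially the same route as the paper, which treats the corollary as the direct composition of Lemma~\ref{lemma:sufficient_condition_convexity_concavity} (supplying the convexity--concavity condition \eqref{eq:cond_convex_1}--\eqref{eq:cond_concave_2}) with Proposition~\ref{prop:sufficient_Pontryagin}. Your additional care about condition~(1) --- the solvability of the forward--backward system \eqref{eq:FBSDE_pontryagin_sufficient}, which the corollary's phrasing ``defined in Proposition~\ref{prop:sufficient_Pontryagin}'' tacitly presupposes --- is well placed; the paper itself makes this hypothesis explicit only later, in Corollary~\ref{corollary:existence_open_loop_Nash}.
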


To conclude this section, we cite a result from \cite{ran1988existence}, tailored to our setting, which provides a sufficient condition for the existence of solutions to discrete Algebraic Riccati equation. Based on this result, we propose a sufficient condition for the existence of $(P_1, P_2, \bar P_1, \bar P_2)$ in Corollary~\ref{corollary:sufficient_open_loop_Nash_eq}.
We say that $(A, B_i)$ is $\gamma-$stabilizable if there exists a matrix $K \in \RR^{\ell \times d}$ such that all eigenvalues of $\gamma^{1/2} (A - B_i K)$ in the complex plan lie inside the unit circle, i.e. $\gamma \| A - B_i K \|^2 < 1$. 

\begin{lemma}[Theorem 3.1 in \cite{ran1988existence}]
\label{lemma:existence_DARE_Thm_3.1}
Assume that $Q \in \cS^d$, $R_i \succ 0$, and $(A, B_i)$ is $\gamma-$stabilizable, for $i = 1,2$. For a matrix $\eta \in \cS^d$, let us denote by
\begin{equation}
\label{eq:DARE_i_cR_i}
\cR_i(\eta) := (-1)^{i-1} Q - \eta + \gamma A^\top \eta A - \gamma^2 A^\top \eta B_i \Big( \gamma B_i^\top \eta B_i + R_i \Big)^{-1} B_i^\top \eta A.
\end{equation}
Then the (DARE-i) has a symmetric solution $P_i \in \cS^d$ satisfying 
\begin{equation*}
    	\cR_i(P_i) = 0, \qquad \text{and} \qquad \gamma B_i^\top P_i B_i + R_i \succ 0
\end{equation*}
if and only if the set
\begin{equation}
\label{eq:DARE_LMI_i}
    \cD_i := \left\{ \eta \in \cS^d \  | \   \gamma B_i^\top \eta B_i + R_i \succ 0, \  \cR_i(\eta) \succeq 0 \right\}
\end{equation}
is not empty. Moreover, we have $P_i \succeq \eta$ for all $\eta \in \cD_i$, and $\gamma \| A - B_i K_i \|^2 < 1$ with $K_i = (\gamma B_i^\top P_i B_i + R_i)^{-1} (\gamma B_i^\top P_i A)$.
\end{lemma}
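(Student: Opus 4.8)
The plan is to recognize this as a \emph{maximal-solution} existence theorem for a discrete algebraic Riccati equation whose cost $(-1)^{i-1}Q$ is allowed to be sign-indefinite, and to prove it by a monotone, Newton-type iteration. Write $M_i(\eta) = \gamma B_i^\top \eta B_i + R_i$ and, whenever $M_i(\eta) \succ 0$, set $K(\eta) = M_i(\eta)^{-1}\gamma B_i^\top \eta A$, so that the gain in the statement is $K_i = K(P_i)$. The starting point is a completing-the-square identity: for every $K \in \RR^{\ell \times d}$, setting
\[
\cR_i(\eta,K) := (-1)^{i-1}Q - \eta + \gamma (A - B_i K)^\top \eta (A - B_i K) + K^\top R_i K,
\]
one checks, when $M_i(\eta) \succ 0$, that $\cR_i(\eta,K) = \cR_i(\eta) + (K - K(\eta))^\top M_i(\eta)(K - K(\eta))$, so that $\cR_i(\eta) = \cR_i(\eta, K(\eta)) = \min_K \cR_i(\eta, K)$. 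A Schur-complement argument rewrites the pair of conditions $\{M_i(\eta)\succ 0,\ \cR_i(\eta)\succeq 0\}$ as feasibility of a single linear matrix inequality in $\eta$, whence $\cD_i$ is convex. The ``only if'' direction is then immediate: any solution $P_i$ with $M_i(P_i)\succ 0$ satisfies $\cR_i(P_i)=0\succeq 0$, so $P_i\in\cD_i$ and $\cD_i\neq\emptyset$.

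The single algebraic tool used repeatedly is the following \emph{Stein comparison}: if $S\in\RR^{d\times d}$ is $\gamma$-stable, i.e. $\gamma\|S\|^2<1$, and $\Delta\in\cS^d$ satisfies $\Delta\preceq\gamma S^\top\Delta S$, then, conjugating by $\gamma^{1/2}S$ and iterating, $\Delta\preceq\gamma^n (S^\top)^n\Delta S^n$, and the right-hand side has norm at most $(\gamma\|S\|^2)^n\|\Delta\|\to 0$, so $\Delta\preceq 0$; symmetrically $\Delta\succeq\gamma S^\top\Delta S$ forces $\Delta\succeq 0$. Moreover, for such $S$ the Stein equation $\eta = W + \gamma S^\top\eta S$ has the unique solution $\eta=\sum_{n\geq 0}\gamma^n (S^\top)^n W S^n$, which is monotone in $W$.

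For the ``if'' direction I would build $P_i$ as the limit of a Newton iteration anchored in $\cD_i$. Starting from $\eta_0\in\cD_i$, define $\eta_{k+1}$ as the solution of the Stein equation $\cR_i(\eta_{k+1},K(\eta_k))=0$, i.e.
\[
\eta_{k+1} = (-1)^{i-1}Q + K(\eta_k)^\top R_i K(\eta_k) + \gamma\, S_k^\top \eta_{k+1} S_k, \qquad S_k := A - B_i K(\eta_k),
\]
which is well posed as soon as $S_k$ is $\gamma$-stable. Granting that the iterates stay stabilizing (see below), the comparison tool does the rest. First, since $\cR_i(\eta_k)=\min_K\cR_i(\eta_k,K)\preceq\cR_i(\eta_k,K(\eta_{k-1}))=0$ for $k\geq 1$, the affine difference $\cR_i(\eta_k,K_k)-\cR_i(\eta_{k+1},K_k)=\cR_i(\eta_k)\preceq 0$ gives $\eta_k-\eta_{k+1}\succeq\gamma S_k^\top(\eta_k-\eta_{k+1})S_k$, hence $\eta_k\succeq\eta_{k+1}$: the sequence is monotonically decreasing. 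Second, for any $\eta\in\cD_i$ we have $\cR_i(\eta,K_k)\succeq\cR_i(\eta)\succeq 0=\cR_i(\eta_{k+1},K_k)$, which by the same computation yields $\eta_{k+1}\succeq\eta$; in particular $\eta_k\succeq\eta_0$, so $M_i(\eta_k)\succeq M_i(\eta_0)\succ 0$ is preserved and the sequence is bounded below. A decreasing, bounded-below sequence in $\cS^d$ converges to some $P_i$; passing to the limit in $\cR_i(\eta_{k+1},K(\eta_k))=0$ and using continuity gives $\cR_i(P_i)=0$, while $\eta_{k+1}\succeq\eta$ upgrades to the asserted maximality $P_i\succeq\eta$ for all $\eta\in\cD_i$. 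Finally $\gamma\|A-B_iK_i\|^2<1$ passes to the limit from the stability of the $S_k$.

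The main obstacle is exactly the parenthetical ``granting that the iterates stay stabilizing''. In the classical theory one has $Q\succeq 0$, so $(-1)^{i-1}Q+K^\top R_i K\succeq 0$, and a supersolution ($\cR_i(\eta_k)\preceq 0$) yields $\eta_k-\gamma S_k^\top\eta_k S_k\succeq 0$, from which $\gamma$-stability of $S_k$ follows by the discrete Lyapunov theorem; here that sign condition fails, most strikingly for player $2$ where the cost is $-Q$ and may be negative semidefinite. Showing that $\cD_i\neq\emptyset$ together with $\gamma$-stabilizability of $(A,B_i)$ is enough to (a) initialize the iteration at a $\gamma$-stabilizing point and (b) preserve $\gamma$-stability of $S_k$ along the monotone iteration, \emph{without} any sign hypothesis on $Q$, is precisely the nontrivial content of \cite{ran1988existence}, Theorem~3.1. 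I would therefore either reproduce their stabilizing-initialization and preservation argument or, since the statement is quoted verbatim from their work, invoke it directly.
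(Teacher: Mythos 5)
The paper offers no proof of this lemma at all: it is stated as a verbatim import of Theorem~3.1 of the cited reference, ``tailored to our setting'' by inserting the discount factor $\gamma$, and the surrounding text explicitly says the result is being \emph{cited}, not proved. Your sketch therefore does strictly more than the paper, and what you do is the standard monotone Newton (Kleinman-type) iteration for indefinite-cost discrete Riccati equations. The pieces you spell out are all correct: the completing-the-square identity $\cR_i(\eta,K)=\cR_i(\eta)+(K-K(\eta))^\top M_i(\eta)(K-K(\eta))$, the Stein comparison argument via $\Delta\preceq\gamma^n (S^\top)^n\Delta S^n\to 0$ (which does require the operator-norm condition $\gamma\|S\|^2<1$ rather than just a spectral-radius condition, consistent with how the paper defines $\gamma$-stabilizability), the monotone decrease of $\eta_k$ from $\cR_i(\eta_k)\preceq 0$, the lower bound $\eta_{k+1}\succeq\eta$ for every $\eta\in\cD_i$ giving both convergence and the asserted maximality, and the triviality of the ``only if'' direction.

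The one genuine gap is exactly the one you name yourself: initializing the iteration at a $\gamma$-stabilizing gain and proving that $\gamma$-stability of $S_k=A-B_iK(\eta_k)$ is preserved along the iteration \emph{without} a sign condition on the cost matrix (which fails here, most visibly for player~2 where the cost is $-Q$). The usual Lyapunov argument for stability preservation breaks down precisely there, and repairing it is the substantive content of the Ran--Vreugdenhil theorem. Deferring that step to the reference is a legitimate resolution --- it is what the authors do for the whole statement --- but you should be aware that until that step is supplied, your argument is a reduction of the lemma to its hardest ingredient rather than a self-contained proof. Also note that passing to the limit in $\cR_i(\eta_{k+1},K(\eta_k))=0$ is justified by your observation that $M_i(\eta_k)\succeq M_i(\eta_0)\succ 0$ uniformly, so the inverse stays bounded; it is worth stating that the limit $P_i$ inherits $M_i(P_i)\succeq M_i(\eta_0)\succ 0$, which is the second half of the conclusion.
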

Similarly, for the existence of $\bar{P}_i$ to (DARE-MF-i) satisfying condition \eqref{eq:condition_DARE_P_i}, we can define $\bar \cR_i(\eta)$ as the right hand side of \eqref{eq:DARE_MF} and consider the set 
$$
    \bar{\cD}_i := \left\{ \bar \eta \in \cS^d \  | \   \gamma (B_i + \bar B_i)^\top \bar \eta (B_i + \bar B_i) + (R_i + \bar R_i) \succ 0, \  \bar \cR_i(\bar \eta) \succeq 0 \right\}.
$$
We notice that if $\gamma \| A \|^2 < 1$, the pair $(A, B_i)$ will be $\gamma-$stabilizable, for $i=1,2$. So, under Assumption \ref{assumption:A_openloop}, we have $(A, B_1), (A, B_2), (\tilde{A}, \tilde{B}_1)$ and $(\tilde A, \tilde B_2)$ are all $\gamma-$stabilizable. 

\begin{corollary}
\label{corollary:existence_open_loop_Nash}
    Under Assumption~\ref{assumption:A_openloop}, and we assume that $Q, \bar Q \in \cS^d$, $R_i, R_i + \bar R_i \succ 0$ for $i=1,2$. If $\cD_1, \cD_2, \bar \cD_1, \bar \cD_2 \neq \emptyset$, and if the forward-backward system of equations \eqref{eq:FBSDE_pontryagin_sufficient} holds for $\bx$ and $\bp$, then there exists an open-loop Nash equilibrium for the zero-sum game.
\end{corollary}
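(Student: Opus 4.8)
The plan is to prove this corollary by chaining together three previously established facts: the existence criterion for discrete Algebraic Riccati equations (Lemma~\ref{lemma:existence_DARE_Thm_3.1}), the sufficient condition for the convexity-concavity property (Lemma~\ref{lemma:sufficient_condition_convexity_concavity}), and the sufficient form of the Pontryagin maximum principle (Corollary~\ref{corollary:sufficient_open_loop_Nash_eq}). The only genuine content is verifying that the hypotheses of the cited existence lemma are met under Assumption~\ref{assumption:A_openloop}; once solutions to the four Riccati equations are produced, the conclusion follows by assembly.

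First I would check the hypotheses of Lemma~\ref{lemma:existence_DARE_Thm_3.1} for the two state-level equations (DARE-$i$). We are given $Q \in \cS^d$ and $R_i \succ 0$, and since Assumption~\ref{assumption:A_openloop} gives $\gamma \|A\|^2 < 1$, choosing the gain matrix $K = 0$ in the definition of $\gamma$-stabilizability shows that $(A, B_i)$ is $\gamma$-stabilizable for $i=1,2$; this is precisely the observation recorded just before the statement. As $\cD_1, \cD_2 \neq \emptyset$ by hypothesis, Lemma~\ref{lemma:existence_DARE_Thm_3.1} yields symmetric solutions $P_1, P_2$ of (DARE-1)--(DARE-2) with $\gamma B_i^\top P_i B_i + R_i \succ 0$.

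Next I would repeat the same argument for the mean-field equations (DARE-MF-$i$) under the substitutions $A \mapsto A + \bar A$, $B_i \mapsto B_i + \bar B_i$, $Q \mapsto Q + \bar Q$, $R_i \mapsto R_i + \bar R_i$. Here $Q + \bar Q \in \cS^d$ and $R_i + \bar R_i \succ 0$ hold by assumption, and the second inequality in Assumption~\ref{assumption:A_openloop}, namely $\gamma \|A + \bar A\|^2 < 1$, makes $(A + \bar A, B_i + \bar B_i)$ $\gamma$-stabilizable (again taking the gain to be zero). Since $\bar \cD_1, \bar \cD_2 \neq \emptyset$, the analogue of Lemma~\ref{lemma:existence_DARE_Thm_3.1} produces $\bar P_1, \bar P_2$ satisfying $\gamma (B_i + \bar B_i)^\top \bar P_i (B_i + \bar B_i) + (R_i + \bar R_i) \succ 0$. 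At this stage the full tuple $(P_1, P_2, \bar P_1, \bar P_2)$ satisfies condition~\eqref{eq:condition_DARE_P_i}.

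Finally I would feed these four matrices into Lemma~\ref{lemma:sufficient_condition_convexity_concavity} to obtain the convexity-concavity condition~\eqref{eq:cond_convex_1}--\eqref{eq:cond_concave_2}, and then invoke Corollary~\ref{corollary:sufficient_open_loop_Nash_eq} (equivalently Proposition~\ref{prop:sufficient_Pontryagin}, whose remaining hypothesis---solvability of the forward-backward system~\eqref{eq:FBSDE_pontryagin_sufficient} for $\bx$ and $\bp$---is assumed in the statement of the corollary) to conclude that the control pair~\eqref{eq:open_loop_Nash_expression_BSDE} built from $\bx$ and $\bp$ is an open-loop Nash equilibrium, which establishes existence. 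The main point requiring care is the stabilizability verification: one must ensure that the zero gain is admissible in the definition of $\gamma$-stabilizability, which is exactly what the two inequalities in Assumption~\ref{assumption:A_openloop} guarantee. Beyond that, the argument is purely a matter of assembling the previously proved implications, so there is no deep obstacle here.
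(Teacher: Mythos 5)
Your proposal is correct and follows exactly the route the paper intends: the corollary is stated without an explicit proof precisely because it is the assembly of Lemma~\ref{lemma:existence_DARE_Thm_3.1} (with the observation, recorded just before the statement, that $\gamma\|A\|^2<1$ and $\gamma\|A+\bar A\|^2<1$ give $\gamma$-stabilizability via the zero gain), Lemma~\ref{lemma:sufficient_condition_convexity_concavity}, and Corollary~\ref{corollary:sufficient_open_loop_Nash_eq}. Your verification of the stabilizability hypothesis and of condition~\eqref{eq:condition_DARE_P_i} matches the paper's reasoning, so there is nothing to add.
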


\section{Closed-loop information structure}
\label{sec:closed-loop-structure}

In this section, we turn our attention to closed-loop controls, that is, controls which are functions of the state and the conditional mean. We will in fact focus on a specific class of such functions.

	\subsection{Admissible set of controls}

We start by defining the class of functions we will consider in the rest of this section. 

	 \begin{definition}
	    For $i=1,2$, a closed-loop feedback strategy (or policy) for player $i$ is a function $v_{\theta_i}: \RR^d \times \RR^d \to \RR^\ell$, $(x, \bar x) \mapsto (-1)^i K_i (x - \bar x) + (-1)^i L_i \bar x$ parameterized by a tuple $\theta_i = (K_i, L_i)$ where $K_i$ and $L_i$ are (deterministic) matrices in $\RR^{\ell \times d}$. 
	    A pair of policies given by parameters $(\theta_1, \theta_2) \in (\RR^{\ell \times d})^2 \times (\RR^{\ell \times d})^2$ for the two players is called a closed-loop feedback policy profile.
	    
	 \end{definition}
	 For simplicity, in the sequel, we will  use interchangeably the terms strategy, policy and parameters. In other words, for any $\theta$, we identify the parameters $\theta$ with the induced closed-loop policy $v_{\theta}$.

    We consider the set of admissible policy in the closed-loop information structure as follow:
	\begin{equation}
	\Theta_{ad}^{close} = \Bigg\{  (\theta_1 , \theta_2) \in (\RR^{\ell \times d})^2 \times (\RR^{\ell \times d})^2 \  | \  \bx^{\theta_1, \theta_2} \in \cX  
	\Bigg\},
	\end{equation}
	where the state process $\bx^{\theta_1, \theta_2} = (x_t^{\theta_1, \theta_2})_{t \geq 0} $ is controlled by the pair of closed-loop feedback control processes $(\bu_1, \bu_2) \in \cU_{loc} \times \cU_{loc}$ defined by
	\begin{equation}
	\label{eq:CLFB_control}
	u_{i,t} = (-1)^i K_i (x_t^{\theta_1, \theta_2} - \bar{x}_t^{\theta_1, \theta_2} )  + (-1)^i L_i \bar{x}_t^{\theta_1, \theta_2}.
	\end{equation}

	When we plug in the above closed-loop feedback controls \eqref{eq:CLFB_control} into the state process dynamics \eqref{eq:MKV-state_ZS}, we obtain that, for every $t \geq 0$,
	\begin{equation*}
	    x_{t+1}^{\theta_1, \theta_2} = (A - B_1 K_1 + B_2 K_2) \left(x_t^{\theta_1, \theta_2} - \bar{x}_t^{\theta_1, \theta_2} \right) + \left( \tilde{A} - \tilde{B}_1 L_1 + \tilde{B}_2 L_2 \right) \bar{x}_t^{\theta_1, \theta_2} + \epsilon_{t+1}^0 + \epsilon_{t+1}^1,
 	\end{equation*}
 	where $\tilde{A} = A + \bar A$, $\tilde{B}_1 = B_1 + \bar B_1$, and $\tilde B_2 = B_2 + \bar B_2$.
 	By Proposition~\ref{prop:stability_with_extra_L2_term}, the process $\bx^{\theta_1, \theta_2}$ is $L^2-$discounted globally integrable under the assumption:
 	$$
 	    \gamma \| A - B_1 K_1 + B_2 K_2 \|^2 < 1, \qquad \text{and} \qquad \gamma \| \tilde A - \tilde B_1 L_1 + \tilde B_2 L_2 \|^2 < 1.
 	$$
 	Thus, it is reasonable to consider the following subset of $\Theta_{ad}^{close}$: 
    \begin{align}
    \label{eq:Theta_stable_set}
    	\Theta &= \left\{ (\theta_1, \theta_2) \in  (\RR^{\ell \times d})^2 \times (\RR^{\ell \times d})^2 \  | \  \gamma \| A - B_1 K_1 + B_2 K_2 \|^2 < 1, \right.
    	\\
    	& \qquad\qquad\qquad\qquad \left.\   \gamma \| \tilde A - \tilde B_1 L_1 + \tilde B_2 L_2 \|^2 < 1 \right\}.
    	\notag
   \end{align}
 	
	Despite its simple definition, one can check that the set $\Theta$ is not convex (see e.g. the Appendix of \cite{fazel2018global}). Moreover, the set $\Theta_{ad}^{close}$ does not have a simple expression in terms of the model parameters. Without any additional assumptions, the two players need to decide together the set of admissible policy profiles $\Theta_{ad}^{close}$ before playing against each other in a zero-sum game.
	However, in some situations, we can consider a subset of $\Theta_{ad}^{close}$ of the form $\Theta_1 \times \Theta_2$ where $\Theta_1, \Theta_2$ are two independent closed subsets in $\cU_{ad}^{close}$, so that a player is able to choose freely and independently her admissible strategy without being affected by the $L^2$-integrability issue of the state process caused by the choice of strategy of her opponent.\\
 	
 	Under Assumption~\ref{assumption:A_openloop}, namely $\gamma \| A \|^2 < 1$ and $\gamma \| \tilde{A} \|^2 < 1$, there exists two pairs of real numbers $(\eta_1, \eta_2) \in \RR^2$ and $(\tilde \eta_1, \tilde \eta_2) \in \RR^2$ such that
        \begin{equation*}
            \kappa :=  \gamma \| A \|^2 + \gamma \left( \eta_1^2 \| B_1 \|^2 + \eta_2^2 \| B_2 \|^2\right)   < 1, 
            \tilde \kappa :=  \gamma \| \tilde A \|^2 + \gamma \left( \tilde \eta_1^2 \| \tilde B_1 \|^2 + \tilde \eta_2^2 \| \tilde B_2 \|^2  \right) < 1.
        \end{equation*}
    For $i=1,2$, let us denote 
    $
        r_K^{(i)} = \eta_i \sqrt{ \frac{1}{2} \left( \frac{1}{ \kappa } -1 \right)},$ and $r_L^{(i)} = \tilde \eta_i \sqrt{ \frac{1}{2} \left( \frac{1}{ \tilde \kappa } -1 \right)} .$
    
    The following lemma, obtained by Cauchy-Schwarz inequality, provides an example in which the two players are able to choose their admissible strategies independently of each other.
 	
 	\begin{lemma}
        Assuming the closed-loop feedback policies $\theta_1 = (K_1, L_1) \in \RR^{\ell \times d} \times \RR^{\ell \times d}$ and $\theta_2 = (K_2, L_2) \in \RR^{\ell \times d} \times \RR^{\ell \times d}$ satisfy  $\| K_i \| \leq r_K^{(i)}$ and  $\| L_i \| \leq r_L^{(i)}$ 
        for $i = 1, 2$, then $(\theta_1, \theta_2) \in \Theta$. 
    \end{lemma}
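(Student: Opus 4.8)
The plan is to verify the two spectral-norm conditions defining $\Theta$ one at a time; since they are identical in structure (the second is obtained from the first by replacing $A, B_i, K_i, \eta_i, \kappa, r_K^{(i)}$ with $\tilde A, \tilde B_i, L_i, \tilde\eta_i, \tilde\kappa, r_L^{(i)}$), I concentrate on showing $\gamma \| A - B_1 K_1 + B_2 K_2 \|^2 \le 1$. First I would bound the operator norm using the triangle inequality and submultiplicativity,
\[
\| A - B_1 K_1 + B_2 K_2 \| \le \| A \| + \| B_1 \| \, \| K_1 \| + \| B_2 \| \, \| K_2 \|.
\]

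The crux is then a weighted Cauchy--Schwarz inequality chosen so that the weights $\eta_i$ appearing in $\kappa$ are matched exactly. Writing each control term as $\| B_i \| \, \| K_i \| = (\eta_i \| B_i \|)(\| K_i \| / \eta_i)$, I view the right-hand side above as the Euclidean inner product of $(\| A \|, \eta_1 \| B_1 \|, \eta_2 \| B_2 \|)$ with $(1, \| K_1 \|/\eta_1, \| K_2 \|/\eta_2)$, so that Cauchy--Schwarz gives
\[
\| A - B_1 K_1 + B_2 K_2 \|^2 \le \big( \| A \|^2 + \eta_1^2 \| B_1 \|^2 + \eta_2^2 \| B_2 \|^2 \big)\Big( 1 + \tfrac{\| K_1 \|^2}{\eta_1^2} + \tfrac{\| K_2 \|^2}{\eta_2^2} \Big).
\]
Multiplying by $\gamma$, the first factor is exactly $\kappa$ by definition, while the hypotheses $\| K_i \| \le r_K^{(i)} = \eta_i \sqrt{\tfrac12 ( \tfrac{1}{\kappa} - 1 )}$ give $\| K_i \|^2 / \eta_i^2 \le \tfrac12(\tfrac{1}{\kappa} - 1)$ for $i = 1, 2$. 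Summing the two control contributions yields $1 + \sum_i \| K_i \|^2 / \eta_i^2 \le 1 + (\tfrac1\kappa - 1) = \tfrac{1}{\kappa}$, whence $\gamma \| A - B_1 K_1 + B_2 K_2 \|^2 \le \kappa \cdot \tfrac{1}{\kappa} = 1$. The identical chain applied to $\tilde A, \tilde B_1, \tilde B_2, L_1, L_2$ with the constants $\tilde\eta_i, \tilde\kappa$ yields $\gamma \| \tilde A - \tilde B_1 L_1 + \tilde B_2 L_2 \|^2 \le 1$, and the two estimates together place $(\theta_1, \theta_2)$ in $\Theta$.

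The step I expect to be the main obstacle is the \emph{strict} inequality: the argument above produces only the non-strict bound $\le 1$, whereas membership in $\Theta$ requires $< 1$. This is a genuine boundary effect, since equality is attained precisely when $\| K_i \| = r_K^{(i)}$ holds together with equality in both the triangle and the Cauchy--Schwarz steps. I would resolve it by noting that equality in Cauchy--Schwarz forces proportionality of the two vectors above, while equality in the triangle inequality forces $A, -B_1 K_1, B_2 K_2$ to be nonnegatively aligned in operator norm; outside of such degenerate configurations the inequality is strict. More cleanly, one may read the radius constraints as strict, or shrink $r_K^{(i)}, r_L^{(i)}$ by an arbitrarily small factor, which turns $\le 1$ into $< 1$ without altering the substance of the statement.
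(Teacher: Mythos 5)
Your proof takes exactly the same route as the paper's: the paper's proof is precisely the one-line weighted Cauchy--Schwarz bound $\gamma\|A - B_1K_1 + B_2K_2\|^2 \le \left(\gamma\|A\|^2 + \gamma\eta_1^2\|B_1\|^2 + \gamma\eta_2^2\|B_2\|^2\right)\left(1+\eta_1^{-2}\|K_1\|^2+\eta_2^{-2}\|K_2\|^2\right)$, with the two factors bounded by $\kappa$ and $1/\kappa$ respectively, and the same argument repeated for the $(\tilde A,\tilde B_i,L_i)$ block. Your remark about strictness is fair --- the paper asserts $<1$ although this chain only yields $\le 1$ when $\|K_i\| = r_K^{(i)}$ with the triangle and Cauchy--Schwarz steps simultaneously tight --- so flagging that the radii should be read as strict (or shrunk by an arbitrarily small factor) is a legitimate observation about the statement rather than a defect of your argument.
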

 	\commentJDG{
 	\begin{proof}
        The Cauchy-Schwarz inequality implies that
     	\begin{equation*}
     	    \gamma \| A - B_1 K_1 + B_2 K_2 \|^2  \leq \left( \gamma \| A \|^2 + \gamma \eta_1^2 \| B_1 \|^2 + \gamma \eta_2^2 \| B_2 \|^2 \right) \left( 1 + \eta_1^{-2} \| K_1 \|^2 + \eta_2^{-2} \| K_2 \|^2 \right) < 1.
     	\end{equation*}
     	Similarly, we have $\gamma \| \tilde A - \tilde B_1 L_1 + \tilde B_2 L_2 \|^2 < 1$. The result follows.
 	\end{proof}
 	}
 	If the context is clear, we omit in the following sections the superscript $(\theta_1, \theta_2)$ in state processes $(x_t^{\theta_1, \theta_2} )_{t\geq 0}$.

    \subsection{Auxiliary processes}
	
	We apply a re-parametrization on the state variable $x_t$ as follow: for every $t \geq 0$, let 
	\begin{equation*}
		y_t = x_t - \bar{x}_t,  \qquad  z_t = \bar{x}_t.
	\end{equation*}
    We denote $\by = (y_t)_{t \geq 0}$ and $\bz = (z_t)_{t \geq 0}$ the two auxiliary state processes derived from $\bx$.
	For the sake of clarity, we introduce some new notations on the control processes using the sample re-parametrization method:
	\begin{align*}
		& u_{1,t}^{(y)} := u_{1,t} - \bar{u}_{1,t} = -K_1 y_t, && u_{2,t}^{(y)} := u_{2,t} - \bar{u}_{2,t} = K_2 y_t
		\\
		& 
		u_{1,t}^{(z)} := \bar u_{1,t} = - L_1 z_t, && u_{2,t}^{(z)} := \bar{u}_{2,t} = L_2 z_t,
	\end{align*}  
	The processes $(y_t)_{t\geq 0}$ and $(z_t)_{t\geq 0}$ defined in this way follow the dynamics	
	\begin{align}	
	    \label{eq:dyn_y_closed}
		y_{t+1} & = A y_{t} + B_1 \yut{1} + B_2 \yut{2} + \epsilon_{t+1}^1 , \qquad y_0 \sim \epsilon_0^1,
		\\
		\label{eq:dyn_z_closed}
		z_{t+1} & = \tilde{A} z_t + \tilde{B}_1 \zut{1} + \tilde{B}_2 \zut{2} + \epsilon_{t+1}^0, \qquad z_0 \sim \epsilon_0^0,
	\end{align}
	where $\epsilon_0^0, \epsilon_0^1$ are random variables with distributions $\mu_0^0$ and $\mu_0^1$ respectively. We then observe that for every $ t, t' \geq 0$, the random variables $y_t$ and $z_t$ are respectively $\cF^1-$measurable and  $\cF^0-$measurable, and they are independent.
	
	The running cost at time $t$ defined by equation \eqref{eq:MKV_running_cost_c} can also expressed using the above notations:
	\begin{align*}
		& c(x_t, \bar{x}_t, u_{1,t}, \bar{u}_{1,t}, u_{2,t}, \bar{u}_{2,t} ) 
		\nonumber \\ 
		= & y_t^\top Q y_t + (\yut{1})^\top R_1 \yut{1} - (\yut{2})^\top R_2 \yut{2} + z_t^\top \tilde Q z_t + (\zut{1})^\top \tilde R_1 \zut{1} - (\zut{2})^\top \tilde R_2 \zut{2}
		\nonumber \\
		= & c_y( y_t, \yut{1}, \yut{2} ) + c_z( z_t, \zut{1}, \zut{2} ),
	\end{align*}
	where $\tilde Q = Q + \bar Q$, $\tilde R_1 = R_1 + \bar R_1$, $\tilde R_2 = R_2 + \bar R_2$, and $c_y : \RR^d \times \RR^{\ell} \times \RR^{\ell} \to \RR $ and $c_z : \RR^{d} \times \RR^{\ell} \times \RR^{\ell} \to \RR$ are the running cost functions associated to $(y_t)_{t\geq 0}$ and $(z_t)_{t\geq 0}$ defined by
	\begin{align*}
		c_y(y_t, \yut{1}, \yut{2}) 
		&: = y_t^\top Q y_t + ( \yut{1} )^\top R_1 \yut{1} - ( \yut{2} )^\top R_2 \yut{2} 
		\commentJDG{
		= 
		\left[ \begin{array}{c} y_t \\ \yut{1} \\ \yut{2} \end{array} \right]^\top 
		\left[ 
		\begin{array}{ccc} Q & 0 & 0 \\
					  0 & R_1 & 0 \\
					  0 & 0 & -R_2 \\
		\end{array}
		\right]
		\left[ \begin{array}{c} y_t \\ \yut{1} \\ \yut{2} \end{array} \right]
		}
	\end{align*}
	and 
	\begin{align*}
		c_z(z_t, \zut{1}, \zut{2}) 
		&: = 
		z_t^\top \tilde{Q} z_t + ( \zut{1} )^\top \tilde{R}_1 \zut{1} - ( \zut{2} )^\top \tilde{R}_2 \zut{2} 
		 \commentJDG{= 
		 \left[ \begin{array}{c} z_t \\ \zut{1} \\ \zut{2} \end{array} \right]^\top 
		\left[ 
		\begin{array}{ccc} \tilde Q & 0 & 0 \\
		0 & \tilde R_1 & 0 \\
		0 & 0 & - \tilde R_2 \\
		\end{array}
		\right]
		\left[ \begin{array}{c} z_t \\ \zut{1} \\ \zut{2} \end{array} \right]
		}.
	\end{align*}

	We denote by $C(\theta_1, \theta_2) = J(\bu_1, \bu_2)$ the utility function associated to a closed-loop feedback policy profile  $(\theta_1, \theta_2) \in \Theta$. As what has been presented in the proof of Proposition~\ref{lem:PG_expression}, we introduce two auxiliary utility functions $C_y(K_1, K_2, \tilde{y})$ and $C_z(L_1, L_2, \tilde{z})$ defined as 
	\begin{subequations}
	    \begin{empheq}[left = \empheqlbrace]{align}
	    		C_y( K_1, K_2, \tilde{y} ) & = \EE \left[ \sum_{t=0}^\infty \gamma^t c_y( y_t, \yut{1}, \yut{2} ) \ \Big| y_0 = \tilde{y} \right],
		    \label{eq:C_y_K1_K2} 
		\\
	    	C_z( L_1, L_2, \tilde{z} ) & = \EE \left[ \sum_{t=0}^\infty \gamma^t c_z( z_t, \zut{1}, \zut{2} ) \ \Big| z_0 = \tilde{z} \right],
			\label{eq:C_z_L1_L2}
	    \end{empheq}
	\end{subequations}
	in which the control processes are  $(u_{1,t}^{(y)}, u_{1,t}^{(z)}) = (- K_1 y_t, - L_1 z_t)$ and $(u_{2,t}^{(y)}, u_{2,t}^{(z)}) = (K_2 y_t, L_2 z_t)$. 
	Lemma~\ref{lemma:L2_x_and_x_bar} shows that $\bx_t^{\theta_1, \theta_2}$ is $L^2-$integrable if and only if $\by$ and $\bz$ are $L^2-$integrable. With the new notations, we let
	\begin{equation}
	\label{eq:def-Ctheta1theta2}
		C(\theta_1, \theta_2) = \EE_{\tilde{y}}[ C_y(K_1, K_2, \tilde{y} ) ] + \EE_{\tilde{z}} [ C_z( L_1, L_2, \tilde{z}) ].
	\end{equation}

	Now we can define the closed-loop saddle point, also known as the Nash equilibrium in the closed-loop information structure, for the zero-sum game.
 	\begin{definition}
		A closed-loop feedback policy profile $(\theta_1^*, \theta_2^*) \in \Theta_{ad}^{close}$ with $\theta_1^* = (K_1^*, L_1^*)$ and $\theta_2^* = (K_2^*, L_2^*)$ is said to be a closed-loop saddle point for the zero-sum game (CLSP for short) if and only if
		\begin{itemize}
			\item for every $\theta_1 = (K_1, L_1) \in \RR^{\ell\times d} \times \RR^{\ell \times d}$ such that $(\theta_1, \theta_2^*) \in \Theta_{ad}^{close}$, we have 
			\begin{equation*}
			C(\theta_1, \theta_2^*) \geq C(\theta_1^*, \theta_2^*),
			\end{equation*}
			\item and for every $\theta_2 = (K_2, L_2) \in \RR^{\ell \times d} \times \RR^{\ell \times d}$ such that $(\theta_1^*, \theta_2) \in \Theta_{ad}^{close}$, we have 
			\begin{equation*}
			C(\theta_1^*, \theta_2) \leq C(\theta_1^*, \theta_2^*).
			\end{equation*}
			
		\end{itemize}
		
	\end{definition}
 	
	\begin{remark}
		We notice that the state processes associated with $C( \theta_1, \theta_2^*)$, $C(\theta_1^*, \theta_2^*)$, and $C(\theta_1^*, \theta_2)$ are all different. For example, the state process $\bx^{\theta_1, \theta_2^*}$ appearing in $C(\theta_1, \theta^*_2)$  
		follows the dynamics
		\begin{align*}
		    x_{t+1}^{\theta_1, \theta_2^*} & = A x_t^{\theta_1, \theta_2^*} + B_1 u_{1,t} + \bar B_1 \bar u_{1,t}  + B_2 u_{2,t}^* + \bar B_2 \bar u_{2,t}^*  + \epsilon_{t+1}^1 + \epsilon_{t+1}^0
		    \\
		    & = \left( A - B_1 K_1 + B_2 K_2^*  \right) y_t^{\theta_1, \theta_2^*} + \left( \tilde A - \tilde B_1 L_1 + \tilde B_2 L_2^* \right) z_t^{\theta_1, \theta_2^*}+ \epsilon_{t+1}^0 + \epsilon_{t+1}^1
		\end{align*}
		with $x_0^{\theta_1, \theta_2^*} \sim \epsilon_0^0 + \epsilon_0^1$, whereas the state process $\bx^{\theta_1^*, \theta_2^*}$ is given by
		\begin{equation*}
		     x_{t+1}^{\theta_1^*, \theta_2^*} = \left( A - B_1 K_1^* + B_2 K_2^*  \right) y_t^{\theta_1^*, \theta_2^*} + \left( \tilde A - \tilde B_1 L_1^* + \tilde B_2 L_2^* \right) z_t^{\theta_1^*, \theta_2^*}+ \epsilon_{t+1}^0 + \epsilon_{t+1}^1.
		\end{equation*}
		
	\end{remark}

	\begin{remark}
		We can see that the process $(y_t)_{t\geq 0}$ is completely controlled by $(K_1, K_2)$ or by $(\bu_1^{(y)}, \bu_2^{(y)})$, and likewise for the process $(z_t)_{t\geq 0}$ by $(L_1, L_2)$ or by $(\bu_1^{(z)}, \bu_2^{(z)})$. Moreover, the noise processes associated with $(y_t)_{t\geq 0}$ and $(z_t)_{t\geq 0}$ are independent. So when the two players are at CLSP $(\theta_1^*, \theta_2^*)$, and one of them, say controller 1, perturbs her policy with a parameter set $\theta_1 = (K_1, L_1)$ different from $\theta_1^*=(K_1^*, L_1^*)$, we can look at the difference between the costs $\EE_{\tilde{y}}[ C_y(K_1, K_2^*, \tilde{y})]$ and $\EE_{\tilde y}[ C_y(K_1^*, K_2^*, \tilde{y})]$, and separately at the difference between $\EE_{\tilde{z}}[C_z(L_1, L_2^*, \tilde{z})]$ and $\EE_{\tilde{z}}[C_z(L_1^*, L_2^*, \tilde{z})]$.
	\end{remark}
	
	We introduce here two sets related to the admissible policies with respect to the processes $(y_t)_{t \geq 0}$ and $(z_t)_{t \geq 0}$.
	Let us denote by 
	\begin{subequations}
	\begin{empheq}[left=\empheqlbrace]{align}
	        \Theta_y &= \big\{ (K_1, K_2) \in \RR^{\ell \times d} \times \RR^{\ell \times d} \ | \  \by \in \cX \big\},
	        \label{eq:Theta_y}
	    \\
	    \Theta_z &= \big\{ (L_1, L_2) \in \RR^{\ell \times d} \times \RR^{\ell \times d} \ | \  \bz \in \cX \big\},
	    \label{eq:Theta_z}
	   \end{empheq}
	\end{subequations}
	where $\by$ and $\bz$ are two processes following the dynamics \eqref{eq:dyn_y_closed} and \eqref{eq:dyn_z_closed}. The processes $\by$ and $\bz$ can be constructed without any prior knowledge from $\bx$, and they are completely determined by the choice of matrix pairs $(K_1, K_2)$ and $(L_1, L_2)$. From Lemma~\ref{lemma:L2_x_and_x_bar}, the set $\Theta_y$ (and similarly $\Theta_z$) can be understood as the collection of pair of matrices consisting of the first (or second) elements in policies $\theta_1$ and $\theta_2$.
	
	\begin{definition}
	    A pair of matrices $(K_1^*, K_2^*) \in \RR^{\ell \times d} \times \RR^{\ell \times d}$ is said to be a closed-loop feedback saddle point in $\Theta_y$ ($CLSP-y$ for short) if for every $\tilde y \in \RR^d$, for every $K_1, K_2 \in \RR^{\ell \times d}$ such that $(K_1, K_2^*) \in \Theta_y$ and $(K_1^*, K_2) \in \Theta_y$, we have 
	    \begin{equation}
		    \label{eq:cond_CLSP_y}
			C_y(K_1^*, K_2, \tilde{y}) \leq C_y(K_1^*, K_2^*, \tilde{y}) \leq C_y(K_1, K_2^*, \tilde{y}).
		\end{equation}
		A pair of matrices $(L_1^*, L_2^*) \in \RR^{\ell \times d} \times \RR^{\ell \times d}$ is said to be a closed-loop feedback saddle point in $\Theta_z$ ($CLSP-z$ for short) if for every $\tilde z \in \RR^d$, for every $L_1, L_2 \in \RR^{\ell \times d}$ such that $(L_1, L_2^*) \in \Theta_z$ and $(L_1^*, L_2) \in \Theta_z$, we have 
	    \begin{equation}
		    \label{eq:cond_CLSP_z}
			C_z(L_1^*, L_2, \tilde{z}) \leq C_z(L_1^*, L_2^*, \tilde{z}) \leq C_z(L_1, L_2^*, \tilde{z}).
		\end{equation}
	\end{definition}

	\subsection{Notations and useful lemmas}
    
    In the sequel, we will use the following notations:
    \begin{subequations}
	\label{eq:notation_M_N_L}
	    \begin{empheq}[left=\empheqlbrace]{align}
	& \cM(P) = \gamma A^\top P A - P + Q
		\\
		& \cL_1(P) = \gamma A^\top P B_1, \qquad \cL_2(P) = \gamma A^\top P B_2, \qquad \cL_{12} = \gamma B_1^\top P B_2
		\\
		& \cN_1(P) = \gamma B_1^\top P B_1 + R_1, \qquad \cN_2(P) = \gamma B_2^\top P B_2 - R_2.
		\end{empheq}
	\end{subequations}
	
	The following lemma is about the inverse of block matrix.
	\begin{lemma}[Inverse $2 \times 2$ block matrix; Corollary~4.1 in \cite{lu2002inverses}]
	It holds:
		\label{lemma:inverse_block}
		\begin{itemize}
			\item 
			If $\cN_1(P)$ and $S_2 := \cN_2(P) - \cL_{12}(P)^\top \cN_1(P)^{-1} \cL_{12}(P) $ are invertible, then
			\begin{equation*}
			\cN(P)^{-1} = \left[
			\begin{array}{cc}
			\cN_1(P)^{-1} + \cN_1(P)^{-1} \cL_{12}(P) S_2^{-1} \cL_{12}(P)^\top \cN_1(P)^{-1} & -\cN_1(P)^{-1} \cL_{12}(P) S_2^{-1} 
			\\
			\\
			- (\cN_1(P)^{-1} \cL_{12}(P) S_2^{-1} )^\top &  S_2^{-1}
			\end{array}
			\right].
			\end{equation*}
			
			\item 
			If $\cN_2(P)$ and $S_1 := \cN_1(P) - \cL_{12}(P)  \cN_2(P)^{-1} \cL_{12}(P)^\top$ are invertible, then
			\begin{equation*}
			\cN(P)^{-1} = \left[
			\begin{array}{cc}
			S_1^{-1} &  - S_1^{-1} \cL_{12}(P) \cN_2(P) 
			\\
			\\
			-  ( S_1^{-1} \cL_{12}(P) \cN_2(P) )^{-1} & \cN_2(P) + \cN_2(P)^{-1} \cL_{12}(P)^\top S_2^{-1} \cL_{12}(P) \cN_2(P)^{-1}
			\end{array}
			\right]. 
			\end{equation*}
			
		\end{itemize}
		
	\end{lemma}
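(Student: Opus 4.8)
The statement is the classical Schur-complement formula for the inverse of a symmetric $2\times 2$ block matrix, applied to the matrix
\[
\cN(P) = \begin{pmatrix} \cN_1(P) & \cL_{12}(P) \\ \cL_{12}(P)^\top & \cN_2(P) \end{pmatrix},
\]
which is the natural block matrix assembled from the quantities introduced in~\eqref{eq:notation_M_N_L}. The plan is to exhibit an explicit block \textbf{LDU} factorization in each of the two cases and then invert the triangular and block-diagonal factors, whose inverses are immediate. Throughout I suppress the argument $P$ to lighten notation.

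First I would treat the case where $\cN_1$ is invertible. Block Gaussian elimination on the first block row/column gives the factorization
\[
\cN = \begin{pmatrix} I & 0 \\ \cL_{12}^\top \cN_1^{-1} & I \end{pmatrix} \begin{pmatrix} \cN_1 & 0 \\ 0 & S_2 \end{pmatrix} \begin{pmatrix} I & \cN_1^{-1} \cL_{12} \\ 0 & I \end{pmatrix},
\]
where $S_2 = \cN_2 - \cL_{12}^\top \cN_1^{-1} \cL_{12}$ is the Schur complement of $\cN_1$; a direct block multiplication confirms the right-hand side equals $\cN$, using $\cL_{12}^\top\cN_1^{-1}\cL_{12} + S_2 = \cN_2$. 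Since both $\cN_1$ and $S_2$ are assumed invertible, all three factors are invertible, the two unit-triangular factors invert by flipping the sign of their off-diagonal block, and the block-diagonal factor inverts blockwise. Multiplying the three inverses in reverse order then yields the first displayed formula for $\cN^{-1}$.

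For the second formula I would perform the symmetric elimination starting from the lower-right block $\cN_2$, obtaining the analogous factorization
\[
\cN = \begin{pmatrix} I & \cL_{12} \cN_2^{-1} \\ 0 & I \end{pmatrix} \begin{pmatrix} S_1 & 0 \\ 0 & \cN_2 \end{pmatrix} \begin{pmatrix} I & 0 \\ \cN_2^{-1} \cL_{12}^\top & I \end{pmatrix},
\]
with $S_1 = \cN_1 - \cL_{12} \cN_2^{-1} \cL_{12}^\top$, and inverting factor by factor exactly as before under the hypotheses that $\cN_2$ and $S_1$ are invertible. Finally, I would confirm both expressions by directly multiplying the claimed $\cN^{-1}$ against $\cN$ and checking that the product is the identity; this is the safest guard against bookkeeping slips. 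The only real obstacle here is clerical---keeping track of which Schur complement appears and the correct placement of transposes and signs---since every individual step is elementary linear algebra. Indeed, the result is precisely Corollary~4.1 of~\cite{lu2002inverses}, so it may simply be invoked once the block matrix $\cN(P)$ is identified.
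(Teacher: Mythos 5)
Your proposal is correct. The paper in fact offers no proof of this lemma at all---it is simply quoted as Corollary~4.1 of the cited reference---so your block LDU derivation via Schur complements supplies the standard argument that the paper leaves implicit, and both of your factorizations multiply out correctly to $\cN(P)$. One genuinely useful byproduct of your derivation: the second displayed formula as printed in the paper contains typographical slips (the off-diagonal blocks should involve $\cN_2(P)^{-1}$ rather than $\cN_2(P)$, the lower-left block should be a transpose rather than an inverse of the upper-right block, and the lower-right block should read $\cN_2(P)^{-1} + \cN_2(P)^{-1}\cL_{12}(P)^\top S_1^{-1}\cL_{12}(P)\cN_2(P)^{-1}$ with $S_1$, not $S_2$); your elimination starting from the lower-right block produces the corrected expressions, and your final suggestion of verifying by direct multiplication against $\cN(P)$ is exactly the check that catches these errors.
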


	\begin{lemma}[Schur's lemma]
	\label{lemma:schur's_lemma}
		For every symmetric matrices $M \in \cS^d$ and $N \in \cS^{\ell}$, and for every matrix $L \in \RR^{d \times \ell}$, if $N$ is invertible, then
		\begin{equation*}
		\left[ \begin{array}{c} x \\ u \end{array} \right]^\top 
		\left[ \begin{array}{cc} M & L \\ L^\top & N \end{array} \right]
		\left[ \begin{array}{c} x \\ u \end{array} \right]  
		= x^\top \left( M - L N^{-1} L^\top \right) x +  \left(u + N^{-1} L^\top x \right)^\top N \left( u + N^{-1} L^\top x \right).
		\end{equation*}\\
	\end{lemma}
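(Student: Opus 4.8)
The plan is to establish the identity by a direct expansion that amounts to completing the square in the block quadratic form; no deep argument is required. First I would expand the left-hand side by block multiplication,
\[
\begin{pmatrix} x \\ u \end{pmatrix}^\top \begin{pmatrix} M & L \\ L^\top & N \end{pmatrix} \begin{pmatrix} x \\ u \end{pmatrix} = x^\top M x + x^\top L u + u^\top L^\top x + u^\top N u.
\]
Since each summand is a scalar, the two cross terms are transposes of one another and hence equal, so they combine into $2 x^\top L u$. Thus the left-hand side equals $x^\top M x + 2 x^\top L u + u^\top N u$.

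Next I would expand the completed-square term appearing on the right. Setting $w = u + N^{-1} L^\top x$ and using the symmetry of $N$, so that $(N^{-1})^\top = N^{-1}$, one computes
\[
w^\top N w = u^\top N u + 2 x^\top L u + x^\top L N^{-1} L^\top x.
\]
Adding the remaining term $x^\top (M - L N^{-1} L^\top) x$ then cancels the quadratic-in-$x$ remainder $x^\top L N^{-1} L^\top x$ against the Schur-complement correction $-x^\top L N^{-1} L^\top x$, leaving exactly $x^\top M x + 2 x^\top L u + u^\top N u$, which coincides with the left-hand side. This proves the claimed identity.

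The computation is entirely elementary, so there is no genuine obstacle; the only two facts used are that a scalar equals its own transpose, which merges the two off-diagonal cross terms into $2 x^\top L u$, and that the symmetry of $N$ gives $(N^{-1})^\top = N^{-1}$, which makes the square genuinely absorb the correction $-L N^{-1} L^\top$. Invertibility of $N$ is needed only so that $N^{-1}$ is defined. I would also remark that the symmetry of $M$ is not actually used, since $M$ enters solely through the term $x^\top M x$ that appears identically on both sides; it is retained in the hypotheses merely to match the intended application to the symmetric cost matrices.
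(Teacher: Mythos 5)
Your proof is correct: the block expansion of the left-hand side, the completion of the square using $(N^{-1})^\top = N^{-1}$, and the cancellation of the $x^\top L N^{-1} L^\top x$ terms are all carried out accurately, and your remark that the symmetry of $M$ is not needed is also right. The paper states this lemma without proof, and your argument is exactly the standard elementary verification one would supply.
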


	
	\subsection{Algebraic Riccati equations}

    We present here a few lemmas and some notations that will be useful to understand the closed-loop saddle point in $\Theta_y$ ($CLSP-y$). Since the processes $\by$ and $\bz$ follow similar linear dynamics but with different coefficients, we omit the proof for lemmas corresponding to $CLSP-z$. We use the notation $\langle u, v \rangle$ to represent the product $u^\top v$ for two vectors in $\RR^d$. 
    Using the dynamics \eqref{eq:dyn_y_closed} for $(y_t)_{t\geq 0}$, we obtain the following result.

	\begin{lemma}
	\label{lemma:relation_yPy_tp1_t}
	For every symmetric matrix $P \in \cS^d$, we have
	\begin{align*}
		&\gamma^{t+1} \EE \left[  \langle P y_{t+1}, y_{t+1} \rangle \right] 
		\\
		= &  \gamma^t \EE \left[ \langle P y_t, y_t \rangle \right]  + \gamma^{t+1} \EE \left[ (\epsilon_{t+1}^1)^\top P \epsilon_{t+1}^1 \right]
		\nonumber \\
		& + \gamma^t \EE \left[ \left[ \begin{array}{c} y_t \\ \yut{1} \\ \yut{2} \end{array} \right]^\top 
		\left[ 
		\begin{array}{lll} \cM(P) - Q & \cL_1(P) & \cL_2(P) \\
		\cL_1(P)^\top & \cN_1(P) - R_1 & \cL_{12}(P) \\
		\cL_2(P)^\top & \cL_{12}(P)^\top &  \cN_2(P) + R_2 \\
		\end{array}
		\right]
		\left[ \begin{array}{c} y_t \\ \yut{1} \\ \yut{2} \end{array} \right]
		\right]
	\end{align*}
	where we recall the notation~\eqref{eq:notation_M_N_L}.
	
	\end{lemma}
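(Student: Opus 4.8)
The plan is to expand the quadratic form $\langle P y_{t+1}, y_{t+1}\rangle$ directly from the dynamics~\eqref{eq:dyn_y_closed}. Introduce the shorthand $\Phi_t := A y_t + B_1 \yut{1} + B_2 \yut{2}$, so that the update reads $y_{t+1} = \Phi_t + \epsilon_{t+1}^1$. Crucially $\Phi_t$ is $\cF_t$-measurable: indeed $y_t$ is $\cF_t$-measurable and $\yut{1} = -K_1 y_t$, $\yut{2} = K_2 y_t$ are functions of $y_t$. Expanding the symmetric bilinear form gives $\langle P y_{t+1}, y_{t+1}\rangle = \langle P \Phi_t, \Phi_t\rangle + 2 \langle P \Phi_t, \epsilon_{t+1}^1\rangle + (\epsilon_{t+1}^1)^\top P \epsilon_{t+1}^1$, and the whole argument reduces to handling these three terms after multiplying by $\gamma^{t+1}$ and taking expectations.

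For the cross term I would condition on $\cF_t$: since $\Phi_t$ is $\cF_t$-measurable while $\epsilon_{t+1}^1$ is independent of $\cF_t$ with mean zero, one has $\EE\bigl[\langle P \Phi_t, \epsilon_{t+1}^1\rangle\bigr] = \EE\bigl[\langle P \Phi_t, \EE[\epsilon_{t+1}^1 \mid \cF_t]\rangle\bigr] = 0$. Hence $\gamma^{t+1}\EE\bigl[\langle P y_{t+1}, y_{t+1}\rangle\bigr] = \gamma^{t+1}\EE\bigl[\langle P \Phi_t, \Phi_t\rangle\bigr] + \gamma^{t+1}\EE\bigl[(\epsilon_{t+1}^1)^\top P \epsilon_{t+1}^1\bigr]$, which already isolates the noise contribution appearing on the right-hand side of the claimed identity.

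It remains to rewrite $\gamma^{t+1}\EE[\langle P\Phi_t,\Phi_t\rangle]$ as the quadratic form in the lemma. Stacking $\xi_t := \bigl[y_t^\top, (\yut{1})^\top, (\yut{2})^\top\bigr]^\top$, we have $\Phi_t = \bigl(A\ B_1\ B_2\bigr)\xi_t$, so $\langle P\Phi_t,\Phi_t\rangle = \xi_t^\top \bigl(A\ B_1\ B_2\bigr)^\top P \bigl(A\ B_1\ B_2\bigr)\xi_t$, a $3\times 3$ block matrix whose entries are $A^\top P A$, $A^\top P B_1$, $A^\top P B_2$, $B_1^\top P B_1$, $B_1^\top P B_2$, $B_2^\top P B_2$. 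Multiplying by $\gamma$ and invoking the notation~\eqref{eq:notation_M_N_L}, every off-diagonal and lower-diagonal block matches directly: $\gamma A^\top P B_1 = \cL_1(P)$, $\gamma A^\top P B_2 = \cL_2(P)$, $\gamma B_1^\top P B_2 = \cL_{12}(P)$, $\gamma B_1^\top P B_1 = \cN_1(P) - R_1$, and $\gamma B_2^\top P B_2 = \cN_2(P) + R_2$. The only mismatch is the $(1,1)$ block: we obtain $\gamma A^\top P A$, whereas the target is $\cM(P) - Q = \gamma A^\top P A - P$. This discrepancy is exactly absorbed by transferring the term $\gamma^t\EE[\langle P y_t, y_t\rangle]$ to the right-hand side, since $\gamma^t\,\xi_t^\top \mathrm{diag}(P,0,0)\,\xi_t = \gamma^t\langle P y_t, y_t\rangle$ supplies the required $-P$ shift. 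Collecting the blocks then yields the stated equality.

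The computation carries no genuine analytic difficulty; the only substantive point is the vanishing of the cross term, which relies on the $\cF_t$-measurability of $\Phi_t$ together with the independence and mean-zero property of $\epsilon_{t+1}^1$ guaranteed by the probabilistic setup of Section~\ref{app:proba-framework}. The main source of potential error is purely bookkeeping: keeping the sign conventions on $R_1$ and $R_2$ consistent (note $\cN_2(P)$ carries $-R_2$, so that $\gamma B_2^\top P B_2 = \cN_2(P)+R_2$) and correctly matching each block of $\gamma\bigl(A\ B_1\ B_2\bigr)^\top P\bigl(A\ B_1\ B_2\bigr)$ against the $\cM,\cL,\cN$ notation.
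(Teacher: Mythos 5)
Your proof is correct and follows essentially the same route as the paper's: expand $\langle P y_{t+1},y_{t+1}\rangle$ via the dynamics~\eqref{eq:dyn_y_closed}, kill the cross term with $\epsilon^1_{t+1}$ by conditioning on $\cF_t$, and match the resulting blocks of $\gamma(A\ B_1\ B_2)^\top P(A\ B_1\ B_2)$ against the notation~\eqref{eq:notation_M_N_L}, with the $-P$ in $\cM(P)-Q$ accounted for by the separate $\gamma^t\EE[\langle Py_t,y_t\rangle]$ term. The sign bookkeeping on $R_1$ and $R_2$ is handled correctly.
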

	
	\commentJDG{
	\begin{proof}
	    We use the dynamics \eqref{eq:dyn_y_closed} for $(y_t)_{t\geq 0}$ and we obtain
	    \begin{align*}
		 & \gamma^{t+1} \EE \left[  \langle P y_{t+1}, y_{t+1} \rangle \right] 
	     \\
		 = &    \gamma^t \EE \left[ \langle P y_t, y_t \rangle \right]  + \gamma^{t+1} \EE \left[ \langle P \epsilon_{t+1}^1, \epsilon_{t+1}^1 \rangle \right]	 
		 \\
		 & +  \gamma^t  \EE   \bigg[ \langle (\cM(P) - Q) y_t, y_t \rangle + 2 \langle \cL_1(P)^\top y_t, \yut{1} \rangle  + 2 \langle \cL_2(P)^\top y_t, \yut{2} \rangle 
		\\
		&  + \langle (\cN_1(P) + R_1) \yut{1}, \yut{1} \rangle + \langle (\cN_2(P) - R_2) \yut{2}, \yut{2} \rangle + 2 \langle \cL_{12}(P)^\top \yut{1}, \yut{2} \rangle  \bigg] .
	    \end{align*}
	\end{proof}
    }

	Let us denote 
	\begin{equation*}
		C_y^*(P; \tilde{y}) :=  \tilde{y}^\top P \tilde{y}  + \frac{\gamma}{1 - \gamma} \EE \left[ (\epsilon_1^1)^\top P \epsilon_1^1 \right].
	\end{equation*}

	\begin{corollary}
		\label{corollary:opt_gap_Cy}
		For every $P \in \cS^{d}$ and every $(K_1, K_2) \in \Theta_y$, we have 
		\begin{align}
		\label{eq:opt_gap_Cy}
		&C_y(K_1, K_2, \tilde{y} ) - C_y^*(P; \tilde{y}) 
		\\
		&=  \EE \left[  \left. \sum_{t=0}^\infty
		\gamma^t   \left[ \begin{array}{c} y_t \\ \yut{1} \\ \yut{2} \end{array} \right]^\top 
		\left[ 
		\begin{array}{lll} \cM(P) & \cL_1(P) & \cL_2(P) \\
		\cL_1(P)^\top & \cN_1(P) & \cL_{12}(P) \\
		\cL_1(P)^\top & \cL_{12}(P)^\top &  \cN_2(P)  \\
		\end{array}
		\right]
		\left[ \begin{array}{c} y_t \\ \yut{1} \\ \yut{2} \end{array} \right] \  \right| y_0 = \tilde{y}  \right].
		\notag
		\end{align}
	\end{corollary}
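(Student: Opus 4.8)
The plan is to convert the one-step identity of Lemma~\ref{lemma:relation_yPy_tp1_t} into a telescoping sum. The crucial observation is that the block matrix in the statement of the Corollary differs from the one in Lemma~\ref{lemma:relation_yPy_tp1_t} exactly by the block-diagonal matrix $\mathrm{diag}(Q, R_1, -R_2)$, and this is precisely the matrix of the running cost: writing $\xi_t = [y_t^\top, (\yut{1})^\top, (\yut{2})^\top]^\top$, we have $c_y(y_t, \yut{1}, \yut{2}) = \xi_t^\top \,\mathrm{diag}(Q, R_1, -R_2)\, \xi_t$. Let $\mathbf{M}(P)$ denote the symmetric block matrix with diagonal blocks $\cM(P), \cN_1(P), \cN_2(P)$ and off-diagonal blocks $\cL_1(P), \cL_2(P), \cL_{12}(P)$ appearing in the statement. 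Applying Lemma~\ref{lemma:relation_yPy_tp1_t} conditionally on $y_0 = \tilde y$ (which is legitimate, since the dynamics~\eqref{eq:dyn_y_closed} and the mean-zero, independent structure of $\epsilon_{t+1}^1$ make all the cross terms vanish under the conditional expectation as well), I would rewrite its identity, using $\xi_t^\top\big(\mathbf{M}(P) - \mathrm{diag}(Q,R_1,-R_2)\big)\xi_t$, as
\begin{align*}
& \gamma^{t+1} \EE[\langle P y_{t+1}, y_{t+1}\rangle \mid y_0 = \tilde y] - \gamma^t \EE[\langle P y_t, y_t\rangle \mid y_0 = \tilde y]
\\
&= \gamma^{t+1} \EE[(\epsilon_{t+1}^1)^\top P \epsilon_{t+1}^1] + \gamma^t \EE[\xi_t^\top \mathbf{M}(P) \xi_t \mid y_0 = \tilde y] - \gamma^t \EE[c_y(y_t, \yut{1}, \yut{2}) \mid y_0 = \tilde y].
\end{align*}

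Next I would sum this identity over $t = 0, \dots, T-1$. The left-hand side telescopes to $\gamma^T \EE[\langle P y_T, y_T\rangle \mid y_0 = \tilde y] - \tilde y^\top P \tilde y$, where the initial term uses $y_0 = \tilde y$ deterministically. On the right, the noise contribution is a geometric sum: since the $\epsilon_t^1$ are identically distributed with constant covariance $\Sigma^1$, the quantity $\EE[(\epsilon_{t+1}^1)^\top P \epsilon_{t+1}^1]$ is independent of $t$ and equals $\EE[(\epsilon_1^1)^\top P \epsilon_1^1]$, whence $\sum_{t=0}^{T-1} \gamma^{t+1} \EE[(\epsilon_{t+1}^1)^\top P \epsilon_{t+1}^1] = \frac{\gamma(1-\gamma^T)}{1-\gamma} \EE[(\epsilon_1^1)^\top P \epsilon_1^1]$, which converges to $\frac{\gamma}{1-\gamma}\EE[(\epsilon_1^1)^\top P \epsilon_1^1]$ as $T \to \infty$.

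Finally I would let $T \to \infty$. The only delicate step is showing the boundary term $\gamma^T \EE[\langle P y_T, y_T\rangle \mid y_0 = \tilde y]$ vanishes in the limit; this is exactly where the hypothesis $(K_1, K_2) \in \Theta_y$ is used. By definition~\eqref{eq:Theta_y} it forces $\by \in \cX$, and Lemma~\ref{lemma:x_L2_integ_L2_asym_stable} then yields $\lim_{T\to\infty}\gamma^T \EE[\|y_T\|^2] = 0$; combined with the bound $|\langle P y_T, y_T\rangle| \leq \|P\|\,\|y_T\|^2$ this sends the boundary term to $0$. The same integrability guarantees absolute convergence of both the series $\sum_t \gamma^t \EE[\xi_t^\top \mathbf{M}(P) \xi_t \mid y_0 = \tilde y]$ and of $C_y(K_1, K_2, \tilde y)$, so the passage to the limit is justified. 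The limiting identity reads
\begin{equation*}
-\tilde y^\top P \tilde y = \frac{\gamma}{1-\gamma} \EE[(\epsilon_1^1)^\top P \epsilon_1^1] + \sum_{t=0}^\infty \gamma^t \EE[\xi_t^\top \mathbf{M}(P) \xi_t \mid y_0 = \tilde y] - C_y(K_1, K_2, \tilde y),
\end{equation*}
and rearranging while recognizing $C_y^*(P; \tilde y) = \tilde y^\top P \tilde y + \frac{\gamma}{1-\gamma}\EE[(\epsilon_1^1)^\top P \epsilon_1^1]$ gives precisely~\eqref{eq:opt_gap_Cy}. The main obstacle, as indicated, is the vanishing of the telescoped boundary term, which is resolved entirely by the asymptotic-stability consequence of $L^2$-integrability furnished by $\Theta_y$ and Lemma~\ref{lemma:x_L2_integ_L2_asym_stable}.
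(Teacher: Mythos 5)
Your proposal is correct and follows essentially the same route as the paper's own proof: telescoping the one-step identity of Lemma~\ref{lemma:relation_yPy_tp1_t} (whose matrix differs from the corollary's by exactly the running-cost block $\diag(Q,R_1,-R_2)$), summing the geometric noise contribution, and killing the boundary term $\gamma^T\EE[\langle P y_T, y_T\rangle\,|\,y_0=\tilde y]$ via $(K_1,K_2)\in\Theta_y$, Lemma~\ref{lemma:x_L2_integ_L2_asym_stable} and the bound $|\langle Py_T,y_T\rangle|\le\|P\|\,\|y_T\|^2$. Your write-up merely makes explicit the rearrangement that the paper leaves implicit.
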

	
	\begin{remark}
		We notice that the difference between $C_y(K_1, K_2, \tilde{y})$ and $C_y^*(P; \tilde{y})$ depends on the cross product $\langle \cL_{12}(P)^\top \yut{1}, \yut{2} \rangle$. When we perturb only one policy parameter, say $K_1$ for example, the change involved in the cost $C_y(K_1, K_2, \tilde{y})$ is not only caused by the state process $(y_t)_{t \geq 0}$ but also by the interactions between the two feedback control processes, even if no term in definition \eqref{eq:C_y_K1_K2} of $C_y(K_1, K_2, \tilde{y})$ is directly related to this cross interaction between strategies. The cross product $\cL_{12}(P)$ in equation \eqref{eq:opt_gap_Cy} makes our main result of this section, namely Proposition~\ref{prop:sufficient_CLSP_y} for the CLSP, harder to prove, compared with a continuous-time result as discussed e.g. in \cite{sun2016linear}.
	\end{remark}
	
	\begin{proof}
	    The process $\by = (y_t)_{t\geq 0}$ following dynamics \eqref{eq:dyn_y_closed} with matrices $(K_1, K_2) \in \Theta_y$ satisfies
		\begin{equation*}
			y_{t+1} = (A - B_1 K_1 + B_2 K_2) y_t + \epsilon_{t+1}^1, \qquad y_0 = \tilde{y}.
		\end{equation*}
		By definition of $\Theta_y$, see~\eqref{eq:Theta_y}, $\by$ is $L^2-$discounted globally integrable. By Lemma~\ref{lemma:x_L2_integ_L2_asym_stable}, we have $ \lim_{t \to 0} \EE[ \gamma^t \| y_t \|^2 ] = 0$.
		Thus, by Cauchy-Schwartz inequality and Jensen's inequality, we obtain
		\begin{equation*}
			\lim_{t \to \infty} \left| \EE \left[  \gamma^t \langle P y_t, y_t \rangle | y_0 = \tilde{y} \right] \right| \leq \lim_{t \to \infty} \| P \| \EE\left[ \gamma^t \| y_t \|^2 | y_0 = \tilde{y} \right] = 0.
		\end{equation*}
	    By applying recursively Lemma~\ref{lemma:relation_yPy_tp1_t} from $t=T$ down to $0$, and then letting $T$ tends to infinity, we obtain equation \eqref{eq:opt_gap_Cy}.\\
	\end{proof}
	
	We introduce here another Discrete Algebraic Riccati equation in the set of symmetric matrices $\cS^d$ for the discrete-time process $(y_t)_t$ and the cost $C_y$: 
	\begin{align*}
	\label{eq:ARE_y_complete}
    	P 
    	&= \gamma A^\top P A + Q 
    	\\
    	&\qquad - \gamma^2 [ A^\top P B_1, A^\top P B_2] 
    	\left[
    	\begin{array}{ll}
        	\gamma B_1^\top P B_1 + R_1 & \gamma B_1^\top P B_2 
        	\\
        	\gamma B_2^\top P B_1 & \gamma B_2^\top P B_2 - R_2 
        	\end{array} 
        	\right]^{-1} 
        	\left[
        	\begin{array}{l}
        	B_1^\top P A 
        	\\
        	B_2^\top P A
        	\end{array}
        	\right].
        	\notag
	\end{align*}
	The above equation can also be written in a compact form:
	\begin{equation}
	    \label{eq:ARE_y}
    	0 = \cM(P) - \cL(P) \cN(P)^{-1} \cL(P)^\top
	\end{equation}
	with (using the notations introduced in~\eqref{eq:notation_M_N_L})
	\begin{equation}
	    \label{eq:cL_P}
	    \cL(P) = [ \cL_1(P), \cL_2(P)] \in \RR^{d \times (\ell + \ell)}
	\end{equation}
	and a $2 \times 2$ block matrix
	\begin{equation}
    	\label{eq:cN_P}
    	\cN(P) = \left[
    	\begin{array}{ll}
    	\cN_1(P) & \cL{12}(P) \\
    	\cL_{12}(P)^\top &  \cN_2(P)  \\
    	\end{array}
    	\right] \in \RR^{(\ell + \ell) \times (\ell + \ell)} .
    \end{equation}
	To distinguish with other Algebraic Riccati equations introduced earlier sections, we may refer \eqref{eq:ARE_y} as (ARE-y).\\

	In the spirit of Nash equilibrium, we discuss in the following a few results related to the situations when only one controller intends to change her strategy but her opponent keeps the original one.
	
	Let us denote by $\by^{2*} = (y_t^{2*})_{t \geq 0}$ the state process associated to a pair of strategy $(K_1, K_2^*) \in \Theta_y$ where $K_2^*$ is a predetermined matrix in $\RR^{\ell \times d}$. Then the process $\by^{2*}$ follows the dynamics
	\begin{equation}
	\label{eq:dyn_y_K2*}
		y_{t+1}^{2*}= (A + B_2 K_2^*) y_{t}^{2*} + B_1 \yut{1} + \epsilon_{t+1}^1 = (A + B_2 K_2^* - B_1 K_1) y_t^{2*} + \epsilon_{t+1}^1, \, y_0^{2*} = \tilde{y},
	\end{equation}	
	where $(\yut{1})_{t \geq 0}$ is the control process adopted by player 1 (with parameter $K_1$).
	
	\begin{corollary}
	\label{corollary:C_y_K1_K2*}
	There holds
		\begin{align*}
			&C_y(K_1, K_2^*, \tilde{y}) - C^*_y(P; \tilde{y}) 
		    \\
		    &=  \EE \left[  \left. \sum_{t=0}^\infty
			\gamma^t   \left[ \begin{array}{c} y_t^{2*} \\ \yut{1} \end{array} \right]^\top 
			\left[ 
			\begin{array}{ll} \cM^{2*}(P) & \cL_1^{2*}(P) \\
			\cL_1^{2*}(P)^\top & \cN_1^{2*}(P)
			\end{array}
			\right]
			\left[ \begin{array}{c} y_t^{2*} \\ \yut{1} \end{array} \right] \  
			\right| y_0 = \tilde{y}  \right]
		\end{align*}		
		where
		\begin{equation*}
		\left\{
		    \begin{aligned}
			\cM^{2*}(P) & = \gamma ( A + B_2 K_2^*)^\top P (A + B_2 K_2^*) - P + Q - K_2^* R_2 K_2^*
			\\
			& = \cM(P) + \cL_2(P) K_2^* + (\cL_2(P) K_2^*)^\top + (K_2^*)^\top \cN_2(P) K_2^*
			\\
			\cL_1^{2*}(P) &= \gamma (A + B_2 K_2^*)^\top P B_1
						 = \cL_1(P) + (\cL_{12}(P) K_2^*)^\top
			\\
			\cN_1^{2*}(P) & = \gamma B_1^\top P B_1 + R_1 = \cN_1(P).
		\end{aligned}
		\right.
	    \end{equation*}
	    The ARE associated to the process $\by^{2*}$ and the value function $C_y(\cdot, K_2^*,\tilde y)$ is given by 
	    \begin{equation}
		    \label{eq:ARE_y_K2*}
		    0 =\cM^{2*}(P) - \cL_{1}^{2*}(P) ( \cN_1^{2*}(P) )^{-1} (\cL_1^{2*}(P))^\top.
		\end{equation}
	\end{corollary}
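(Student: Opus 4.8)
The plan is to reduce the two-control identity of Corollary~\ref{corollary:opt_gap_Cy} to a single-control version by absorbing player~2's frozen strategy $K_2^*$ into both the drift and the running cost, and then to replay the telescoping argument behind Lemma~\ref{lemma:relation_yPy_tp1_t}. First I would observe that, with player~2 fixed at $\yut{2} = K_2^* y_t^{2*}$, the term $-(\yut{2})^\top R_2 \yut{2}$ in $c_y$ becomes $-(y_t^{2*})^\top (K_2^*)^\top R_2 K_2^* y_t^{2*}$, so the running cost collapses to $(y_t^{2*})^\top Q^{2*} y_t^{2*} + (\yut{1})^\top R_1 \yut{1}$ with $Q^{2*} := Q - (K_2^*)^\top R_2 K_2^*$. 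Combined with the single-control dynamics~\eqref{eq:dyn_y_K2*}, whose drift matrix is $A^{2*} := A + B_2 K_2^*$, this is exactly a one-player discounted LQ problem of the same type as the one governing $C_y$ in Corollary~\ref{corollary:opt_gap_Cy}, under the substitutions $A \mapsto A^{2*}$, $Q \mapsto Q^{2*}$ with the $B_2$-channel removed.

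Next I would establish the one-step identity, the analog of Lemma~\ref{lemma:relation_yPy_tp1_t} for this reduced system. Expanding $\gamma^{t+1}\EE[\langle P y_{t+1}^{2*}, y_{t+1}^{2*}\rangle]$ using~\eqref{eq:dyn_y_K2*}, the independence of $\epsilon_{t+1}^1$ from the $\cF_t$-measurable pair $(y_t^{2*}, \yut{1})$, and $\EE[\epsilon_{t+1}^1]=0$, yields
\begin{align*}
\gamma^{t+1}\EE[\langle P y_{t+1}^{2*}, y_{t+1}^{2*}\rangle]
&= \gamma^t \EE[\langle P y_t^{2*}, y_t^{2*}\rangle] + \gamma^{t+1}\EE[(\epsilon_{t+1}^1)^\top P \epsilon_{t+1}^1] \\
&\quad + \gamma^t \EE\left[
\begin{bmatrix} y_t^{2*} \\ \yut{1} \end{bmatrix}^\top
\begin{bmatrix} \cM^{2*}(P)-Q^{2*} & \cL_1^{2*}(P) \\ \cL_1^{2*}(P)^\top & \cN_1^{2*}(P)-R_1 \end{bmatrix}
\begin{bmatrix} y_t^{2*} \\ \yut{1} \end{bmatrix}
\right],
\end{align*}
where $\cM^{2*}(P)-Q^{2*} = \gamma (A^{2*})^\top P A^{2*}-P$ and $\cN_1^{2*}(P)-R_1 = \gamma B_1^\top P B_1$ are read off directly from the expansion. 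Adding the running-cost block $\mathrm{diag}(Q^{2*}, R_1)$ to the middle matrix restores its $(1,1)$ and $(2,2)$ entries to $\cM^{2*}(P)$ and $\cN_1^{2*}(P)$, so that $\gamma^t \EE[c_y]$ plus the telescoping increment equals the quadratic form with the full matrix of the statement.

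I would then sum this identity from $t=0$ to $T$ and let $T\to\infty$, exactly as in Corollary~\ref{corollary:opt_gap_Cy}. Since $(K_1,K_2^*)\in\Theta_y$ forces $\by^{2*}\in\cX$, Lemma~\ref{lemma:x_L2_integ_L2_asym_stable} gives $\gamma^{T+1}\EE[\langle P y_{T+1}^{2*}, y_{T+1}^{2*}\rangle]\to 0$, killing the boundary term; the initial boundary term contributes $-\tilde y^\top P \tilde y$; and the constant-variance assumption collapses the noise contribution to $\tfrac{\gamma}{1-\gamma}\EE[(\epsilon_1^1)^\top P \epsilon_1^1]$. Together these form precisely $C_y^*(P;\tilde y)$, giving the claimed formula. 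The only remaining step is to verify the alternative expressions $\cM^{2*}(P) = \cM(P)+\cL_2(P)K_2^*+(\cL_2(P)K_2^*)^\top+(K_2^*)^\top \cN_2(P) K_2^*$ and $\cL_1^{2*}(P) = \cL_1(P)+(\cL_{12}(P)K_2^*)^\top$, which follow by expanding $A^{2*}=A+B_2K_2^*$ inside the definitions~\eqref{eq:notation_M_N_L} and matching terms.

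The main obstacle is bookkeeping rather than conceptual: one must track how freezing player~2 reshapes the quadratic form, in particular the sign coming from $-R_2$ (which produces the subtraction $-(K_2^*)^\top R_2 K_2^*$ inside $Q^{2*}$ and, equivalently, feeds $\cN_2(P)$ into the alternative expression for $\cM^{2*}$), and confirm that the cross term $\cL_{12}(P)$—whose presence made the two-player case of Corollary~\ref{corollary:opt_gap_Cy} delicate—reappears correctly in $\cL_1^{2*}(P)$ through the term $(\cL_{12}(P)K_2^*)^\top$.
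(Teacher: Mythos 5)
Your proposal is correct and follows essentially the route the paper intends: the corollary is the single-player specialization of Lemma~\ref{lemma:relation_yPy_tp1_t} and Corollary~\ref{corollary:opt_gap_Cy}, obtained by absorbing the frozen strategy into the drift $A+B_2K_2^*$ and the cost $Q-(K_2^*)^\top R_2K_2^*$ and rerunning the same telescoping argument, with the identities $\cM^{2*}$, $\cL_1^{2*}$, $\cN_1^{2*}$ checked by expanding $A+B_2K_2^*$ in the definitions~\eqref{eq:notation_M_N_L}. (Equivalently, and slightly more directly, one can substitute $\yut{2}=K_2^*y_t^{2*}$ into the $3\times3$ quadratic form of Corollary~\ref{corollary:opt_gap_Cy} and collect terms; also note the boundary term at $t=0$ enters with the sign that makes it part of $+C_y^*(P;\tilde y)$, a harmless slip in your prose.)
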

	
	\vspace{0.5cm}
    With a proper choice of $K_2^*$, we can connect equation \eqref{eq:ARE_y_K2*} to the (ARE-y).
    \begin{lemma}
	\label{lemma:equi_ARE_fix_K2*}
		For any symmetric matrix $P \in \cS^d$ such that $\cN_1(P)$ and $S_2 = \cN_2(P) - \cL_{12}(P)^\top \cN_1(P)^{-1} \cL_{12}(P)$ are invertible, let us consider that player 2 fixes her strategy with parameter
		\begin{equation}
	    	K_2^*  =  \Big( \cL_{12}(P)^\top \cN_2(P)^{-1} \cL_{12}(P) - \cN_2(P) \Big)^{-1} \Big( \cL_2(P)^\top - \cL_{12}(P)^\top \cN_1(P)^{-1} \cL_{1}(P)^\top \Big).
		\end{equation}
		Then $P$ is a solution to \eqref{eq:ARE_y_K2*} if and only if it is a solution to the (ARE-y) \eqref{eq:ARE_y}.
	\end{lemma}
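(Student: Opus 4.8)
The plan is to read both algebraic Riccati equations as \emph{Schur-complement} (i.e. value-function) identities for one and the same quadratic game, and to recognize the prescribed $K_2^*$ as the stationary (saddle-point) feedback of player~$2$. For a fixed state $y\in\RR^d$ introduce the quadratic form
$$
g(u_1,u_2)=
\begin{pmatrix} y \\ u_1 \\ u_2 \end{pmatrix}^{\!\top}
\begin{pmatrix}
\cM(P) & \cL_1(P) & \cL_2(P)\\
\cL_1(P)^\top & \cN_1(P) & \cL_{12}(P)\\
\cL_2(P)^\top & \cL_{12}(P)^\top & \cN_2(P)
\end{pmatrix}
\begin{pmatrix} y \\ u_1 \\ u_2 \end{pmatrix},
$$
whose block Hessian I denote $\mathcal H(P)$. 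With the notations of~\eqref{eq:notation_M_N_L}, the (ARE-y)~\eqref{eq:ARE_y} says exactly that the Schur complement of the block $\cN(P)$ in $\mathcal H(P)$ vanishes, i.e. $\cM(P)-\cL(P)\cN(P)^{-1}\cL(P)^\top=0$. On the other hand, substituting $u_2=K_2^*y$ into $g$ and collecting terms yields, by Corollary~\ref{corollary:C_y_K1_K2*}, the $2\times2$ congruence
$$
g(u_1,K_2^*y)=
\begin{pmatrix} y \\ u_1 \end{pmatrix}^{\!\top}
\begin{pmatrix} \cM^{2*}(P) & \cL_1^{2*}(P)\\ (\cL_1^{2*}(P))^\top & \cN_1(P)\end{pmatrix}
\begin{pmatrix} y \\ u_1 \end{pmatrix},
$$
so that~\eqref{eq:ARE_y_K2*} says the Schur complement of $\cN_1(P)$ in this restricted block vanishes.

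The crux is therefore the matrix identity
$$
\cM^{2*}(P)-\cL_1^{2*}(P)\,\cN_1(P)^{-1}\,(\cL_1^{2*}(P))^\top
=\cM(P)-\cL(P)\cN(P)^{-1}\cL(P)^\top ,
$$
after which the equivalence of the two solution sets is immediate. I would prove it by completing the square twice. First apply Schur's Lemma (Lemma~\ref{lemma:schur's_lemma}) with $N=\cN_1(P)$, which is invertible by hypothesis: this splits $g$ as $\phi(u_2)$ plus a term in $u_1$ vanishing at $u_1=-\cN_1(P)^{-1}(\cL_1(P)^\top y+\cL_{12}(P)u_2)$, where
$$
\phi(u_2)=y^\top\bigl(\cM-\cL_1\cN_1^{-1}\cL_1^\top\bigr)y+2y^\top\bigl(\cL_2-\cL_1\cN_1^{-1}\cL_{12}\bigr)u_2+u_2^\top S_2\,u_2,
$$
with $S_2=\cN_2(P)-\cL_{12}(P)^\top\cN_1(P)^{-1}\cL_{12}(P)$ invertible by hypothesis. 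Applying Schur's Lemma a second time, now with $N=S_2$, splits $\phi$ as $y^\top(\cM-\cL\cN^{-1}\cL^\top)y$ plus a term in $u_2$ vanishing at $u_2=-S_2^{-1}\bigl(\cL_2(P)^\top-\cL_{12}(P)^\top\cN_1(P)^{-1}\cL_1(P)^\top\bigr)y$; that the constant part is precisely the full Schur complement $\cM-\cL\cN^{-1}\cL^\top$ is the quotient property of Schur complements, realized explicitly by the block inverse of Lemma~\ref{lemma:inverse_block}. This vanishing point is exactly $K_2^*y$ for the $K_2^*$ of the statement, provided the first factor is read as $\bigl(\cL_{12}^\top\cN_1^{-1}\cL_{12}-\cN_2\bigr)^{-1}=-S_2^{-1}$ (the $\cN_2^{-1}$ printed in the hypothesis should be $\cN_1^{-1}$, consistent with the invertibility assumptions stated). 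Hence $\phi(K_2^*y)=y^\top(\cM-\cL\cN^{-1}\cL^\top)y$; but $\phi(K_2^*y)$ is also the critical value over $u_1$ of $g(u_1,K_2^*y)$, which by the first display (Schur's Lemma on the restricted block) equals $y^\top(\cM^{2*}-\cL_1^{2*}\cN_1^{-1}(\cL_1^{2*})^\top)y$. Since this holds for every $y$ and both sides are symmetric, the displayed matrix identity follows, and both AREs reduce to the single equation $\cM(P)-\cL(P)\cN(P)^{-1}\cL(P)^\top=0$.

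The main obstacle is the bookkeeping of the double completion of squares, equivalently verifying the nested (quotient) Schur-complement identity via the explicit $2\times2$ block inverse of Lemma~\ref{lemma:inverse_block}: one must track the off-diagonal coupling $\cL_{12}(P)$ carefully and invoke the invertibility of \emph{both} $\cN_1(P)$ and $S_2$ to license each elimination step and to identify the stationary feedback as $K_2^*=-S_2^{-1}\bigl(\cL_2^\top-\cL_{12}^\top\cN_1^{-1}\cL_1^\top\bigr)$. Once $K_2^*$ is so identified, no sign-definiteness of $\cN_2(P)$ is needed, since Schur's Lemma is a purely algebraic completion of squares valid for any invertible pivot. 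A brute-force alternative---substituting the formula for $K_2^*$ directly into $\cM^{2*}(P)$ and $\cL_1^{2*}(P)$ and simplifying---also works, but is considerably more opaque and is the route I would avoid.
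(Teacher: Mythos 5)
Your proposal is correct and establishes exactly the matrix identity the paper's proof hinges on, namely $\cM^{2*}(P)-\cL_1^{2*}(P)\,\cN_1(P)^{-1}(\cL_1^{2*}(P))^\top=\cM(P)-\cL(P)\cN(P)^{-1}\cL(P)^\top$, but it gets there by a different organization of the algebra. The paper expands the left-hand side directly from the definitions of $\cM^{2*}$ and $\cL_1^{2*}$, groups terms so that $K_2^*$ enters only through $\cL_2^\top-\cL_{12}^\top\cN_1^{-1}\cL_1^\top$ and $S_2$, substitutes $K_2^*=-S_2^{-1}(\cL_2^\top-\cL_{12}^\top\cN_1^{-1}\cL_1^\top)$, and then separately computes $\cM-\cL\cN^{-1}\cL^\top$ via the block inverse of Lemma~\ref{lemma:inverse_block}, checking that both reduce to $\cM-\cL_1\cN_1^{-1}\cL_1^\top-(\cL_2-\cL_1\cN_1^{-1}\cL_{12})S_2^{-1}(\cL_2-\cL_1\cN_1^{-1}\cL_{12})^\top$; this is essentially the ``brute-force alternative'' you mention at the end, though the term grouping makes it less opaque than a blind substitution. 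Your route instead completes the square twice, identifies $K_2^*y$ as the stationary point of the intermediate form $\phi$, and reads both sides of the identity as critical values of the same function of $u_1$; the only computational content left is the quotient property of nested Schur complements, which, as you note, is supplied by the same Lemma~\ref{lemma:inverse_block}. Both arguments use the same hypotheses (invertibility of $\cN_1(P)$ and $S_2$ only, no definiteness) and the same ingredients, so neither buys extra generality, but yours makes the cancellation structural rather than the outcome of a substitution. You are also right that the $\cN_2(P)^{-1}$ printed inside the first factor of $K_2^*$ in the statement must be read as $\cN_1(P)^{-1}$: the paper's own proof silently does the same, starting from $K_2^*=-S_2^{-1}(\cL_2^\top-\cL_{12}^\top\cN_1^{-1}\cL_1^\top)$, and this reading is the only one consistent with the stated invertibility assumptions and with the general formula~\eqref{eq:K_i^*}.
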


	\begin{proof}
    To alleviate the notations, we omit the matrix $P$ in this proof. The idea is based on algebraic matrix manipulations with the help of Lemma~\ref{lemma:inverse_block} to invert a $2 \times 2$ block matrix $\cN(P)$.	
    First, we notice that 
    \begin{equation*}
         K_2^* = - S_2^{-1}  \Big( \cL_2^\top - \cL_{12}^\top \cN_1^{-1} \cL_{1}^\top \Big).
    \end{equation*}
    Then, the right hand side of ARE \eqref{eq:ARE_y_K2*} becomes: 
	\begin{align*}
		& \cM^{2*} - \cL_1^{2*} (\cN_1^{2*})^{-1} (\cL_1^{2*})^\top
		\\
		= & \cM + \cL_2 K_2^* + (K_2^*)^\top \cL_2^\top + (K_2^*)^\top \cN_2 K_2^* 
		\\
		& - \Big( \cL_1 \cN_1^{-1} \cL_1^\top + (K_2^*)^\top \cL_{12}^\top \cN_1^{-1} \cL_1^\top + \cL_1 \cN_1^{-1} \cL_{12} K_2^* + (K_2^*)^\top \cL_{12}^\top \cN_1^{-1} \cL_{12} K_2^* \Big)
		\\
		= & \cM - \cL_1 \cN_1^{-1} \cL_1^\top + \Big(\cL_2 - \cL_1 \cN_1^{-1} \cL_{12} \Big) K_2^* 
		\\
		& \qquad + (K_2^*)^\top \Big(\cL_2^\top - \cL_{12}^\top \cN_1^{-1} \cL_1^\top \Big) 
		+ (K_2^*)^\top \Big( \cN_2 - \cL_{12}^\top \cN_1^{-1} \cL_{12} \Big) K_2^*   .
	\end{align*}
	Since 
	$
		K_2^* = - S_2^{-1} \Big( \cL_2^\top - \cL_{12}^\top \cN_1^{-1} \cL_1^\top \Big)
	$
	and $ S_2 = \cN_2 - \cL_{12}^\top \cN_1^{-1} \cL_{12}$, we have
	\begin{equation*}
		\cM^{2*} - \cL_1^{2*} (\cN_1^{2*})^{-1} (\cL_1^{2*})^\top = \cM - \cL_1 \cN_1^{-1} \cL_1^\top - \Big( \cL_2 - \cL_1 \cN_1^{-1} \cL_{12} \Big) S_2^{-1} \Big( \cL_2 - \cL_1 \cN_1^{-1} \cL_{12} \Big)^\top .
	\end{equation*}
	\\
	Now, under the assumption that $\cN_1$ and $S_2$ are invertible,  we apply Lemma~\ref{lemma:inverse_block} to $\cN$ and obtain
	\begin{align*}	
		\cM - \cL \cN^{-1} \cL^\top
		= & \cM - [ \cL_1, \cL_2] 
			\left[ \begin{array}{ll} 
				\cN_1^{-1} + \cN_1^{-1} \cL_{12} S_2^{-1} \cL_{12}^\top \cN_1^{-1} &  - \cN_1^{-1} \cL_{12} S_2^{-1} \\
				- S_2^{-1} L_{12}^\top \cN_1^{-1}  & S_2^{-1}
			\end{array}
			\right]
			\left[ \begin{array}{c} \cL_1^\top \\ \cL_2^\top \end{array} \right]
		\\
		= & \cM - \Big( \cL_1 \cN_1^{-1} \cL_1 + (\cL_1 \cN_1^{-1} \cL_{12}) S_2^{-1} (\cL_{12}^\top \cN_1^{-1} \cL_1^\top ) 
		\\
		& \qquad - (\cL_1 \cN_1^{-1} \cL_{12}) S_2^{-1} \cL_2^\top - \cL_2 S_2^{-1} (\cL_{12}^\top \cN_1^{-1} \cL_{1}^\top ) + \cL_2 S_2^{-1} \cL_2^\top \Big)
		\\
		= & \cM - \cL_1 \cN_1^{-1} \cL_1^\top - \Big( \cL_2 - \cL_1 \cN_1^{-1} \cL_{12} \Big) S_2^{-1} \Big( \cL_2 - \cL_1 \cN_1^{-1} \cL_{12} \Big)^\top .
 	\end{align*}
 	Hence,
 	$
 		\cM^{2*} - \cL_1^{2*} (\cN_1^{2*})^{-1} (\cL_1^{2*})^\top
 		= 	\cM - \cL \cN^{-1} \cL^\top.
 	$
	\end{proof}

	We state here briefly the counterparts of Corollary~\ref{corollary:C_y_K1_K2*} and Lemma~\ref{lemma:equi_ARE_fix_K2*} for the situation when player 1 fixed her strategy to some predetermined matrix $K_1^* \in \RR^{\ell \times d}$.
	
	\begin{corollary}
		\label{Corollary:C_y_K1*_K2}

		\begin{align*}
		& C_y(K_1^*, K_2, \tilde{y}) - C^*_y(P; \tilde{y}) 
		\\
		&=  \EE \left[  \left. \sum_{t=0}^\infty
    		\gamma^t   \left[ \begin{array}{c} y_t^{1*} \\ \yut{2} \end{array} \right]^\top 
    		\left[ 
    		\begin{array}{ll} \cM^{1*}(P) & \cL_2^{1*}(P) \\
    		\cL_2^{1*}(P)^\top & \cN_2^{1*}(P)
    		\end{array}
    		\right]
    		\left[ \begin{array}{c} y_t^{1*} \\ \yut{2} \end{array} \right] \  
    		\right| y_0 = \tilde{y}  \right],
		\end{align*}
		where
		\begin{equation*}
		\left\{
		    \begin{aligned}
    		\cM^{1*}(P) & = \gamma ( A - B_1 K_1^*)^\top P (A - B_1 K_1^*) - P + Q + K_1^* R_1 K_1^*
    		\\
    		& = \cM(P) - \cL_1(P) K_1^* - (\cL_1(P) K_1^*)^\top + (K_1^*)^\top \cN_1(P) K_1^*
    		\\
    		\cL_2^{1*}(P) &= \gamma (A - B_1 K_1^*)^\top P B_2
    		= \cL_2(P) - (\cL_{12}(P)^\top K_1^*)^\top
    		\\
    		\cN_2^{1*}(P) & = \gamma B_2^\top P B_2 - R_2 = \cN_2(P),
    		\end{aligned}
		\right.
		\end{equation*}
		and the state process $(y_t^{1*})_{t \geq 0}$ follows the dynamics
		\begin{equation}
		\label{eq:dyn_y_K1*}
			y_{t+1}^{1*} = (A - B_1 K_1^*) y_t^{1*} + B_2 \yut{2} + \epsilon_{t+1}^1, \qquad y_0^{1*} = \tilde{y}.
		\end{equation}
	\end{corollary}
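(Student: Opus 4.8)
The plan is to mirror the proof of Corollary~\ref{corollary:C_y_K1_K2*} with the two players' roles exchanged, reducing the two-control problem for $\by$ to a single-control LQ problem in which player~1's fixed feedback $K_1^*$ is absorbed into both the drift and the running cost of the auxiliary process $\by^{1*}$. The only genuinely new content relative to Corollary~\ref{corollary:opt_gap_Cy} and Corollary~\ref{corollary:C_y_K1_K2*} is the block-matrix bookkeeping, so the proof is structurally identical.

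First I would substitute player~1's closed-loop control $\yut{1} = -K_1^* y_t^{1*}$ into the dynamics \eqref{eq:dyn_y_closed}. This collapses the drift to $A - B_1 K_1^*$ and yields the reduced dynamics \eqref{eq:dyn_y_K1*}, $y_{t+1}^{1*} = (A - B_1 K_1^*) y_t^{1*} + B_2 \yut{2} + \epsilon_{t+1}^1$, now driven only by player~2's control $\yut{2}$. Feeding the same feedback into the running cost $c_y$ removes its dependence on $\yut{1}$ and converts the term $(\yut{1})^\top R_1 \yut{1}$ into $(y_t^{1*})^\top (K_1^*)^\top R_1 K_1^* y_t^{1*}$, so the effective instantaneous cost becomes $(y_t^{1*})^\top \bigl(Q + (K_1^*)^\top R_1 K_1^*\bigr) y_t^{1*} - (\yut{2})^\top R_2 \yut{2}$, i.e.\ an LQ cost for a single controller with state-weight $Q + (K_1^*)^\top R_1 K_1^*$ and control-weight $-R_2$.

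Next I would reapply the telescoping identity of Lemma~\ref{lemma:relation_yPy_tp1_t}, but adapted to the reduced dynamics, namely with $A$ replaced by $A - B_1 K_1^*$, the input $B_1 \yut{1}$ dropped, and $B_2 \yut{2}$ as the sole control input. Expanding $\gamma^{t+1}\EE[\langle P y_{t+1}^{1*}, y_{t+1}^{1*} \rangle]$ and using the independence and mean-zero property of $\epsilon_{t+1}^1$ writes, for each $t$, the increment $\gamma^{t+1}\EE[\langle P y_{t+1}^{1*}, y_{t+1}^{1*} \rangle] - \gamma^t \EE[\langle P y_t^{1*}, y_t^{1*} \rangle]$ as a quadratic form in $(y_t^{1*}, \yut{2})$ plus the constant noise contribution $\gamma^{t+1}\EE[(\epsilon_{t+1}^1)^\top P \epsilon_{t+1}^1]$. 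Adding the reduced running cost cancels the $-Q$, $+R_2$ shifts in the pure telescoping form, and grouping against the notation \eqref{eq:notation_M_N_L} via $\gamma (A - B_1 K_1^*)^\top P (A - B_1 K_1^*) = \gamma A^\top P A - \cL_1(P) K_1^* - (\cL_1(P) K_1^*)^\top + (K_1^*)^\top \gamma B_1^\top P B_1 K_1^*$ and $\gamma (A - B_1 K_1^*)^\top P B_2 = \cL_2(P) - (\cL_{12}(P)^\top K_1^*)^\top$ identifies the three blocks as $\cM^{1*}(P)$, $\cL_2^{1*}(P)$ and $\cN_2^{1*}(P) = \cN_2(P)$, exactly as stated. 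The sign subtlety worth tracking is that player~1 minimizes, so absorbing $K_1^*$ \emph{adds} $(K_1^*)^\top R_1 K_1^*$ to the state weight, whereas player~2's control retains its $-R_2$ weight in $\cN_2^{1*}(P)$.

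Finally I would sum the increments from $t=0$ to $T$: the quadratic forms accumulate into the claimed series, while the left-hand telescope leaves $\gamma^{T+1}\EE[\langle P y_{T+1}^{1*}, y_{T+1}^{1*} \rangle] - \tilde y^\top P \tilde y$. Since $(K_1^*, K_2) \in \Theta_y$, the process $\by^{1*}$ is $L^2$-discounted globally integrable, so Lemma~\ref{lemma:x_L2_integ_L2_asym_stable} together with Cauchy--Schwarz and $\|P\| < \infty$ gives $\gamma^{T+1}\EE[\langle P y_{T+1}^{1*}, y_{T+1}^{1*} \rangle] \to 0$ as $T \to \infty$, while the accumulated noise terms sum to $\frac{\gamma}{1-\gamma}\EE[(\epsilon_1^1)^\top P \epsilon_1^1]$ because the variance $\Sigma^1$ is constant in time. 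Rearranging and recalling the definition of $C_y^*(P; \tilde y)$ yields the identity. The main obstacle is not conceptual but lies in carrying out this block-matrix expansion cleanly and in rigorously justifying the passage to the limit through the $L^2$-integrability guaranteed by $(K_1^*, K_2) \in \Theta_y$.
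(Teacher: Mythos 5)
Your proposal is correct and follows essentially the same route the paper uses for the symmetric Corollary~\ref{corollary:C_y_K1_K2*}: absorb the fixed feedback into the drift and running cost, apply the telescoping identity of Lemma~\ref{lemma:relation_yPy_tp1_t} to the reduced dynamics, and pass to the limit via the $L^2$-discounted integrability guaranteed by $(K_1^*,K_2)\in\Theta_y$ together with Lemma~\ref{lemma:x_L2_integ_L2_asym_stable}. (Equivalently, one can substitute $\yut{1}=-K_1^* y_t$ directly into the $3\times 3$ quadratic form of Corollary~\ref{corollary:opt_gap_Cy}, which collapses to the stated $2\times 2$ blocks; this is the same computation packaged differently.)
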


	\begin{lemma}
		If player 1 chooses her strategy with parameter
		\begin{align}
			K_1^* & = - \Big( \cL_{12}(P) \cN_2(P)^{-1} \cL_{12}(P)^\top - \cN_1(P) \Big)^{-1} \Big( \cL_1(P)^\top - \cL_{12}(P) \cN_2(P)^{-1} \cL_{2}(P)^\top \Big),  
		\end{align}
		where $P \in \cS^d$ and the matrices $\cN_2(P)$ and $S_1 = \cN_1(P) - \cL_{12}(P) \cN_2(P)^{-1} \cL_{12}(P)^\top $ are invertible, 
		then we have 
		\begin{equation}
			\cM^{1*}(P) - \cL_{2}^{1*}(P) ( \cN_2^{1*}(P) )^{-1} (\cL_2^{1*}(P))^\top = \cM(P) - \cL(P) \cN(P)^{-1} \cL(P)^\top.
		\end{equation}
	\end{lemma}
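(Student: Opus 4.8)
The plan is to mirror the proof of Lemma~\ref{lemma:equi_ARE_fix_K2*}, exchanging the roles of the two players and using the \emph{second} block-inversion formula of Lemma~\ref{lemma:inverse_block} (the one based on the Schur complement $S_1$, where $\cN_2$ is inverted first) in place of the first. Throughout I suppress the argument $P$. First I would simplify the stated formula for $K_1^*$: since $\cL_{12}\cN_2^{-1}\cL_{12}^\top - \cN_1 = -S_1$, it reduces to
\[
K_1^* = S_1^{-1}\bigl(\cL_1^\top - \cL_{12}\cN_2^{-1}\cL_2^\top\bigr),
\]
which is the exact counterpart of the identity $K_2^* = -S_2^{-1}(\cdots)$ used in the earlier proof.

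Next I would substitute the expressions for $\cM^{1*}$, $\cL_2^{1*} = \cL_2 - (K_1^*)^\top \cL_{12}$ and $\cN_2^{1*} = \cN_2$ from Corollary~\ref{Corollary:C_y_K1*_K2} into the left-hand side. Expanding $\cL_2^{1*}(\cN_2^{1*})^{-1}(\cL_2^{1*})^\top = (\cL_2 - (K_1^*)^\top \cL_{12})\cN_2^{-1}(\cL_2 - (K_1^*)^\top \cL_{12})^\top$ and collecting the constant, linear, and quadratic contributions in $K_1^*$, the left-hand side becomes
\[
\cM - \cL_2\cN_2^{-1}\cL_2^\top - (\cL_1 - \cL_2\cN_2^{-1}\cL_{12}^\top) K_1^* - (K_1^*)^\top(\cL_1^\top - \cL_{12}\cN_2^{-1}\cL_2^\top) + (K_1^*)^\top S_1 K_1^*,
\]
where I have used $\cN_1 - \cL_{12}\cN_2^{-1}\cL_{12}^\top = S_1$ to merge the two quadratic pieces. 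Writing $T = \cL_1^\top - \cL_{12}\cN_2^{-1}\cL_2^\top$, so that $K_1^* = S_1^{-1}T$ and $T^\top = \cL_1 - \cL_2\cN_2^{-1}\cL_{12}^\top$, the three $K_1^*$-dependent terms collapse to $-T^\top S_1^{-1}T - T^\top S_1^{-1}T + T^\top S_1^{-1}T = -T^\top S_1^{-1}T$, leaving
\[
\cM - \cL_2\cN_2^{-1}\cL_2^\top - (\cL_1 - \cL_2\cN_2^{-1}\cL_{12}^\top) S_1^{-1} (\cL_1 - \cL_2\cN_2^{-1}\cL_{12}^\top)^\top.
\]

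Finally I would evaluate the right-hand side $\cM - \cL\cN^{-1}\cL^\top$ by invoking the second formula of Lemma~\ref{lemma:inverse_block}, which applies precisely because $\cN_2$ and $S_1$ are invertible. Multiplying out $[\cL_1, \cL_2]\,\cN^{-1}\,[\cL_1^\top, \cL_2^\top]^\top$ gives $\cL_2\cN_2^{-1}\cL_2^\top + (\cL_1 - \cL_2\cN_2^{-1}\cL_{12}^\top) S_1^{-1}(\cL_1 - \cL_2\cN_2^{-1}\cL_{12}^\top)^\top$, so that subtracting from $\cM$ reproduces exactly the expression obtained above, which establishes the claimed equality. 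The only genuine obstacle is bookkeeping: one must track transposes carefully and be sure to use the $S_1$-based block inverse (with $\cN_2$ inverted first) rather than the $S_2$-based one employed in Lemma~\ref{lemma:equi_ARE_fix_K2*}; the algebra is otherwise a faithful transposition of that earlier argument.
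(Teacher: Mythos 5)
Your proof is correct and is exactly the argument the paper intends: the lemma is stated without proof as the player-swapped counterpart of Lemma~\ref{lemma:equi_ARE_fix_K2*}, and your computation faithfully transposes that proof, using the $S_1$-based block inverse and the identity $K_1^*=S_1^{-1}(\cL_1^\top-\cL_{12}\cN_2^{-1}\cL_2^\top)$. Note only that you are implicitly using the \emph{corrected} form of the second formula in Lemma~\ref{lemma:inverse_block} (with $\cN_2^{-1}$ and a transpose in the lower-left block, and $S_1^{-1}$ in the lower-right), since the version printed in the paper contains typographical errors; your expansion of $\cL\cN^{-1}\cL^\top$ is the right one.
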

	
	\subsection{Sufficient condition}
	\label{sec:closed-loop-sufficient}
	 We now phrase a sufficient condition of optimality. The necessary part will be discussed in  Section~\ref{sec4:algo}, since it will serve as a basis for our numerical algorithms.  
	For a symmetric matrix $P \in \cS^d$, let us denote 
    \begin{align}
        K_i^* &= - \Big( \cL_{12}(P) \cN_j(P)^{-1} \cL_{12}(P)^\top - \cN_i(P) \Big)^{-1} \Big( \cL_i(P)^\top - \cL_{12}(P) \cN_j(P)^{-1} \cL_{j}(P)^\top \Big)
			  \label{eq:K_i^*} 
	\end{align}
    for $i \neq j$, $i,j \in \{1,2\}$, under the condition that the inverse of matrices involved above exist.
    
	\begin{proposition}
		\label{prop:sufficient_CLSP_y} 
		Assume that we have the following two conditions:
		\begin{enumerate}
		    \item The (ARE-y) \eqref{eq:ARE_y} $ \cM(P) - \cL(P) \cN(P)^{-1} \cL(P)^\top  = 0 $
		          admits a symmetric solution $P \in \cS^d $ satisfying
            		\begin{equation}
            		\label{eq:cond_R1_R2_CLSP_y}
            			\gamma B_1^\top P B_1 + R_1 \succ 0, \qquad \gamma B_2^\top P B_2 - R_2 \prec 0.
            		\end{equation}
		    \item  The pair of matrices $(K_1^*, K_2^*) \in \Theta_y$.
		\end{enumerate}
		Then $(K_1^*, K_2^*)$ is a $CLSP-y$ in $\Theta_y$.  
		Moreover, we have
		\begin{equation*}
			C_y(K_1^*, K_2^*, \tilde{y}) = C_y^*(P; y) = \tilde{y}^\top P \tilde{y} + \frac{\gamma}{1- \gamma} \EE\left[  (\epsilon_1^1)^\top P \epsilon_1^1 \right].
		\end{equation*}
		\\	
		The control processes $(u_{1,t}^{(y),*})_{t\geq 0}$ and $(u_{1,t}^{(y),*} )_{t \geq 0}$ corresponding to the $CLSP-y$ $(K_1^*, K_2^*)$ are given by, for every $t\geq 0$,
		\begin{equation}
		\label{eq:feedback_opt_controls}
			u_{1,t}^{(y),*} = -K_1^* y_t^*, \qquad  	u_{2,t}^{(y),*} = K_2^* y_t^*
		\end{equation}
		where the process $(y_t^*)_{t \geq 0}$ follows the dynamics
		\begin{equation*}
			y_{t+1}^* = (A - B_1 K_1^* + B_2 K_2^*) y_t^* + \epsilon_{t+1}^1, \qquad y_0^* = \tilde{y}.
		\end{equation*}	
		These two control processes satisfy the optimality condition: for every $t \geq 0$,
		\begin{equation}
		\label{eq:relationship_ut_yt_compact}
		    \cN(P) u_t^{(y), *} + \cL(P)^\top y_t^* = 0
		\end{equation}
		 where $u_t^{(y), *} = [ (u_{1,t}^{(y), *})^\top, ( u_{2,t}^{(y),*} )^\top ]^\top \in \RR^{2\ell},$ or equivalently
		\begin{subequations}
		\label{eq:relationship_ut_yt}
		    \begin{empheq}[left = \empheqlbrace]{align}
		        & \cN_1(P) u_{1,t}^{(y),*}+ \cL_{12}(P) u_{2,t}^{(y),*} = - \cL_1(P)^\top y_t^* \, ,
		        \label{eq:relation_ut_yt_1}
		        \\
	        	& \cL_{12}^\top(P) u_{1,t}^{(y),*} + \cN_2(P) u_{2,t}^{(y),*} = - \cL_2(P)^\top y_t^* \, .
	        	\label{eq:relation_ut_yt_2}
		    \end{empheq}
		\end{subequations}
	\end{proposition}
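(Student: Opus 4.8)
The plan is to establish the two saddle-point inequalities in \eqref{eq:cond_CLSP_y} separately, each by a completion-of-squares (Schur complement) argument, and then to recover the value $C_y(K_1^*,K_2^*,\tilde y)=C_y^*(P;\tilde y)$ by sandwiching at the common point $(K_1^*,K_2^*)$.

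First I would treat the right-hand inequality, in which player~2 keeps $K_2^*$ fixed while player~1 (the minimizer) deviates to an arbitrary $K_1$ with $(K_1,K_2^*)\in\Theta_y$. By Corollary~\ref{corollary:C_y_K1_K2*}, the gap $C_y(K_1,K_2^*,\tilde y)-C_y^*(P;\tilde y)$ equals an expected discounted sum of the quadratic form in $(y_t^{2*},u_{1,t}^{(y)})$ built from the blocks $\cM^{2*}(P)$, $\cL_1^{2*}(P)$, and $\cN_1^{2*}(P)$. Condition~(1) gives $\cN_1^{2*}(P)=\cN_1(P)=\gamma B_1^\top P B_1+R_1\succ0$, so Schur's lemma (Lemma~\ref{lemma:schur's_lemma}) rewrites each integrand as $(y_t^{2*})^\top\big[\cM^{2*}(P)-\cL_1^{2*}(P)\cN_1(P)^{-1}(\cL_1^{2*}(P))^\top\big]y_t^{2*}$ plus a nonnegative term $w_t^\top\cN_1(P)\,w_t$ obtained by completing the square in $u_{1,t}^{(y)}$. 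Lemma~\ref{lemma:equi_ARE_fix_K2*} identifies the bracketed Schur complement with the left-hand side of the (ARE-y) \eqref{eq:ARE_y}, which vanishes since $P$ solves it; hence the gap is nonnegative and $C_y(K_1,K_2^*,\tilde y)\ge C_y^*(P;\tilde y)$ for every admissible $K_1$.

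By the entirely symmetric argument — now fixing player~1 at $K_1^*$, letting player~2 (the maximizer) deviate, using Corollary~\ref{Corollary:C_y_K1*_K2}, its companion lemma for the choice $K_1^*$, and the sign condition $\cN_2^{1*}(P)=\cN_2(P)=\gamma B_2^\top P B_2-R_2\prec0$ from \eqref{eq:cond_R1_R2_CLSP_y} — Schur's lemma produces a nonpositive quadratic remainder, so $C_y(K_1^*,K_2,\tilde y)\le C_y^*(P;\tilde y)$ for every admissible $K_2$. Specializing the first inequality to $K_1=K_1^*$ and the second to $K_2=K_2^*$ (both admissible because $(K_1^*,K_2^*)\in\Theta_y$ by condition~(2)) sandwiches $C_y(K_1^*,K_2^*,\tilde y)$ between $C_y^*(P;\tilde y)$ from both sides, which gives the value formula; combining the two inequalities then yields exactly \eqref{eq:cond_CLSP_y}, i.e. $(K_1^*,K_2^*)$ is a $CLSP$-$y$. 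The optimality relation \eqref{eq:relationship_ut_yt_compact} follows by writing the prescribed feedback as $(-K_1^*,K_2^*)^\top=-\cN(P)^{-1}\cL(P)^\top$, which is precisely what the block-inverse formula of Lemma~\ref{lemma:inverse_block} applied to the definition \eqref{eq:K_i^*} encodes; and \eqref{eq:relationship_ut_yt} is simply the two block rows of this compact identity.

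The Schur-complement reductions are already supplied by Lemma~\ref{lemma:equi_ARE_fix_K2*} and its counterpart, so the main thing to get right is the interplay of signs. The delicate point is that the \emph{same} $P$ must simultaneously render the player-1 block positive definite and the player-2 block negative definite — precisely the two-sided condition \eqref{eq:cond_R1_R2_CLSP_y} — so that the minimizer's remainder is $\ge0$ and the maximizer's remainder is $\le0$ with no conflict; and one must invoke $(K_1,K_2^*),(K_1^*,K_2)\in\Theta_y$ to ensure $L^2$-integrability, so that the corollaries apply and the boundary term $\gamma^t\,\EE[\langle Py_t,y_t\rangle]\to0$ (as in the proof of Corollary~\ref{corollary:opt_gap_Cy}). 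I expect the feedback identity $(-K_1^*,K_2^*)^\top=-\cN(P)^{-1}\cL(P)^\top$, verified via the $2\times2$ block inverse, to be the most computation-heavy though routine step.
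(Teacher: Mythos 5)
Your proposal is correct and uses essentially the same machinery as the paper: the one-player-deviation decompositions of Corollary~\ref{corollary:C_y_K1_K2*} and Corollary~\ref{Corollary:C_y_K1*_K2}, the completion of squares via Lemma~\ref{lemma:schur's_lemma}, the identification of the resulting Schur complements with the (ARE-y) residual through Lemma~\ref{lemma:equi_ARE_fix_K2*} and its counterpart, and the sign conditions \eqref{eq:cond_R1_R2_CLSP_y} to fix the signs of the two quadratic remainders. The only (harmless) difference is that the paper first evaluates $C_y(K_1^*,K_2^*,\tilde{y})=C_y^*(P;\tilde{y})$ directly by showing that both terms in the joint three-block decomposition of Corollary~\ref{corollary:opt_gap_Cy} vanish at $(K_1^*,K_2^*)$ and uses that to derive the expressions \eqref{eq:K_i^*} from the system \eqref{eq:relationship_ut_yt_compact}, whereas you recover the value by sandwiching the two one-sided inequalities at the common point --- a valid and slightly leaner variant, which still leaves you the routine block-inverse verification that \eqref{eq:K_i^*} solves \eqref{eq:sys_equation_K1*_K2*}, exactly as you anticipate.
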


	\begin{proof}
		From condition \eqref{eq:cond_R1_R2_CLSP_y}, we have
		$$
			\cN_1(P) = \gamma B_1^\top P B_1 + R_1 \succ 0, \qquad \text{and} \qquad   \cL_{12}(P)^\top \cN_1(P)^{-1} \cL_{12}(P) - \cN_2(P)  \succ 0,
		$$
		so that the matrix $K_2^*$ is well defined in $\RR^{\ell \times d}$. Similarly, we have $\cN_2(P) \prec 0$ and $\cL_{12}(P) \cN_2(P)^{-1} \cL_{12}(P)^\top - \cN_1(P) \prec 0$, so that $K_1^*$ is well-defined too.
		Moreover, Lemma~\ref{lemma:inverse_block} implies that the $2 \times 2$ block matrix 
		$$
			\cN(P) = \left[ 
			\begin{array}{cc} 
				\cN_1(P) & \cL_{12}(P) \\
				\cL_{12}(P)^\top &  \cN_2(P)
			\end{array}
			\right]
		$$ is invertible. By applying Schur's lemma (Lemma~\ref{lemma:schur's_lemma}) to the block matrix shown in Corollary~\ref{corollary:opt_gap_Cy}, we get, for every time $t\geq 0$:
		\begin{align*}
			& \left[ \begin{array}{c} y_t \\ \yut{1} \\ \yut{2} \end{array} \right]^\top
			\left[ 
			\begin{array}{lll} \cM(P) & \cL_1(P) & \cL_2(P) \\
			\cL_1(P)^\top & \cN_1(P) & \cL_{12}(P) \\
			\cL_1(P)^\top & \cL_{12}(P)^\top &  \cN_2(P)  \\
			\end{array}
			\right]
			\left[ \begin{array}{c} y_t \\ \yut{1} \\ \yut{2} \end{array} \right]
			\nonumber \\
			= & y_t^\top \Big( \cM(P) - \cL(P) \cN(P)^{-1} \cL(P)^\top \Big) y_t 
			 \\
			 &\quad+ \Big(u_t^{(y)} + \cN(P)^{-1} \cL(P)^\top y_t \Big)^\top \cN(P) \Big(u_t^{(y)} + \cN(P)^{-1} \cL(P)^\top  y_t \Big)
			\nonumber \\
			= & (i)_t + (ii)_t,
		\end{align*}
		 where $u_t^{(y)} = [ (u_{1,t}^{(y)})^\top, ( u_{2,t}^{(y)} )^\top ]^\top \in \RR^{2 \ell}.$
		Since $P$ satisfies (ARE-y) \eqref{eq:ARE_y}, so that the first term $(i)_t = 0$ for every $t \geq 0$. Thus, by Corollary~\ref{corollary:opt_gap_Cy}, for every $(K_1, K_2) \in \Theta_y$ and $\tilde{y} \in \RR^d$,
		\begin{align*}
		    &C_y(K_1, K_2, \tilde{y}) - C^*(P; \tilde{y}) 
			\\
			&=   \EE \left[ \left. \sum_{t=0}^\infty \gamma^t \Big(u_t^{(y)} + \cN(P)^{-1} \cL(P)^\top y_t \Big)^\top \cN(P) \Big(u_t^{(y)} + \cN(P)^{-1} \cL(P)^\top y_t \Big)  \right| y_0 = \tilde{y} \right].
		\end{align*}
		If we choose $(K_1^*, K_2^*) \in \RR^{\ell \times d} \times \RR^{\ell \times d}$ satisfying equation \eqref{eq:relationship_ut_yt_compact}, i.e.
		$
			\cN(P) u_t^{(y), *} + \cL(P)^\top y_t^* = 0,
		$
	    then we obtain $(ii)_t = 0$ for every $t \geq 0$. In this case, we have
		$$
			C_y(K_1^*, K_2^*, \tilde{y}) = C^*_y(P; \tilde{y}) = \tilde{y}^\top P \tilde{y} + \frac{\gamma}{1- \gamma} \EE\left[  (\epsilon_1^1)^\top P \epsilon_1^1 \right] < \infty.
		$$
        			
		Let us move on to obtain expressions for $(K_1^*, K_2^*)$. Since the matrix $\cN(P)$ is invertible, there exists a unique solution $u_t^{(y), *}$ to \eqref{eq:relationship_ut_yt_compact} for every $t \geq 0$.	
		We plug-in the definition of $\cL(P)$, $\cN(P)$, and $u_t^{(y),*} = [ ( u_{1,t}^{(y),*} )^\top, ( u_{2,t}^{(y), *} )^\top ]^\top$,
		equation \eqref{eq:relationship_ut_yt_compact} is equivalent to the system of equations \eqref{eq:relation_ut_yt_1} and \eqref{eq:relation_ut_yt_2}.		
		So, by multiplying $\cL_{12}(P) \cN_2(P)^{-1}$ on both sides of \eqref{eq:relation_ut_yt_2}, and subtract it to \eqref{eq:relation_ut_yt_1}, we obtain 
		$$
			\Big(\cL_{12}(P) \cN_2(P)^{-1} \cL_{12}(P)^\top - \cN_1(P) \Big) u_{1,t}^{(y), *} = \Big( \cL_1(P)^\top - \cL_{12}(P) \cN_2(P)^{-1} \cL_2(P)^\top \Big) y_t^*.
		$$
		By the assumption on $\cN_1(P) \succ 0$ and $\cN_2(P) \prec 0$, we have $\cL_{12}(P) \cN_2(P)^{-1} \cL_{12}(P)^\top - \cN_1(P) \prec 0$ which is invertible. As a consequence, we obtain the optimal feedback control for player 1 by
		$$
			u_{1,t}^{(y), *} = - K_1^* y_t^*
		$$
		where $K_1^*$ is given by \eqref{eq:K_i^*}. Similarly, we can derive that $u_{2,t}^{(y), *} = K_2^* y_t^*$ with $K_2^*$ given by \eqref{eq:K_i^*}.
		Moreover, when we replace $u_{1,t=0}^{(y),*}$ and $u_{2,t=0}^{(y), *}$ with their expressions in \eqref{eq:feedback_opt_controls} back into \eqref{eq:relationship_ut_yt_compact}, and by noticing that it holds true for every $\tilde{y} \in \RR^{d}$, we have
		\begin{equation}
		\label{eq:sys_equation_K1*_K2*}
		\left\{
		    \begin{array}{l}
			    - \cN_1(P) K_1^* + \cL_{12}(P) K_2^* = - \cL_1(P)^\top
			    \\
			    - \cL_{12}(P)^\top K_1^* + \cN_2(P) K_2^* = - \cL_2(P)^\top.
			\end{array}
		\right.
		\end{equation}
		
		In the following, we will show that the pair $(K_1^*, K_2^*)$ is a $CLSP-y$, which means that it satisfies condition \eqref{eq:cond_CLSP_y}. First, under the assumption in the statement, we know that $(K_1^*, K_2^*)\in \Theta_y$. 
        Then, we look at the case when player 2 fixes her strategy to $K_2^*$, but player 1 adopts an alternative strategy $K_1$ satisfying $(K_1, K_2^*) \in \Theta_y$. The corresponding control at time $t$ is given then by $\yut{1} = - K_1 y_t^{2*}$, where the state process $(y_t^{2*})_{t\geq 0}$ follows dynamics \eqref{eq:dyn_y_K2*}:
		\begin{equation*}
			y_{t+1}^{2*}  = (A - B_1 K_1 + B_2 K_2^*) y_t^{2*}  + \epsilon_{t+1}^1  = (A + B K^*_2) y_t^{2*} + B_1 \yut{1} + \epsilon_{t+1}^1
		\end{equation*}
		with $y_0^{2*} = \tilde{y}$.
		By Corollary~\ref{corollary:C_y_K1_K2*} and Schur's lemma, we have
		\begin{align*}
			&	C_y(K_1, K_2^*, \tilde{y}) - C^*_y(P; \tilde{y})
			\\
			= & \EE \left[ \left. \sum_{t=0}^{\infty} \gamma^t (y_t^{2*} )^\top \Big( \cM^{2*}(P) - \cL_{1}^{2*}(P) ( \cN_1^{2*}(P) )^{-1} \cL_1^{2*}(P)^\top \Big) y_t^{2*} \right| y_0^{2*} = \tilde{y} \right]
			\\
			& + \EE \left[ \left. \sum_{t=0}^\infty \gamma^t \Big( \yut{1} + (\cN_1^{2*}(P) )^{-1} \cL_1^{2*}(P)^\top y_t^{2*} \Big)^\top
			\cN_1^{2*}(P) \right.\right.
			\\
			&\qquad \left.\left.\Big( \yut{1} + (\cN_1^{2*}(P) )^{-1} \cL_1^{2*}(P)^\top y_t^{2*}\Big)  \right| y_0^{2*} = \tilde{y}\right].
		\end{align*}
		\\
		From Lemma~\ref{lemma:equi_ARE_fix_K2*}, we know that a solution $P$ to (ARE-y) \eqref{eq:ARE_y} is also a solution to \eqref{eq:ARE_y_K2*}:
		$$
			0 =	\cM^{2*}(P) - \cL_{1}^{2*}(P) ( \cN_1^{2*}(P) )^{-1} (\cL_1^{2*}(P))^\top.
		$$
		Moreover, we have
		$
			- \cN_1(P) K_1^* + \cL_{12}(P) K_2^* = - \cL_1(P)^\top
		$
		which implies, by definition of $\cN_1^{2*}(P)$ and $\cL_1^{2*}(P)$, that
		\begin{equation*}
			- K_1^*  = - (\cN_1(P) )^{-1} \Big( \cL_1(P)^\top + \cL_{12}(P) K_2^* \Big) = - (\cN_1^{2*}(P) )^{-1} \cL_1^{2*}(P)^\top.
		\end{equation*}
		Thus, together with $\yut{1} = -K_1 y_t^{2*}$ for every $t \geq 0$, we have
		\begin{equation*}
			C_y(K_1, K_2^*, \tilde{y}) - C^*(P; \tilde{y})=  \EE \left[ \left. \sum_{t=0}^\infty \gamma^t (y_t^{2*})^\top (K_1^* - K_1)^\top \cN_1(P) (K_1^* - K_1) y_t^{2*} \right| y_0^{2*} = \tilde{y} \right] . 	
		\end{equation*}
		Consequently, the condition $\cN_1(P) = \gamma B_1^\top P B_1 + R_1 \succ 0$ implies
		\begin{equation*}
			C_y(K_1, K_2^*, \tilde{y}) - C_y(K_1^*, K_2^*, \tilde{y}) \geq 0.
		\end{equation*}
		Similarly, we have $K_2^* = - ( \cN_2^{1*}(P) )^{-1} \cL_2^{1*}(P)^\top$, so
		\begin{equation*}
			C_y(K_1^*, K_2, \tilde{y}) - C^*_y(P; \tilde{y})=  \EE \left[ \left. \sum_{t=0}^\infty \gamma^t (y_t^{1*})^\top (K_2^* - K_2)^\top \cN_2(P) (K_2^* - K_2) y_t^{1*} \right| y_0^{1*} = \tilde{y} \right]
		\end{equation*}
		where the process $(y_t^{1*})_{t \geq 0}$ satisfies the dynamics \eqref{eq:dyn_y_K1*} with intial $y_0^{1*} = \tilde y$.
		Thus, the condition $\cN_2(P) = \gamma B_2^\top P B_2 - R_2 \prec 0$ implies
		\begin{equation*}
			C_y(K_1^*, K_2, \tilde{y}) - C_y(K_1^*, K_2^*, \tilde{y}) \leq 0.
		\end{equation*}
	
	\end{proof}

	\begin{remark}
		We have emphasized previously that one difficulty in Proposition~\ref{prop:sufficient_CLSP_y} comes from the presence of a cross interaction between control processes $(\yut{1})_{t \geq 0}$ and $(\yut{2})_{t \geq 0}$ shown in Corollary~\ref{corollary:opt_gap_Cy}, which disappears in the continuous-time setting. Moreover, the solution $P$ to (ARE-y) is not involved in $\cN_1$ nor $\cN_2$ in the continuous-time case. Consequently, with continuous-time closed-loop feedback controls, a sufficient condition based only on model parameters ``$R_1 \succ 0$ and $- R_2 \prec 0$", which is similar to condition \eqref{eq:cond_R1_R2_CLSP_y} in Proposition~\ref{prop:sufficient_CLSP_y}, turns out to be a necessary condition for a $CLSP-y$ in $\Theta_y$ (see \cite{sun2016linear}). This is not the case in our discrete-time setting.
	\end{remark}

    We present here similar results corresponding to the $CLSP-z$.    Let the matrices $(\tilde N_1, \tilde N_2, \tilde L_1, \tilde L_2, \tilde L_{12}, \tilde M, \tilde N, \tilde L)(\bar{P})$ be defined by using the same expressions in equations \eqref{eq:notation_M_N_L}(a), (b), (c), but by replacing $(A, B_1, B_2, Q)$ to $(\tilde A, \tilde B_1, \tilde B_2, \tilde Q)$.
    
    For a symmetric matrix $\bar P \in \cS^d$, let us denote 
    \begin{align}
            L_i^* &= - \Big( \tilde \cL_{12}(\bar P) \tilde \cN_j(\bar P)^{-1} \tilde \cL_{12}(\bar P)^\top - \tilde \cN_i(\bar P) \Big)^{-1} \Big( \tilde  \cL_i(\bar P)^\top - \tilde \cL_{12}(\bar P) \tilde \cN_j(\bar P)^{-1} \tilde \cL_{j}(\bar P)^\top \Big)
		  \label{eq:L_i^*} 
	\end{align}
    for $i \neq j$, $i,j \in \{1,2\}$, under the condition that the inverse of matrices appearing above exist.

    \begin{lemma}
    \label{prop:sufficient_CLSP_z}
         Assume the following Algebraic Riccati equation (ARE-z):
            \begin{equation}
            \label{eq:ARE_z}
                0 = \tilde \cM(\bar P) - \tilde \cL( \bar P) \tilde \cN(\bar P ) \tilde \cL(\bar P)^\top
            \end{equation}
        admits a solution $\bar P \in \cS^d$ which is such that 
        \begin{equation}
            \label{eq:cond_R1_R2_CLSP_z}
        	\gamma \tilde B_1^\top \bar P \tilde B_1 + \tilde R_1 \succ 0
        	\qquad 
        	\text{ and }
    		\qquad
    		\gamma \tilde B_2^\top \bar P \tilde B_2 - \tilde R_2 \prec 0.
    	\end{equation}
		 Assume in addition that the pair of matrices $(L_1^*, L_2^*)$ given by \eqref{eq:L_i^*} is in $\Theta_z$. Then, $(L_1^*, L_2^*)$ is a $CLSP-z$ in $\Theta_z$.
    \end{lemma}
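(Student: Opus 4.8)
The plan is to mirror, essentially verbatim, the proof of Proposition~\ref{prop:sufficient_CLSP_y}, exploiting the observation that the auxiliary process $\bz$ obeys the dynamics~\eqref{eq:dyn_z_closed}, which has exactly the same linear-quadratic structure as the dynamics~\eqref{eq:dyn_y_closed} of $\by$, under the substitution $(A, B_1, B_2, Q, R_1, R_2, \epsilon^1_t, P) \mapsto (\tilde A, \tilde B_1, \tilde B_2, \tilde Q, \tilde R_1, \tilde R_2, \epsilon^0_t, \bar P)$, with $C_z$ playing the role of $C_y$ and $\bz$ being $\cF^0$-measurable where $\by$ was $\cF^1$-measurable. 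In particular, the tilde-versions of Lemma~\ref{lemma:relation_yPy_tp1_t}, Corollary~\ref{corollary:opt_gap_Cy}, Corollary~\ref{corollary:C_y_K1_K2*}, Corollary~\ref{Corollary:C_y_K1*_K2} and Lemma~\ref{lemma:equi_ARE_fix_K2*} hold with identical proofs; I would first record these as the corresponding identities for $\bz$, writing $C_z^*(\bar P; \tilde z) = \tilde z^\top \bar P \tilde z + \frac{\gamma}{1-\gamma}\,\EE[(\epsilon^0_1)^\top \bar P \epsilon^0_1]$.

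First I would check that condition~\eqref{eq:cond_R1_R2_CLSP_z}, namely $\tilde \cN_1(\bar P) \succ 0$ and $\tilde \cN_2(\bar P) \prec 0$, guarantees that $L_1^*$ and $L_2^*$ in~\eqref{eq:L_i^*} are well defined (the relevant Schur complements $\tilde \cL_{12}(\bar P)^\top \tilde \cN_1(\bar P)^{-1} \tilde \cL_{12}(\bar P) - \tilde \cN_2(\bar P) \succ 0$ and $\tilde \cL_{12}(\bar P) \tilde \cN_2(\bar P)^{-1} \tilde \cL_{12}(\bar P)^\top - \tilde \cN_1(\bar P) \prec 0$ are then invertible), and that $\tilde \cN(\bar P)$ is invertible by Lemma~\ref{lemma:inverse_block}. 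Applying Schur's lemma (Lemma~\ref{lemma:schur's_lemma}) to the block matrix in the tilde-analog of Corollary~\ref{corollary:opt_gap_Cy}, the optimality gap splits as
\[
C_z(L_1, L_2, \tilde z) - C_z^*(\bar P; \tilde z) = \EE\Big[\textstyle\sum_{t \geq 0} \gamma^t\big( (i)_t + (ii)_t \big) \,\big|\, z_0 = \tilde z\Big],
\]
where $(i)_t = z_t^\top(\tilde \cM(\bar P) - \tilde \cL(\bar P)\tilde \cN(\bar P)^{-1}\tilde \cL(\bar P)^\top) z_t$ and $(ii)_t$ is the completed square. Since $\bar P$ solves (ARE-z)~\eqref{eq:ARE_z}, every $(i)_t$ vanishes; choosing $(L_1^*, L_2^*)$ so that the feedback kills the square term (i.e. $\tilde \cN(\bar P) u_t^{(z),*} + \tilde \cL(\bar P)^\top z_t^* = 0$) makes every $(ii)_t$ vanish, yielding $C_z(L_1^*, L_2^*, \tilde z) = C_z^*(\bar P; \tilde z)$. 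Solving this linear system as in the $y$-case recovers the explicit expressions~\eqref{eq:L_i^*}.

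For the saddle inequalities I would fix the opponent. With $L_2 = L_2^*$ held fixed and $L_1$ perturbed, the tilde-version of Lemma~\ref{lemma:equi_ARE_fix_K2*} shows that $\bar P$ also solves the reduced Riccati equation $\tilde \cM^{2*}(\bar P) - \tilde \cL_1^{2*}(\bar P)(\tilde \cN_1^{2*}(\bar P))^{-1}\tilde \cL_1^{2*}(\bar P)^\top = 0$, so after a second Schur completion (using the tilde-analog of Corollary~\ref{corollary:C_y_K1_K2*}) the gap reduces to $C_z(L_1, L_2^*, \tilde z) - C_z^*(\bar P; \tilde z) = \EE[\sum_t \gamma^t (z_t^{2*})^\top (L_1^* - L_1)^\top \tilde \cN_1(\bar P)(L_1^* - L_1) z_t^{2*} \mid z_0 = \tilde z] \geq 0$ because $\tilde \cN_1(\bar P) \succ 0$; symmetrically, fixing $L_1 = L_1^*$ gives a gap of the same shape involving $\tilde \cN_2(\bar P) \prec 0$, hence $\leq 0$. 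Together with the hypothesis $(L_1^*, L_2^*) \in \Theta_z$ (needed, via Lemma~\ref{lemma:x_L2_integ_L2_asym_stable}, to ensure $\lim_t \EE[\gamma^t \|z_t\|^2]=0$ and thus the telescoping/convergence of the series), these are exactly the two inequalities in~\eqref{eq:cond_CLSP_z}.

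The only genuine subtlety, already flagged in the remark following Proposition~\ref{prop:sufficient_CLSP_y}, is the cross term $\tilde \cL_{12}(\bar P)$ coupling the two players' controls; this is precisely what forces the sign conditions to be imposed on the \emph{solution-dependent} matrices $\tilde \cN_i(\bar P)$ rather than merely on $\tilde R_i$, and what makes the two-step Schur reduction (first on the full $2\times 2$ block, then on the reduced block after fixing one player) necessary. Since this machinery has already been assembled for the $y$-process and transfers unchanged under the tilde substitution, no new estimate is required and the work is entirely bookkeeping. I would therefore present the argument as \emph{identical to that of Proposition~\ref{prop:sufficient_CLSP_y}, replacing $(A, B_i, Q, R_i, P, \epsilon^1)$ by $(\tilde A, \tilde B_i, \tilde Q, \tilde R_i, \bar P, \epsilon^0)$ throughout}, spelling out only those steps where the tilde notation might obscure the correspondence.
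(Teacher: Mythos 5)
Your proposal is correct and coincides with the paper's intended argument: the paper explicitly omits the proof of this lemma on the grounds that $\bz$ follows the same linear-quadratic structure as $\by$ with the coefficients $(\tilde A,\tilde B_i,\tilde Q,\tilde R_i,\bar P,\epsilon^0)$ in place of $(A,B_i,Q,R_i,P,\epsilon^1)$, which is exactly the substitution you carry out. Your bookkeeping of the well-definedness of $L_i^*$, the two-stage Schur reduction, and the role of the hypothesis $(L_1^*,L_2^*)\in\Theta_z$ matches the proof of Proposition~\ref{prop:sufficient_CLSP_y} step for step.
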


    \begin{corollary}
    \label{cor:sufficient_CLSP_z}
        If the two pair of matrices $(K_1^*, K_2^*) \in \Theta_y$ and $(L_1^*, L_2^*) \in \Theta_z$ defined in Lemmas~\ref{prop:sufficient_CLSP_y} and \ref{prop:sufficient_CLSP_z} are $CLSP-y$ and $CLSP-z$ respectively, then the closed-loop feedback policy profile $(\theta_1^*, \theta_2^*) \in \Theta_{ad}^{close}$ defined by
        $$
            \theta_1^* = (K_1^*, L_1^*), \qquad \text{and} \qquad \theta_2^* = (K_1^*, L_2^*)
        $$
        is a closed-loop saddle point for the zero-sum game. The optimal value of the utility function is given by
        \begin{equation*}
            C(\theta_1^*, \theta_2^*) = \EE \left[ \tilde{y}^\top P \tilde{y} \right] + \EE \left[ \tilde z^\top \bar P \tilde z \right] + \frac{\gamma}{1- \gamma} \EE\left[  (\epsilon_1^1)^\top P \epsilon_1^1 + (\epsilon_1^0)^\top \bar P \epsilon_1^0 \right],
        \end{equation*}
        where $P$ and $\bar P$ are solutions to the (ARE-y)~\eqref{eq:ARE_y} and (ARE-z)~\eqref{eq:ARE_z} satisfying conditions \eqref{eq:cond_R1_R2_CLSP_y} and \eqref{eq:cond_R1_R2_CLSP_z} respectively.
    \end{corollary}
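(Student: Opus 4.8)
The plan is to exploit the additive decomposition of the closed-loop cost established in~\eqref{eq:def-Ctheta1theta2}, namely
$$
C(\theta_1, \theta_2) = \EE_{\tilde y}[C_y(K_1, K_2, \tilde y)] + \EE_{\tilde z}[C_z(L_1, L_2, \tilde z)],
$$
together with the fact that the $y$-component depends only on $(K_1, K_2)$ and the $z$-component only on $(L_1, L_2)$, and that the driving noises $\boldsymbol{\epsilon}^1$ and $\boldsymbol{\epsilon}^0$ are independent. Because the two coordinates decouple completely, a saddle point of the full game can be assembled from saddle points of the two scalar subproblems.

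First I would pin down the admissibility correspondence. By Lemma~\ref{lemma:L2_x_and_x_bar}, a state process $\bx^{\theta_1, \theta_2} \in \cX$ if and only if both auxiliary processes $\by$ and $\bz$ are $L^2$-integrable; hence $(\theta_1, \theta_2^*) \in \Theta_{ad}^{close}$ holds precisely when $(K_1, K_2^*) \in \Theta_y$ and $(L_1, L_2^*) \in \Theta_z$, and symmetrically for deviations of player $2$. This is what guarantees that the hypotheses of the $CLSP$-$y$ and $CLSP$-$z$ inequalities are met whenever we consider an admissible unilateral deviation in the full game.

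Next, for any $\theta_1 = (K_1, L_1)$ with $(\theta_1, \theta_2^*) \in \Theta_{ad}^{close}$, I would write
$$
C(\theta_1, \theta_2^*) = \EE_{\tilde y}[C_y(K_1, K_2^*, \tilde y)] + \EE_{\tilde z}[C_z(L_1, L_2^*, \tilde z)]
$$
and apply, pointwise in $\tilde y$ and $\tilde z$, the right-hand inequalities of~\eqref{eq:cond_CLSP_y} and~\eqref{eq:cond_CLSP_z}, namely $C_y(K_1, K_2^*, \tilde y) \geq C_y(K_1^*, K_2^*, \tilde y)$ and $C_z(L_1, L_2^*, \tilde z) \geq C_z(L_1^*, L_2^*, \tilde z)$. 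Since these hold for every initial value, they survive taking expectations over the initial distributions, and adding them yields $C(\theta_1, \theta_2^*) \geq C(\theta_1^*, \theta_2^*)$. The analogous argument with the left-hand inequalities of~\eqref{eq:cond_CLSP_y}--\eqref{eq:cond_CLSP_z} gives $C(\theta_1^*, \theta_2) \leq C(\theta_1^*, \theta_2^*)$ for any admissible deviation $\theta_2$ of player $2$, which is exactly the closed-loop saddle point condition.

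Finally, to obtain the value I would evaluate the decomposition at $(\theta_1^*, \theta_2^*)$, using the closed form $C_y(K_1^*, K_2^*, \tilde y) = \tilde y^\top P \tilde y + \frac{\gamma}{1-\gamma}\EE[(\epsilon_1^1)^\top P \epsilon_1^1]$ from Proposition~\ref{prop:sufficient_CLSP_y} and its exact $z$-analogue (the same computation applied to the solution $\bar P$ of~\eqref{eq:ARE_z}), then taking expectations over $\tilde y$ and $\tilde z$. The main obstacle is not any deep estimate but the bookkeeping of the admissibility sets: one must verify that the decoupling of $\Theta_{ad}^{close}$ into the product of the two constraint sets $\Theta_y$ and $\Theta_z$ is exact for the relevant unilateral deviations, so that a deviation admissible in the full game is simultaneously admissible in each subgame. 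Once that correspondence is clean, the corollary follows by linearity of the cost decomposition.
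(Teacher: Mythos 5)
Your proposal is correct and follows exactly the route the paper intends: the paper states this corollary without proof, but the decomposition $C(\theta_1,\theta_2)=\EE_{\tilde y}[C_y(K_1,K_2,\tilde y)]+\EE_{\tilde z}[C_z(L_1,L_2,\tilde z)]$ from~\eqref{eq:def-Ctheta1theta2}, the equivalence of admissibility via Lemma~\ref{lemma:L2_x_and_x_bar}, and the separate application of the $CLSP$-$y$ and $CLSP$-$z$ inequalities are precisely the argument set up in the preceding remarks and in Proposition~\ref{prop:sufficient_CLSP_y} and Lemma~\ref{prop:sufficient_CLSP_z}. No gaps.
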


\section{Connection between closed-loop and open-loop Nash equilibria}
\label{sec:connect-open-closed}

In this section, we show that the open-loop and closed-loop equilibria are tightly related. To this end, we impose the following assumption on the model parameters.
\begin{assumption}
\label{assumption_B1_B2_invertible}
	We assume that $\ell = d$, and the matrices $B_1, B_2, R_1, R_2$ and $\tilde B_1, \tilde B_2, \tilde R_1, \tilde R_2$ are all invertible.
\end{assumption}
For an invertible matrix $S \in \RR^{d \times d}$, we denote $S^{-\top} = (S^\top)^{-1} = (S^{-1})^\top$. 
The following lemma relates the inverse of a 2-by-2 block matrix to its off-diagonal term \cite{lu2002inverses}. 
\begin{lemma}[Corollary~4.1 in \cite{lu2002inverses}]
\label{lemma:inverse_N_by_crossdiagnal}
	If the matrices $\cL_{12}(P) = \gamma B_1^\top P B_2 \in \RR^{2\ell}$ and  $S_3 = \cL_{12}(P)^\top - \cN_2(P) \cL_{12}(P)^{-1} \cN_1(P)$ are invertible, then matrix $\cN(P)$ has an inverse expressed by
	\begin{equation}
	\label{eq:N_inverse_crossdiag}
		\cN(P)^{-1} = 
		\left[
		\begin{array}{cc}
			\cN_1 & \cL_{12} \\
			\cL_{12}^\top & \cN_2
		\end{array}
		\right]^{-1}(P)
		= \left[
		\begin{array}{cc}
				-  S_3^{-1} \cN_2  \cL_{12}^{-1} & S_3^{-1}
				\\
				\cL_{12}^{-1} + \cL_{12}^{-1} \cN_1 S_3^{-1} \cN_2 \cL_{12}^{-1} &
				- \cL_{12}^{-1} \cN_1 S_3^{-1}
		\end{array}
			\right](P)\, .
	\end{equation}
\end{lemma}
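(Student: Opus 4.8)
The plan is to verify directly that the matrix $M$ displayed on the right-hand side of~\eqref{eq:N_inverse_crossdiag} is the inverse of $\cN(P)$, by computing the product $\cN(P)\,M$ block by block and checking that it equals the $2\ell \times 2\ell$ identity. As in the proof of Lemma~\ref{lemma:equi_ARE_fix_K2*}, I would suppress the argument $P$ throughout, writing $\cN_1, \cN_2, \cL_{12}, S_3$ in place of $\cN_1(P), \cN_2(P), \cL_{12}(P), S_3$, and record at the outset the single algebraic identity that does all the work, namely the defining relation $S_3 = \cL_{12}^\top - \cN_2 \cL_{12}^{-1} \cN_1$, equivalently $\cL_{12}^\top = S_3 + \cN_2 \cL_{12}^{-1} \cN_1$. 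Note that under the hypotheses only $\cL_{12}^{-1}$ and $S_3^{-1}$ are needed, both of which exist, and these are precisely the inverses appearing in $M$.

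Writing $M$ in the block form $M = \left[\begin{smallmatrix} M_{11} & M_{12} \\ M_{21} & M_{22}\end{smallmatrix}\right]$ with the four blocks read off from~\eqref{eq:N_inverse_crossdiag}, namely $M_{11} = -S_3^{-1}\cN_2\cL_{12}^{-1}$, $M_{12} = S_3^{-1}$, $M_{21} = \cL_{12}^{-1} + \cL_{12}^{-1}\cN_1 S_3^{-1}\cN_2\cL_{12}^{-1}$, and $M_{22} = -\cL_{12}^{-1}\cN_1 S_3^{-1}$, I would then compute the four blocks of $\cN(P)M$. Three of them are immediate telescopings. The $(1,1)$ block $\cN_1 M_{11} + \cL_{12} M_{21} = -\cN_1 S_3^{-1}\cN_2\cL_{12}^{-1} + I_\ell + \cN_1 S_3^{-1}\cN_2\cL_{12}^{-1}$ collapses to $I_\ell$; the $(1,2)$ block $\cN_1 M_{12} + \cL_{12} M_{22} = \cN_1 S_3^{-1} - \cL_{12}\cL_{12}^{-1}\cN_1 S_3^{-1}$ collapses to $0$; and the $(2,2)$ block $\cL_{12}^\top M_{12} + \cN_2 M_{22} = (\cL_{12}^\top - \cN_2\cL_{12}^{-1}\cN_1)S_3^{-1} = S_3 S_3^{-1} = I_\ell$ by the definition of $S_3$.

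The only block requiring genuine work is the $(2,1)$ block $\cL_{12}^\top M_{11} + \cN_2 M_{21}$, and this is where I expect the one real step of the argument to lie. Here I would substitute $\cL_{12}^\top = S_3 + \cN_2 \cL_{12}^{-1}\cN_1$ into the leading term $-\cL_{12}^\top S_3^{-1}\cN_2\cL_{12}^{-1}$, producing $-\cN_2\cL_{12}^{-1} - \cN_2\cL_{12}^{-1}\cN_1 S_3^{-1}\cN_2\cL_{12}^{-1}$; these two pieces then cancel exactly against the two terms coming from $\cN_2 M_{21} = \cN_2\cL_{12}^{-1} + \cN_2\cL_{12}^{-1}\cN_1 S_3^{-1}\cN_2\cL_{12}^{-1}$, giving $0$.

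Finally, since $\cN(P)$ is a square matrix over $\RR$, a right inverse is automatically a two-sided inverse, so the identity $\cN(P)M = I_{2\ell}$ established above shows $M = \cN(P)^{-1}$ and completes the proof. If one prefers to avoid this appeal, the symmetric computation $M\,\cN(P) = I_{2\ell}$ can be carried out in exactly the same way, using the same substitution for $S_3$ in the one nontrivial block. The whole argument is therefore purely mechanical once the defining relation for $S_3$ is isolated as the key identity.
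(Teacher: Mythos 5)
Your verification is correct: all four blocks of $\cN(P)M$ check out, the only nontrivial cancellation (the $(2,1)$ block) is handled correctly via the substitution $\cL_{12}^\top = S_3 + \cN_2\cL_{12}^{-1}\cN_1$, and the appeal to the fact that a one-sided inverse of a square matrix is two-sided is legitimate. The paper itself gives no proof of this lemma --- it is quoted directly from Corollary~4.1 of \cite{lu2002inverses} --- so your direct block-multiplication argument supplies a complete, self-contained justification that the paper omits; nothing further is needed.
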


If a solution $P^c \in \cS^d$ to (ARE-y) is invertible, we have an alternative expression for (ARE-y).
\begin{lemma}
\label{lemma:alternative_expression_ARE_y_close}
	We suppose that Assumption~\ref{assumption_B1_B2_invertible} holds. Let $P^c \in \cS^d$ be a solution to the Algebraic Riccati equation \eqref{eq:ARE_y}
	$$
	    0 = \cM(P^c) - \cL(P^c) \cN(P^c)^{-1}\cL(P^c)^\top.
	$$ 
	If $P^c$ and $(P^c)^{-1} + \gamma B_1 R_1^{-1} B_1^\top - \gamma B_2 R_2^{-1} B_2^\top$ are invertible, then 
   
   \begin{align}
   \label{eq:ARE_closed_loop_expression2}
		    &\cM(P^c) - \cL(P^c) \cN(P^c)^{-1}\cL(P^c)^\top =
			\nonumber\\
			&  \qquad \qquad =Q - P^c  + A^\top \left( \frac{1}{\gamma} (P^c)^{-1}  + B_1 R_1^{-1} B_1^\top - B_2 R_2^{-1} B_2^\top  \right)^{-1} A.
		\end{align}
\end{lemma}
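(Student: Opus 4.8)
The plan is to rewrite the quadratic correction $\cL(P^c)\cN(P^c)^{-1}\cL(P^c)^\top$ in a compact block form and then apply the Woodbury matrix identity. First I would stack the control matrices and the cost weights by setting $B = [B_1,B_2] \in \RR^{d\times 2\ell}$ and $R = \diag(R_1,-R_2) \in \RR^{2\ell \times 2\ell}$. Recalling the definitions in~\eqref{eq:notation_M_N_L}, one checks directly that $\cL(P^c) = \gamma A^\top P^c B$ and $\cN(P^c) = \gamma B^\top P^c B + R$. Since $R_1$ and $R_2$ are invertible under Assumption~\ref{assumption_B1_B2_invertible}, so is $R$, with $R^{-1} = \diag(R_1^{-1},-R_2^{-1})$, and therefore $B R^{-1} B^\top = B_1 R_1^{-1} B_1^\top - B_2 R_2^{-1} B_2^\top$.

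Using these identities I would group the $A$-dependent terms of $\cM(P^c)-\cL(P^c)\cN(P^c)^{-1}\cL(P^c)^\top = \gamma A^\top P^c A - P^c + Q - \gamma^2 A^\top P^c B(\gamma B^\top P^c B + R)^{-1} B^\top P^c A$ to write it as
\[
Q - P^c + \gamma A^\top \Big[\, P^c - \gamma P^c B (\gamma B^\top P^c B + R)^{-1} B^\top P^c \,\Big] A .
\]
The key step is the matrix identity
\[
P^c - \gamma P^c B (\gamma B^\top P^c B + R)^{-1} B^\top P^c = \big( (P^c)^{-1} + \gamma B R^{-1} B^\top \big)^{-1},
\]
an instance of the Woodbury identity $(D+UCV)^{-1} = D^{-1} - D^{-1}U(C^{-1}+VD^{-1}U)^{-1}VD^{-1}$ applied with $D=(P^c)^{-1}$, $U=B$, $C=\gamma R^{-1}$, $V=B^\top$; note that $C^{-1}+VD^{-1}U = \tfrac{1}{\gamma}(\gamma B^\top P^c B + R) = \tfrac{1}{\gamma}\cN(P^c)$. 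The invertibility hypotheses match exactly what this identity requires: $P^c$ invertible makes $D$ well-defined, $R$ invertible gives $C$ and shows $C^{-1}+VD^{-1}U$ is a scalar multiple of the (invertible) matrix $\cN(P^c)$, and the standing assumption that $(P^c)^{-1}+\gamma B_1 R_1^{-1}B_1^\top - \gamma B_2 R_2^{-1}B_2^\top = (P^c)^{-1}+\gamma B R^{-1}B^\top$ is invertible is precisely the invertibility of $D+UCV$.

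Finally I would absorb the scalar $\gamma$ into the inverse via $\gamma\big((P^c)^{-1}+\gamma B R^{-1}B^\top\big)^{-1} = \big(\tfrac{1}{\gamma}(P^c)^{-1} + B R^{-1}B^\top\big)^{-1}$ and substitute $B R^{-1} B^\top = B_1 R_1^{-1}B_1^\top - B_2 R_2^{-1}B_2^\top$, which yields exactly~\eqref{eq:ARE_closed_loop_expression2}. The only delicate point is lining up the invertibility conditions with those needed for Woodbury; as a fully elementary alternative that avoids the identity, one can verify the displayed formula directly by left-multiplying its right-hand side by $(P^c)^{-1}+\gamma B R^{-1}B^\top$ and checking that the product collapses to the identity matrix after a short cancellation. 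I expect no further obstacle, the remaining manipulations being routine algebraic bookkeeping.
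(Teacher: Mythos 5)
Your proof is correct, but it takes a genuinely different route from the paper's. The paper proves the identity by brute force: it invokes Lemma~\ref{lemma:inverse_N_by_crossdiagnal} to write $\cN(P^c)^{-1}$ in terms of the off-diagonal block $\cL_{12}(P^c)=\gamma B_1^\top P^c B_2$ and the auxiliary matrix $S_3=\cL_{12}^\top-\cN_2\cL_{12}^{-1}\cN_1$, expands $\cL(P^c)\cN(P^c)^{-1}\cL(P^c)^\top$ into four terms $(i)+(ii)+(iii)+(iv)$, and cancels them by hand until only $\gamma A^\top P^c A - A^\top B_1^{-\top}R_1 S_3^{-1}R_2 B_2^{-1}A$ survives; the factorization $S_3=R_2^\top B_2^{-1}\bigl(\tfrac1\gamma (P^c)^{-1}+B_1R_1^{-1}B_1^\top-B_2R_2^{-1}B_2^\top\bigr)B_1^{-\top}R_1$ then delivers the result. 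You instead stack $B=[B_1,B_2]$, $R=\diag(R_1,-R_2)$, observe $\cL(P^c)=\gamma A^\top P^c B$ and $\cN(P^c)=\gamma B^\top P^c B+R$, and apply the Woodbury identity once. I have checked the Woodbury instantiation: $C^{-1}+VD^{-1}U=\tfrac1\gamma\cN(P^c)$ is indeed invertible (implicitly required for $P^c$ to solve \eqref{eq:ARE_y} at all), and the hypothesis on $(P^c)^{-1}+\gamma BR^{-1}B^\top$ is exactly the invertibility of $D+UCV$, so the identity applies and the final substitution $BR^{-1}B^\top=B_1R_1^{-1}B_1^\top-B_2R_2^{-1}B_2^\top$ gives \eqref{eq:ARE_closed_loop_expression2}. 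What your approach buys is twofold: it is shorter and less error-prone, and it never needs $\cL_{12}(P^c)$ or $S_3$ to be invertible, so in principle it works without the restriction $\ell=d$ and without $B_1,B_2$ invertible (only $R_1,R_2$ invertible matter), whereas the paper's block-inversion route leans on Assumption~\ref{assumption_B1_B2_invertible} through $\cL_{12}^{-1}$ and $B_i^{-1}$. Amusingly, the paper itself adopts your stacked notation ($\check B$, $\check R$) in the proof of the subsequent corollary, so your argument is very much in the spirit of the surrounding text even though it is not the proof given for this lemma.
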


\begin{proof}
	First, under Assumption~\ref{assumption_B1_B2_invertible}, we observe that
	\begin{align}
	S_3 & = \cL_{12}(P^c)^\top - \cN_2(P^c) \cL_{12}(P^c)^{-1} \cN_1(P^c) 
	\nonumber \\
	& =  \gamma B_2^\top P^c B_1 - (\gamma B_2^\top P^c B_2 - R_2) \left(\frac{1}{\gamma} B_2^{-1} (P^c)^{-1} B_1^{-\top} \right) (  \gamma B_1^\top P^c B_1 + R_1)
	\nonumber \\
	& = \gamma B_2^\top P^c B_1 - \left( B_2^\top B_1^{-\top} R_1 + \gamma B_2^\top P^c B_1 - \frac{1}{\gamma} R_2 B_2^{-1} (P^c)^{-1} B_1^{-\top} R_1 - R_2 B_2^{-1} B_1 \right)
	\nonumber \\
	& = -  B_2^\top B_1^{-\top} R_1 + \frac{1}{\gamma} R_2 B_2^{-1} (P^c)^{-1} B_1^{-\top} R_1 + R_2 B_2^{-1} B_1
	\nonumber \\
	& = R_2^\top B_2^{-1} \left( - B_2 R_2^{-1} B_2^\top + \frac{1}{\gamma} (P^c)^{-1} + B_1 R_1^{-1} B_1^\top \right) B_1^{-\top} R_1
	\label{eq:S3}
	\end{align}
	and also
	\begin{equation}
	\label{eq:N_1_L12}
    	\left\{
    	\begin{array}{rcl}
    	\cN_2(P^c) \cL_{12}(P^c)^{-1} &=& \displaystyle B_2^\top B_1^{-\top} - \frac{1}{\gamma} R_2 B_2^{-1} (P^c)^{-1} B_1^{-\top}
    	\\
    	\cL_{12}(P^c)^{-1} \cN_1(P^c) & = & \displaystyle \frac{1}{\gamma} B_2^{-1} (P^c)^{-1} B_1^{-T} R_1 + B_2^{-1} B_1.
    	\end{array} 
    	\right.
	\end{equation}
  Since $\cL_{12}(P^c)$ and $S_3$ are invertible, by Lemma~\ref{lemma:inverse_N_by_crossdiagnal} we obtain
	\begin{align}
	&\frac{1}{\gamma^2} \cL(P^c) \cN(P^c)^{-1} \cL(P^c)^\top
	\nonumber
	\\
	&= \left[ A^\top P^c B_1,\   A^\top P^c B_2 \right] 
	\cN(P^c)^{-1}
	\left[ \begin{array}{c} 
	B_1^\top P^c A 
	\nonumber \\
	B_2^\top P^c A
	\end{array}
	\right]	
	\nonumber \\
	&=  - A^\top P^c B_1  S_3^{-1} \cN_2 \cL_{12}^{-1} B_1^\top P^c A  + A^\top P^c B_1 S_3^{-1} B_2^\top P^c A
	\nonumber \\
	& \qquad + A^\top P^c B_2 \left( \cL_{12}^{-1} + \cL_{12}^{-1} \cN_1 S_3^{-1} \cN_2 \cL_{12}^{-1} \right) B_1^{\top} P^c A - A^\top P^c B_2 \cL_{12}^{-1} \cN_1 S_3^{-1} B_2^\top P^c A 
	\nonumber \\
	&=   (i) + (ii) + (iii) + (iv).	
	\label{eq:LNL}
	\end{align}
	We then use equation \eqref{eq:N_1_L12} to simplify $(i)$ and $(iv)$:
	\begin{align*}
		(i) &= - A^\top P^c B_1 S_3^{-1} \left( \cN_2 \cL_{12}^{-1} B_1^\top P^c A \right) 
		= - A^\top P^c B_1 S_3^{-1} \left( B_2^\top P^c A - \frac{1}{\gamma} R_2 B_2^{-1} A \right)
		\\
		&= - (ii) + \frac{1}{\gamma} A^\top P^c B_1 S_3^{-1} R_2 B_2^{-1} A ,
	\end{align*}
	and
	\begin{align*}
		(iv) &=  \left( - A^\top P^c B_2 \cL_{12} \cN_1 \right) S_3^{-1} B_2^\top P^c A
		=  \left( - \frac{1}{\gamma} A^\top B_1^{-\top} R_1 + A^\top (P^c)^\top B_1 \right) S_3^{-1} B_2^\top P^c A
		\\
		&=  - (ii) - \frac{1}{\gamma} A^\top B_1^{-T} R_1 S_3^{-1} B_2^\top P^c A .
	\end{align*}
	Moreover we have 
	\begin{align*}
		(iii) & =  A^\top P^c B_2 \cL_{12}^{-1} B_1^\top P^c A + \left( A^\top P^c B_2 \cL_{12}^{-1} \cN_1 \right) S_3^{-1} \left( \cN_2 \cL_{12}^{-1} B_1^{\top} P^c A \right)
		\\
		& = \frac{1}{\gamma} A^\top P^c A +  \left( \frac{1}{\gamma} A^\top B_1^{-\top} R_1 + A^\top (P^c)^\top B_1 \right) S_3^{-1} \left( B_2^\top P^c A - \frac{1}{\gamma} R_2 B_2^{-1} A \right)
		\\
		& = \frac{1}{\gamma} A^\top P^c A + (ii) - \frac{1}{\gamma^2} A^\top B_1^{-\top} R_1 S_3^{-1} R_2 B_2^{-1} A
		+ \frac{1}{\gamma} A^\top B_1^{-T} R_1 S_3^{-1} B_2^\top P^c A 
		\\
		&\qquad\qquad
		- \frac{1}{\gamma} A^\top P^c B_1 S_3^{-1} R_2 B_2^{-1} A.
	\end{align*}
	Then, equation \eqref{eq:LNL} becomes
	\begin{equation*}
		\cL(P^c) \cN(P^c)^{-1} \cL(P^c)^\top = \gamma A^\top P^c A - A^\top B_1^{-\top} R_1 S_3^{-1} R_2 B_2^{-1} A.
	\end{equation*}
	Together with equation \eqref{eq:S3}, we conclude that

	\begin{align*}
    	0 &= \cM(P^c) - \cL(P^c) \cN(P^c)^{-1} \cL(P^c)^\top 
    	\\
    	&= Q - P^c + A^\top \left( \frac{1}{\gamma} (P^c)^{-1}  + B_1 R_1^{-1} B_1^\top - B_2 R_2^{-1} B_2^\top  \right)^{-1} A.
   \end{align*}

\end{proof}

\begin{lemma}
\label{lemma:connect_ARE_open_closed}
    Assume that Assumption~\ref{assumption_B1_B2_invertible} holds. Let $P^o \in \RR^{d \times d}$ be a solution to the Algebraic Riccati equation \eqref{eq:main_ARE_P_ZS} derived from the open-loop information structure, namely:
	\begin{equation}
	\label{eq:ARE_y_open_repeat}
		P^o = \gamma\left( A^\top P^o + 2 Q \right) \left( A + (B_1 \Gamma_1 + B_2 \Gamma_2) P^o  \right)
	\end{equation}
	where $\Gamma_1 = -\frac{1}{2} R_1^{-1} B_1^\top$ and $\Gamma_2 = \frac{1}{2} R_2^{-1}B_2^\top$.
	We consider a matrix given by
	\begin{equation}
	\label{eq:ARE_open_close_transformation}
		P^c = \frac{1}{2} A^\top P^o + Q.
	\end{equation}
	If $A^\top P^o = (P^o)^\top A$, and the symmetric matrices $P^c$ and $(P^c)^{-1} + \gamma \left( B_1 R_1^{-1} B_1^{\top} - B_2 R_2^{-1} B_2^\top \right)$ are invertible, then the matrix $P^c \in \cS^d$ is a solution to (ARE-y) \eqref{eq:ARE_y} derived from the closed-loop information structure:
	$$
	    0 = \cM(P^c) - \cL(P^c) \cN(P^c)^{-1} \cL(P^c)^\top.
	$$
\end{lemma}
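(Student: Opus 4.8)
The plan is to reduce the claim to the algebraic identity already established in Lemma~\ref{lemma:alternative_expression_ARE_y_close} and then to verify that the right-hand side of that identity vanishes for $P^c = \frac12 A^\top P^o + Q$, using the open-loop equation~\eqref{eq:ARE_y_open_repeat}. Throughout I abbreviate $G := B_1 R_1^{-1} B_1^\top - B_2 R_2^{-1} B_2^\top$ and $M := \frac{1}{\gamma}(P^c)^{-1} + G$, so that the invertibility hypothesis $(P^c)^{-1} + \gamma(B_1 R_1^{-1}B_1^\top - B_2 R_2^{-1}B_2^\top) = \gamma M$ becomes simply ``$M$ is invertible''.

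First I would record that $P^c \in \cS^d$: since $Q\in\cS^d$ and the hypothesis $A^\top P^o = (P^o)^\top A$ makes $A^\top P^o$ symmetric, $P^c = \frac12 A^\top P^o + Q$ is symmetric. Next I would check the invertibility conditions required by Lemma~\ref{lemma:alternative_expression_ARE_y_close}: under Assumption~\ref{assumption_B1_B2_invertible} the matrix $\cL_{12}(P^c) = \gamma B_1^\top P^c B_2$ is invertible because $B_1, B_2, P^c$ are, and the computation~\eqref{eq:S3} gives $S_3 = R_2^\top B_2^{-1}(\gamma M) B_1^{-\top} R_1$, which is invertible since $R_1, R_2, B_1, B_2$ and $M$ are. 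The crucial point is that the identity~\eqref{eq:ARE_closed_loop_expression2} is a purely algebraic consequence of these invertibility facts (the ``solution'' assumption in Lemma~\ref{lemma:alternative_expression_ARE_y_close} is used only to set its left-hand side to zero), so it may legitimately be applied to the not-yet-known-to-be-a-solution matrix $P^c$, yielding
$$\cM(P^c) - \cL(P^c)\cN(P^c)^{-1}\cL(P^c)^\top = Q - P^c + A^\top M^{-1} A.$$
Thus it suffices to prove $P^c - Q = A^\top M^{-1} A$.

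It remains to exploit the open-loop equation. Since $B_1\Gamma_1 + B_2\Gamma_2 = -\tfrac12 G$ and, by definition of $P^c$, $A^\top P^o + 2Q = 2P^c$, equation~\eqref{eq:ARE_y_open_repeat} rewrites as $P^o = 2\gamma P^c(A - \tfrac12 G P^o) = 2\gamma P^c A - \gamma P^c G P^o$, that is $(I + \gamma P^c G) P^o = 2\gamma P^c A$. I would then use the key simplification $I + \gamma P^c G = \gamma P^c M$, obtaining $\gamma P^c M P^o = 2\gamma P^c A$; cancelling the invertible factor $\gamma P^c$ gives $M P^o = 2A$, hence $P^o = 2 M^{-1} A$ since $M$ is invertible. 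Multiplying on the left by $\tfrac12 A^\top$ yields $\tfrac12 A^\top P^o = A^\top M^{-1} A$, and because $P^c - Q = \tfrac12 A^\top P^o$ this is exactly $P^c - Q = A^\top M^{-1} A$. Combined with the identity above, the right-hand side vanishes, so $\cM(P^c) - \cL(P^c)\cN(P^c)^{-1}\cL(P^c)^\top = 0$, i.e. $P^c$ solves (ARE-y).

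The only delicate point is the logical one just flagged: Lemma~\ref{lemma:alternative_expression_ARE_y_close} is phrased for a $P^c$ already solving (ARE-y), so I must take care to invoke only its algebraic content, not its solution hypothesis, in order to avoid circularity. Everything else is a direct substitution requiring no fixed-point or analytic argument, and the symmetry condition $A^\top P^o = (P^o)^\top A$ enters solely to guarantee $P^c \in \cS^d$.
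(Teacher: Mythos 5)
Your proof is correct and follows essentially the same route as the paper's: rewrite the open-loop Riccati equation using $A^\top P^o + 2Q = 2P^c$ and $B_1\Gamma_1 + B_2\Gamma_2 = -\tfrac12(B_1R_1^{-1}B_1^\top - B_2R_2^{-1}B_2^\top)$, cancel the invertible factor to obtain $\tfrac12 P^o = M^{-1}A$, and conclude $0 = Q - P^c + A^\top M^{-1}A$. Your explicit verification of the invertibility hypotheses of Lemma~\ref{lemma:alternative_expression_ARE_y_close} and your remark that only its algebraic content (not its solution hypothesis) is being invoked make the final step more careful than the paper, which leaves that appeal implicit.
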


\begin{proof}
    By plugging the expression of $\gamma_1$ and $\gamma_2$ into equation \eqref{eq:ARE_y_open_repeat}, we obtain
    \begin{equation*}
        P^o + \gamma (A^\top P^o + 2 Q) \left(\frac{1}{2} B_1 R_1^{-1} B_1^{\top} - \frac{1}{2} B_2 R_2^{-1} B_2^\top  \right) P^o  = \gamma (A^\top P^o + 2 Q) A.
    \end{equation*}
    Let $P^c = \frac{1}{2} A^\top P^o + Q$, the above equation is equivalent to
    $$
        \left( \frac{1}{\gamma} I_d + P^c \left(B_1 R_1^{-1} B_1^{\top} - B_2 R_2^{-1} B_2^\top \right) \right)  P^o = 2 P^c A,
    $$
    Since we have assumed that $P^c$ and $(P^c)^{-1}+ \gamma \left( B_1 R_1^{-1} B_1^{\top} - B_2 R_2^{-1} B_2^\top \right) $ are invertible, we must have
    $$
    	  \frac{1}{2}P^o = \left( \frac{1}{\gamma} (P^c)^{-1}  + B_1 R_1^{-1} B_1^\top - B_2^\top R_2^{-1} B_2^\top  \right)^{-1}  A.
    $$
    If we multiply on both sides by $A^\top$ and then add $Q$, by rearranging the terms, we obtain
    $$
        0 = Q - P^c + A^\top \left( \frac{1}{\gamma} (P^c)^{-1}  + B_1 R_1^{-1} B_1^\top - B_2 R_2^{-1} B_2^\top  \right)^{-1} A.
    $$
\end{proof}

\begin{remark}
\label{rem:from-Pc-to-Po}
    In addition to Assumption~\ref{assumption_B1_B2_invertible}, if $A$ is invertible, then from a positive definite solution $P^c$ to (ARE-y) \eqref{eq:ARE_y}, we can define a matrix $P^o = 2 A^{-\top} (P^c- Q) \in \RR^{d \times d}$. By inverting the steps used in Lemma~\ref{lemma:connect_ARE_open_closed}, we can show that $P^o$ is a solution to the equation \eqref{eq:main_ARE_P_ZS}.
\end{remark}

The following corollary shows that both the pair of control processes associated to a closed-loop saddle point and the pair of processes for an open-loop Nash equilibrium will lead to the same state process, hence the same value function for the zero-sum game.

\begin{corollary}
    We assume that Assumption~\ref{assumption_B1_B2_invertible} holds and $A$, $\tilde A$ are invertible. Suppose that there exists unique invertible solutions $P^o$ (resp. $\bar P^o$) and $P^c$ (resp. $\bar{P}^c$) to the corresponding Algebraic Riccati equations in the open-loop information structure \eqref{eq:main_ARE_P_ZS} (resp.\eqref{eq:main_ARE_Pbar_ZS}) and in the closed-loop information structure \eqref{eq:ARE_y} (resp. \eqref{eq:ARE_z}).  Then, we have :
    \begin{enumerate}
        \item[(i)] The following holds, where  $( (K_1^*, L_1^*), (K_2^*, L_2^*) )$ are given in  \eqref{eq:K_i^*} and \eqref{eq:L_i^*}:
        \begin{equation}
            \label{eq:connection_feedback_coefficient}
            \left\{ 
                \begin{aligned}
                    - B_1 K_1^* + B_2 K_2^*  & = B_1 \Gamma_1 P^o + B_2 \Gamma_2 P^o
                    \\
                    - \tilde B_1 L_1^* + \tilde B_2 L_2^*  & = \tilde B_1 \Lambda_1 \bar P^o + \tilde B_2 \Lambda_2 \bar P^o.
                \end{aligned}
            \right.
        \end{equation}

        \item[(ii)] For every time $t \geq 0$, the state variable $x_t^{\theta_1^*, \theta_2^*}$ corresponding to a pair of closed-loop feedback control $(\bu^{c, *}_1, \bu^{c, *}_2)$ with policies  $(\theta_1^*, \theta_2^*) = ( (K_1^*, L_1^*), (K_2^*, L_2^*) )$ \eqref{eq:CLFB_control}  has the same distribution as the state variable $x_t^{\bu_1^{o,*}, \bu_2^{o,*} }$ controlled by an open-loop Nash equilibrium $(\bu_1^{o,*}, \bu_2^{o,*})$ with parameters $(P^o, \bar P^o)$  \eqref{eq:MKV-opt-ctrl-formula}. 
        \end{enumerate}
\end{corollary}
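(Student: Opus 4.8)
The plan is to establish the two matrix identities of part~(i) first, after which part~(ii) follows by a direct substitution into the state dynamics. Throughout I write $B = [B_1, B_2] \in \RR^{d \times 2\ell}$ and $\hat R = \diag(R_1, -R_2)$, so that the block matrices of~\eqref{eq:notation_M_N_L} take the compact form $\cL(P^c) = \gamma A^\top P^c B$ and $\cN(P^c) = \gamma B^\top P^c B + \hat R$, and the combination $D := B_1 R_1^{-1} B_1^\top - B_2 R_2^{-1} B_2^\top = B \hat R^{-1} B^\top$ appears naturally. By the assumed existence and uniqueness of invertible solutions, together with Lemma~\ref{lemma:connect_ARE_open_closed} and Remark~\ref{rem:from-Pc-to-Po}, the open- and closed-loop solutions are linked by $P^c = \tfrac12 A^\top P^o + Q$, and the proof of Lemma~\ref{lemma:connect_ARE_open_closed} supplies the intermediate bridge relation
\[
\tfrac12 P^o = \Big( \tfrac1\gamma (P^c)^{-1} + D \Big)^{-1} A ,
\]
which is the key link I will exploit; the analogous relation holds for $(\bar P^o, \bar P^c)$ with $(\tilde A, \tilde B_i, \tilde R_i)$ and $\tilde D := \tilde B_1 \tilde R_1^{-1}\tilde B_1^\top - \tilde B_2 \tilde R_2^{-1}\tilde B_2^\top$ replacing $(A, B_i, R_i)$ and $D$.

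For the first identity in~\eqref{eq:connection_feedback_coefficient} I would start from the linear system~\eqref{eq:sys_equation_K1*_K2*} characterising $(K_1^*, K_2^*)$, which in block form reads
\[
\cN(P^c)\begin{pmatrix} -K_1^* \\ K_2^* \end{pmatrix} = -\cL(P^c)^\top .
\]
Left-multiplying by $B = [B_1, B_2]$ and using $\cL(P^c)^\top = \gamma B^\top P^c A$ gives $-B_1 K_1^* + B_2 K_2^* = -\gamma B\big(\gamma B^\top P^c B + \hat R\big)^{-1} B^\top P^c A$. The crucial algebraic step is the push-through identity $(\hat R + \gamma B^\top P^c B)^{-1} B^\top = \hat R^{-1} B^\top (I + \gamma P^c B \hat R^{-1} B^\top)^{-1}$, checked by multiplying both sides on the left by $\hat R + \gamma B^\top P^c B$, which turns the right-hand side into $-\gamma\, D\,(I + \gamma P^c D)^{-1} P^c A = -D\big(\tfrac1\gamma (P^c)^{-1} + D\big)^{-1} A$. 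Invoking the bridge relation, this equals $-\tfrac12 D P^o = (B_1 \Gamma_1 + B_2 \Gamma_2) P^o$, since $\Gamma_1 = -\tfrac12 R_1^{-1} B_1^\top$ and $\Gamma_2 = \tfrac12 R_2^{-1} B_2^\top$. The second identity in~\eqref{eq:connection_feedback_coefficient} follows verbatim with $(\tilde A, \tilde B_i, \tilde R_i, \bar P^o, \bar P^c, \tilde D, \Lambda_i)$ in place of $(A, B_i, R_i, P^o, P^c, D, \Gamma_i)$, using $\tilde B_1 \Lambda_1 + \tilde B_2 \Lambda_2 = -\tfrac12 \tilde D$ and the (ARE-z) counterpart of Lemma~\ref{lemma:connect_ARE_open_closed}.

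For part~(ii) I would substitute the open-loop equilibrium controls $u_{i,t} = \Gamma_i P^o (x_t - \bar x_t) + \Lambda_i \bar P^o \bar x_t$ from~\eqref{eq:MKV-opt-ctrl-formula} into the dynamics~\eqref{eq:MKV-state_ZS}. Taking $\EE_{\cF^0}$ yields $\bar u_{i,t} = \Lambda_i \bar P^o \bar x_t$, so that $B_i u_{i,t} + \bar B_i \bar u_{i,t} = B_i \Gamma_i P^o (x_t - \bar x_t) + \tilde B_i \Lambda_i \bar P^o \bar x_t$; summing over $i$ and using $A x_t + \bar A \bar x_t = A(x_t - \bar x_t) + \tilde A \bar x_t$, the equilibrium recursion becomes $x_{t+1}^{o,*} = [A + (B_1\Gamma_1 + B_2\Gamma_2)P^o](x_t - \bar x_t) + [\tilde A + (\tilde B_1\Lambda_1 + \tilde B_2\Lambda_2)\bar P^o]\bar x_t + \epsilon_{t+1}^0 + \epsilon_{t+1}^1$. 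The closed-loop recursion under $(\theta_1^*, \theta_2^*)$, obtained by plugging~\eqref{eq:CLFB_control} into~\eqref{eq:MKV-state_ZS}, is $x_{t+1}^{c,*} = (A - B_1 K_1^* + B_2 K_2^*)(x_t - \bar x_t) + (\tilde A - \tilde B_1 L_1^* + \tilde B_2 L_2^*)\bar x_t + \epsilon_{t+1}^0 + \epsilon_{t+1}^1$. By part~(i) the transition matrices acting on $x_t - \bar x_t$ and on $\bar x_t$ coincide. Decomposing both systems through $y_t = x_t - \bar x_t$ and $z_t = \bar x_t$ as in~\eqref{eq:dyn_y_closed}--\eqref{eq:dyn_z_closed}, the two pairs $(\by, \bz)$ then satisfy identical linear recursions driven by the same independent noise sequences $(\epsilon_t^1)_t$ and $(\epsilon_t^0)_t$, with the same initial laws $\mu_0^1, \mu_0^0$; hence $x_t^{c,*}$ and $x_t^{o,*}$ share the same distribution for every $t$, which is the claim.

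The main obstacle is the matrix manipulation in part~(i): correctly identifying the block structure $\cN(P^c) = \gamma B^\top P^c B + \hat R$, applying the push-through identity in the right order, and matching the outcome against the nonlinear relation $\tfrac12 P^o = (\tfrac1\gamma (P^c)^{-1} + D)^{-1} A$ coming from Lemma~\ref{lemma:connect_ARE_open_closed}. One must also track the standing invertibility hypotheses (of $P^c$, of $(P^c)^{-1} + \gamma D$, and of $\cL_{12}(P^c)$ and $S_3$) and the symmetry condition $A^\top P^o = (P^o)^\top A$ needed to make the bridge relation available; these are guaranteed here by the assumed existence and uniqueness of invertible solutions together with Lemma~\ref{lemma:connect_ARE_open_closed} and Remark~\ref{rem:from-Pc-to-Po}. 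Part~(ii) is then routine bookkeeping once the transition matrices are shown to agree.
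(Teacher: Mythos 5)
Your proposal is correct and follows essentially the same route as the paper: both parts start from the linear system \eqref{eq:sys_equation_K1*_K2*} in the block form $\check B(\gamma\check B^\top P^c\check B+\check R)^{-1}\gamma\check B^\top P^c A$, apply a push-through/Woodbury-type identity, and identify the result with $-\tfrac12(B_1R_1^{-1}B_1^\top-B_2R_2^{-1}B_2^\top)P^o$ via the link $P^c=\tfrac12 A^\top P^o+Q$; part (ii) is the same decomposition into the $y$- and $z$-recursions with matching transition matrices and noise laws. The only cosmetic difference is that you invoke the bridge relation $\tfrac12 P^o=(\tfrac1\gamma(P^c)^{-1}+D)^{-1}A$ directly from the proof of Lemma~\ref{lemma:connect_ARE_open_closed}, whereas the paper routes the same identity through Lemma~\ref{lemma:alternative_expression_ARE_y_close} and $P^o=2A^{-\top}(P^c-Q)$.
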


\begin{proof}
    According to Lemma~\ref{lemma:connect_ARE_open_closed}, the unique solutions $P^o$ (resp. $\bar P^o$) and $P^c$ (resp. $\bar P^c$) to the corresponding Algebraic Riccai equation satisfy:
    $$
        P^c = \frac{1}{2} A^\top P^o + Q, \qquad \text{and} \qquad \bar P^c = \frac{1}{2} (A + \bar A)^\top \bar P^o + (Q + \bar Q).
    $$
    
    (i) $\quad$ It is enough to show the connection between $(K_1^*, K_2^*)$ to the pair of matrices $(-\Gamma_1 P^o, - \Gamma_2 P^o)$, and the situation for $(L_1^*, L_2^*)$ can be proved with similar arguments.
    Let us denote by 
     $
        \check B = [ B_1, B_2] \in \RR^{d \times 2\ell}
    $
    and 
    $\check R = 
        \left[ \begin{array}{cc} R_1 & 0 \\ 0 & -R_2 \end{array} \right].
    $
    Then, by equation \eqref{eq:sys_equation_K1*_K2*} and Lemma \ref{lemma:alternative_expression_ARE_y_close}, since $A$ is invertible, we have:
    \begin{align*}
         - B_1 K_1^* + B_2 K_2^* & = - \check B ( \gamma \check B^\top P^c \check B + \check R)^{-1}\left( \gamma \check B^\top P^c A \right)
         \\
          & = \left( I_{d} + \gamma \check B \check R^{-1} \check B^\top P^c \right)^{-1} A - A
          \\
          &= - \left( \check B \check R^{-1} \check B^\top\right) \left( A^{-\top} (P^c - Q) \right).
    \end{align*}
    Together with $P^o = 2 A^{-\top}(P^c - Q)$ and the definition of $\Gamma_1, \Gamma_2$, we obtain
    \begin{equation*}
        - B_1 K_1^* + B_2 K_2^* = -\frac{1}{2} \check B \check R^{-1} \check B^\top P^o = B_1 \Gamma_1 P^o + B_2 \Gamma_2 P^o.
    \end{equation*}

    (ii) By comparing the state dynamics of $ \left( x_t^{\bu_1^{o,*}, \bu_2^{o,*}}  - \bar x_t^{\bu_1^{o,*}, \bu_2^{o,*} } \right)_{t \geq 0}$ in the open-loop case and that of $(y_t^{\theta_1^*, \theta_2^*})_{t \geq 0}$ \eqref{eq:dyn_y_closed} in the closed-loop case, we have 
    $$
        y_t^{\theta_1^*, \theta_2^*} \stackrel{d}{=} x_t^{\bu_1^{o,*}, \bu_2^{o,*}}  - \bar x_t^{\bu_1^{o,*}, \bu_2^{o,*} }
    $$
    in the sense of distribution. Similar arguments shows  $z_t^{\theta_1^*, \theta_2^*} \stackrel{d}{=} \bar x_t^{\theta_1^*, \theta_2^*}$. Thus, the conclusion holds.
     
\end{proof}

\begin{remark}
In the case with only one decision-maker, similar to equation~\eqref{eq:connection_feedback_coefficient}, we can prove that $- B_1 K_1^* = B_1 \Gamma_1 P^o$ and $-\tilde B_1 L_1^* = \tilde B_1 \Lambda_1 \bar P^o$. Then the control process in a single population model for the open-loop and for the closed-loop information structures are identical at every time $t \geq 0$, namely
$$
    u_{1,t}^{c, *} = - B_1 K_1^* (x_t^{\theta_1^*} - \bar{x}_t^{\theta_1^*} ) - \tilde{B}_1 L_1^* \bar{x}_t^{\theta_1^*} = B_1 \Gamma_1 P^o (x_t^{\bu_1^*} - \bar{x}_t^{\bu_1^*}) + \tilde{B}_1 \Lambda_1 \bar P^o \bar{x}^{\bu_1^*} = u_{1,t}^{o,*}. 
$$
\end{remark}

\section{Algorithms}
\label{sec4:algo}

In this section, we propose policy-gradient based algorithms to find the Nash equilibrium of the zero-sum mean-field type game. We start with a convenient expression for the gradient of the utility function, which leads to a necessary condition of optimality (counterpart to the sufficient condition studied in \S~\ref{sec:closed-loop-sufficient}). Then, after introducing model-based methods, we explain how to extend them to sample-based algorithms in which the gradient is estimated using a simulator providing stochastic realizations of the utility. The results of this section have initially been presented in~\cite{carmona2020lqzsmftg}.

\subsection{Gradient expression} 
\label{sec2:opt}

\commentJDG{

We now characterize the structure of a Nash equilibrium in terms of linear combinations of the state $x_t$ and conditional mean $\bar x_t$.

To alleviate the notation, let $\tilde A  = A + \bar {A}$, $\tilde Q  = Q + \bar {Q}$, $\tilde B_i  = B_i + \bar {B}_i$, $\tilde R_i  = R_i + \bar {R}_i$, $i=1,2$.  
We recall the notations $\Gamma_i, \Xi_i, \Lambda_i$ introduced in~\eqref{eq:def-GammaXiLambda_i}.
To investigate the solution of (\ref{eq:prob_for}) and derive the closed-form expressions for the equilibrium controls in terms of the idiosyncratic and mean-field state processes, we introduce the following Riccati equations
\begin{equation}
	\label{eq:main_ARE_P_ZS}
		 \gamma [A^{\top} P + 2Q]\left[ A + \big(B_1 \Gamma_1 + B_2 \Gamma_2\big)  P \right] = P,
	\end{equation}
	and
	\begin{equation}
\label{eq:main_ARE_Pbar_ZS}
	\gamma\bigl[ \tilde A^{\top} \bar{P}+2 \tilde Q\bigr]\left[\tilde A +\big(\tilde B_1\Lambda_1 + \tilde B_2\Lambda_2\big)\bar{P} \right] =\bar{P}.
\end{equation}

Under suitable conditions and relying on a form of stochastic Pontryagin maximum principle (see \cite{bensoussan2007representation} for the zero-sum case without mean-field interactions and~\cite{CarmonaDelarue_book_I} for the case of mean-field interactions but without zero-sum structure), one can show that the ZSMFTG admits an open-loop Nash equilibrium, say $(\mathrm{\bf u}^*_1, \mathrm{\bf u}^*_2)$. These controls correspond to the open-loop saddle point and can be explicitly written in terms of the solutions $P, \bar P$ of the Riccati equations above as
	\begin{equation}
	\label{eq:MKV-opt-ctrl-formula}
		 u^*_{i,t} = \Gamma_i P(x_t - \bar x_t) + \Lambda_i \bar P \bar x_t, \textrm{ for } i=1,2.
 	\end{equation}

This relies on a form of stochastic Pontryagin maximum principle for mean-field dynamics. To keep the presentation as concise as possible, we defer the exact statements and the proofs to the appendix, see Propositions~\ref{proposition:Pontryagin_maximum_principle_necessary_condition}, \ref{prop:sufficient_Pontryagin} and Corollary~\ref{corollary:existence_open_loop_Nash} in Appendix~\ref{sec:open-loop-structure}.

According to the above result, it is sufficient to look for  $( K^*_i, L^*_i), i=1,2$ such that
    $$
         u^*_{1,t} = - K^*_1(x^{ \mathrm{\bf u}^*_1, \mathrm{\bf u}^*_2}_t - \bar{x}^{ \mathrm{\bf u}^*_1, \mathrm{\bf u}^*_2}_t) - L^*_1 \bar{x}^{ \mathrm{\bf u}^*_1, \mathrm{\bf u}^*_2}_t,
    $$
    and
    $$
         u^*_{2,t} =  K^*_2(x^{ \mathrm{\bf u}^*_1, \mathrm{\bf u}^*_2}_t - \bar{x}^{\mathrm{\bf u}^*_1, \mathrm{\bf u}^*_2}_t) + L^*_2 \bar{x}^{ \mathrm{\bf u}^*_1, \mathrm{\bf u}^*_2}_t.
    $$

Since optimizing over all possible open-loop controls is infeasible from a numerical perspective (because it is a set of all stochastic processes which does not admit a simple representation), we will focus on closed-loop Nash equilibrium with the above linear structure in the sequel (which allows us to do the optimization over a small number of parameters, namely the coefficients of the linear combination). In fact, it can be shown that looking for closed-loop controls which are linear in $x$ and $\bar x$ leads to the same Nash equilibrium as open-loop controls under suitable conditions (see Appendix~\ref{sec:connect-open-closed}).

\vskip 6pt 
}

We henceforth replace problem~\eqref{eq:prob_for} by the following problem based on closed-loop controls, introduced in Section~\ref{sec:closed-loop-structure}.   Each player $i=1,2$ chooses parameter $\theta^*_i = ( K^*_i, L^*_i)$ 
such that
$$
    J(\mathrm{\bf u}^{ \theta^*_1}_1, \mathrm{\bf u}^{\theta^*_2}_2)
    =
    \inf_{\theta_1} \sup_{\theta_2} J(\mathrm{\bf u}^{\theta_1}_1, \mathrm{\bf u}^{\theta_2}_2),
$$
where for $\theta = (\theta_1,\theta_2) \in \Theta$ (recall the parameter set $\Theta$ defined in~\eqref{eq:Theta_stable_set}),
$$
    u^{\theta_1,\theta_2}_{i,t} = (-1)^i K_i(x^{\mathrm{\bf u}_1^{\theta_1},\mathrm{\bf u}_2^{\theta_2}}_t 
    - \bar{x}^{\mathrm{\bf u}_1^{\theta_1},\mathrm{\bf u}_2^{\theta_2}}_t) 
    + (-1)^i L_i \bar{x}^{\mathrm{\bf u}_1^{\theta_1},\mathrm{\bf u}_2^{\theta_2}}_t, \qquad i=1,2.
$$

For simplicity, we introduce the following notation
$
    x^{\mathrm{\bf u}_1^{\theta_1},\mathrm{\bf u}_2^{\theta_2}}
    =
    x^{\theta_1,\theta_2},
$
and since we focus on linear controls, using the notation $C$ introduced in~\eqref{eq:def-Ctheta1theta2}, we have
$$
    C(\theta_1,\theta_2)
    =
    J(\mathrm{\bf u}^{\theta_1}_1, \mathrm{\bf u}^{\theta_2}_2).
$$
Moreover, we introduce $y^{K_1,K_2}_t = x^{\theta_1,\theta_2}_t - \bar{x}^{\theta_1,\theta_2}_t$ and $z^{L_1,L_2}_t = \bar{x}^{\theta_1,\theta_2}_t$, which is justified by the fact that the dynamics of $\by$ and $\bz$ depend respectively only on $(K_1,K_2)$ and $(L_1,L_2)$. 

Let $P^y_{K_1,K_2}$ be a solution to the linear equation
    \begin{align}
        \label{eq:eq-Py_K1K2}
        P^y_{K_1,K_2}
        &=
        Q + K_1^\top R_1 K_1 - K_2^\top R_2 K_2
        \\
        & \qquad + \gamma (A - B_1 K_1 + B_2 K_2)^\top P^y_{K_1,K_2} (A - B_1 K_1 + B_2 K_2),
        \notag
    \end{align}
and let $P^z_{L_1,L_2}$ be a solution to the linear equation
    \begin{align}
        \label{eq:eq-Pz_L1L2}
        P^z_{L_1,L_2}
        &=
        \tilde Q  + L_1^\top \tilde R_1 L_1 - L_2^\top \tilde R_2 L_2
        \\
        \notag
        & \qquad + \gamma (\tilde A - \tilde B_1 L_1 + \tilde B_2 L_2)^\top P^z_{L_1,L_2} (\tilde A - \tilde B_1 L_1 + \tilde B_2 L_2).
    \end{align}
     
\commentJDG{
In order to guarantee that the above equations have solutions, we introduce the notion of stabilizing parameters. 
\begin{definition}
The set of stabilizing parameters is defined as follows:
\begin{align}
    \Theta =
    &\Big\{(K_1,L_1,K_2,L_2) \,:\,
    \gamma \|A - B_1 K_1 + B_2 K_2\|^2 < 1,
    \nonumber \\
    &\quad \gamma \| \tilde A - \tilde B_1 L_1 + \tilde B_2 L_2\|^2 < 1 
    \Big\}.
    \label{eq:Theta_stable_set}
\end{align}
\end{definition}}

\commentJDG{
We consider the parameter set $\Theta$ introduced in~\eqref{eq:Theta_stable_set}.

More details on this closed-loop information structure and the corresponding optimality conditions are provided in Appendix~\ref{sec:closed-loop-structure}.
}

We now provide an explicit expression for the gradient of the utility function with respect to the control parameters in terms of the solution to the equations~\eqref{eq:eq-Py_K1K2} and~\eqref{eq:eq-Pz_L1L2}. Let us denote
\begin{align*}
    & 
    \begin{bmatrix}
        E^{y,1}_{K_1,K_2}\\
        E^{y,2}_{K_1,K_2}
    \end{bmatrix}
    = - \gamma  
        \begin{bmatrix} 
        B_1^\top  P^y_{K_1,K_2} A
        \\
      - B_2^\top  P^y_{K_1,K_2} A
        \end{bmatrix}
    +
    \mathbf{R} 
    \begin{bmatrix}
        K_1\\
        K_2
    \end{bmatrix},
    \begin{bmatrix}
        E^{z,1}_{L_1,L_2}\\
        E^{z,2}_{L_1,L_2}
    \end{bmatrix}
    = - \gamma  
        \begin{bmatrix} 
        \tilde B_1^\top  P^z_{L_1,L_2} \tilde A
        \\
      - \tilde B_2^\top  P^z_{L_1,L_2} \tilde A
        \end{bmatrix}
    +
    \tilde {\mathbf{R}} 
    \begin{bmatrix}
        L_1\\
        L_2
    \end{bmatrix}
\end{align*}
with 
$$
    \mathbf{R} =
    \begin{bmatrix}
        R_1 + \gamma B_1^\top  P^y_{K_1,K_2} B_1 & - \gamma B_1^\top P^y_{K_1,K_2} B_2   
    \\
    - \gamma B_2^\top P^y_{K_1, K_2} B_1 &  - R_2 + \gamma B_2^\top  P^y_{K_1,K_2} B_2 
    \end{bmatrix},
$$
$$
    \tilde {\mathbf{R}} =
    \begin{bmatrix}
        \tilde R_1 + \gamma \tilde B_1^\top  P^z_{L_1,L_2} \tilde B_1 & - \gamma \tilde B_1^\top P^z_{L_1,L_2} \tilde B_2   
    \\
    - \gamma \tilde B_2^\top P^z_{L_1,L_2} \tilde B_1 &  - \tilde R_2 + \gamma \tilde B_2^\top  P^z_{L_1,L_2} \tilde B_2 
    \end{bmatrix}
$$
where
$$
    \Sigma^y_{K_1,K_2}
    =\EE\left[ \sum_{t \ge 0} \gamma^t y^{K_1,K_2}_t (y^{K_1,K_2}_t)^\top \right],
    \qquad
    \Sigma^z_{L_1,L_2}
    =\EE\left[ \sum_{t \ge 0} \gamma^t z^{L_1,L_2}_t (z^{L_1,L_2}_t)^\top \right].
$$

\begin{proposition}[Policy gradient expression]
\label{lem:PG_expression}
For any $\theta = (\theta_1,\theta_2) \in \Theta$, we have for $j=1,2$,
\begin{align}
\label{eq:PG_expression_Kj}
    \nabla_{K_j}C(\theta_1,\theta_2)
    & =
    2 E^{y,j}_{K_1,K_2} \Sigma^y_{K_1,K_2},
    \qquad
    \nabla_{L_j}C(\theta_1,\theta_2)
    & =
    2 E^{z,j}_{L_1,L_2} \Sigma^z_{L_1,L_2}.
\end{align} 

\end{proposition}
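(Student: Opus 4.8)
The plan is to reduce the statement to a standard policy-gradient computation for a fixed linear feedback, performed separately on the two decoupled auxiliary systems $\by$ and $\bz$, and then to differentiate the Lyapunov (value) equation~\eqref{eq:eq-Py_K1K2}. By the decomposition~\eqref{eq:def-Ctheta1theta2}, $C(\theta_1,\theta_2)=\EE_{\tilde y}[C_y(K_1,K_2,\tilde y)]+\EE_{\tilde z}[C_z(L_1,L_2,\tilde z)]$, and since the dynamics~\eqref{eq:dyn_y_closed}--\eqref{eq:dyn_z_closed} of $\by$ and $\bz$ are driven respectively by $(K_1,K_2)$ and $(L_1,L_2)$ only, the first summand is independent of $(L_1,L_2)$ and the second of $(K_1,K_2)$. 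Hence $\nabla_{K_j}C=\nabla_{K_j}\EE_{\tilde y}[C_y]$, and the $\nabla_{L_j}$ formula follows by the identical argument after substituting $(A,B_i,Q,R_i,\epsilon^1)$ by $(\tilde A,\tilde B_i,\tilde Q,\tilde R_i,\epsilon^0)$; I therefore only treat the $\by$ part. Writing $\Lambda=A-B_1K_1+B_2K_2$ and $S=Q+K_1^\top R_1K_1-K_2^\top R_2K_2$, the closed loop reads $y_{t+1}=\Lambda y_t+\epsilon^1_{t+1}$ with running cost $y_t^\top S y_t$. As $\theta\in\Theta$ forces $(K_1,K_2)\in\Theta_y$, i.e.\ $\gamma\|\Lambda\|^2<1$, the matrix $P:=P^y_{K_1,K_2}=\sum_{t\ge0}\gamma^t(\Lambda^\top)^tS\Lambda^t$ is the unique solution of~\eqref{eq:eq-Py_K1K2}, and summing the one-step identity of Lemma~\ref{lemma:relation_yPy_tp1_t} over $t\ge0$ (as in the proof of Corollary~\ref{corollary:opt_gap_Cy}) gives $C_y(K_1,K_2,\tilde y)=\tilde y^\top P\tilde y+\frac{\gamma}{1-\gamma}\EE[(\epsilon^1_1)^\top P\epsilon^1_1]$.

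The crux is a pair of ``dual'' identities sharing one policy-independent matrix $D_0:=\EE[\epsilon^1_0(\epsilon^1_0)^\top]+\frac{\gamma}{1-\gamma}\EE[\epsilon^1_1(\epsilon^1_1)^\top]$. Averaging the previous display over $\tilde y\sim\epsilon^1_0$ yields the \emph{value form} $\EE_{\tilde y}[C_y]=\mathrm{Tr}(PD_0)$, while propagating the correlations of~\eqref{eq:dyn_y_closed} and summing the resulting geometric series gives the \emph{occupation form} $\Sigma:=\Sigma^y_{K_1,K_2}=\sum_{t\ge0}\gamma^t\Lambda^tD_0(\Lambda^\top)^t$; both series converge because $\gamma\|\Lambda\|^2<1$ (Proposition~\ref{prop:stability_with_extra_L2_term}). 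Differentiating~\eqref{eq:eq-Py_K1K2} in a direction $E$ of $K_j$ and writing $\dot\Lambda,\dot S,\dot P$ for the induced directional derivatives gives $\dot P=G+\gamma\Lambda^\top\dot P\Lambda$ with $G=\dot S+\gamma\dot\Lambda^\top P\Lambda+\gamma\Lambda^\top P\dot\Lambda$, hence $\dot P=\sum_{t\ge0}\gamma^t(\Lambda^\top)^tG\Lambda^t$. Since $D_0$ does not depend on the policy, $\mathrm{Tr}(\dot PD_0)=\sum_{t\ge0}\gamma^t\mathrm{Tr}\big(G\Lambda^tD_0(\Lambda^\top)^t\big)=\mathrm{Tr}(G\Sigma)$, which is the identity that lets us avoid computing $\nabla\Sigma$ directly.

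It then remains to insert the explicit perturbations and read off the gradient. For $j=1$ one has $\dot\Lambda=-B_1E$ and $\dot S=E^\top R_1K_1+K_1^\top R_1E$, so that, using cyclicity of the trace together with the symmetry of $P,\Sigma$ and $R_1$,
\begin{equation*}
\mathrm{Tr}(G\Sigma)=2\,\mathrm{Tr}(K_1^\top R_1E\Sigma)-2\gamma\,\mathrm{Tr}(\Lambda^\top PB_1E\Sigma)=\big\langle\,2(R_1K_1-\gamma B_1^\top P\Lambda)\Sigma,\,E\,\big\rangle,
\end{equation*}
whence $\nabla_{K_1}C=2(R_1K_1-\gamma B_1^\top P\Lambda)\Sigma$. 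Expanding $\Lambda$ shows $R_1K_1-\gamma B_1^\top P\Lambda=E^{y,1}_{K_1,K_2}$, giving the claimed formula; the choice $j=2$, with $\dot\Lambda=B_2E$ and $\dot S=-E^\top R_2K_2-K_2^\top R_2E$, produces $\nabla_{K_2}C=2(-R_2K_2+\gamma B_2^\top P\Lambda)\Sigma=2E^{y,2}_{K_1,K_2}\Sigma$ in the same way. The $\bz$-gradients follow verbatim with tilded data, using~\eqref{eq:eq-Pz_L1L2} and $\Sigma^z_{L_1,L_2}$.

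I expect the main obstacle to be the rigorous justification of the two dual identities sharing the single matrix $D_0$ and of the interchange of differentiation with the infinite matrix series defining $P$ and $\Sigma$: this matching is exactly what replaces the awkward term $\nabla\Sigma$ by $\mathrm{Tr}(G\Sigma)$, and both the convergence and the differentiability of the Neumann series $\sum_t\gamma^t(\Lambda^\top)^tG\Lambda^t$ hinge on the uniform bound $\gamma\|\Lambda\|^2<1$ supplied by $(K_1,K_2)\in\Theta_y$. The remaining trace bookkeeping, although it must be carried out carefully to track transposes, is routine.
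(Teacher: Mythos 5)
Your proof is correct, and the final identifications check out: $E^{y,1}_{K_1,K_2}=R_1K_1-\gamma B_1^\top P^y_{K_1,K_2}(A-B_1K_1+B_2K_2)$ and $E^{y,2}_{K_1,K_2}=-R_2K_2+\gamma B_2^\top P^y_{K_1,K_2}(A-B_1K_1+B_2K_2)$ agree with the block definitions in the statement, and the trace bookkeeping (using the symmetry of $P$, $\Sigma$, $R_i$) is right. However, your route is genuinely different from the paper's. The paper also starts from the splitting $C=\EE_{\tilde y}[C_y]+\EE_{\tilde z}[C_z]$ and the value formula $C_y(K_1,K_2,\tilde y)=\tilde y^\top P^y_{K_1,K_2}\tilde y+\tfrac{\gamma}{1-\gamma}\EE[(\epsilon^1_1)^\top P^y_{K_1,K_2}\epsilon^1_1]$, but it then writes a one-step Bellman recursion for $C_y(\cdot,\cdot,\tilde y)$, applies the chain rule to obtain a recursive identity $\nabla_{K_1}C_y(\tilde y)=2\bigl(R_1K_1-\gamma B_1^\top P^y\Lambda\bigr)\tilde y\tilde y^\top+\gamma\EE\bigl[\nabla_{K_1}C_y(y_1)\bigr]$, and unrolls it along the trajectory so that the occupation matrix $\Sigma^y_{K_1,K_2}$ appears as the accumulated sum $\sum_t\gamma^t\EE[y_ty_t^\top]$. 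You instead differentiate the Lyapunov equation~\eqref{eq:eq-Py_K1K2} itself and pair $\dot P$ against the policy-independent matrix $D_0$ via $\mathrm{Tr}(\dot PD_0)=\mathrm{Tr}(G\Sigma)$; the factor $\Sigma^y_{K_1,K_2}$ then emerges from trace cyclicity rather than from unrolling a recursion. Your approach buys a cleaner, purely algebraic derivation (it sidesteps the informal ``using recursion'' step in the paper and makes explicit why $\nabla\Sigma$ never needs to be computed), at the cost of having to justify term-by-term differentiation of the Neumann series for $P$ and $\Sigma$ --- which you correctly flag and which does hold locally uniformly since $\gamma\|A-B_1K_1+B_2K_2\|^2<1$ on a neighbourhood of any point of $\Theta$. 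The paper's trajectory-based argument needs the same smoothness of $P^y_{K_1,K_2}$ implicitly, so neither route is weaker on that front.
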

\begin{proof}
    We note that the utility can be split as
    $$
        C(\theta_1,\theta_2)
        =
        \EE_{\tilde y, \tilde z}\Big[
        C_{y}(K_1,K_2, \tilde y)
        +
        C_{z}(L_1,L_2, \tilde z)
        \Big]
    $$
    where $C_y, C_z$ are defined by~\eqref{eq:C_y_K1_K2}--\eqref{eq:C_z_L1_L2}. Let us consider the first part. We have 
    \begin{align*}
    C_{y}(K_1,K_2, \tilde y) 
    &=
    \EE \sum_{t \ge 0} \gamma^t \Big[
    (y^{L_1,L_2}_t)^{\top} Q y^{K_1,K_2}_t
    \\ 
    &\quad + (u_{1,t}-\bar{u}_{1,t})^{\top} R_{1} (u_{1,t}-\bar{u}_{1,t}) 
     - (u_{2,t} - \bar{u}_{2,t})^\top R_{2} (u_{2,t} - \bar{u}_{2,t}) 
    \,|\, y_0 = \tilde y
    \Big].
    \end{align*}
    We note, using the above definition together with~\eqref{eq:eq-Py_K1K2} and the dynamics satisfied by $y^{K_1,K_2}_t = x^{\theta_1,\theta_2}_t - \bar{x}^{\theta_1,\theta_2}_t$, that
    \begin{align*}
    C_{y}(K_1,K_2, \tilde y) 
    &= \tilde y^{\top} P^y_{K_1,K_2} \tilde y
    + \frac{\gamma}{1-\gamma} \EE[(\epsilon^1_1)^{\top} P^y_{K_1,K_2} \epsilon^1_1],
    \end{align*}
    from which we deduce that
    $
    \nabla_{\tilde y} C_y(K_1,K_2,\tilde y)
    = 2 P^y_{K_1,K_2} \tilde y.
    $ 
    Moreover,
    \begin{align*}
    C_{y}(K_1,K_2, \tilde y) 
    &=
    \tilde y^\top (Q + K_1^\top R_1 K_1 - K_2^\top R_2 K_2) \tilde y
    \\
    &\quad + \gamma \EE \Big[
    C_{y}\Big(K_1,K_2, (A - B_1 K_1 + B_2 K_2) \tilde y\Big)
    \,|\, y_0 = \tilde y
    \Big].
    \end{align*}
    Using the two above equalities and the chain rule, we obtain (using the fact that $R_1$ is symmetric)
    \begin{align*}
    \nabla_{K_1} C_{y}(K_1,K_2, \tilde y) 
    &=
    2 R_1 K_1 \tilde y \tilde y^\top 
    - 2 \gamma B_1^\top  P^y_{K_1,K_2} (A - B_1 K_1 + B_2 K_2) \tilde y \tilde y^\top
    \\
    &\quad + \gamma \EE \Big[
    \nabla_{K_1} C_{y}\Big(K_1,K_2, \tilde y'\Big)_{\big| \tilde y' = (A - B_1 K_1 + B_2 K_2) \tilde y + \epsilon^1_1}
    \Big].
    \end{align*}
     Using recursion and the equation satisfied by $P^y_{K_1,K_2}$ leads to the expression~\eqref{eq:PG_expression_Kj} for $\nabla_{K_1} C(\theta_1,\theta_2) $. 
    One can proceed similarly for the gradients with respect to $K_2$, $L_1$ and $L_2$.
    \commentJDG{
    Using recursion and the equation satisfied by $P^y_{K_1,K_2}$ leads to
    \begin{align*}
    \nabla_{K_1} C_{y}(K_1,K_2, \tilde y) 
    &=
    2 [R_1 K_1  - \gamma B_1^\top  P^y_{K_1,K_2} (A - B_1 K_1 + B_2 K_2)] \times
    \\
    &\quad 
    \left(\tilde y \tilde y^\top 
    + \EE\left[ \sum_{t \ge 1} \gamma^t y^{K_1,K_2}_t (y^{K_1,K_2}_t)^\top \right] \right) .
    \end{align*}
    Using similar computations, we obtain the expression for $\nabla_{K_2} C_{y}(K_1,K_2, \tilde y)$. 
    }
    \commentJDG{
    \begin{align*}
    \nabla_{K_2} C_{y}(K_1,K_2, \tilde y) 
    &=
    2 [ - R_2 K_2  + \gamma B_2^\top  P^y_{K_1,K_2} (A - B_1 K_1 + B_2 K_2)] \times
    \\
    &\quad 
    \left(\tilde y \tilde y^\top 
    + \EE\left[ \sum_{t \ge 1} \gamma^t y^{K_1,K_2}_t (y^{K_1,K_2}_t)^\top \right] \right) .
    \end{align*}
    One can proceed similarly for the gradients with respect to $L_1$ and $L_2$.
    }
\end{proof}

\subsection{Model-based policy optimization} 
Let us assume that the model is known and both players can see the actions of one another at the end of each time step. 
To explain the intuition behind the iterative methods, we first express the optimal control of a player when the other player has a fixed control. 
 For some given $\theta_2=(K_2,L_2)$, the inner minimization problem for player $1$ becomes an LQR problem with instantaneous utility at time $t$:
\begin{align*}
    &(x_t-\bar{x}_t)^{\top} \mathbf{Q}_{K_2} (x_t-\bar{x}_t) + \bar{x}^{\top} \mathbf{\tilde Q}_{K_2} \bar{x}
    \\ 
    &\qquad + (u_{1,t}-\bar{u}_{1,t})^{\top} R_{1} (u_{1,t}-\bar{u}_{1,t}) + \bar{u}^\top_{1,t} (R_{1}+\bar{R}_{1}) \bar{u}_{1,t}, 
\end{align*}
when player $1$ uses control $u_1$, where $\mathbf{Q}_{K_2}=Q-K_2 R_2 K_2$ and $\mathbf{\tilde Q}_{L_2}= \tilde Q - L_2\tilde R_2 L_2$, and state dynamics given by:
\begin{equation*}
    x_{t+1} = \mathbf{A}_{K_2} x_t + \mathbf{\bar A}_{K_2,L_2} \bar{x}_t
    + B_1 u_{1,t} 
    + \bar{B}_1 \bar{u}_{1,t}
    + \epsilon^0_{t+1} + \epsilon^1_{t+1},
\end{equation*}
where $\mathbf{A}_{K_2}=A+B_2K_2$ and $\mathbf{\bar A}_{K_2,L_2}=\bar A + \bar B_2L_2 + B_2(L_2 - K_2)$. 
Inspired by the results in~\cite{fazel2018global}, we propose to find the stationary point $\theta_1^*(\theta_2) = (K_1^*(K_2), L_1^*(L_2))$ of the inner problem. By setting
$\nabla_{\theta_1}C(\theta_1,\theta_2) = 0$ and by Proposition~\ref{lem:PG_expression}, this yields
\begin{equation}
\label{eq:K1star_K2}
    K_1^*(K_2) = \gamma (R_1+\gamma B_1^{\top}P^{y}_{K_2}B_1)^{-1} B_1^{\top} P^y_{K_2}\left[A + B_2 K_2\right],
\end{equation}
where $P^y_{K_2} = P^y_{K_1^*(K_2),K_2}$ solves 

  \begin{align*}
        &P^y_{K_2}
        =
        \tilde Q_{K_2} 
        + \gamma \tilde A_{K_2}^\top P^y_{K_2} \tilde A_{K_2}
        - \gamma^2 \tilde A_{K_2}^\top P^y_{K_2} B_1 (R_1+\gamma B_1^{\top}P^{y}_{K_2}B_1)^{-1} B_1^\top P^y_{K_2} \tilde A_{K_2},
    \end{align*}
where $\tilde{Q}_{K_2}=Q-K_2^{\top}R_2 K_2$ and $\tilde{A}_{K_2} = A+B_2K_2$. This equation is obtained by considering the equation~\eqref{eq:eq-Py_K1K2} for $P^y_{K_1,K_2}$ and replacing $K_1$ by the above expression~\eqref{eq:K1star_K2} for $K_1^*(K_2)$. One can similarly introduce $K_2^*(K_1)$, which is the optimal $K_2$ for a given $K_1$, and likewise for $L_1^*(L_2), L_2^*(L_1)$.

Based on this idea and inspired by the works of Fazel et al.~\cite{fazel2018global} and Zhang et al.~\cite{zhang2019policy}, we propose two iterative algorithms relying on policy-gradient methods, namely alternating-gradient and gradient-descent-ascent, to find the optimal values of $\theta_1$ and $\theta_2$. Starting from an initial guess of the control parameters, the players update either alternatively or simultaneously their parameters by following the gradients of the utility function. In the \emph{alternating-gradient} (AG) method, the players take turn in updating their parameters. Between two updates of $\theta_2$, $\theta_1$ is updated $\Iiter_1$ times. This procedure is summarized in Algorithm~\ref{algo:AG-ZSMFC}, which is based on nested loops. In the \emph{gradient-descent-ascent} (GDA) method, all the control parameters are updated synchronously at each iteration, as presented in Algorithm~\ref{algo:GDA-ZSMFC}. 

At each step of these methods, the gradients can be computed directly using the formulas provided in Proposition~\ref{lem:PG_expression}. For instance, in the inner loop of the alternating-gradient method, based on~\eqref{eq:PG_expression_Kj}, parameter $K_1$ can be updated as follows:
\begin{align*}
    K_1^{\iiter_1+1,\iiter_2} 
    &= K_1^{\iiter_1,\iiter_2}-\eta_1 \nabla_{K_1} C(\theta_1^{\iiter_1,\iiter_2},\theta_2^{\iiter_2-1})
    \\ 
    \nonumber 
    &=K^{\iiter_1,\iiter_2}-2\eta_1\Big[(R_1+B_1^{\top}P^{y}_{K_1^{\iiter_1,\iiter_2},K_2^{\iiter_2-1}}B_1)K_1^{\iiter_1,\iiter_2}
    \\
    &\qquad\qquad -B^{\top}_1P^{y}_{K_1^{\iiter_1,\iiter_2},K_2^{\iiter_2-1}}\tilde{A}_{K_2^{\iiter_2-1}}\Big]\Sigma_{K_1^{\iiter_1,\iiter_2},K_2^{\iiter_2-1}}.  
\end{align*}
Then, in the outer loop, one can compute $\nabla_{K_2} C$ at the point $(\theta_1^{\Iiter_1,\iiter_2},\theta_2^{\iiter_2-1})$ using again~\eqref{eq:PG_expression_Kj}.

In order to have a benchmark, one can compute the equilibrium $(\theta_1^*,\theta_2^*)$ by solving the Riccati equations~\eqref{eq:ARE_y}--\eqref{eq:ARE_z}. Alternatively, the Nash equilibrium can be computed by finding $K_2$ such that $\nabla_{K_2} C_y(K_1^*(k_2),K_2)_{\big|k_2 = K_2} = 0$. The left-hand side has an explicit expression obtained by combining~\eqref{eq:PG_expression_Kj} and~\eqref{eq:K1star_K2}.

\begin{algorithm}[H]
\caption{Alternating-Gradient method}
\label{algo:AG-ZSMFC}
\begin{algorithmic} 
    \REQUIRE Number of inner and outer iterations $\Iiter_1, \Iiter_2$; initial guess $\theta_1^0, \theta_2^0$; learning rates $\eta_1,\eta_2$
    \ENSURE $(K^*_1,K^*_2)$ 
        \STATE $\theta_1^{0,1} \leftarrow \theta_1^0$\;
        \FOR{$\iiter_2 = 1, 2, \dots, \Iiter_2$ }  
			\FOR{$\iiter_1 = 1, 2, \dots, \Iiter_1$ } 
			\STATE  
			$\theta_1^{\iiter_1,\iiter_2} \leftarrow \theta_1^{\iiter_1-1,\iiter_2} - \eta_1 \nabla_{\theta_1} C(\theta_1^{\iiter_1-1,\iiter_2},\theta_2^{\iiter_2-1})$\; 
	     	\ENDFOR
			\STATE $\theta_2^{\iiter_2} \leftarrow \theta_2^{\iiter_2-1} + \eta_2 \nabla_{\theta_2} C(\theta_1^{\Iiter_1,\iiter_2},\theta_2^{\iiter_2-1})$\; 
		\ENDFOR
		\RETURN $(\theta_1^{\Iiter_1,\Iiter_2},\theta_2^{\Iiter_2})$
\end{algorithmic}
\end{algorithm}   

\begin{algorithm}[H]
\caption{Gradient-Descent-Ascent method}
\label{algo:GDA-ZSMFC}
\begin{algorithmic} 
    \REQUIRE Number of iterations $\Iiter$; initial guess $\theta_1^{0}, \theta_2^0$; learning rates $\eta_1,\eta_2$
    \ENSURE $(K^*_1,K^*_2)$ 
        \FOR{$\iiter = 1, 2, \dots, \Iiter$ } 
			\STATE  
			$\theta_1^{\iiter} \leftarrow \theta_1^{\iiter-1} - \eta_1 \nabla_{\theta_1} C(\theta_1^{\iiter-1},\theta_2^{\iiter-1})
			$\; 
			\STATE $\theta_2^{\iiter} \leftarrow \theta_2^{\iiter-1} + \eta_2 \nabla_{\theta_2} C(\theta_1^{\iiter-1},\theta_2^{\iiter-1})$\; 
	     	\ENDFOR
		\RETURN $(\theta_1^{\Iiter},\theta_2^{\Iiter})$
\end{algorithmic}
\end{algorithm}

\subsection{Sample-based policy optimization}

The aforementioned methods use explicit expressions for the gradients, which rely on the knowledge of the model (the coefficients of the dynamics and the utility function). However, in many situations these coefficients are not known. Instead, let us assume that we have access to the following (stochastic) simulator, called \emph{MKV simulator} and denoted by $\cS^{\mathcal{T}}_{MKV}$: given a control parameter $\theta = (\theta_1, \theta_2) = (K_1, L_1, K_2, L_2)$, $\cS^{\mathcal{T}}_{MKV}(\theta)$ returns a sample of the mean-field utility (i.e., the quantity inside the expectation in equation~\eqref{fo:MKV-discounted_utility_ZS}) for the MKV dynamics~\eqref{eq:MKV-state_ZS} using the control $\theta$ and truncated at time horizon $\mathcal{T}$, which is similar to the one introduced in~\cite{carmona2019linear} (when there is a single controller). In other words, it returns a realization of the social utility $\sum_{t=0}^{\mathcal{T}-1} \gamma^t c_{t}$, where $c_t$ is the instantaneous mean-field utility at time $t$, see~\eqref{eq:instantaneous-utility}. This is used in Algorithm~\ref{algo:ZSMFC-MKVestim}, which provides a way to estimate the gradient of the utility with respect to the control parameters of the first player. One can estimate the gradient with respect to the control parameters of the second player in an analogous way. The estimation algorithm uses the simulator to obtain realizations of the (truncated) utility when using perturbed versions of the controls. In order to estimate the gradient of $C_y$, we use $2M$ perturbations $v_{1,1,i}, v_{1,2,i}$ which are i.i.d. with uniform distribution $\mu_{\mathbb{S}_\tau}$ over the sphere $\mathbb{S}_\tau$ of radius $\tau$. The first index corresponds to the player ($1$ or $2$), the second index corresponds to the part of the control being perturbed ($K$ or $L$) and the last index corresponds to the index of the perturbation (between $1$ and $M$). See e.g.~\cite{fazel2018global} for more details.  Notice that, although the simulator needs to know the model in order to sample the dynamics and compute the utilities, Algorithm~\ref{algo:ZSMFC-MKVestim} uses this simulator as a black-box (or an oracle), and hence uses only samples from the model and not the model itself.

\begin{algorithm}
	\caption{Sample-Based Gradient Estimation for Player~1}
	\label{algo:ZSMFC-MKVestim}
	\begin{algorithmic}
	\STATE {\bfseries Data:} {Parameter $\theta = (\theta_1, \theta_2) = (K_1,L_1,K_2,L_2)$; number of perturbations $M$; length $\mathcal{T}$; radius $\tau$}
	\STATE {\bfseries Result:} {An estimator for $\nabla_{\theta_1} C(\theta)$}
	
		\FOR{$i = 1, 2, \dots, M$}
			\STATE Sample $v_{1,1,i}, v_{1,2,i}$ i.i.d. $\sim \mu_{\mathbb{S}_\tau}$\; 
			\STATE Set $\check\theta_{1,i} := (K_{1,i}, L_{1,i}) := (K_1 + v_{1,1,i}, L_1 + v_{1,2,i})$\;
			\STATE Set $\check\theta_i = (\check\theta_{1,i}, \theta_{2})$\; 
			\STATE Sample $\tilde{C}^i$ using MKV simulator  $\cS^{\mathcal{T}}_{MKV}(\check\theta_i)$\;
		\ENDFOR
		\STATE Set $\tilde{\nabla}_{K_1} C(\theta) =   \frac{d}{\tau^2} \frac{1}{M} \sum_{i=1}^M \tilde{C}^i v_{1,1,i},$ 
		\STATE and $\tilde{\nabla}_{L_1} C(\theta) = \frac{d}{\tau^2} \frac{1}{M} \sum_{i=1}^M \tilde{C}^i  v_{1,2,i}$ \;
		\STATE {\bfseries Return: }{$\tilde\nabla_{\theta_1} C(\theta)  := \diag\left(\tilde{\nabla}_{K_1} C(\theta), \tilde{\nabla}_{L_1} C(\theta)   \right)$}
\end{algorithmic}
\end{algorithm}

\subsection{Numerical Results}
\label{sec5:num}

We now provide numerical results both for model-based and sample-based versions of the two methods presented in the previous section.  

\textbf{Setting.} The specification of the model used in the simulations is given in Table~\ref{tab:simulation_parameters_ZS}. This setting has been chosen so that it allows us to illustrate the convergence of the method when the equilibrium controls are not symmetric, i.e. $\theta_1 \neq \theta_2$. To be able to visualize the convergence of the controls, we focus on a one-dimensional example, that is, $d = \ell = 1$.

\textbf{Model-based results. }
The parameters used are given in Table~\ref{tab:simulation_parameters_ZS}. This choice of parameters is based on the values used for a single controller in~\cite{carmona2019linear} and numerical experiments.

Fig.~\ref{fig:1d-exact-surfaces} displays the trajectory of $(K_1,K_2) \mapsto C_y(K_1,K_2)$ and  $(L_1,L_2) \mapsto C_z(L_1,L_2)$ generated by the iterations of AG and DGA methods. Iterations are counted in the following way:  in AG at iteration $k$, $(\theta_1^k, \theta_2^k) = (\theta_1^{k \, \textrm{mod}\, \Iiter_1, \lceil k/\Iiter_1 \rceil}, \theta_2^{\lceil k/\Iiter_1 \rceil-1})$, while in DGA one step of for-loop corresponds to one iteration.  The utility at the starting point and the utility at the Nash equilibrium are respectively given by a black star and a red dot.
In the AG method, since $\theta_1$ is updated $\Iiter_1$ times between two updates of $\theta_2$, the trajectory moves faster in the  $\theta_1$-direction until it reaches an approximate best response against $\theta_2$, after which the trajectory moves towards the Nash equilibrium.  This is also confirmed by the convergence of the parameters $\theta = (K_1,L_1,K_2,L_2)$ in Fig.~\ref{fig:1d-exact-params}. 
The relative error on the utility is shown in Fig.~\ref{fig:1d-exact-utility}. We observe that the convergence is slower with AG because player $2$ updates her control only every $\Iiter_1$ iterations.

\begin{figure}[h]
	\begin{subfigure}{0.45\columnwidth}
		\centering
		\includegraphics[width=1.05\textwidth]{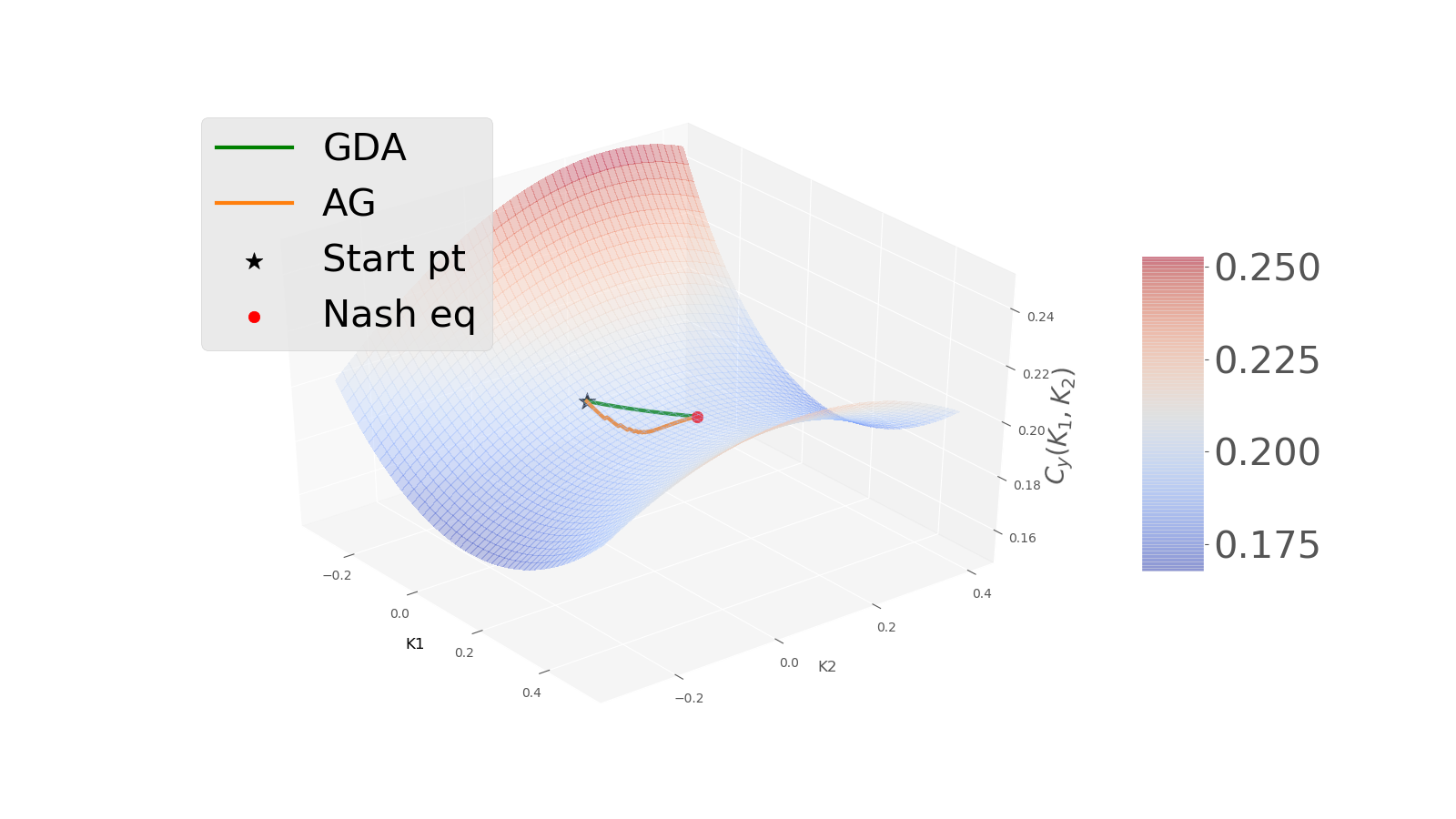}
		\caption{\,}
		\label{fig:1d-exact-surface-K}
	\end{subfigure}%
	\begin{subfigure}{0.45\columnwidth}
		\centering 
		\includegraphics[width=1.05\textwidth]{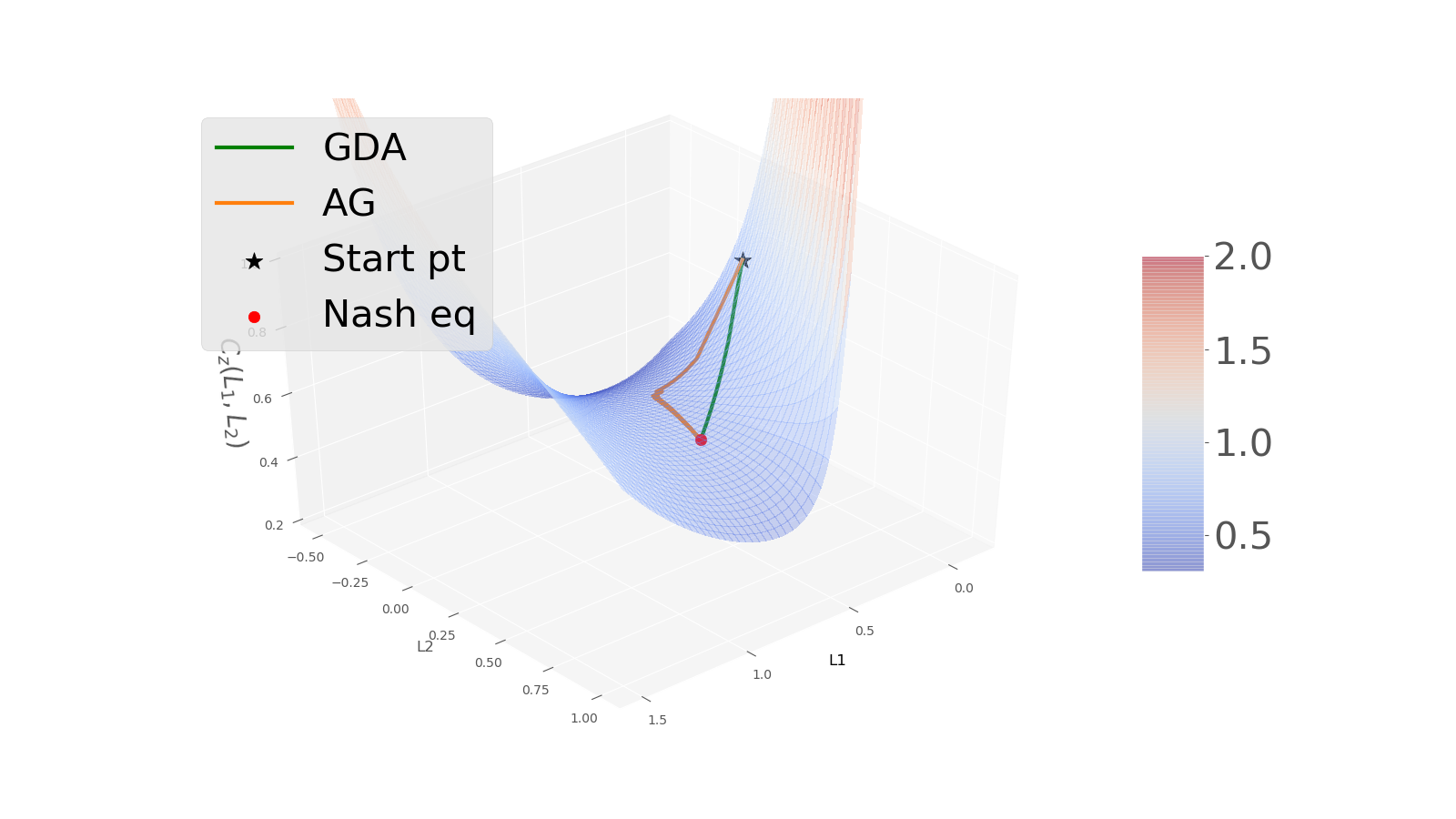}
		\caption{\,}
		\label{fig:1d-exact-surface-L}
	\end{subfigure}
	\caption{Model-based policy optimization: Convergence of each part of the utility. (a)~$C_y$ as a function of $(K_1,K_2)$. (b)~$C_z$ as a function of $(L_1,L_2)$.}
	\label{fig:1d-exact-surfaces}
\end{figure}

\begin{figure}[h]
	\begin{subfigure}{0.45\columnwidth}
		\centering 
		\includegraphics[width=1.05\columnwidth]{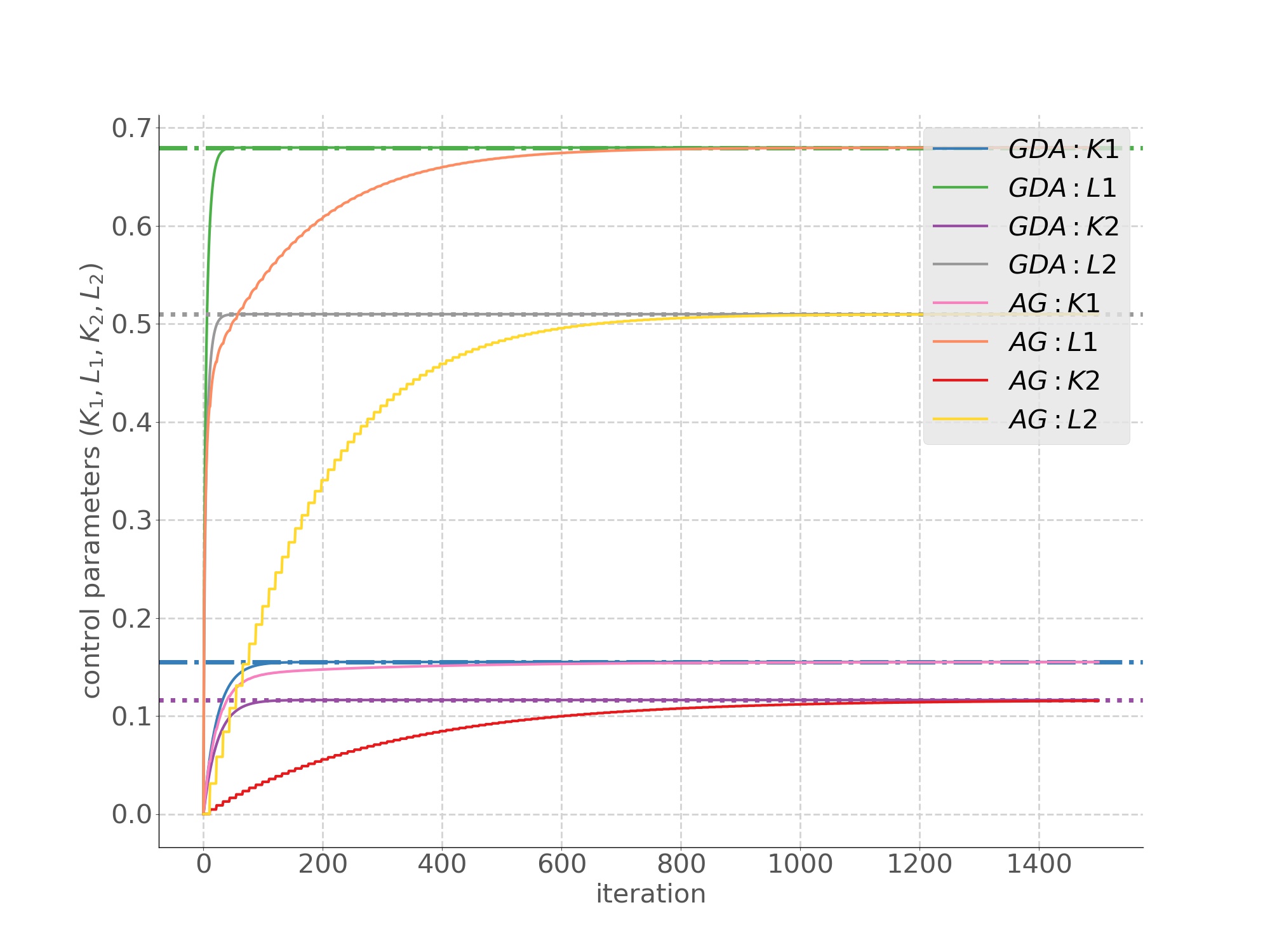}
		\caption{\,}
		\label{fig:1d-exact-params}
	\end{subfigure}%
	\begin{subfigure}{0.45\columnwidth}
		\centering  
		\includegraphics[width=1.05\columnwidth]{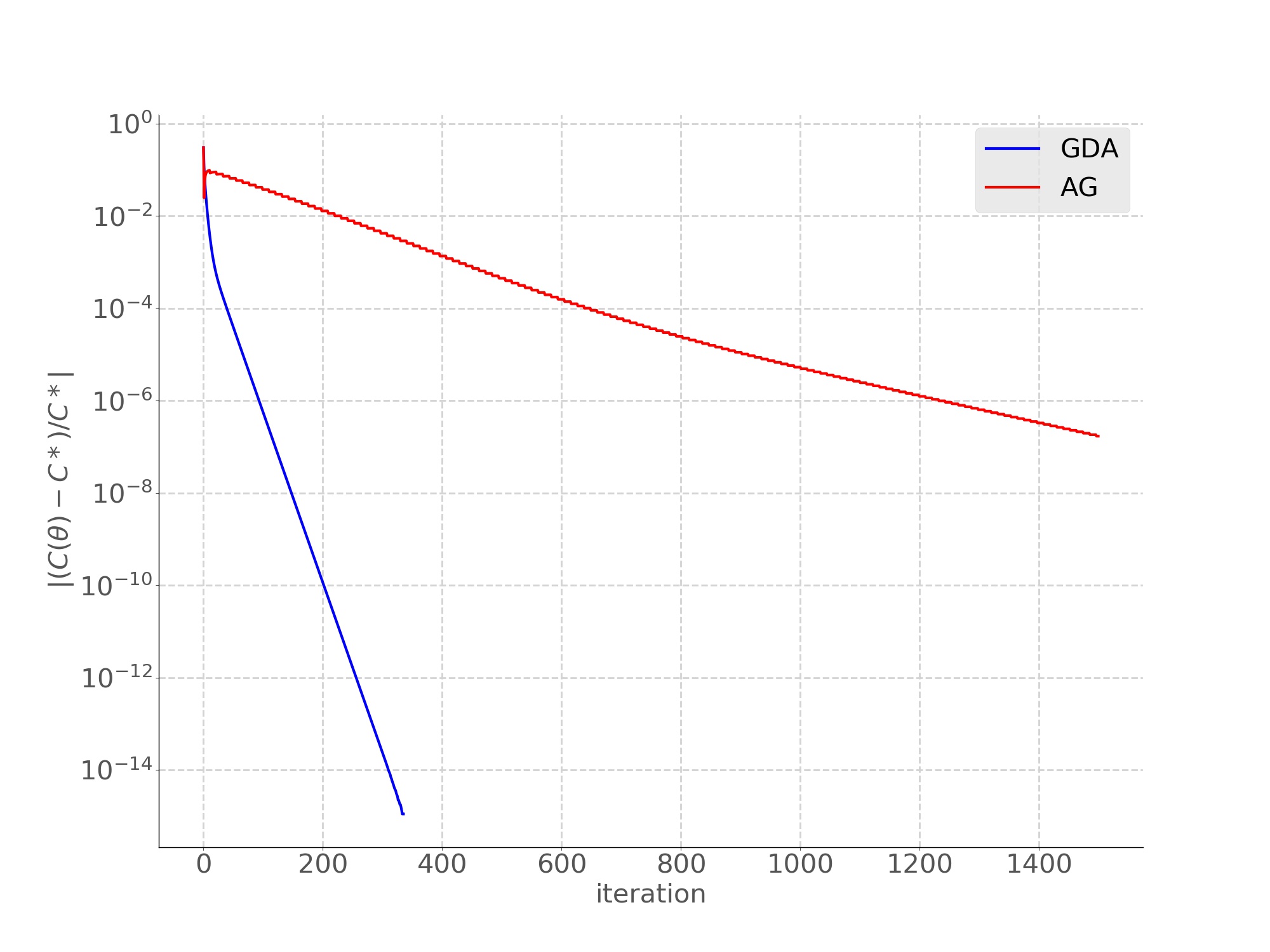}
		\caption{\,}
		\label{fig:1d-exact-utility}
	\end{subfigure}
	\caption{Model-based policy optimization: Convergence of the control parameters in~(a) and of the relative error on the utility in~(b).}
	\label{fig:1d-exact-params-utility}
\end{figure}

\textbf{Sample-based results. } The parameters used are given in Table~\ref{tab:simulation_parameters_ZS} and were chosen based on the values in~\cite{carmona2019linear} as well as numerical experiments. The figures are obtained by averaging the results over 5 experiments, each based on a different realization of the randomness in the initial points, in the dynamics and in the gradient estimation. 

Fig.~\ref{fig:1d-modelfree-surfaces} displays the trajectory of $(K_1,K_2) \mapsto C_y(K_1,K_2)$ and  $(L_1,L_2) \mapsto C_z(L_1,L_2)$ generated by the iterations of AG and DGA methods. The convergence of the parameters $\theta = (K_1,L_1,K_2,L_2)$ is shown in Fig.~\ref{fig:1d-modelfree-params}. 
The evolution of the relative error on the utility is shown in Fig.~\ref{fig:1d-modelfree-utility}.

\begin{figure}[h]
	\begin{subfigure}{0.45\columnwidth}
		\centering 
		\includegraphics[width=1.05\columnwidth]{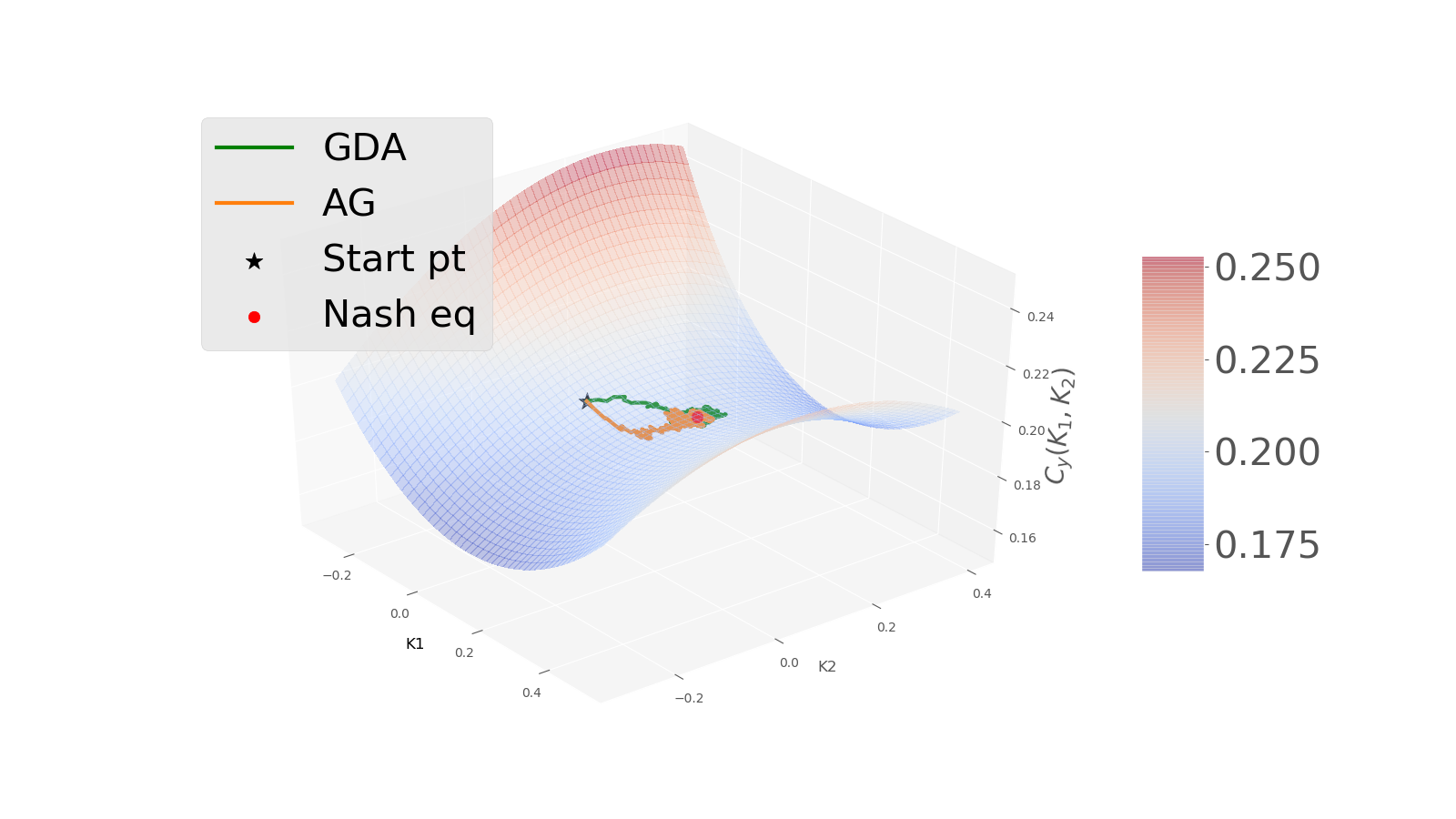}
		\caption{\,}
		\label{fig:1d-modelfree-surface-K}
	\end{subfigure}%
	\begin{subfigure}{0.45\columnwidth}
		\centering  
		\includegraphics[width=1.05\columnwidth]{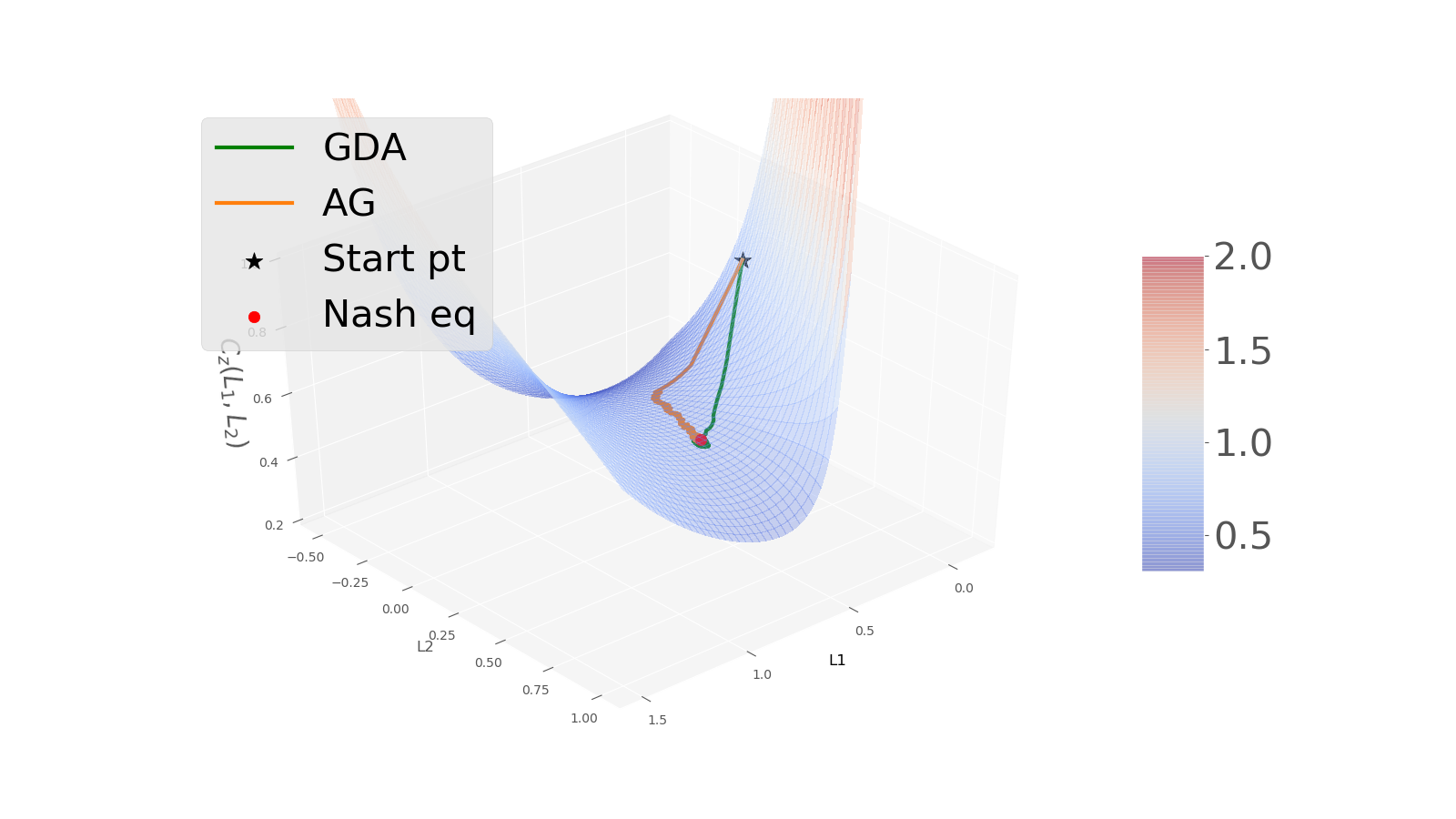}
		\caption{\,}
		\label{fig:1d-modelfree-surface-L}
	\end{subfigure}
	\caption{Sample-based policy optimization: Convergence of each part of the utility. (a)~$C_y$ as a function of $(K_1,K_2)$. (b)~$C_z$ as a function of $(L_1,L_2)$.}
	\label{fig:1d-modelfree-surfaces}
\end{figure}

\begin{figure}[h]
	\begin{subfigure}{0.45\columnwidth}
		\centering 
		\includegraphics[width=1.05\columnwidth]{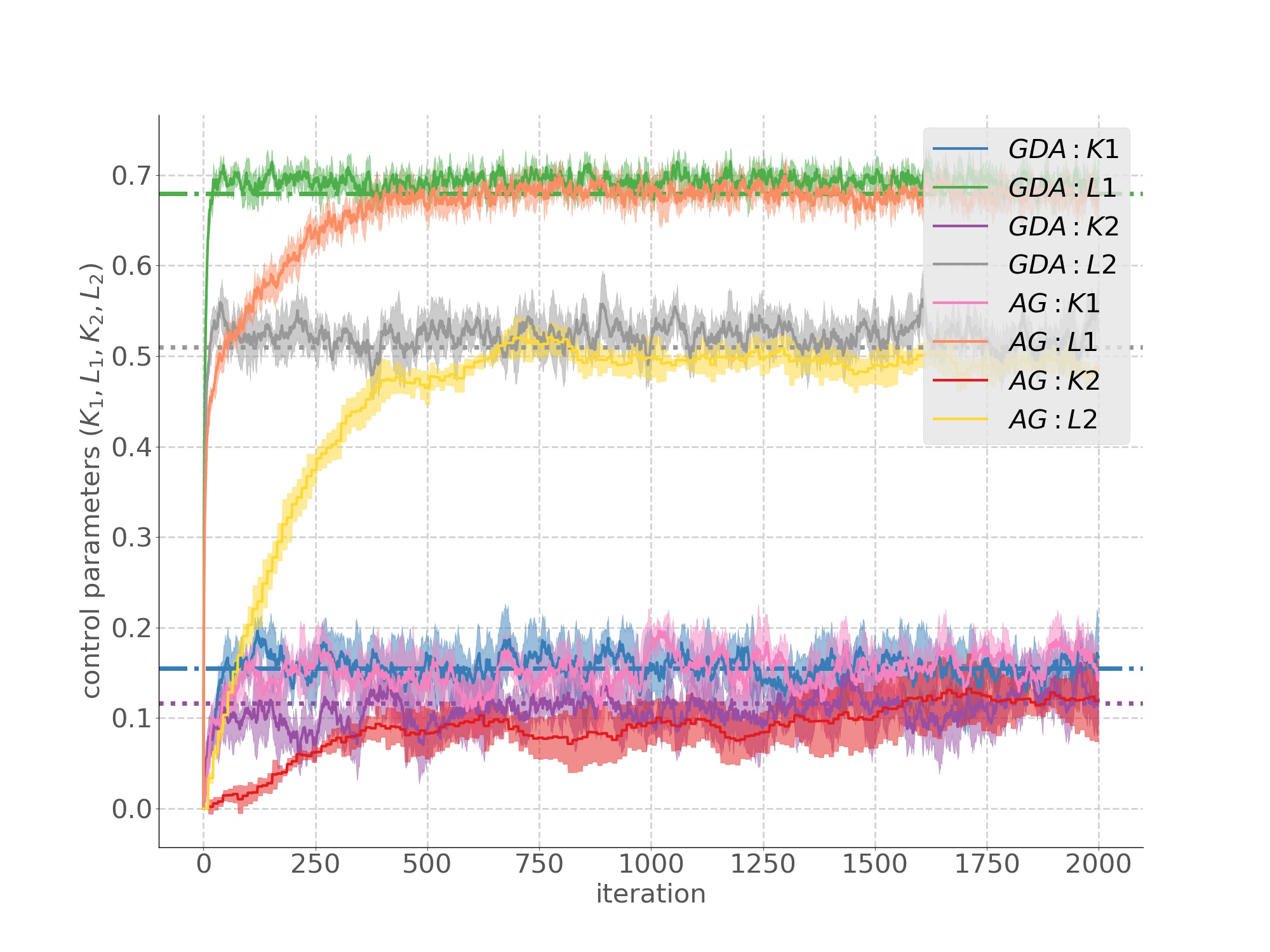}
		\caption{\,}
		\label{fig:1d-modelfree-params}
	\end{subfigure}%
	\begin{subfigure}{0.45\columnwidth}
		\centering  
		\includegraphics[width=1.05\columnwidth]{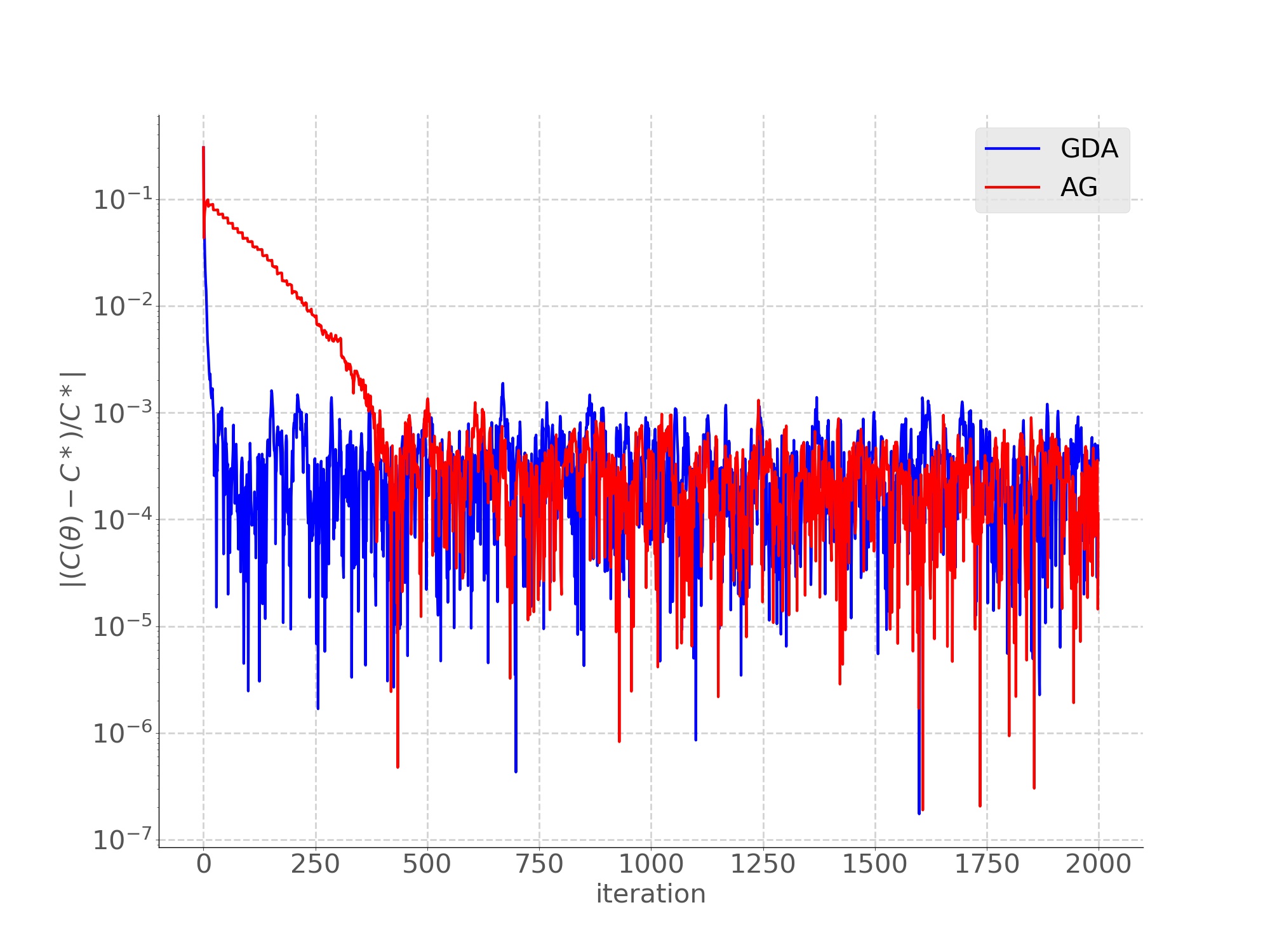}
		\caption{\,}
		\label{fig:1d-modelfree-utility}
	\end{subfigure}
	\caption{Sample-based policy optimization: Convergence of the control parameters in~(a) and of the relative error on the utility in~(b).}
	\label{fig:1d-modelfree-params-utility}
\end{figure}

\renewcommand{\arraystretch}{1.1}

\begin{table}[h!]
	
	\begin{center}
		\caption{Simulation parameters}    
		\begin{adjustbox}{width=0.7\columnwidth,center}
			\begin{tabular}{ccccccccc}
				
				\hline
				
				\multicolumn{9}{c}{Model parameters}\\
				
				\hline \hline
				
				 $A$ & $\overline{A}$ & $B_1=\overline{B}_1$ & $B_2=\overline{B}_2$ & $Q$ & $\overline{Q}$ & $R_1=\overline{R}_1$ & $R_2=\overline{R}_2$ & $\gamma$ \\
				
				\hline
				
				 0.4 & 0.4 & 0.4 & 0.3 & 0.4 & 0.4 & 0.4 & 0.4 & 0.9 \\
				
				\hline \hline
				
				\\
				
				\hline
				
				\multicolumn{9}{c}{Initial distribution and noise processes}\\
				
				\hline \hline
				
				 & \multicolumn{2}{c}{$\epsilon_0^0$} &  \multicolumn{2}{c}{$\epsilon^1_0$} &  \multicolumn{2}{c}{$\epsilon^0_t$} &  \multicolumn{2}{c}{$\epsilon^1_t$}  
				 \\
				
				\hline
				
				 & \multicolumn{2}{c}{$\mathcal{U}([-1, 1])$} & \multicolumn{2}{c}{$\mathcal{U}([-1, 1])$} & \multicolumn{2}{c}{$\mathcal{N}(0, 0.01)$} & \multicolumn{2}{c}{$\mathcal{N}(0, 0.01)$} 
				 \\
				
				\hline \hline
				
				\\
				
				\hline
				
				\multicolumn{9}{c}{AG and DGA methods parameters}\\
				
				\hline \hline
				
				$\Iiter_1$ & $\Iiter_2$ & $T$  & $\eta_1$ & $\eta_2$ & $K_1^0$ & $L_1^0$ & $K_2^0$ & $L_2^0$\\
				
				\hline
				
				10 & 200 & 2000 & 0.1 & 0.1 & 0.0 & 0.0 & 0.0 & 0.0 \\
				
				\hline \hline
				
				\\
				
				\hline
				
				\multicolumn{9}{c}{Gradient estimation algorithm parameters}\\
				
				\hline \hline
				\multicolumn{3}{c}{$\mathcal{T}$} &
				\multicolumn{3}{c}{$M$} &
				\multicolumn{3}{c}{$\tau$} 
				 \\
				
				\hline
				\multicolumn{3}{c}{50} &
				\multicolumn{3}{c}{10000} &
				\multicolumn{3}{c}{0.1} 
				\\
				
				\hline \hline

			\end{tabular}
		\end{adjustbox}

		\label{tab:simulation_parameters_ZS} 
	\end{center}
	
\end{table}

\section{Conclusion}
\label{sec6:conc}
In this paper, we have studied zero-sum mean-field type games with linear quadratic model under infinite-horizon discounted  utility  function. We have identified the  closed-form expression  of the  Nash equilibrium controls as linear combinations  of  the  state and  its  mean. Moreover, we have proposed two policy optimization methods to learn the equilibrium. Numerical results have shown the convergence of the two methods in both model-based and sample-based settings. The question of convergence of the algorithms proposed here as well as model-free methods for non-LQ or general-sum MFTG will be studied in future works.

\bibliographystyle{plain}
\bibliography{references}

\begin{thebibliography}{10}

\bibitem{achdou2012mean}
Yves Achdou, Fabio Camilli, and Italo Capuzzo-Dolcetta.
\newblock Mean field games: numerical methods for the planning problem.
\newblock {\em SIAM Journal on Control and Optimization}, 50(1):77--109, 2012.

\bibitem{MR2679575}
Yves Achdou and Italo Capuzzo-Dolcetta.
\newblock Mean field games: numerical methods.
\newblock {\em SIAM J. Numer. Anal.}, 48(3):1136--1162, 2010.

\bibitem{achdou2019mean}
Yves Achdou and Jean-Michel Lasry.
\newblock Mean field games for modeling crowd motion.
\newblock In {\em Contributions to partial differential equations and
  applications}, pages 17--42. Springer, 2019.

\bibitem{MR3392611}
Yves Achdou and Mathieu Lauri\`ere.
\newblock On the system of partial differential equations arising in mean field
  type control.
\newblock {\em Discrete Contin. Dyn. Syst.}, 35(9):3879--3900, 2015.

\bibitem{MR3575615}
Yves Achdou and Mathieu Lauri\`ere.
\newblock Mean field type control with congestion ({II}): {A}n augmented
  {L}agrangian method.
\newblock {\em Appl. Math. Optim.}, 74(3):535--578, 2016.

\bibitem{achdoulauriere2020mfgnumerical}
Yves Achdou and Mathieu Lauri{\`e}re.
\newblock Mean field games and applications: Numerical aspects.
\newblock {\em arXiv preprint arXiv:2003.04444}, 2020.

\bibitem{al2007model}
Asma Al-Tamimi, Frank~L Lewis, and Murad Abu-Khalaf.
\newblock Model-free q-learning designs for linear discrete-time zero-sum games
  with application to h-infinity control.
\newblock {\em Automatica}, 43(3):473--481, 2007.

\bibitem{alasseur2017extended}
Cl{\'e}mence Alasseur, Imen Ben~Tahar, and Anis Matoussi.
\newblock An extended mean field game for storage in smart grids.
\newblock {\em arXiv preprint arXiv:1710.08991}, 2017.

\bibitem{anahtarci2019value}
Berkay Anahtarci, Can~Deha Kariksiz, and Naci Saldi.
\newblock Value iteration algorithm for mean-field games.
\newblock {\em arXiv preprint arXiv:1909.01758}, 2019.

\bibitem{barreiro2020discrete}
Julian Barreiro-Gomez, Tyrone~E Duncan, and Hamidou Tembine.
\newblock Discrete-time linear-quadratic mean-field-type repeated games:
  Perfect, incomplete, and imperfect information.
\newblock {\em Automatica}, 112:108647, 2020.

\bibitem{bacsar2008h}
Tamer Ba{\c{s}}ar and Pierre Bernhard.
\newblock {\em H-infinity optimal control and related minimax design problems:
  a dynamic game approach}.
\newblock Springer Science \& Business Media, 2008.

\bibitem{bauso2016game}
Dario Bauso.
\newblock {\em Game theory with engineering applications}.
\newblock SIAM, 2016.

\bibitem{bauso2012robust}
Dario Bauso, Hamidou Tembine, and Tamer Ba{\c{s}}ar.
\newblock Robust mean field games with application to production of an
  exhaustible resource.
\newblock {\em IFAC Proceedings Volumes}, 45(13):454--459, 2012.

\bibitem{bensoussan2007representation}
Alain Bensoussan, Giuseppe Da~Prato, Michel~C Delfour, and Sanjoy~K Mitter.
\newblock {\em Representation and control of infinite dimensional systems}.
\newblock Springer Science \& Business Media, 2007.

\bibitem{MR3134900}
Alain Bensoussan, Jens Frehse, and Sheung Chi~Phillip Yam.
\newblock {\em Mean field games and mean field type control theory}.
\newblock Springer Briefs in Mathematics. Springer, New York, 2013.

\bibitem{bensoussan2018mean}
Alain Bensoussan, Tao Huang, and Mathieu Lauri{\`e}re.
\newblock Mean field control and mean field game models with several
  populations.
\newblock {\em arXiv preprint arXiv:1810.00783}, 2018.

\bibitem{BricenoAriasetalCEMRACS2017}
Luis~M. Brice\~no Arias, Dante Kalise, Ziad Kobeissi, Mathieu Lauri\`ere,
  \'Alvaro Mateos~Gonz\'alez, and Francisco~J. Silva.
\newblock On the implementation of a primal-dual algorithm for second order
  time-dependent mean field games with local couplings.
\newblock {\em ESAIM: ProcS}, 65:330--348, 2019.

\bibitem{MR3772008}
Luis~M. Brice\~{n}o Arias, Dante Kalise, and Francisco~J. Silva.
\newblock Proximal methods for stationary mean field games with local
  couplings.
\newblock {\em SIAM J. Control Optim.}, 56(2):801--836, 2018.

\bibitem{cao2020connecting}
Haoyang Cao, Xin Guo, and Mathieu Lauri{\`e}re.
\newblock Connecting gans and mfgs.
\newblock {\em arXiv preprint arXiv:2002.04112}, 2020.

\bibitem{Cardaliaguet-2013-notes}
Pierre Cardaliaguet.
\newblock Notes on mean field games, 2013.

\bibitem{cardaliaguet2018mean}
Pierre Cardaliaguet and Charles-Albert Lehalle.
\newblock Mean field game of controls and an application to trade crowding.
\newblock {\em Mathematics and Financial Economics}, 12(3):335--363, 2018.

\bibitem{MR3148086}
E.~Carlini and F.~J. Silva.
\newblock A fully discrete semi-{L}agrangian scheme for a first order mean
  field game problem.
\newblock {\em SIAM J. Numer. Anal.}, 52(1):45--67, 2014.

\bibitem{carmona2018probabilisticI-II}
Ren{\'e} Carmona and Fran{\c{c}}ois Delarue.
\newblock {\em Probabilistic Theory of Mean Field Games with Applications
  I-II}.
\newblock Springer, 2018.

\bibitem{MR3325083}
Ren\'{e} Carmona, Jean-Pierre Fouque, and Li-Hsien Sun.
\newblock Mean field games and systemic risk.
\newblock {\em Commun. Math. Sci.}, 13(4):911--933, 2015.

\bibitem{carmona2020lqzsmftg}
Ren{\'e} Carmona, Kenza Hamidouche, Mathieu Lauri{\`e}re, and Zongjun Tan.
\newblock {Policy Optimization for Linear-Quadratic Zero-Sum Mean-Field Type
  Games}.
\newblock {\em Accepted to CDC'2020.}, 2020.

\bibitem{CarmonaLauriere_DL_periodic}
Ren{\'e} Carmona and Mathieu Lauri{\`e}re.
\newblock Convergence analysis of machine learning algorithms for the numerical
  solution of mean field control and games: {I} - the ergodic case, 2019.

\bibitem{CarmonaLauriere_DL}
Ren{\'e} Carmona and Mathieu Lauri{\`e}re.
\newblock Convergence analysis of machine learning algorithms for the numerical
  solution of mean field control and games: {II} - the finite horizon case.
\newblock {\em arXiv preprint arXiv:1908.01613}, 2019.

\bibitem{carmona2019linear}
Ren{\'e} Carmona, Mathieu Lauri{\`e}re, and Zongjun Tan.
\newblock Linear-quadratic mean-field reinforcement learning: convergence of
  policy gradient methods.
\newblock {\em arXiv preprint arXiv:1910.04295}, 2019.

\bibitem{carmona2019model}
Ren{\'e} Carmona, Mathieu Lauri{\`e}re, and Zongjun Tan.
\newblock Model-free mean-field reinforcement learning: mean-field mdp and
  mean-field q-learning.
\newblock {\em arXiv preprint arXiv:1910.12802}, 2019.

\bibitem{cosso2019zero}
Andrea Cosso and Huy{\^e}n Pham.
\newblock Zero-sum stochastic differential games of generalized mckean--vlasov
  type.
\newblock {\em Journal de Math{\'e}matiques Pures et Appliqu{\'e}es},
  129:180--212, 2019.

\bibitem{MR4104525}
Boualem Djehiche and Said Hamad\`ene.
\newblock Optimal control and zero-sum stochastic differential game problems of
  mean-field type.
\newblock {\em Appl. Math. Optim.}, 81(3):933--960, 2020.

\bibitem{djehiche2016mean}
Boualem Djehiche, Alain Tcheukam, and Hamidou Tembine.
\newblock Mean-field-type games in engineering.
\newblock {\em arXiv preprint arXiv:1605.03281}, 2016.

\bibitem{domingo2020mean}
Carles Domingo-Enrich, Samy Jelassi, Arthur Mensch, Grant Rotskoff, and Joan
  Bruna.
\newblock A mean-field analysis of two-player zero-sum games.
\newblock {\em arXiv preprint arXiv:2002.06277}, 2020.

\bibitem{elie2020large}
Romuald {\'E}lie, Tomoyuki Ichiba, and Mathieu Lauri{\`e}re.
\newblock Large banking systems with default and recovery: A mean field game
  model.
\newblock {\em arXiv preprint arXiv:2001.10206}, 2020.

\bibitem{elie2020convergence}
Romuald Elie, Julien Perolat, Mathieu Lauri{\`e}re, Matthieu Geist, and Olivier
  Pietquin.
\newblock On the convergence of model free learning in mean field games.
\newblock In {\em AAAI Conference one Artificial Intelligence (AAAI 2020)}.

\bibitem{fazel2018global}
Maryam Fazel, Rong Ge, Sham~M Kakade, and Mehran Mesbahi.
\newblock Global convergence of policy gradient methods for the linear
  quadratic regulator.
\newblock {\em arXiv preprint arXiv:1801.05039}, 2018.

\bibitem{gu2020q}
Haotian Gu, Xin Guo, Xiaoli Wei, and Renyuan Xu.
\newblock Q-learning for mean-field controls.
\newblock {\em arXiv preprint arXiv:2002.04131}, 2020.

\bibitem{guo2019learning}
Xin Guo, Anran Hu, Renyuan Xu, and Junzi Zhang.
\newblock Learning mean-field games.
\newblock In {\em Advances in Neural Information Processing Systems}, pages
  4967--4977, 2019.

\bibitem{HuangMalhameCaines2006_MR2346927}
Minyi Huang, Roland~P. Malham{\'e}, and Peter~E. Caines.
\newblock Large population stochastic dynamic games: closed-loop
  {M}c{K}ean-{V}lasov systems and the {N}ash certainty equivalence principle.
\newblock {\em Commun. Inf. Syst.}, 6(3):221--251, 2006.

\bibitem{kim2019mean}
Hyesung Kim, Jihong Park, Mehdi Bennis, Seong-Lyun Kim, and M{\'e}rouane
  Debbah.
\newblock Mean-field game theoretic edge caching in ultra-dense networks.
\newblock {\em IEEE Transactions on Vehicular Technology}, 2019.

\bibitem{kuvcera1972discrete}
Vladim{\'\i}r Ku{\v{c}}era.
\newblock The discrete riccati equation of optimal control.
\newblock {\em Kybernetika}, 8(5):430--447, 1972.

\bibitem{lasry2007mean}
Jean-Michel Lasry and Pierre-Louis Lions.
\newblock Mean field games.
\newblock {\em Japanese journal of mathematics}, 2(1):229--260, 2007.

\bibitem{liu2018mean}
Zhiyu Liu, Bo~Wu, and Hai Lin.
\newblock A mean field game approach to swarming robots control.
\newblock In {\em 2018 Annual American Control Conference (ACC)}, pages
  4293--4298, 2018.

\bibitem{lu2002inverses}
Tzon-Tzer Lu and Sheng-Hua Shiou.
\newblock Inverses of 2$\times$ 2 block matrices.
\newblock {\em Computers \& Mathematics with Applications}, 43(1-2):119--129,
  2002.

\bibitem{meriaux2012mean}
Fran{\c{c}}ois M{\'e}riaux, Vineeth Varma, and Samson Lasaulce.
\newblock Mean field energy games in wireless networks.
\newblock In {\em 2012 conference record of the forty sixth Asilomar conference
  on signals, systems and computers (ASILOMAR)}, pages 671--675, 2012.

\bibitem{ran1988existence}
ACM Ran and R~Vreugdenhil.
\newblock Existence and comparison theorems for algebraic riccati equations for
  continuous-and discrete-time systems.
\newblock {\em Linear Algebra and its applications}, 99:63--83, 1988.

\bibitem{shi2020mean}
Dian Shi, Hao Gao, Li~Wang, Miao Pan, Zhu Han, and H~Vincent Poor.
\newblock Mean field game guided deep reinforcement learning for task placement
  in cooperative multi-access edge computing.
\newblock {\em IEEE Internet of Things Journal}, 2020.

\bibitem{sun2016linear}
Jingrui Sun, Jiongmin Yong, and Shuguang Zhang.
\newblock Linear quadratic stochastic two-person zero-sum differential games in
  an infinite horizon.
\newblock {\em ESAIM: Control, Optimisation and Calculus of Variations},
  22(3):743--769, 2016.

\bibitem{von2007theory}
John Von~Neumann and Oskar Morgenstern.
\newblock {\em Theory of games and economic behavior (commemorative edition)}.
\newblock Princeton university press, 2007.

\bibitem{xu2012zerosum}
Ruimin Xu.
\newblock Zero-sum stochastic differential games of mean-field type and bsdes.
\newblock In {\em Proceedings of the 31st Chinese Control Conference}, pages
  1651--1654, 2012.

\bibitem{zhang2019policy}
Kaiqing Zhang, Zhuoran Yang, and Tamer Basar.
\newblock Policy optimization provably converges to nash equilibria in zero-sum
  linear quadratic games.
\newblock In {\em Advances in Neural Information Processing Systems}, pages
  11598--11610, 2019.

\end{thebibliography}

\end{document}